\newtheorem{theorem}{Theorem}[section]
\newtheorem{proposition}[theorem]{Proposition}
\newtheorem{lemma}[theorem]{Lemma}
\newtheorem{corollary}[theorem]{Corollary}
\newtheorem*{corollary*}{Corollary}
\newtheorem*{proposition4.2}{Proposition 4.2}
\theoremstyle{definition}
\newtheorem{definition}[theorem]{Definition}
\newtheorem{example}[theorem]{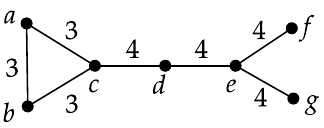}
\newtheorem{remark}[theorem]{Remark}
\newtheorem{question}[theorem]{Question}
\newtheorem{notation}[theorem]{Notation}
\newcommand{\Aut}{\mathrm{Aut}}
\newcommand{\Out}{\mathrm{Out}}
\newcommand{\Inn}{\mathrm{Inn}}
\newcommand{\Fix}{\mathrm{Fix}}
\newcommand{\Stab}{\mathrm{Stab}}
\newcommand{\gars}{\Delta_{ab}}
\newcommand{\Isom}{\mathrm{Isom}}
\newcommand{\h}{z}
\newcommand{\quotient}[2]{{\raisebox{.2em}{$#1$}\left/\raisebox{-.2em}{$#2$}\right.}}
\begin{document}
\usetikzlibrary{arrows,positioning}

\begin{center}
{\large \textbf{Fixed subgroups in Artin groups}}
\end{center}

\begin{center}
Oli Jones and Nicolas Vaskou 
\end{center}

\begin{abstract}
\centering \justifying 

We study fixed subgroups of automorphisms of any large-type Artin group $A_{\Gamma}$. We define a natural subgroup $\Aut_\Gamma(A_\Gamma)$ of $\Aut(A_{\Gamma})$, and for every $\gamma \in \Aut_\Gamma(A_\Gamma)$ we find the isomorphism type of $\Fix(\gamma)$ and a generating set for a finite index subgroup. We show that $\Fix(\gamma)$ is a finitely generated Artin group, with a uniform bound on the rank in terms of the number of vertices of $\Gamma$. Finally, we provide a natural geometric characterisation of the subgroup $\Aut_\Gamma(A_\Gamma)$, which informally is the maximal subgroup of $\Aut(A_\Gamma)$ leaving the Deligne complex of $A_{\Gamma}$ invariant.
\end{abstract}

\noindent \rule{7em}{.4pt}\par

\small

\noindent 2020 \textit{Mathematics subject classification.} 20F36, 20E36, 20F28, 20F65.

\noindent \textit{Key words.} Artin groups, Automorphisms, Fixed subgroups.

\normalsize
\tableofcontents

\noindent \rule{7em}{.4pt}\par

Email: oj2003@hw.ac.uk. Address: School of Mathematical and Computer Sciences, Heriot-Watt University, Edinburgh, Scotland, EH14 4AS

Email: nicolas.vaskou@bristol.ac.uk. Address: School of Mathematics, University of Bristol, Bristol BS8 1UG.

\newpage

\section{Introduction}

Given a group $G$ and an automorphism $\gamma \in \Aut(G)$, one can define the \emph{fixed subgroup} of $\gamma$ as
$$\Fix(\gamma) = \{g \in G ~|~ \gamma(g) = g\}.$$
Fixed subgroups have been extensively studied, with good structural results being common in groups with some kind of negative curvature. For instance, Bestvina-Handel's celebrated theorem states that in the rank $n$ non-abelian free group $F_n$, fixed subgroups have rank at most $n$ \cite{bestvina1992train}. Other results in this spirit have been obtained in hyperbolic \cite{neumann1992fixed} and relatively hyperbolic groups \cite{minasyan2012fixed}. Recently, there has been interest in fixed subgroups in settings without negative curvature, such as right-angled Artin groups \cite{fioravanti2024coarse}.

In this paper, we study fixed subgroups in Artin groups. These groups are easily defined by a presentation. Let $\Gamma$ be a simplicial graph with vertex set $V(\Gamma)$ and edge set $E(\Gamma)$, and suppose that each edge $e^{ab}$ between two vertices $a$ and $b$ is given a coefficient $m_{ab} \in \{2, 3, 4, \cdots \}$. Then the \emph{Artin group} associated with $\Gamma$ is the group given by the following presentation:
$$A_{\Gamma} \coloneqq \langle V(\Gamma) \ | \ \underbrace{aba\cdots}_{m_{ab} \text{ terms}} = \underbrace{bab\cdots}_{m_{ab} \text{ terms}}, \forall e^{ab} \in E(\Gamma) \rangle.$$
The vertices of $\Gamma$ are called the \emph{standard generators} of $A_{\Gamma}$ associated with $\Gamma$, and the number of vertices of $\Gamma$ is called the \emph{rank} of $A_{\Gamma}$. It is conjectured that the rank of an Artin group in this sense coincides with the usual notion of rank. From here, we will always use rank in the Artin group sense. If $a$ and $b$ are not connected by an edge, we set $m_{ab} \coloneqq \infty$.

Artin groups are generally hard to study, as questions such as knowing their torsion, their centres, or a solution to the word problem, remain open in the most general setting. For this reason, most of the results regarding Artin groups concern some smaller class of Artin groups. In that spirit, we study Artin groups of \emph{large-type}, that is, where for all pairs of generators $a, b \in V(\Gamma)$, we have $m_{ab} \geq 3$.

Very little is known regarding automorphisms of Artin groups in general. Although different types of possible automorphisms have been identified, computing the full automorphism groups of Artin groups remains wide open. The automorphism group of Artin groups were computed for some smaller classes, namely: the \emph{right-angled} Artin groups, i.e. Artin groups where every coefficient is either $2$ or $\infty$ \cite{servatius1989automorphisms, laurence1995generating}, and some affine and spherical-type Artin groups \cite{charney2004automorphism}. The general “large-type” case remains open, although some progress was made: a generating set was given for the automorphism groups of connected large-type triangle free Artin groups \cite{crisp2005automorphisms}, and more recently a presentation was given for large-type \emph{free-of-infinity} Artin groups, i.e. Artin groups where every coefficient $m_{ab}$ satisfies $3 \leq m_{ab} < \infty$ \cite{vaskou2023automorphisms}.

In this paper, we describe fixed subgroups of automorphisms beyond the aforementioned classes of Artin groups for which $\Aut(A_{\Gamma})$ has been fully computed. To do so, we restrict our attention to the following subgroup of $\Aut(A_{\Gamma})$:

\begin{definition}
The group $\Aut_{\Gamma}(A_{\Gamma})$ is the subgroup of $\Aut(A_{\Gamma})$ generated by the following three kinds of automorphisms:
\begin{itemize}
        \item The subgroup $\Inn(A_{\Gamma})$ of \emph{inner automorphisms}, that is, automorphisms of the form $\varphi_g \colon h \mapsto g h g^{-1}$ for some $g \in A_{\Gamma}$.
        \item The subgroup $\Aut(\Gamma)$ of \emph{graph automorphisms}, that is, automorphisms induced by a labelled-preserving permutation $\sigma \in \Aut(\Gamma)$ of the standard generators.
        \item The order $2$ subgroup generated by the \emph{global inversion}, that is, the automorphism defined by $\iota \colon a \mapsto a^{-1}$ for every standard generator $a \in V(\Gamma)$.
    \end{itemize}
\end{definition}

In some cases it is that $\Aut_\Gamma(A_\Gamma)$ is the whole of $\Aut(A_\Gamma)$. This was for instance proved by the second author for large-type free-of-infinity Artin groups (see \cite{vaskou2023automorphisms}). Moreover, the subgroup $\Aut_{\Gamma}(A_{\Gamma})$ turns out to have a strong geometric interpretation, as we will discuss later. Note that $\Aut_\Gamma(A_\Gamma)$ in general depends on the choice of presentation graph $\Gamma$.

Fixed subgroups of Artin groups were previously studied by Crisp \cite{crisp2000symmetrical} who showed that for certain subclasses of Artin groups, including large-type, the fixed subgroups of graph automorphisms were finitely generated Artin groups. Our result is in the spirit of Crisp's, dealing with a much larger class of automorphisms, restricted to large-type Artin groups. In particular, we recover Crisp's result in the large-type case.

We briefly introduce the notion of parabolic subgroups, which will be used in the statement of our main result. It is a standard result (\cite{van1983homotopy}) that for any Artin group $A_{\Gamma}$ and any induced subgraph $\Gamma' \subseteq \Gamma$, the Artin group $A_{\Gamma'}$ naturally injects in $A_{\Gamma}$. In particular, one can unambiguously write $A_{\Gamma'}$ to denote the subgroup of $A_{\Gamma}$ spanned by the vertices of $\Gamma'$. These subgroups are called \emph{standard parabolic subgroups}, and their conjugates are called \emph{parabolic subgroups}. Parabolic subgroups lie at the heart of the study of Artin groups, and our paper is no exception.

Our main theorem may be shortly stated as follows.

\begin{theorem}\label{mainTheorem}
    Let $A_\Gamma$ be a large-type Artin group, and let $\gamma \in \Aut_\Gamma(A_\Gamma)$. 
    
    \begin{enumerate}   

    \item We determine the isomorphism type of $\Fix(\gamma)$, and provide a generating set for a finite index subgroup of $\Fix(\gamma)$.
    
    \item The subgroup $\Fix(\gamma)$ is isomorphic to one of the following:
    $$\{1\}, \ \ \mathbb{Z}, \ \ \mathbb{Z}^2, \ \ F, \ \ \mathbb{Z} \times F, \ \ A_4, \ \ A_{\Gamma'}, \ \ A_{\Gamma'} * \mathbb{Z}, \text{ or } A_{\Gamma'} * F,$$
    where $F$ is a non-abelian finitely generated free group, $A_{\Gamma'}$ is a parabolic subgroup of $A_\Gamma$, and $A_4$ is  the group with abstract presentation $\langle x, y \ | \ xyxy = yxyx \rangle$. 
    
    \item We obtain a uniform bound on the rank of the fixed subgroups, namely $$\mathrm{Rank}(\Fix(\gamma)) \leq n^2 - 2n +2,$$ where $n = \mathrm{Rank}(A_\Gamma)$.

    \item In particular, $\Fix(\gamma)$ is always a finitely generated Artin group, of one of finitely many isomorphism types. 
    \end{enumerate}
\end{theorem}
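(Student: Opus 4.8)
The plan is to argue geometrically, via the action of $A_\Gamma$ on its Deligne complex $D_\Gamma$. Since $A_\Gamma$ is of large type, $D_\Gamma$ is two-dimensional and, with the Moussong metric, CAT(0); its vertices are the cosets of the spherical standard parabolics, which here are only $\{1\}$, the infinite cyclic groups $\langle a\rangle$ for $a\in V(\Gamma)$, and the dihedral Artin groups $A_{ab}$ with $m_{ab}<\infty$, and these are exactly the cell stabilisers. The first step is to show that every $\gamma\in\Aut_\Gamma(A_\Gamma)$ induces a cellular isometry $\hat\gamma$ of $D_\Gamma$ that is $\gamma$-equivariant, i.e. $\hat\gamma(g\cdot x)=\gamma(g)\cdot\hat\gamma(x)$ for all $g\in A_\Gamma$ and $x\in D_\Gamma$. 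One checks this on the three kinds of generators of $\Aut_\Gamma(A_\Gamma)$: inner automorphisms act as the corresponding group elements, graph automorphisms permute the standard parabolics in a label-preserving fashion, and the global inversion $\iota$ fixes every standard parabolic setwise. Equivariance then forces $\Fix(\gamma)$ to preserve $\minset(\hat\gamma)$ — in particular $\Fix(\hat\gamma)$ when it is nonempty — and to act on it cellularly with cell stabilisers of the form $\Fix(\gamma|_P)$ for $P$ a parabolic of rank at most $2$. Since $D_\Gamma$ has finitely many isometry types of cells, $\hat\gamma$ is semisimple, so it is enough to treat the elliptic and the hyperbolic cases; it is also convenient to normalise $\gamma=\varphi_g\circ\delta$ with $\delta$ of finite order (a product of a graph automorphism and possibly $\iota$), since the isomorphism type of $\Fix(\gamma)$ depends only on the $\delta$-twisted conjugacy class of $g$.

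In the elliptic case $Y:=\Fix(\hat\gamma)$ is a nonempty convex subcomplex of $D_\Gamma$ on which $\Fix(\gamma)$ acts, and the analysis has two ingredients. The first is a hands-on computation of fixed subgroups of automorphisms of the vertex groups: for $\{1\}$ and $\mathbb{Z}$ this is trivial, and for a large-type dihedral Artin group $A_{ab}$ one uses that $Z(A_{ab})$ is infinite cyclic (generated by $\gars$ or $\gars^2$) and that $A_{ab}/Z(A_{ab})$ is virtually free, which yields exactly $\{1\}$, $\mathbb{Z}$, $\mathbb{Z}^2$, $F$, $\mathbb{Z}\times F$, and the exceptional group $A_4=\langle x,y\mid xyxy=yxyx\rangle$. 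The second ingredient is a Bass–Serre / graph-of-groups analysis of the $\Fix(\gamma)$-action on $Y$: after showing that (possibly up to passing to a finite-index subgroup) this action is cocompact, $\Fix(\gamma)$ is the fundamental group of a finite graph of groups with vertex groups from the above list and with edge groups forced by the local geometry of $D_\Gamma$ to be trivial or cyclic. Assembling this produces either one of the rank-$\le 2$ groups, or a free group $F$, or a group splitting as a free product $A_{\Gamma'}$, $A_{\Gamma'}*\mathbb{Z}$ or $A_{\Gamma'}*F$, where $A_{\Gamma'}$ is the unique non-trivial, non-cyclic vertex group and is itself a parabolic subgroup of $A_\Gamma$.

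In the hyperbolic case $\Fix(\hat\gamma)=\varnothing$, but $\Fix(\gamma)$ preserves $\minset(\hat\gamma)$, which in a two-dimensional CAT(0) complex splits as $\mathbb{R}\times K$ with $K$ convex of dimension at most $1$; one checks that here $\minset(\hat\gamma)$ is a line or a $2$-flat, and in the latter case a combinatorial one, so that $\Fix(\gamma)$ is a torsion-free group of isometries of a line or a flat plane preserving a locally finite cell structure. As large-type Artin groups are torsion-free (a finite subgroup would fix a point of $D_\Gamma$, hence lie in a parabolic of rank $\le 2$, all of which are torsion-free), it follows that $\Fix(\gamma)$ is $\{1\}$, $\mathbb{Z}$, $\mathbb{Z}^2$, or the Klein-bottle group; the last is excluded using that all flats of $D_\Gamma$ lie inside dihedral parabolic subgroups. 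Finally, finiteness of the generating set and the bound $\mathrm{Rank}(\Fix(\gamma))\le n^2-2n+2$ come from bounding the number of $\Fix(\gamma)$-orbits of cells in the relevant finite subcomplex in terms of $n=\mathrm{Rank}(A_\Gamma)$; the extremal case is $A_{\Gamma'}*F$, where $\mathrm{Rank}(A_{\Gamma'})\le|V(\Gamma')|$ and the rank of the free factor is bounded by a count of edges (and vertices) of $\Gamma$ not accounted for by $\Gamma'$. Part~(4) is then immediate from the explicit list in~(2).

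I expect the main obstacle to be the elliptic case: one must pin down precisely which subcomplexes of $D_\Gamma$ occur as $\Fix(\hat\gamma)$ and prove that the $\Fix(\gamma)$-action on them is cocompact — this is exactly what makes the graph of groups finite and the rank bound effective — while simultaneously carrying out the somewhat delicate description of $\Aut(A_{ab})$ and its fixed subgroups for dihedral Artin groups, which is where both the exceptional group $A_4$ and the free and $\mathbb{Z}\times F$ summands appear. By comparison, the semisimplicity dichotomy, the hyperbolic case, and the final orbit count should be comparatively routine once the geometric picture is in place.
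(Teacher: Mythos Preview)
Your overall framework matches the paper's: compatible action on the Deligne complex, semisimplicity, split into elliptic and hyperbolic, analyse $\Fix(\gamma)$ via its action on $X_\Gamma^\gamma$. But two concrete misattributions would derail the argument.

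First, the hyperbolic case is not as tame as you claim. It is false that ``all flats of $D_\Gamma$ lie inside dihedral parabolic subgroups'': when $\Gamma$ contains a triangle with all labels $3$, the element $z=abcabc$ has minset a genuine Euclidean plane tiled by principal triangles, and $C(z)=\langle b,abc\rangle$ is an \emph{exotic} dihedral Artin subgroup isomorphic to $A_4=\langle x,y\mid xyxy=yxyx\rangle$ that is not contained in any parabolic. This is exactly where $A_4$ appears in the classification --- as $\Fix(\gamma)$ for certain \emph{hyperbolic} $\gamma$ (the paper's Theorem~4.17, case~2) --- not in the local dihedral analysis. Your hyperbolic treatment, which outputs only $\{1\},\mathbb{Z},\mathbb{Z}^2$, misses this entirely; the Klein-bottle exclusion also needs a different argument (the paper uses a colouring of edge-orbits to rule out glide reflections).

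Second, your local dihedral analysis is overstated. For $\gamma\in\Aut_\Gamma(A_{ab})$ the fixed subgroups are only $\{1\}$, $\mathbb{Z}$, $\mathbb{Z}^2$ (or all of $A_{ab}$); one never gets $F$, $\mathbb{Z}\times F$, or $A_4$ from a single type-$2$ vertex. The groups $F$ and $\mathbb{Z}\times F$ arise globally in the elliptic case: $\mathbb{Z}\times F$ when $\gamma$ is isogredient to $\varphi_{a^k}\sigma$ and $X_\Gamma^\gamma$ is a subtree of a standard tree $X_\Gamma^a$ (the $\mathbb{Z}$ factor is $\langle a\rangle$, and $F$ comes from the quotient graph), while $F$ alone appears for $\gamma$ isogredient to $\sigma\iota$ or $\varphi_a\sigma\iota$. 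The parabolic free factors $A_{\Gamma'}*\mathbb{Z}$ and $A_{\Gamma'}*F$ come only from pure graph automorphisms $\sigma$, where $\Gamma'=\Gamma^\sigma$. Getting these attributions right is what makes the rank bound work: the extremal case $n^2-2n+2$ is realised by $\Fix(\varphi_a)=\langle a\rangle\times F$ on a complete graph with all labels $3$, and the count uses the explicit description of $F$ in terms of $\pi_1$ of a subgraph of $\Gamma_{\mathrm{bar}}$ together with one generator per edge.
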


For a precise description of the isomorphism type of $\Fix(\gamma)$ along with a generating set of a finite index subgroup, we refer the reader to Theorem \ref{mainTheoremElliptic} and Theorem \ref{mainTheoremHyperbolic}, for the case where $\Gamma$ has at least 3 vertices, and Theorem \ref{dihedralFixPointsOdd} (from \cite{jones2023fixed}, included for completeness) and Theorem \ref{dihedralFixPointsEven}, for the dihedral case. The comment on rank follows from Proposition \ref{propRankElliptic} and Remark \ref{remHyperbolicRank}, with the proposition showing the bound is optimal.
\medskip

Given the results of \cite{vaskou2023automorphisms}, we obtain the following corollary.

\begin{corollary}
    If $A_{\Gamma}$ is large-type and free-of-infinity, then Theorem \ref{mainTheorem} describes the fixed subgroups of all automorphisms of $A_{\Gamma}$.
\end{corollary}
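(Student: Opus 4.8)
The plan is simply to combine Theorem \ref{mainTheorem} with the computation of the automorphism group of large-type free-of-infinity Artin groups obtained by the second author in \cite{vaskou2023automorphisms}. Recall from the discussion preceding Theorem \ref{mainTheorem} that when $A_\Gamma$ is large-type and free-of-infinity, it is proved in \cite{vaskou2023automorphisms} that $\Aut(A_\Gamma)$ is generated by inner automorphisms, graph automorphisms, and the global inversion $\iota$; that is, $\Aut(A_\Gamma) = \Aut_\Gamma(A_\Gamma)$. The first thing I would do is check that the hypotheses line up: a large-type Artin group is free-of-infinity precisely when its defining graph $\Gamma$ is complete with all labels at least $3$, which is exactly the class treated in loc. cit.; in particular such a $\Gamma$ is connected, and the degenerate cases of one vertex ($A_\Gamma \cong \mathbb{Z}$, whose automorphism group is $\langle \iota \rangle$) and two vertices (the dihedral case, handled separately by Theorems \ref{dihedralFixPointsOdd} and \ref{dihedralFixPointsEven}) cause no trouble.

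Granting this, the argument is immediate. Let $\gamma \in \Aut(A_\Gamma)$ be arbitrary. By the above, $\gamma \in \Aut_\Gamma(A_\Gamma)$, so Theorem \ref{mainTheorem} applies to $\gamma$: it determines the isomorphism type of $\Fix(\gamma)$, gives a generating set for a finite index subgroup, and yields the rank bound $\mathrm{Rank}(\Fix(\gamma)) \leq n^2 - 2n + 2$. Since $\gamma$ was arbitrary, Theorem \ref{mainTheorem} describes the fixed subgroups of \emph{all} automorphisms of $A_\Gamma$, which is exactly the assertion of the corollary.

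The only real "obstacle" here is bookkeeping rather than mathematics: one must be certain that the equality $\Aut(A_\Gamma) = \Aut_\Gamma(A_\Gamma)$ from \cite{vaskou2023automorphisms} is stated for precisely the class in play (large-type, free-of-infinity, arbitrary rank), so that no hidden connectivity or rank restriction in loc. cit. needs to be reconciled with the hypotheses of Theorem \ref{mainTheorem}. Once that is confirmed, there is nothing further to prove.
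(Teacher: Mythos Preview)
Your proposal is correct and takes essentially the same approach as the paper: the corollary is presented there as an immediate consequence of the equality $\Aut(A_\Gamma) = \Aut_\Gamma(A_\Gamma)$ from \cite{vaskou2023automorphisms} combined with Theorem \ref{mainTheorem}, with no further argument given. Your additional bookkeeping remarks about small-rank cases are harmless elaborations on what the paper leaves implicit.
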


In general, Theorem \ref{mainTheorem} does not hold if we don't restrict to the subgroup $\Aut_{\Gamma}(A_{\Gamma})$:

\begin{example}
    Consider the dihedral Artin group $\langle a, b ~|~ abab = baba \rangle$, and the automorphism $\gamma\colon a \mapsto (ab)^{-2}a$, $b \mapsto b(ab)^2$. Then $\Fix(\gamma)$ is not an Artin group and not finitely generated (see Example \ref{infinitelyGenerated}).
\end{example}

The automorphism in this example does not extend to a (freely indecomposable) large-type Artin group on at least 3 generators. It would be interesting to know if our main theorem holds for irreducible Artin groups of rank at least 3, without restricting to $\Aut_\Gamma(A_\Gamma)$.

\begin{question}
    Given $A_\Gamma$ a freely indecomposable large-type Artin group of rank at least 3, what can be said about its fixed subgroups, without restriction on the automorphisms? For instance, are they finitely generated? Is there a uniform bound on the rank in terms of the rank of $A_\Gamma$? Are the fixed subgroups Artin groups?
\end{question}

We now discuss the strategy for proving Theorem \ref{mainTheorem}. The \emph{Deligne complex} $X_{\Gamma}$ associated to an Artin group is a simplicial complex built from the combinatorics of its parabolic subgroups (see \cite{charney1995k} or Definition \ref{DefDeligneComplex}). It has turned out to be an incredible tool to study Artin groups, especially for classes such as large-type Artin groups, where its geometry is better understood and carries a natural $\mathrm{CAT}(0)$ metric (see Theorem \ref{ThmCAT(0)} and Definition \ref{DefiMoussong}).

In \cite{vaskou2023automorphisms}, the author describes how one can extend the action of $A_{\Gamma}$ on $X_{\Gamma}$ to a \emph{compatible} action of the subgroup $\Aut_{\Gamma}(A_{\Gamma}) \leq \Aut(A_\Gamma)$ on $X_{\Gamma}$, (see Definition \ref{DefinitionActionOfAut}). Compatible actions are defined in Definition \ref{defCompatible}, but essentially mean that the inner automorphism $\varphi_g\colon h \mapsto ghg^{-1}$ acts like $g$ on $X_\Gamma$.

This action is semi-simple, so every automorphism $\gamma \in \Aut_\Gamma(A_\Gamma)$ has a non-empty, convex and hence $\mathrm{CAT}(0)$ minimal set $X_\Gamma^\gamma$ (see Definition \ref{defMinSet}). Given an automorphism $\gamma$, we show that the action $\Fix(\gamma) \curvearrowright X_\Gamma$ restricts to an action $\Fix(\gamma) \curvearrowright X_\Gamma^\gamma$ (see Corollary \ref{fixAction}). Our strategy then is to compute $X_\Gamma^\gamma$ for each $\gamma \in \Aut_\Gamma(A_\Gamma)$, and then calculate $\Fix(\gamma)$ by restricting our attention to the setwise stabiliser of this subspace, which is typically much smaller than the whole Deligne complex. Along the way, we understand the minset in $X_\Gamma$ of every automorphism of $A_\Gamma$ in $\Aut_\Gamma(A_\Gamma)$.

Given our strategy, the compatible action $\Aut_\Gamma(A_\Gamma) \curvearrowright X_\Gamma$ is crucial. The following theorem is a natural geometric characterisation of $\Aut_\Gamma(A_\Gamma)$, showing that it carries some maximality property for compatibility, which is yet another justification for restricting our attention to this subgroup.

\begin{theorem}\label{theoremIntroEquivariantRigidity}
    Suppose $A_\Gamma$ is a large-type Artin group with $\Gamma$ connected, and $A \leq \Aut(A_\Gamma)$ has a compatible action $A \curvearrowright X_\Gamma$. Then $A \leq \Aut_\Gamma(A_\Gamma)$.
\end{theorem}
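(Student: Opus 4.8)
The plan is to fix an element $\alpha \in A$ and show directly that $\alpha$ lies in $\Aut_\Gamma(A_\Gamma)$ by analysing how $\alpha$ acts on the Deligne complex $X_\Gamma$ through the compatible action. The starting observation is that the compatibility condition pins down the $\alpha$-action up to very little: if $\varphi_g$ is an inner automorphism then $\varphi_g$ acts as $g$, so for any $\beta \in A$ the element $\beta$ acts on $X_\Gamma$ the same way as $\beta \varphi_g \beta^{-1} = \varphi_{\beta(g)}$ forces, i.e. the action is determined on the normal subgroup $\Inn(A_\Gamma)$ and $\alpha$ normalises it in a way visible geometrically. So $\alpha$ permutes the parabolic subgroups of $A_\Gamma$ according to how it permutes the simplices of $X_\Gamma$, and in particular it sends the fundamental domain structure (the simplices corresponding to the standard parabolics $A_{\Gamma'}$, $\Gamma' \subseteq \Gamma$) to some translate of it.

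The key steps, in order. First, I would recall the combinatorial description of $X_\Gamma$: its vertices are cosets $gA_{\Gamma'}$ of standard parabolics with $A_{\Gamma'}$ of spherical (here: $\emptyset$, rank $1$, or rank $2$ dihedral) type, and the rank-$0$ vertex (the coset $A_\emptyset = \{1\}$, the "cone point") is the unique vertex whose $A_\Gamma$-stabiliser is all of $A_\Gamma$. Its image under $\alpha$ must be a vertex with the same property, hence (after composing $\alpha$ with an inner automorphism, which does not change membership in $\Aut_\Gamma(A_\Gamma)$ since $\Inn(A_\Gamma) \le \Aut_\Gamma(A_\Gamma)$) we may assume $\alpha$ fixes the cone point. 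Second, the link of the cone point carries the combinatorial data of the standard generators and the dihedral edges; an automorphism fixing the cone point and acting simplicially permutes the rank-$1$ vertices adjacent to it, which biject with the conjugacy-data of standard generators, and preserves adjacency, which records the labels $m_{ab}$. This should produce a label-preserving permutation $\sigma$ of $V(\Gamma)$, i.e. an element of $\Aut(\Gamma)$. Third, composing $\alpha$ with (the inverse of) this graph automorphism, we reduce to the case where $\alpha$ fixes the cone point and stabilises each rank-$1$ vertex $\langle a \rangle$ for every standard generator $a$. Fourth, stabilising the vertex $\langle a \rangle$ means $\alpha(a)$ is conjugate to $a^{\pm 1}$ inside the parabolic $\langle a \rangle \cong \mathbb{Z}$, so $\alpha(a) = g_a a^{\pm 1} g_a^{-1}$; one then shows the sign $\varepsilon_a \in \{\pm 1\}$ is the same for all $a$ (using that $\alpha$ must respect a dihedral edge relation $\langle a, b\rangle$, and in a dihedral Artin group an automorphism cannot send $a \mapsto $ (conj of $a$) and $b \mapsto$ (conj of $b^{-1}$) — this is where one invokes the rank-$2$ analysis, e.g. via \cite{jones2023fixed} or direct computation) and the conjugators $g_a$ can be absorbed into a single inner automorphism. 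This leaves $\alpha$ equal to $\mathrm{id}$ or to the global inversion $\iota$, up to the inner and graph automorphisms already peeled off — hence $\alpha \in \Aut_\Gamma(A_\Gamma)$.

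The main obstacle is the bookkeeping in step four: showing that the local conjugators $g_a$ attached to the different generators can be unified into a single global inner automorphism, and that the signs $\varepsilon_a$ are globally consistent. Connectedness of $\Gamma$ is exactly what makes this possible — one propagates the choice of conjugator along edges of $\Gamma$, using at each dihedral edge $\{a,b\}$ that the stabiliser in $A_\Gamma$ of the corresponding rank-$2$ vertex is the parabolic $\langle a, b\rangle$, which has trivial-ish normaliser behaviour (its own centre is cyclic, generated by a power of $ab$), so the ambiguity in the conjugator at each step is controlled. A secondary subtlety is making sure the reduction "compose with an inner automorphism" is legitimate at each stage: because the action is \emph{compatible}, composing $\alpha$ with $\varphi_g$ changes the geometric action of $\alpha$ by the deck transformation $g$, so the reductions "move the cone point back" and "move $\langle a\rangle$ back" genuinely correspond to multiplying $\alpha$ by inner automorphisms, and compatibility guarantees these are the only ambiguities — an automorphism acting trivially on $X_\Gamma$ in a compatible way must be inner, which is the clean statement to isolate and prove first (it follows from the fact that the action $A_\Gamma \curvearrowright X_\Gamma$ has trivial kernel and that the stabiliser of the cone point is all of $A_\Gamma$). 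I expect the rank-$2$ input and the connectedness-propagation argument to be the technical heart; everything else is structural.
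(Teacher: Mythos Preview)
Your outline matches the paper's: peel off an inner automorphism to fix $v_\emptyset$, a graph automorphism to fix $K_\Gamma$ pointwise, and a power of $\iota$ for the remaining sign, then propagate along $\Gamma$ using connectedness. Two confusions should be corrected, though neither is fatal.

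First, the stabiliser of $v_\emptyset$ is \emph{trivial}, not all of $A_\Gamma$: the vertex is the coset $\{1\} \cdot A_\emptyset$. Your reduction still works---type-$0$ vertices are characterised by having trivial stabiliser, so $\alpha \cdot v_\emptyset = g v_\emptyset$ and one composes with $\varphi_{g^{-1}}$---but the justification is backwards, and your aside that ``an automorphism acting trivially on $X_\Gamma$ must be inner because the stabiliser of the cone point is all of $A_\Gamma$'' is wrong for the same reason. (The correct statement is stronger: such an $\alpha$ is the identity, since $\alpha(g) v_\emptyset = \alpha \cdot g v_\emptyset = g v_\emptyset$ and $\Stab(v_\emptyset) = \{1\}$.)

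Second, what you call the ``main obstacle'' evaporates. Once $\alpha$ fixes $v_\emptyset$ and each $v_a$, equivariance gives $\alpha(\Stab(v_a)) = \Stab(v_a) = \langle a \rangle$, so $\alpha(a) = a^{\varepsilon_a}$ directly, with no conjugator $g_a$ to absorb. All that remains is the consistency of the signs $\varepsilon_a$. Your dihedral argument for this is valid (a height count on the image of $\Delta_{ab}$ rules out mixed signs), but the paper does it geometrically: the induced action on the link $lk_{X_\Gamma}(v_{ab})$ already fixes the edges $e_a, e_b$, and \cite[Proposition~40]{crisp2005automorphisms} says the only such link automorphisms are the identity and $\iota$. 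Connectedness then propagates, as you say; the final identification $\alpha(s) = s$ comes from $\alpha \cdot s v_\emptyset = s v_\emptyset$ and, again, the triviality of $\Stab(v_\emptyset)$.
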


This theorem shows that $\Aut_\Gamma(A_\Gamma)$ is the maximal subgroup of $\Aut(A_\Gamma)$ where one can use compatible actions to recover fixed subgroups. Furthermore, it follows that $\Aut_\Gamma(A_\Gamma)$ is the maximal subgroup leaving the Deligne complex $X_\Gamma$ invariant (up to equivariant isometry). This natural notion regularly appears when studying automorphism groups of groups acting on trees, for instance \cite{bass1996automorphism}, \cite{levitt2007automorphism} \cite{andrew2022free}. In the setting of groups acting on trees, the analogous object to $\Aut_\Gamma(A_\Gamma)$ can equivalently be characterised as the subgroup preserving the translation function, or a point stabiliser in the associated deformation space.

As is often the case when working with the Deligne complex, some of our arguments reduce to understanding what happens locally. In particular, one first needs to understand fixed subgroups for automorphisms of local groups such as \emph{dihedral Artin groups}, i.e. Artin groups whose defining graph is a single edge. In Section \ref{sectionDihedral}, we prove Theorem \ref{mainTheorem} for large-type dihedral Artin groups. The first author has already proved it in the case where the coefficient of the dihedral Artin group is odd \cite{jones2023fixed}, so we focus on the even case.

Let $A_m$ describe the dihedral Artin group with coefficient $m$. It is well-known (see for instance \cite{ciobanu2020equations}) that $A_m$ has a graph-of-$\mathbb{Z}$ decomposition and thus acts on the associated Bass-Serre tree $T_m$. In fact, this action extends to a compatible action of $\Aut(A_m)$. While in the general case we are interested in computing $X_{\Gamma}^{\gamma}$ for $\gamma \in \Aut_\Gamma(A_\Gamma)$, in the dihedral Artin group case we are interested in computing $T_m^{\gamma}$, the tree $T_m$ playing an analog role to that of the Deligne complex $X_{\Gamma}$. While the technical details are different, the general strategies are very much alike.

It would be interesting to know if our results extend to other classes of Artin groups, in particular those where some of our tools still exist. The FC-Type Artin groups have $\mathrm{CAT}(0)$ Deligne complexes \cite{charney1995k} (albeit with a different metric), which admit a compatible action of $\Aut_\Gamma(A_\Gamma)$. A priori our methods could be applied in this setting, albeit with the complications of higher dimension, and larger spherical parabolic subgroups.

\begin{question}
    Given $A_\Gamma$ a spherical or a FC-type Artin group, and given $\gamma \in \Aut_\Gamma(A_\Gamma)$, is $\Fix(\gamma)$ a finitely generated Artin group? Is there a uniform bound on the rank of these subgroups, in terms of the rank of $A_\Gamma$?
\end{question}

The outline of the paper is as follows. In Section \ref{sectionPreliminaries} we define the Deligne complex, recall the structure of minimal sets, define compatible actions, and prove several simple lemmas which will form the basis of our strategy. In Section \ref{sectionDihedral} we prove Theorem \ref{mainTheorem} for even dihedral Artin groups. In Section \ref{sectionGeneral}, we prove Theorem \ref{mainTheorem} when $A_{\Gamma}$ has rank at least 3, following the strategy outlined. Finally in Section \ref{sectionEquivariantRigidity}, we prove Theorem \ref{theoremIntroEquivariantRigidity}. 

Sections \ref{sectionDihedral}, \ref{sectionGeneral} and \ref{sectionEquivariantRigidity} can be read independently of each other, although Section \ref{sectionDihedral} is a good warm up for Section \ref{sectionGeneral}, due to the similarity of the methods.

\section*{Acknowledgements}

The first author would like to thank their supervisor Laura Ciobanu for her support and comments on the paper. The second author is supported by the Postdoc Mobility $\sharp$P500PT$\_$210985 of the Swiss National Science Foundation. We also thank Gemma Crowe and Giovanni Sartori for their comments on the manuscript.

 \section{Preliminaries} \label{sectionPreliminaries}

This section serves as a preliminary section in which we recall standard objects and results. Section 2.1 is dedicated to introducing the Deligne complex $X_\Gamma$, and the action of $A_{\Gamma}$ and of the subgroup $\Aut_{\Gamma}(A_\Gamma) \leq \Aut(A_{\Gamma})$ on $X_{\Gamma}$. In Section 2.2 we discuss the notion of minimal sets for elements acting on the Deligne complex. This includes a few technical results. Finally, in Section 2.3 we explain the connections with fixed subgroups of automorphisms.
\medskip

We introduce a notion that will be used throughout the paper:

\begin{definition} \label{definitionHeight} \cite[Definition 2.18]{vaskou2023isomorphism}
    The \emph{height} of an element $g \in A_{\Gamma}$ is the image of $g$ through the (well-defined) morphism $ht \colon A_{\Gamma} \to \mathbb{Z}$ that sends every standard generator to $1$.
\end{definition}

\subsection{The Deligne complex}

The Deligne complex is built out of the combinatorics of some of its parabolic subgroups. We start by recounting the definition from the introduction:

\begin{definition}
    A parabolic subgroup $g A_{\Gamma'} g^{-1}$ of an Artin group $A_{\Gamma}$ is called \emph{spherical} if the associated Coxeter group $W_{\Gamma'}$ is finite.
\end{definition}

\begin{definition} \label{DefDeligneComplex}
    The \emph{Deligne complex} associated with an Artin group $A_{\Gamma}$ is the simplicial complex $X_{\Gamma}$ defined as follows:
    \begin{enumerate}
        \item The vertices of $X_{\Gamma}$ are the left-cosets of the form $g A_{\Gamma'}$ where $g \in A_{\Gamma}$ and $A_{\Gamma'}$ is a spherical standard parabolic subgroup.
        \item Every string of inclusions of the form $g_0 A_{\Gamma_0} \subsetneq \cdots \subsetneq g_n A_{\Gamma_n}$ spans an $n$-simplex between the corresponding vertices.
    \end{enumerate}
    The Artin group $A_{\Gamma}$ naturally acts on $X_{\Gamma}$ by left-multiplication.
\end{definition}

There are (at least) two interesting metrics on $X_{\Gamma}$. We define the first one thereafter:

\begin{definition}
    The vertex set $X_{\Gamma}^{(0)}$ of the Deligne complex can be given the \emph{combinatorial metric} $d^C$, where the distance $d^C(v, v')$ between two vertices $v$ and $v'$ is their distance in the graph $X_{\Gamma}^{(1)}$ when declaring that every edge has length $1$. We will say that two vertices $v$ and $v'$ are \emph{neighbours} if $d^C(v, v') = 1$. Moreover, for any vertex $v \in X_{\Gamma}^{(0)}$.
\end{definition}

The next metric is a finer piecewise-Euclidean metric known as the Moussong metric. It can be defined for all Deligne complexes, but to make its definition simpler, we will restrict to $2$-dimensional Artin groups. We recall their definition:

\begin{definition} \label{DefiDimension2} 
    An Artin group $A_{\Gamma}$ is said to be \emph{2-dimensional} if $\Gamma$ contains at least one edge, and every triangle $T = T(a, b, c)$ with $a, b, c \in V(\Gamma)$ satisfies
    $$\frac{1}{m_{ab}} + \frac{1}{m_{ac}} + \frac{1}{m_{bc}} \leq 1.$$
\end{definition}

\begin{remark}
    (1) \emph{Large-type} Artin groups, that is Artin groups were $m_{ab} \geq 3$ for every $a, b \in V(\Gamma)$, are always $2$-dimensional.
    \\(2) Equivalent definitions of being $2$-dimensional can be found in \cite[Definition 2.2, Proposition 2.3]{vaskou2023isomorphism}. 
\end{remark}

The results in this paper will generally concern the class of large-type Artin groups, with some intermediate results more generally applying to $2$-dimensional Artin groups. Both classes have been well-studied throughout the literature. One of the major reasons is the following structural theorem, which ensures their geometry is low-dimensional and non-positively curved:

\begin{theorem} \cite[Proposition 4.4.5)]{charney1995k} \label{ThmCAT(0)}
For every $2$-dimensional Artin group $A_{\Gamma}$ the corresponding Deligne complex $X_{\Gamma}$ is $2$-dimensional, and it is CAT(0) when given the Moussong metric.
\end{theorem}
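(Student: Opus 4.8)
The plan is to prove the $\mathrm{CAT}(0)$ statement by the standard route for piecewise-Euclidean complexes: Bridson's theorem to get a complete geodesic space, the Cartan–Hadamard theorem to reduce to simple connectivity plus local $\mathrm{CAT}(0)$, and Gromov's link condition to reduce local $\mathrm{CAT}(0)$ to a girth computation in the vertex links. The dimension claim is separate and quick: if $A_{\Gamma}$ is $2$-dimensional then every standard parabolic on three vertices is non-spherical (immediate from $2$-dimensionality, since every rank-$3$ finite Coxeter group violates $\sum \tfrac{1}{m_{ij}}\le 1$ and any triangle-free triple contains an infinite dihedral subgroup), so every chain of spherical standard parabolics has length at most $2$ and $X_{\Gamma}$ has no simplex of dimension $>2$ — and dimension exactly $2$ since $\Gamma$ has an edge.

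For the main statement, first note that because $\Gamma$ is finite the Moussong metric uses only finitely many isometry types of Euclidean triangles, so by Bridson's theorem $X_{\Gamma}$ is a complete geodesic metric space; it is moreover connected and, in fact, simply connected — a fact independent of the metric, since $X_{\Gamma}$ is the development of a complex of groups over the quotient $X_{\Gamma}/A_{\Gamma}$, which is the order complex of the poset of spherical subsets of $V(\Gamma)$ and hence a cone on its minimum $\emptyset$, so contractible. By the Cartan–Hadamard theorem for polyhedral complexes it therefore suffices to prove $X_{\Gamma}$ is locally $\mathrm{CAT}(0)$, and by Gromov's link condition this reduces to showing the link of every vertex is $\mathrm{CAT}(1)$; since $X_{\Gamma}$ is $2$-dimensional these links are metric graphs, and a metric graph is $\mathrm{CAT}(1)$ exactly when every embedded loop has length at least $2\pi$ (forests being automatically $\mathrm{CAT}(1)$).

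There are three $A_{\Gamma}$-orbits of vertices, indexed by the rank ($0$, $1$, or $2$) of the associated spherical parabolic, and I would treat them in turn using the Moussong triangle $(v_\emptyset, v_a, v_{ab})$, namely the Euclidean triangle with angle $\tfrac{\pi}{2}-\tfrac{\pi}{2m_{ab}}$ at $v_\emptyset$, angle $\tfrac{\pi}{2}$ at $v_a$, and angle $\tfrac{\pi}{2m_{ab}}$ at $v_{ab}$ (side lengths are irrelevant here). (i) The link of a rank-$0$ vertex $gA_\emptyset$ is the barycentric subdivision of $\Gamma$, where the edge from a vertex $a$ to the midpoint of an edge of label $m_{ab}$ has length $\tfrac{\pi}{2}-\tfrac{\pi}{2m_{ab}}$; a combinatorial $k$-cycle in $\Gamma$ through edge-labels $m_1,\dots,m_k$ then has total length $\pi\bigl(k-\sum_{i=1}^k \tfrac{1}{m_i}\bigr)$, which is $\geq 2\pi$ iff $\sum_i \tfrac{1}{m_i}\leq k-2$ — automatic for $k\geq 4$ and, for $k=3$ (and $k\geq 3$ since $\Gamma$ is simplicial), exactly the $2$-dimensionality hypothesis. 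So the rank-$0$ links are $\mathrm{CAT}(1)$ precisely because $A_{\Gamma}$ is $2$-dimensional. (ii) The link of a rank-$1$ vertex $g\langle a\rangle$ is the complete bipartite graph on $\{ga^n : n\in\mathbb{Z}\}$ and $\{gA_{ab} : e^{ab}\ni a\}$ with every edge of length $\tfrac{\pi}{2}$; any embedded loop has at least four edges, hence length $\geq 2\pi$, so these links are always $\mathrm{CAT}(1)$. (iii) The link of a rank-$2$ vertex $gA_{ab}$ is the coset graph of $\langle a\rangle$ and $\langle b\rangle$ in the dihedral Artin group $A_{ab}$, with the singleton cosets as edge-midpoints and every edge of length $\tfrac{\pi}{2m_{ab}}$; an embedded loop corresponds to a nontrivial, cyclically reduced, alternating relation $a^{t_0}b^{s_0}a^{t_1}b^{s_1}\cdots = 1$ in $A_{ab}$, and the claim is that the shortest such has $2m_{ab}$ syllables — realised by the defining relator — giving girth $4m_{ab}\cdot\tfrac{\pi}{2m_{ab}} = 2\pi$. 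Granting this, all vertex links are $\mathrm{CAT}(1)$ and hence $X_{\Gamma}$ is $\mathrm{CAT}(0)$.

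The main obstacle is part (iii): correctly identifying the rank-$2$ link and proving it has no short loop is a form of Moussong's Lemma, and ruling out short alternating relations in a dihedral Artin group $A_{ab}$ needs its normal form / Garside structure (or a small-cancellation-type argument on the single defining relator). The only other non-mechanical input is the metric-independent simple connectivity of $X_{\Gamma}$; the rest is an accounting of angles, arranged precisely so that case (i) recovers the $2$-dimensionality hypothesis on the nose and cases (ii) and (iii) sit exactly on the $\mathrm{CAT}(1)$ boundary.
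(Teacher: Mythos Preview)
The paper does not give its own proof of this statement: it is quoted verbatim as a result of Charney--Davis \cite[Proposition~4.4.5]{charney1995k}, with no argument supplied. So there is nothing to compare against on the paper's side.

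Your sketch is the standard Charney--Davis argument and is correct in outline. The dimension claim, the reduction via Bridson, simple connectivity of $X_\Gamma$, Cartan--Hadamard, and the link condition are all handled correctly, and your angle computations for the rank-$0$ and rank-$1$ links are right (in particular you recover the $2$-dimensionality inequality exactly at the rank-$0$ vertices, which is the point of the Moussong metric). You have also correctly located the one nontrivial ingredient: the girth of the link of a rank-$2$ vertex is $2\pi$, equivalently the shortest nontrivial cyclically reduced alternating relation in $A_{ab}$ has $2m_{ab}$ syllables. This is precisely what Charney--Davis prove (their Lemma~4.3.4, sometimes attributed to Appel--Schupp), and it does require real input about dihedral Artin groups --- you are right that it is not automatic. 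Since you flag it as the main obstacle rather than claiming it is obvious, the proposal is honest and essentially complete as a proof sketch; filling in that lemma would make it a full proof along the same lines as the original reference.
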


We introduce some notation before explaining how the Moussong metric is defined. This notation will actually be useful throughout the paper.

\begin{notation}
(1) The fundamental domain of the action $A_{\Gamma} \curvearrowright X_{\Gamma}$ will be denoted $K_{\Gamma}$. When $A_{\Gamma}$ is $2$-dimensional, $K_{\Gamma}$ is always isomorphic to the cone over the barycentric subdivision $\Gamma_{bar}$ of the presentation graph $\Gamma$, the cone-point being the vertex associated to $\{1\}$. See Figure \ref{figDeligne} for an example.
\\(2) To avoid the confusion between algebraic and geometric objects, we will more usually denote the cone-point vertex by $v_{\emptyset}$. The vertex that corresponds to a standard generator $a \in V(\Gamma)$ will be denoted $v_a$, while the vertex corresponding to an edge $e^{ab} \in E(\Gamma)$ will be denoted $v_{ab}$. Similarly, we will denote by $e_a$ the edge joining $v_{\emptyset}$ and $v_a$, and by $e_{a, ab}$ the edge joining $v_a$ and $v_{ab}$.
\\(3) The subcomplex of $X_{\Gamma}$ made by all edges with non-trivial stabilisers is a subgraph of the $1$-skeleton of $X_{\Gamma}$. We will call it the \emph{essential 1-skeleton} and denote it by $X_{\Gamma}^{(1)-ess}$.
\end{notation}

\begin{definition} (\cite{charney1995k}) \label{DefiMoussong}
    The \emph{Moussong metric} on the Deligne complex $d$ is defined as follows. Consider three vertices spanning a $2$-dimensional simplex. Up to conjugation, these vertices are of the form $v_{\emptyset}$, $v_a$ and $v_{ab}$. Then the simplex they span is the unique Euclidean triangle (up to isometry) that satisfies
    $$\angle_{v_{ab}}(v_{\emptyset}, v_a) = \frac{\pi}{2 m_{ab}}, \ \ \
    \angle_{v_a}(v_{\emptyset}, v_{ab}) = \frac{\pi}{2}, \ \ \text{ and } \ \
    d(v_{\emptyset}, v_a) = 1.$$
\end{definition}

\begin{figure}[H]
\centering
\includegraphics[scale=1]{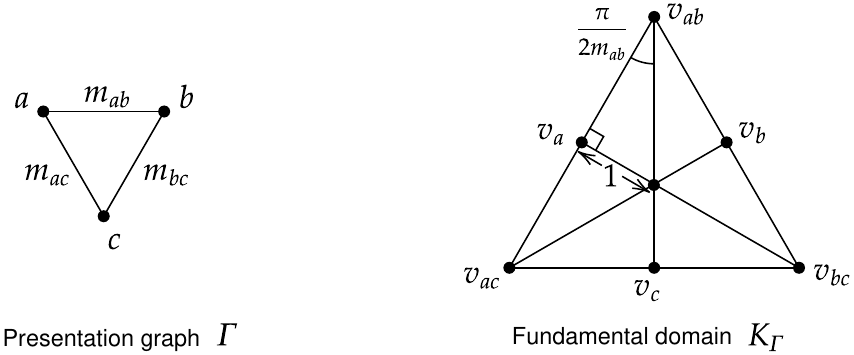}
\caption{A picture of the fundamental domain $K_{\Gamma}$ when $\Gamma$ is a triangle.}
\label{figDeligne}
\end{figure}

The aforementioned $2$-dimensional Artin groups also benefit from convenient algebraic properties, as showed in the next theorem. Let us first recall that an Artin group $A_{\Gamma}$ is called \emph{reducible} if $\Gamma$ splits as a join of two proper subgraphs $\Gamma_1$ and $\Gamma_2$, where every edge of the join is labelled by a $2$. When this happens, the group splits as $A_{\Gamma} \cong A_{\Gamma_1} \times A_{\Gamma_2}$. If $A_{\Gamma}$ is not reducible then it is called \emph{irreducible}.

\begin{theorem} \label{ThmTorsionAndCentres} (\cite{charney1995k}, \cite{brieskorn1972artin}, \cite{vaskou2022acylindrical}) Let $A_{\Gamma}$ be a $2$-dimensional Artin group. Then $A_{\Gamma}$ is torsion-free. Moreover, if $A_{\Gamma}$ is irreducible, then its centre is isomorphic to $\mathbb{Z}$ if $A_{\Gamma}$ is spherical, and it is trivial otherwise.
\end{theorem}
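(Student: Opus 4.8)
The plan is to prove the two assertions separately, reducing each to the structure of the spherical parabolic subgroups, which is classical. For torsion-freeness I would exploit the $\mathrm{CAT}(0)$ Deligne complex of Theorem \ref{ThmCAT(0)}: since the Moussong metric makes $X_\Gamma$ a complete $2$-dimensional $\mathrm{CAT}(0)$ space on which $A_\Gamma$ acts by simplicial isometries, any finite subgroup $H \leq A_\Gamma$ has a bounded orbit and hence fixes its circumcentre $p$ (Bruhat--Tits fixed-point theorem). The point $p$ lies in the relative interior of a unique simplex $\sigma$, and $H$ preserves $\sigma$; because $A_\Gamma$ acts by left-multiplication on cosets $gA_{\Gamma'}$ it preserves the rank of $\Gamma'$, so the vertices of $\sigma$ have pairwise distinct types and $H$ fixes each of them. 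Hence $H$ lies in a vertex stabiliser, i.e.\ a conjugate of a spherical standard parabolic subgroup. In a $2$-dimensional Artin group every spherical standard parabolic has rank at most $2$, hence is trivial, infinite cyclic, a copy of $\mathbb{Z}^2$, or a dihedral Artin group $A_m$; the first three are visibly torsion-free, and $A_m$ is torsion-free by Brieskorn--Saito \cite{brieskorn1972artin}. Therefore $H = \{1\}$.

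For the centre I would split on whether $A_\Gamma$ is spherical. If $A_\Gamma$ is irreducible and spherical, then it cannot have rank $\geq 3$: an irreducible spherical Artin group of rank $\geq 3$ contains a standard parabolic of Coxeter type $A_3$, $B_3$ or $H_3$, whose three generators form a triangle in $\Gamma$ with labels $\{3,3,2\}$, $\{4,3,2\}$ or $\{5,3,2\}$, so that $\tfrac{1}{m_{ab}}+\tfrac{1}{m_{ac}}+\tfrac{1}{m_{bc}}>1$, contradicting Definition \ref{DefiDimension2}. Hence $\Gamma$ is a single edge, $A_\Gamma$ is a dihedral Artin group, and its centre is infinite cyclic (generated by $\Delta_{ab}$ or $\Delta_{ab}^2$ according to the parity of $m_{ab}$) by \cite{brieskorn1972artin}. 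If instead $A_\Gamma$ is irreducible and not spherical, then by \cite{vaskou2022acylindrical} it is acylindrically hyperbolic; since every infinite normal subgroup of an acylindrically hyperbolic group is again acylindrically hyperbolic, and in particular non-abelian, the abelian normal subgroup $Z(A_\Gamma)$ must be finite, and then trivial by the torsion-freeness just established.

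The fixed-point argument and the bookkeeping of which standard parabolics are spherical are routine; the genuine content is carried entirely by the cited black boxes --- Brieskorn--Saito's analysis of spherical Artin groups, and the acylindrical hyperbolicity of the irreducible non-spherical case. The only step that needs real care is the very last one, deducing that $Z(A_\Gamma)$ is \emph{trivial} rather than merely finite, which is precisely where torsion-freeness is invoked; so the two halves of the statement are logically intertwined, torsion-freeness feeding into the centre computation and itself resting on the torsion-freeness of the spherical parabolics.
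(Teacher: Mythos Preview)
The paper does not prove this theorem; it is stated with citations to \cite{charney1995k}, \cite{brieskorn1972artin} and \cite{vaskou2022acylindrical} and then used as a black box. Your proposal is a correct reconstruction of why those citations suffice, and the logical structure you give --- torsion-freeness via the $\mathrm{CAT}(0)$ fixed-point theorem reducing to the spherical parabolics, and the centre via Brieskorn--Saito in the dihedral case and acylindrical hyperbolicity in the non-spherical case --- is exactly the intended one.

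Two minor remarks. First, your claim that an irreducible spherical Artin group of rank $\geq 3$ contains a parabolic of type $A_3$, $B_3$ or $H_3$ is true, but you do not need it: since $W_\Gamma$ is finite, every rank-$3$ standard parabolic is finite, and a finite rank-$3$ Coxeter group automatically has $\tfrac{1}{m_{ab}}+\tfrac{1}{m_{ac}}+\tfrac{1}{m_{bc}}>1$, so any three generators (all of which are pairwise joined by edges in the paper's convention) already violate Definition~\ref{DefiDimension2}. Second, the step ``infinite normal subgroups of acylindrically hyperbolic groups are acylindrically hyperbolic'' is correct but deserves a citation --- it is Osin's theorem on $s$-normal subgroups --- and strictly speaking one should also observe that in the $2$-dimensional setting the irreducible non-spherical case forces rank $\geq 3$, which is the hypothesis under which \cite{vaskou2022acylindrical} establishes acylindrical hyperbolicity.
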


\begin{remark}\label{remarkCentre}
    A large-type Artin group $A_{\Gamma}$ is always irreducible. Moreover, if it has rank at least $3$, then it is always non-spherical. In that case, it has trivial centre by Theorem \ref{ThmTorsionAndCentres}. Consequently, the group $\Inn(A_{\Gamma})$ of inner automorphisms of $A_{\Gamma}$ is isomorphic to $A_{\Gamma}$ itself.
\end{remark}

We now describe how one can extend the action of $A_{\Gamma}$ on $X_{\Gamma}$ to a compatible action from $\Aut_{\Gamma}(A_{\Gamma})$. The following action was originally described in \cite{vaskou2023automorphisms}. We first introduce the following notation, that will also be used throughout the paper:

\begin{notation}
    We will often write $v_S$ to denote an arbitrary vertex of the form $v_{\emptyset}$, $v_s$ or $v_{st}$ for some $s, t \in V(\Gamma)$. The set $S$ should be thought of as the associated set $S \subseteq V(\Gamma)$ of standard generators. In particular, any vertex of $X_{\Gamma}$ can be written as $g v_S$ for some appropriate $g \in A_{\Gamma}$ and $S \subseteq V(\Gamma)$. If $\sigma \in \Aut(\Gamma)$ is a graph automorphism, we will write $v_{\sigma(S)}$ to denote the vertex $v_{\emptyset}$, $v_{\sigma(s)}$ or $v_{\sigma(s)\sigma(t)}$ respectively.
\end{notation}

\begin{definition} \label{DefinitionActionOfAut}
    Let $A_{\Gamma}$ be a $2$-dimensional Artin group. Then there is a simplicial action of $\Aut_{\Gamma}(A_{\Gamma})$ on $X_{\Gamma}$, that can be described explicitly. Let $g v_S$ be any vertex of $X_{\Gamma}$, then:
    \begin{itemize}
        \item If $\varphi_h$ is the conjugation by $h$, then $\varphi_h \cdot g v_S \coloneqq hg v_S$ ;
        \item If $\sigma$ is a graph automorphism, then $\sigma \cdot g v_S \coloneqq \sigma(g) v_{\sigma(S)}$ ;
        \item If $\iota$ is the global inversion, then $\iota \cdot g v_S \coloneqq \iota(g) v_S$.
    \end{itemize}
\end{definition}

\begin{notation}
    For better clarity, and as in Theorem \ref{DefinitionActionOfAut}, we will use the symbol “$\cdot$” to describe the action of $\Aut_{\Gamma}(A_{\Gamma})$ on $X_{\Gamma}$, while only using juxtaposition for the action of $A_{\Gamma}$ on $X_{\Gamma}$.
\end{notation}

\subsection{Minimal sets}

Throughout the paper we will often study elements of $A_\Gamma$ and $\Aut_\Gamma(A_\Gamma)$ by their dynamics on $X_\Gamma$. 

\begin{definition}\label{defMinSet}
    Let $G$ be a group acting by isometries on a metric space $(X, d)$. For any $g \in G$, the \emph{translation length} of $g$ is defined as
    $$||g|| \coloneqq \inf \{ d(x, gx) \ | \ x \in X \}.$$
    Whenever this infimum is reached, we define the \emph{minimal set} of $g$ as the set of points in $X$ for which the displacement induced by $g$ is minimal:
    $$X^g \coloneqq \{ x \in X \ | \ d(x, gx) = ||g|| \}.$$
    An element $g \in G$ with $X^g \neq \emptyset$ is said to act \emph{elliptically} on $X$ if $||g|| = 0$ and \emph{hyperbolically} if $||g|| > 0$.
\end{definition}

In our setting, that is when $X = X_{\Gamma}$ is the Deligne complex, all elements considered will act elliptically or hyperbolically. This is due to our actions being by simplicial automorphisms (hence isometries), along with the following:

\begin{remark} \cite[Chapter II.6]{bridson2013metric}
    Let $X$ be a piecewise-Eucliean simplicial complex with finitely many shapes. Then every simplicial isometry of $X$ is either elliptic or hyperbolic.
\end{remark}

Before going to more in-depth discussions about minimal sets for either elliptic or hyperbolic elements, we introduce the notion of type which we will use throughout. We will first need the following notion:

\begin{proposition} \cite[Corollary 16]{cumplido2023parabolic}
    Let $A_{\Gamma}$ be a large-type Artin group. Then for every $g \in A_{\Gamma}$ there exists a unique minimal parabolic subgroup $P_g$ containing $g$. We call $P_g$ the \emph{parabolic closure} of $g$.
\end{proposition}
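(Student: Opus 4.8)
The plan is to deduce this from two facts about parabolic subgroups of large-type Artin groups, both established in \cite{cumplido2023parabolic}: (i) the intersection of two parabolic subgroups of $A_\Gamma$ is again a parabolic subgroup of $A_\Gamma$; and (ii) if $P \subseteq Q$ are parabolic subgroups of $A_\Gamma$, then $\mathrm{Rank}(P) \leq \mathrm{Rank}(Q)$, with equality if and only if $P = Q$ (this rests on the fact that parabolic subgroups of parabolic subgroups are parabolic, together with the observation that a proper standard parabolic $A_{\Gamma''} \subsetneq A_{\Gamma'}$ has $|V(\Gamma'')| < |V(\Gamma')|$, and that conjugation preserves rank). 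Fact (i) is the substantial input, and it is where the large-type hypothesis is really used: for general Artin groups closure of parabolics under intersection is open, and the large-type proof proceeds through a careful study of fixed-point sets for the action on the $\mathrm{CAT}(0)$ Deligne complex $X_\Gamma$ (Theorem \ref{ThmCAT(0)}).

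Granting these, the argument is short. Fix $g \in A_\Gamma$ and let $\mathcal{P}_g$ be the collection of parabolic subgroups of $A_\Gamma$ that contain $g$; it is non-empty since $A_\Gamma$ is itself a standard parabolic subgroup. Every member of $\mathcal{P}_g$ has rank at most $|V(\Gamma)|$, so $\mathcal{P}_g$ has an element $P_g$ of minimal rank. I claim that $P_g \subseteq Q$ for every $Q \in \mathcal{P}_g$, which yields simultaneously the existence of a minimum of $\mathcal{P}_g$ and its uniqueness as a minimal element. Indeed, given $Q \in \mathcal{P}_g$, the subgroup $P_g \cap Q$ is parabolic by (i), it contains $g$, and it is contained in $P_g$; by (ii) it has rank at most $\mathrm{Rank}(P_g)$, hence exactly $\mathrm{Rank}(P_g)$ by minimality of $P_g$ in $\mathcal{P}_g$, and the equality case of (ii) forces $P_g \cap Q = P_g$, that is, $P_g \subseteq Q$. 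Consequently, if $M$ is any minimal element of $\mathcal{P}_g$ then $P_g \subseteq M$, so $P_g = M$ by minimality of $M$; thus $P_g$ is the unique minimal parabolic subgroup containing $g$.

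I expect the only genuine obstacle to be fact (i); everything else is a two-line assembly. Since (i) and (ii) are available in the literature, in the paper we simply cite \cite[Corollary 16]{cumplido2023parabolic}, and the sketch above is recorded only to indicate how the statement follows from the intersection theorem. Note also that, a posteriori, $P_g$ is characterised as $\bigcap_{Q \in \mathcal{P}_g} Q$, even though a priori this infinite intersection is not obviously parabolic; the minimal-rank argument is precisely what reduces it to finitely many applications of (i).
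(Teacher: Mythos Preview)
The paper does not supply its own proof of this proposition: it is stated with a citation to \cite[Corollary 16]{cumplido2023parabolic} and used as a black box. Your sketch is a correct and standard deduction of the statement from the intersection theorem for parabolics in large-type Artin groups (your fact (i)), together with the rank monotonicity of parabolic inclusions (your fact (ii)); this is exactly how the corollary is derived in the cited reference, so there is nothing to compare beyond noting that you have unpacked what the paper leaves as a citation.
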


The following notion should be seen as measuring the “complexity” of an element of $A_{\Gamma}$ or of a point of $X_{\Gamma}$.

\begin{definition} \label{DefiType} \cite[Definition 2.10]{vaskou2023isomorphism} Let $A_{\Gamma}$ be a $2$-dimensional Artin group. A parabolic subgroup $g A_{\Gamma'} g^{-1}$ of $A_{\Gamma}$ has \emph{type} $n$ if $|V(\Gamma')| = n$. The \emph{type} of an element $g \in A_{\Gamma}$ is the smallest integer $n$ such that $g$ is contained in a parabolic subgroup of type $n$. Finally, the \emph{type} of a point $p \in X_{\Gamma}$ is defined as the type of its stabiliser, seen as a parabolic subgroup of $A_{\Gamma}$.
\end{definition}

\begin{remark} \label{remTypePreserving}
(1) If $A_{\Gamma}$ is large-type, then the type of an element $g$ agrees with the type of its parabolic closure $P_g$.
\\(2) The types of the vertices $v_{\emptyset}$, $v_a$ and $v_{ab}$ are respectively $0$, $1$ and $2$.
\\(3) The action of $\Aut(A_{\Gamma})$ on $X_{\Gamma}$ described in Definition \ref{DefinitionActionOfAut} preserves the type of vertices.
\end{remark}

\subsubsection{Elliptic minimal sets and type}

Let $G$ be either $A_{\Gamma}$ or $\Aut_{\Gamma}(A_{\Gamma})$. In this section we briefly discuss the minimal sets of elliptic elements of the action $G \curvearrowright X_{\Gamma}$. For $g \in G$ elliptic, we will freely call $X_{\Gamma}^g$ the fixed set of $A_{\Gamma}$, since it is the set of points fixed by $g$.

\begin{lemma} \label{LemmaMinsetConvex}
    Let $G$ be a group acting by isometries on a uniquely geodesic metric space $X$, and let $g \in G$ act elliptically on $X$. Then $X^g$ is convex (in particular, it is also uniquely geodesic). Moreover, if $X$ is a simplicial complex and the action is without inversions, then $X^g$ is a simplicial subcomplex of $X$.
\end{lemma}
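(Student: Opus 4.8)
The plan is to prove the two assertions separately, starting from the definition $X^g = \{x \in X \mid d(x, gx) = 0\} = \{x \in X \mid gx = x\}$, which is valid since $g$ is elliptic.

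\textbf{Convexity.} Let $x, y \in X^g$ and let $[x,y]$ denote the unique geodesic between them (which exists and is unique by hypothesis). Since $g$ acts by isometries, $g \cdot [x,y]$ is a geodesic from $gx = x$ to $gy = y$; by uniqueness of geodesics, $g \cdot [x,y] = [x,y]$ as a set. This alone does not suffice — we need $g$ to fix $[x,y]$ pointwise, not just setwise. For that, I would use the standard fact that an isometry of a uniquely geodesic space fixing both endpoints of a geodesic fixes the geodesic pointwise: for any $z \in [x,y]$, $gz$ lies on $g\cdot[x,y] = [x,y]$ and satisfies $d(x, gz) = d(gx, gz) = d(x,z)$ and $d(gz, y) = d(z,y)$, so $gz$ is the unique point of $[x,y]$ at distance $d(x,z)$ from $x$, namely $z$ itself. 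Hence $[x,y] \subseteq X^g$, proving convexity. That a convex subset of a uniquely geodesic space is itself uniquely geodesic is immediate, since the ambient geodesic between two of its points stays inside it and is the unique one.

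\textbf{Simplicial subcomplex.} Now suppose $X$ is a simplicial complex and $g$ acts without inversions. I would argue that if a simplex $\sigma$ of $X$ meets $X^g$, then $\sigma \subseteq X^g$. Since $g$ acts simplicially, $g\sigma$ is a simplex; if $p \in \sigma \cap X^g$ then $p = gp \in g\sigma$, so $\sigma$ and $g\sigma$ share the point $p$, hence share the (unique) minimal face containing $p$, and in particular $g\sigma \cap \sigma \neq \emptyset$. A cleaner route: restrict attention to the carrier of $p$, i.e. the minimal simplex $\tau$ with $p$ in its interior. Then $g\tau$ is also the carrier of $gp = p$, so $g\tau = \tau$ setwise; since the action is without inversions, $g$ fixes $\tau$ pointwise, in particular every vertex of $\tau$. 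But then any simplex $\sigma$ having $\tau$ as a face — and there is at least the top-dimensional context to worry about — hmm, this shows vertices of $\tau$ are fixed, not all of $\sigma$. The honest statement is: the set of \emph{vertices} fixed by $g$, together with all simplices spanned entirely by such vertices, forms a subcomplex, and one must check $X^g$ equals this. One inclusion is clear. For the other, if $x \in X^g$ lies in the open simplex on vertices $w_0, \dots, w_k$, I claim each $w_i$ is fixed: $g$ permutes $\{w_0, \dots, w_k\}$ (as $g$ preserves the carrier), and since $g$ is an isometry fixing the interior point $x$ whose barycentric-type coordinates are determined, combined with the no-inversion hypothesis, $g$ must fix each $w_i$. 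Then $x$ lies in the closed simplex on fixed vertices, which is in the subcomplex. So $X^g$ is exactly the subcomplex generated by the $g$-fixed vertices.

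\textbf{Main obstacle.} The subtle point is the simplicial part: one must use the ``without inversions'' hypothesis correctly to pass from ``$g$ preserves the carrier of a fixed point'' to ``$g$ fixes that carrier pointwise,'' and then argue that a fixed interior point forces all vertices of its carrier to be fixed (not merely permuted). I would make this precise by invoking the convexity already established — a fixed point in the interior of a simplex, together with convexity of $X^g$ and the fact that $X^g$ is closed, lets one conclude the closed simplex lies in $X^g$ once we know it is $g$-invariant and $g$ restricted to it is an isometry of a simplex fixing an interior point, hence (without inversions) the identity. The rest is routine bookkeeping about faces.
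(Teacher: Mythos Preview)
Your proof is correct and follows the same approach as the paper. For convexity you are actually more careful than the paper, which simply asserts that an isometry fixing the endpoints of a geodesic in a uniquely geodesic space fixes it pointwise; your distance argument supplies the missing line.

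For the simplicial part, however, you make it harder than it is. Your initial goal ``if a simplex $\sigma$ meets $X^g$ then $\sigma \subseteq X^g$'' is false as stated (a vertex of $\sigma$ could be fixed without the rest being fixed), which is what sends you on the detour. The carrier argument you sketch mid-paragraph is already complete and is exactly the paper's proof: for $p \in X^g$ with carrier $\tau$, uniqueness of carriers gives $g\tau = \tau$, and ``without inversions'' means $g$ fixes $\tau$ pointwise, so $\tau \subseteq X^g$. That is precisely the criterion for $X^g$ to be a subcomplex --- there is no residual ``$\sigma$'' to worry about, and the subsequent excursion through fixed vertices, while also valid, is unnecessary.
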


\begin{proof}
    Let $x$ and $y$ be any two points in $X^g$, and let $\gamma$ be the unique geodesic connecting $x$ and $y$ in $X$. The element $g$ fixes both $x$ and $y$ so it fixes $\gamma$. In particular, $\gamma$ is contained in $X^g$, so $X^g$ is convex.

    Recall that in a simplicial complex, a given point $p$ can belong to the interior of at most one simplex, that we call $\Delta_p$. To show that $X^g$ is a simplicial subcomplex of $X$, we must show that for every point $p \in X^g$, then if $\Delta_p$ exists, then it is also contained in $X^g$. By hypothesis, $g \cdot p = p$, so by unicity we obtain $g \cdot \Delta_p = \Delta_{g \cdot p} = \Delta_p$. Because the action is without inversions, this means $g$ fixes $\Delta_p$ pointwise, as wanted.
\end{proof}

The following is a very useful description of fixed sets for the action $A_{\Gamma} \hookrightarrow X_{\Gamma}$.

\begin{lemma} \label{LemmaClassificationByType} \cite[Lemma 8]{crisp2005automorphisms}
Let $g \neq 1$ be an element of a $2$-dimensional Artin group $A_{\Gamma}$. Then exactly one of the following happens:
\begin{itemize}
    \item $type(g) = 1$. Then $\langle g \rangle$ is contained in a parabolic subgroup $h \langle a \rangle h^{-1}$ for some $a \in V(\Gamma)$ and $h \in A_{\Gamma}$. Moreover, $g$ acts elliptically on $X_\Gamma$, and $X_{\Gamma}^g$ is the tree $h X_{\Gamma}^a$. Such tree are called standard trees.
    \item $type(g) = 2$. Then $\langle g \rangle$ is contained in a parabolic subgroup $h \langle a, b \rangle h^{-1}$ for some $a, b \in V(\Gamma)$ and some $h \in A_{\Gamma}$. If $m_{ab} < \infty$, then $g$ acts elliptically on $X_\Gamma$ and $X_{\Gamma}^g$ is the single vertex $h v_{ab}$. If $m_{ab} = \infty$, then $g$ acts hyperbolically on $X_{\Gamma}$.
    \item $type(g) \geq 3$. Then the element $g$ acts hyperbolically on $X_{\Gamma}$.
\end{itemize}
\end{lemma}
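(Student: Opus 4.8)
The plan is to argue according to the type of $g$, since the statement is essentially a trichotomy indexed by $\mathrm{type}(g) \in \{1, 2, \geq 3\}$. First I would recall that by the existence of the parabolic closure $P_g$ (Corollary 16 of \cite{cumplido2023parabolic}) and Remark \ref{remTypePreserving}(1), the type of $g$ equals the type of $P_g$, so there is a smallest parabolic $P_g = h A_{\Gamma'} h^{-1}$ with $|V(\Gamma')| = \mathrm{type}(g)$ containing $g$. Conjugating by $h^{-1}$ (which is an isometry of $X_\Gamma$ sending $X_\Gamma^g$ to $X_\Gamma^{h^{-1}gh}$), I may assume $g \in A_{\Gamma'}$ with $\Gamma'$ an induced subgraph on $\mathrm{type}(g)$ vertices, and it suffices to compute $X_\Gamma^g$ in this normalised situation and then translate back by $h$.

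For $\mathrm{type}(g) = 1$: here $\Gamma' = \{a\}$, so $g \in \langle a \rangle$, i.e.\ $g = a^k$ for some $k \neq 0$. The vertex $v_a$ is fixed by $\langle a \rangle$, and more generally $g$ fixes the star/standard tree $X_\Gamma^a$ spanned by the cosets of parabolics containing $\langle a \rangle$. I would show $X_\Gamma^{a^k} = X_\Gamma^a$: one inclusion is clear, and for the reverse, if $g$ fixes a point $p$ then it fixes the simplex $\Delta_p$ (Lemma \ref{LemmaMinsetConvex}, using that the action is without inversions), so $g$ lies in a vertex stabiliser $w A_{\Gamma''} w^{-1}$; minimality of the parabolic closure forces this to contain $P_g = \langle a \rangle$, which pins down the fixed set to be exactly the standard tree $X_\Gamma^a$. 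That this is genuinely a tree (and not something higher-dimensional) follows from the local structure of the Deligne complex around $v_a$: the link of $v_a$ has no loops, equivalently $\langle a \rangle$ is contained in only the parabolics $\langle a \rangle \subsetneq \langle a, b\rangle$ for the neighbours $b$, giving a cone (hence a tree) structure. The general case is the $h$-translate, giving the standard tree $h X_\Gamma^a$.

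For $\mathrm{type}(g) = 2$: now $\Gamma'$ is a single edge $e^{ab}$, so $g$ lies in the dihedral parabolic $\langle a, b \rangle$ but in no type-$1$ parabolic. If $m_{ab} < \infty$ then $\langle a, b\rangle$ is a spherical parabolic, so $v_{ab}$ is a vertex of $X_\Gamma$ fixed by all of $\langle a, b \rangle$, hence by $g$; and $X_\Gamma^g = \{v_{ab}\}$ because any other fixed point would (again via Lemma \ref{LemmaMinsetConvex} and parabolic-closure minimality) put $g$ into a parabolic of type $1$, contradicting $\mathrm{type}(g) = 2$. The main work in this case is the claim that if $m_{ab} = \infty$ then $g$ acts hyperbolically: one shows $g$ fixes no point, for otherwise $g$ lies in a vertex stabiliser, which is a spherical parabolic, and the only spherical parabolics containing $\langle a, b \rangle$-elements of type exactly $2$ would have to be $\langle a, b\rangle$ itself with $m_{ab}$ finite — impossible. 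Combined with the dichotomy elliptic/hyperbolic (from the finitely-many-shapes remark), non-elliptic forces hyperbolic.

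For $\mathrm{type}(g) \geq 3$: the same reasoning shows $g$ cannot be elliptic, since a fixed point would place $g$ in a spherical parabolic, and in a $2$-dimensional Artin group the spherical standard parabolics have type at most $2$ (their defining graph is a point or a single finite-labelled edge, by Definition \ref{DefiDimension2}), so no vertex stabiliser can contain an element of type $\geq 3$; hence $g$ is hyperbolic. I expect the main obstacle to be the careful bookkeeping in the type-$2$ infinite-label and type-$\geq 3$ cases: ruling out elliptic behaviour rests entirely on the interplay between minimality of the parabolic closure and the fact that vertex stabilisers in $X_\Gamma$ are exactly the spherical standard parabolics (and their conjugates), so I would isolate as a small lemma the statement ``if $g$ fixes a point of $X_\Gamma$ then $P_g$ is spherical of type $\leq 2$,'' after which all three cases fall out uniformly. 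Everything else — convexity of the fixed set, the without-inversions hypothesis, the tree structure of standard trees — is either already in Lemma \ref{LemmaMinsetConvex} or a direct link computation.
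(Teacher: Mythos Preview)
The paper does not prove this lemma at all: it is quoted verbatim from \cite[Lemma 8]{crisp2005automorphisms}, so there is no ``paper's own proof'' to compare against. Your outline is essentially the standard argument and is correct in spirit, but there is one genuine technical dependency you should be aware of.

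Your argument in the $\mathrm{type}(g)=1$ case hinges on the equality $X_\Gamma^{a^k}=X_\Gamma^a$, which you deduce from ``minimality of the parabolic closure forces this to contain $P_g=\langle a\rangle$''. This step requires that parabolic closures exist for $2$-dimensional Artin groups, but Proposition~\ref{DefiType} (the parabolic-closure result you invoke via Remark~\ref{remTypePreserving}(1)) is stated in the paper only for \emph{large-type} Artin groups, whereas the lemma concerns the broader $2$-dimensional class. The honest fix is either to cite a parabolic-closure or intersection-of-parabolics result valid in the $2$-dimensional setting (such results exist, e.g.\ in the work of Cumplido--Martin--Vaskou, but are not stated in this paper), or to argue more directly as Crisp does: one shows that if $a^k$ fixes an edge of $X_\Gamma^{(1)\text{-ess}}$ with stabiliser $w\langle s\rangle w^{-1}$ then in fact $a\in w\langle s\rangle w^{-1}$, using the explicit structure of centralisers of generators in dihedral parabolics.

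By contrast, your arguments for $\mathrm{type}(g)=2$ (finite and infinite label) and $\mathrm{type}(g)\ge 3$ do \emph{not} actually need parabolic closure: the observation ``if $g$ fixes a point of $X_\Gamma$ then $g$ lies in a spherical parabolic, hence $\mathrm{type}(g)\le 2$'' follows directly from the definition of type (smallest $n$ such that $g$ lies in \emph{some} type-$n$ parabolic) together with the fact that vertex stabilisers are conjugates of spherical standard parabolics, which have rank $\le 2$ in the $2$-dimensional case. Your proposed mini-lemma isolating this is exactly right, and once you have it the hyperbolic cases are immediate. So the only place your proof needs strengthening is the equality $X_\Gamma^{a^k}=X_\Gamma^a$ in the type-$1$ case.
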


\subsubsection{Hyperbolic minimal sets and transverse-trees}

There is more to be said about hyperbolic minimal sets, hence the results of this section are slightly more technical. We start with the following structural theorem:

\begin{theorem} \cite[Chapter II.6]{bridson2013metric} \label{TheoremBH} Let $G$ be a group acting by isometries on a CAT(0) metric space $X$, and let $g \in G$ be an element acting hyperbolically. Then $Min(g)$ is a closed, convex and non-empty subspace of $X_{\Gamma}$ (in particular, it is CAT(0)). It is isometric to a direct product $\mathcal{T} \times \mathbb{R}$ on which $g$ acts trivially on the first component, and as a translation on the second component. Every axis $u$ of $g$ decomposes as $u = \bar{u} \times \mathbb{R}$, where $\bar{u}$ is a point of $\mathcal{T}$. Furthermore, the centraliser $C(g)$ leaves $Min(g)$ invariant. It sends axes to axes, so $C(g)$ also acts on $\mathcal{T}$.
\end{theorem}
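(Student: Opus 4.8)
The statement to prove is the structure theorem for minimal sets of hyperbolic isometries of CAT(0) spaces (Theorem \ref{TheoremBH}). Let me sketch the proof.

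\textbf{Plan of proof.}

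The plan is to invoke the classical Flat Strip Theorem and the structure of $\minset(g)$ for a semisimple (here hyperbolic) isometry of a CAT(0) space, following Bridson--Haefliger \cite[Chapter II.6]{bridson2013metric} closely. Since this is a restatement of a known result, the proof is essentially a guided assembly of standard facts, so I would not reprove the Flat Strip Theorem itself but would cite it.

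First I would recall that since $g$ acts hyperbolically, $||g|| > 0$ is attained, so $\minset(g)$ is non-empty; a point $x \in \minset(g)$ lies on an axis of $g$, namely the concatenation $\bigcup_{n \in \mathbb{Z}} g^n([x, gx])$, which is a geodesic line isometrically embedded in $X$ on which $g$ acts by translation by $||g||$. Convexity and closedness of $\minset(g)$: the displacement function $d_g(x) = d(x, gx)$ is convex (as $x \mapsto d(x, gx)$ is a composition of the convex function $d$ on $X \times X$ with the affine map $x \mapsto (x, gx)$, using that $g$ is an isometry hence $(x,y)\mapsto (gx, gy)$ preserves the CAT(0) structure), and continuous; $\minset(g)$ is the sublevel set where $d_g$ attains its minimum, hence closed and convex. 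A closed convex subset of a CAT(0) space is itself CAT(0) with the induced metric.

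Next I would establish the product decomposition. Given two axes $u, u'$ of $g$, the function $t \mapsto d(u(t), u'(t))$ is convex and bounded (it is periodic, being invariant under the $||g||$-translation since $g$ acts as the same translation on both), hence constant; by the Flat Strip Theorem the convex hull of $u$ and $u'$ is isometric to a flat strip $\mathbb{R} \times [0, a]$. Letting $\mathcal{T}$ denote the union of all axes, one shows $\minset(g) = \mathcal{T}$: any point of $\minset(g)$ lies on an axis by the first step, and conversely every axis lies in $\minset(g)$. One then checks $\minset(g)$ splits isometrically as $\mathcal{T}_0 \times \mathbb{R}$, where $\mathcal{T}_0$ is a CAT(0) space, by fixing one axis $u_0 = \{p_0\} \times \mathbb{R}$ and sending an arbitrary point to its "$\mathcal{T}_0$-coordinate" (the unique point of the parallel axis through it) and its "$\mathbb{R}$-coordinate"; the flat strips glue compatibly to give a genuine metric product. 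Under this splitting $g$ acts as $\mathrm{id}_{\mathcal{T}_0} \times (\text{translation by } ||g||)$, and each axis is of the form $\{\bar u\} \times \mathbb{R}$. Finally, for $h \in C(g)$, the displacement function satisfies $d_g(hx) = d(hx, ghx) = d(hx, hgx) = d(x, gx) = d_g(x)$, so $h$ preserves $\minset(g)$; since $h$ commutes with $g$ it sends axes of $g$ to axes of $g$, hence descends to an isometric action on $\mathcal{T}_0 \cong \mathcal{T}$.

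\textbf{Main obstacle.} The genuinely non-trivial input is the Flat Strip Theorem and the verification that the collection of parallel axes assembles into an \emph{isometric} product (not merely a bijective correspondence of coordinates) — in particular that the $\mathcal{T}_0$-factor is well-defined and CAT(0) and that the metric is the product metric. In a write-up that cites \cite{bridson2013metric} this is precisely the part one defers to the reference; the rest (non-emptiness, convexity and closedness of $\minset(g)$ via convexity of $d_g$, invariance under the centraliser) is routine and can be given in a few lines.
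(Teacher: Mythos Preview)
Your proposal is correct and follows exactly the standard argument from \cite[Chapter II.6]{bridson2013metric}. The paper does not give its own proof of this theorem at all: it is stated with the citation to Bridson--Haefliger and used as a black box, so your sketch of the classical argument (convexity of the displacement function, the Flat Strip Theorem, and the product decomposition) is precisely the content being invoked.
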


\begin{lemma} \cite[Lemma 3.4]{vaskou2023isomorphism} \label{LemmaTransverseTree}
    Let $A_{\Gamma}$ be a $2$-dimensional Artin group with Deligne complex $X_{\Gamma}$. Let $G$ be a group acting on $X_{\Gamma}$ by isometries and let $g \in G$ be an element acting hyperbolically. Let $X_\Gamma^g \cong \mathcal{T} \times \mathbb{R}$ be the decomposition given in Theorem \ref{TheoremBH}. Then $\mathcal{T}$ is a (real-)tree. We will call $\mathcal{T}$ the \emph{transverse-tree} of $g$.
\end{lemma}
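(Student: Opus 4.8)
The plan is to derive everything from the product splitting $X_\Gamma^g = \mathrm{Min}(g) \cong \mathcal{T}\times\mathbb{R}$ furnished by Theorem~\ref{TheoremBH}, exploiting only that the ambient Deligne complex $X_\Gamma$ is a $2$-dimensional $\mathrm{CAT}(0)$ space (Theorem~\ref{ThmCAT(0)}).

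First I would record that $\mathcal{T}$ is $\mathrm{CAT}(0)$: indeed $\mathrm{Min}(g)$ is a closed convex subspace of the $\mathrm{CAT}(0)$ space $X_\Gamma$, hence is itself $\mathrm{CAT}(0)$, and $\mathcal{T}\times\{0\}$ is in turn a convex subspace of $\mathcal{T}\times\mathbb{R}$, so $\mathcal{T}$ is $\mathrm{CAT}(0)$ and geodesic. It then remains to prove that every geodesic triangle in $\mathcal{T}$ is a tripod, equivalently that at every point of $\mathcal{T}$ distinct directions subtend an angle $\pi$.

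The core step is a local dimension count. Fix $t\in\mathcal{T}$, choose $s\in\mathbb{R}$, and set $p=(t,s)\in\mathrm{Min}(g)\subseteq X_\Gamma$. Since $X_\Gamma$ is a piecewise-Euclidean complex with finitely many shapes of dimension $2$, the space of directions $\Sigma_pX_\Gamma$ is a compact $\mathrm{CAT}(1)$ metric graph, in particular $1$-dimensional. Convexity of $\mathrm{Min}(g)$ yields an isometric embedding $\Sigma_p\mathrm{Min}(g)\hookrightarrow\Sigma_pX_\Gamma$, while the space of directions of a metric product is the spherical join of the factors' spaces of directions; thus
\[
\Sigma_p\mathrm{Min}(g)\;=\;\Sigma_t\mathcal{T}\ast\Sigma_s\mathbb{R}\;=\;\Sigma_t\mathcal{T}\ast S^0,
\]
the spherical suspension of $\Sigma_t\mathcal{T}$. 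As suspension raises covering dimension by exactly $1$, the inequality $\dim\big(\Sigma_t\mathcal{T}\ast S^0\big)\le\dim\Sigma_pX_\Gamma\le 1$ forces $\dim\Sigma_t\mathcal{T}\le 0$; that is, $\Sigma_t\mathcal{T}$ is totally disconnected, for every $t$. I would then finish as follows: $\Sigma_t\mathcal{T}$ is $\mathrm{CAT}(1)$ (a link in the $\mathrm{CAT}(0)$ space $\mathcal{T}$), so two of its points at distance $<\pi$ would be joined by a nondegenerate geodesic arc, contradicting total disconnectedness; hence any two distinct directions at $t$ are at distance $\pi$. A $\mathrm{CAT}(0)$ space with this property at every point is an $\mathbb{R}$-tree: for $x,y,z\in\mathcal{T}$, letting $w$ be the last common point of $[x,y]$ and $[x,z]$, the subsegments $[w,y]$ and $[w,z]$ leave $w$ at angle $\pi$, so $w\in[y,z]$ and the triangle degenerates to the tripod with centre $w$ (the relevant facts about angles and directions being standard, see \cite[Ch.~II.6]{bridson2013metric}).

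I expect the only genuine friction to be the local dimension bookkeeping — that $\Sigma_pX_\Gamma$ is $1$-dimensional (using finiteness of shapes), that the space of directions of a product splits as a spherical join and sits inside the ambient link by convexity, and that spherical suspension adds one to the dimension — together with the final passage from ``distinct directions at angle $\pi$ everywhere'' to ``every triangle is a tripod''. Each of these is standard, but they must be assembled carefully. An alternative packaging of the same argument is: $\mathrm{Min}(g)$ is a convex, hence at most $2$-dimensional, $\mathrm{CAT}(0)$ subspace of $X_\Gamma$, so by additivity of geometric dimension under products $\mathcal{T}$ has geometric dimension $\le 1$, and a $\mathrm{CAT}(0)$ space of dimension $\le 1$ is an $\mathbb{R}$-tree.
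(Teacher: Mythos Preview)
The paper does not supply its own proof of this lemma; it is quoted from \cite[Lemma 3.4]{vaskou2023isomorphism} and used as a black box. Your argument is correct and is essentially the standard one: convexity of $\mathrm{Min}(g)$ in the $2$-dimensional $\mathrm{CAT}(0)$ complex $X_\Gamma$ bounds the dimension of links in $\mathrm{Min}(g)$ by $1$, and the spherical-join decomposition of links in a product then forces $\Sigma_t\mathcal{T}$ to be discrete with all pairwise distances $\pi$, whence $\mathcal{T}$ is an $\mathbb{R}$-tree. Your alternative packaging via geometric dimension is equally valid and perhaps cleaner.
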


\begin{definition} \cite[Section 2.5]{vaskou2023isomorphism} \label{DefiExotic}
    A subgroup $H$ of $A_{\Gamma}$ is called a \emph{dihedral Artin subgroup} if it is abstractly isomorphic to a (non-abelian) dihedral Artin group. Such a subgroup $H$ is said to be \emph{exotic} if it is not contained in a dihedral Artin parabolic subgroup of $A_{\Gamma}$. Finally, a dihedral Artin subgroup $H$ of $A_{\Gamma}$ is said to be \emph{maximal} if it is not strictly contained in another dihedral Artin subgroup of $A_{\Gamma}$.
\end{definition}

\begin{theorem} \label{TheoremExotic} \cite[Theorem D]{vaskou2023isomorphism}
    Let $A_{\Gamma}$ be a large-type Artin group, and let $H$ be a maximal exotic dihedral Artin subgroup of $A_{\Gamma}$. Then  up to conjugation, there are three standard generators $a, b, c \in V(\Gamma)$ satisfying $m_{ab} = m_{ac} = m_{bc} = 3$ such that
    \begin{align*}
        &H = \langle s, t \ | \ stst = tsts \rangle, \\
        &s \coloneqq b^{-1}, \ \ \ t = babc, \\
        &z \coloneqq stst = tsts = abcabc.
    \end{align*}
\end{theorem}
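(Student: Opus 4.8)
The plan is to study the action of the exotic dihedral Artin subgroup $H$ on the $\mathrm{CAT}(0)$ Deligne complex $X_\Gamma$ (Theorem \ref{ThmCAT(0)}), using the centre of $H$ as the main handle. Since $H$ is a non-abelian dihedral Artin group, write $H \cong A_m$ with $m \geq 3$; then $Z(H) = \langle z \rangle \cong \mathbb{Z}$, where $z$ is (a power of) the Garside element of $H$, and all of $H$ lies in the centraliser $C(z)$, which leaves $X_\Gamma^z$ (respectively $\mathrm{Min}(z)$) invariant.

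First I would locate $z$ in $X_\Gamma$ via Lemma \ref{LemmaClassificationByType}. If $z$ acted elliptically with $\mathrm{type}(z) = 2$, then $X_\Gamma^z$ would be a single vertex $h v_{pq}$; as $H \leq C(z)$ fixes it, $H \leq h A_{\langle p,q\rangle} h^{-1}$ would lie in a dihedral Artin parabolic, contradicting exoticity. If $z$ acted elliptically with $\mathrm{type}(z) = 1$, then $X_\Gamma^z$ would be a standard tree $T = h X_\Gamma^p$, all of whose edge-stabilisers are conjugates of $\langle p\rangle$; since $z$ lies in every edge-stabiliser and the parabolic closure $P_z$ is the \emph{unique} minimal parabolic containing $z$, all these edge-stabilisers coincide with $P_z$, so $P_z$ fixes $T$ pointwise and $H$ gets pushed into a vertex-stabiliser of $T$ — again a parabolic of type $\leq 2$, contradicting exoticity. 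Hence $z$ acts hyperbolically, so by Theorem \ref{TheoremBH} and Lemma \ref{LemmaTransverseTree} we have $\mathrm{Min}(z) \cong \mathcal{T} \times \mathbb{R}$ with $\mathcal{T}$ a real tree, and $H$ acts on $\mathcal{T}$ with $z$ in the kernel.

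Next I would pin down the relevant local configuration. As $z$ acts hyperbolically, $\mathrm{type}(z) \geq 2$; a type-$2$ hyperbolic element would have $P_z = h A_{\langle p,q\rangle} h^{-1}$ with $m_{pq} = \infty$, and chasing $H \leq C(z)$ through this free parabolic again forces $H$ into a proper parabolic, so $\mathrm{type}(z) = 3$ and $P_z = g A_{\langle a,b,c\rangle} g^{-1}$ is a rank-$3$ parabolic, which is non-spherical by Remark \ref{remarkCentre} (and hence has trivial centre by Theorem \ref{ThmTorsionAndCentres}). The crucial step is then a metric argument: one tracks how an axis of $z$ threads through the simplices of $X_\Gamma$ and uses the Moussong angles $\tfrac{\pi}{2 m_{pq}}$ around the type-$0$ vertices it meets, together with the fact that the $\mathbb{R}$-factor of $\mathrm{Min}(z)$ is a geodesic line, to force $m_{ab} = m_{ac} = m_{bc} = 3$. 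With the triangle identified, $H/\langle z\rangle$ acts on the transverse tree $\mathcal{T}$; since $H/\langle z\rangle$ is a non-trivial free product depending on $m$ (for instance $\mathbb{Z} * \mathbb{Z}/2$ when $m = 4$), comparing its Bass--Serre structure with the edge- and vertex-stabilisers of $\mathcal{T}$ — read off from the parabolic combinatorics of $A_{\langle a,b,c\rangle}$ — forces $m = 4$. Finally, choosing the axis of $z$ to pass through the cone vertex $v_{\emptyset}$ and bookkeeping the $H$-action on the small $H$-invariant subcomplex it determines produces the explicit representatives $z = abcabc = (st)^2$, $s = b^{-1}$, $t = babc$ up to conjugation; one then checks directly that $\langle b^{-1}, babc\rangle$ is indeed an exotic dihedral subgroup isomorphic to $A_4$ and is maximal, so that any maximal exotic $H$ must equal it.

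The hard part will be the metric/combinatorial step: describing precisely which sequences of simplices of $X_\Gamma$ can support a $z$-axis, ruling out all label patterns other than $(3,3,3)$, and then extracting the exact normal forms $s = b^{-1}$, $t = babc$ from the $H$-action near the axis. The elliptic-exclusion steps, though they use the parabolic-closure machinery, should be comparatively routine, as should the final verification that $\langle b^{-1}, babc\rangle \cong A_4$ is exotic and maximal.
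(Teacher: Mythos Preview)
The paper does not prove this theorem: it is quoted verbatim from \cite[Theorem~D]{vaskou2023isomorphism} and used as a black box (see also the surrounding material imported from the same source, such as Lemma~\ref{LemmaPossibleMinsets} and Proposition~\ref{PropBassSerreExotic}). So there is no proof in the present paper to compare against, and strictly speaking your task was to reproduce a result the paper only cites.

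That said, your overall strategy---study the action of the central element $z$ of $H$ on $X_\Gamma$, rule out elliptic behaviour, analyse $\mathrm{Min}(z)\cong\mathcal T\times\mathbb R$, and read off the $(3,3,3)$-triangle and the explicit generators from the geometry of the axis---is exactly the shape of argument that \cite{vaskou2023isomorphism} develops, as one can infer from the fragments the present paper quotes (Proposition~\ref{PropBassSerreExotic}, Lemma~\ref{LemmaPossibleMinsets}, and the tiled-plane pictures). So you are on the right track.

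There is, however, a genuine gap in your exclusion of the case $\mathrm{type}(z)=1$. You correctly observe that $P_z$ fixes the standard tree $T$ pointwise, but the sentence ``$H$ gets pushed into a vertex-stabiliser of $T$'' does not follow: $H$ only stabilises $T$ setwise, and elements of $H$ may well act hyperbolically on $T$. The clean way to finish is algebraic rather than geometric: in this case $C(z)\cong\langle a\rangle\times F$ with $F$ free (Proposition~\ref{PropCentralisers}.1), and the $\langle a\rangle$-factor is central, so $H\cap\langle a\rangle\leq Z(H)=\langle z\rangle$, whence the projection $H\to F$ has kernel exactly $\langle z\rangle$. But $H/\langle z\rangle$ is a non-trivial free product with torsion (e.g.\ $\mathbb Z/2*\mathbb Z/m$ in the odd case), which cannot embed in a free group. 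This repairs the step. Your later sketch of the metric argument forcing $m_{ab}=m_{ac}=m_{bc}=3$ and $m=4$ is honest about being incomplete; that is indeed where the real work in \cite{vaskou2023isomorphism} lies.
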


\begin{proposition} \label{PropBassSerreExotic}
    Let $A_{\Gamma}$, $H$ and $z$ be as in Theorem \ref{TheoremExotic}. Then $H = C(z) = C(z^n)$ for any $n \neq 0$. Moreover, the central quotient $\overline{C(z)} \coloneqq \quotient{C(z)}{\langle z \rangle}$ is isomorphic to the following free product:
    $$\overline{C(z)} \cong \langle \overline{b} \rangle * \langle \overline{abc} \rangle \cong \mathbb{Z} * \left(\quotient{\mathbb{Z}}{2 \mathbb{Z}}\right). \ \ (*)$$
    Consequently, the transverse-tree $\mathcal{T}$ associated with any $z^n$ is isomorphic to the natural Bass-Serre tree associated with the free product $(*)$. A picture of the fundamental domain of $H = C(z)$ acting on $Min(z) = Min(z^n)$ is given in Figure \ref{FigureExoticMinset1}
\end{proposition}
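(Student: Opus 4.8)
The plan is to establish Proposition \ref{PropBassSerreExotic} in three stages: first the centraliser computation $H = C(z) = C(z^n)$, then the identification of the central quotient $\overline{C(z)}$ as the free product $(*)$, and finally the translation of this algebraic splitting into a statement about the transverse-tree $\mathcal{T}$ via Theorem \ref{TheoremBH} and Lemma \ref{LemmaTransverseTree}.

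For the centraliser, I would start from the description in Theorem \ref{TheoremExotic}: $H = \langle s,t \mid stst = tsts\rangle$ is a dihedral Artin group of type $A_3$ (coefficient $4$... actually coefficient $m=4$ here since there are $4$ letters on each side), whose centre is generated by $z = stst = abcabc$. Since $H$ is an (exotic) copy of $A_4$ sitting inside $A_\Gamma$, one direction $H \leq C(z)$ is immediate because $z$ is central in $H$. For the reverse inclusion $C(z) \leq H$, the natural route is to invoke the structure theory of parabolic subgroups and centralisers in large-type Artin groups — in particular that $z$, being a power of the central element of a (possibly exotic) dihedral Artin subgroup, has a well-controlled centraliser. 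Concretely I expect the argument to use that $z = (abc)^2$, that $abc$ has type $3$ (so acts hyperbolically on $X_\Gamma$ by Lemma \ref{LemmaClassificationByType}), and a result of the form "the centraliser of a hyperbolic element equals the centraliser of its powers and is itself well-understood" — this is where results from \cite{vaskou2023isomorphism} or \cite{cumplido2023parabolic} on centralisers will be cited. The equality $C(z) = C(z^n)$ for $n \neq 0$ should follow from the same source, or directly from the $\mathcal{T} \times \mathbb{R}$ decomposition since $z$ and $z^n$ have the same axes and hence the same $C$-invariant minimal set.

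For the central quotient, I would work in $H \cong A_4$ and compute $\overline{H} = H/\langle z\rangle$ directly from the presentation. Writing $s = b^{-1}$ and $t = babc$, one has $st = b^{-1} \cdot babc = abc$, so $\overline{H}$ is generated by $\overline{b}$ and $\overline{abc}$; and since $z = (st)(st) = (abc)^2$, the image $\overline{abc}$ has order dividing $2$ — one checks it has order exactly $2$ because $abc$ itself is not in $\langle z \rangle = \langle (abc)^2\rangle$ inside $H$ (else $z$ would have a square root in $H$, contradicting that $H$ is torsion-free with infinite cyclic centre, or more elementarily contradicting the known structure of $A_4$). A clean way to see the free product structure is to use the standard fact that a dihedral Artin group $\langle x,y \mid \underbrace{xyxy\cdots}_{m} = \underbrace{yxyx\cdots}_{m}\rangle$ with $m$ even, modulo its centre, splits as $(\mathbb{Z}/\tfrac{m}{2}\mathbb{Z}) * (\mathbb{Z}/\tfrac{m}{2}\mathbb{Z})$ — here $m = 4$ gives $(\mathbb{Z}/2\mathbb{Z})*(\mathbb{Z}/2\mathbb{Z})$ — wait, that is the infinite dihedral group, not $(*)$. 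So instead I would use the \emph{other} natural generating pair: $A_4$ also has the description with a generator of height $1$ and the central element, giving $\overline{H} \cong \mathbb{Z} * (\mathbb{Z}/2\mathbb{Z})$ via the generators $\overline{b}$ (infinite order, as $b$ has infinite order mod $z$ since $ht(z) = 6 \neq 0$ forces $\langle z\rangle \cap \langle b\rangle = 1$) and $\overline{abc}$ (order $2$). The free product assertion then follows from a normal-form / ping-pong argument inside $A_4$, or by citing the known graph-of-groups decomposition of dihedral Artin groups modulo centre.

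The final step is essentially formal: by Theorem \ref{TheoremBH}, $Min(z) = Min(z^n) \cong \mathcal{T} \times \mathbb{R}$ with $C(z) = H$ acting, trivially on the second factor after quotienting by $\langle z \rangle$ (as $z$ is the generator of the $\mathbb{R}$-translation), so $\overline{C(z)} = \overline{H}$ acts on $\mathcal{T}$; by Lemma \ref{LemmaTransverseTree}, $\mathcal{T}$ is a tree, and one checks the action is cocompact with the fundamental domain an edge, vertex stabilisers $\langle \overline{b}\rangle \cong \mathbb{Z}$ and $\langle \overline{abc}\rangle \cong \mathbb{Z}/2\mathbb{Z}$ and trivial edge stabiliser, so by the fundamental theorem of Bass–Serre theory $\mathcal{T}$ is the Bass–Serre tree of $(*)$. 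Drawing the fundamental domain (Figure \ref{FigureExoticMinset1}) is then a matter of recording these stabilisers.

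\textbf{Main obstacle.} I expect the genuine work to be the centraliser equality $C(z) \leq H$: the element $z = abcabc$ lives in an \emph{exotic} dihedral subgroup, so one cannot simply quote "the centraliser of a central element of a standard parabolic is that parabolic" — one must go through the geometry of $X_\Gamma$ (the axis of $abc$, its transverse-tree, and rigidity of fixed sets under the $C(z)$-action) or cite a sufficiently general centraliser theorem for large-type Artin groups. Verifying that the $\overline{H}$-action on $\mathcal{T}$ has exactly the claimed (finite) fundamental domain, rather than something larger, is the other place where care is needed, though it should reduce to the free-product structure established algebraically.
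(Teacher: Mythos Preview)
The paper does not actually supply a proof of Proposition \ref{PropBassSerreExotic}: it is stated without proof and is implicitly imported from \cite{vaskou2023isomorphism} (note the citations in Lemma \ref{LemmaPossibleMinsets}, and the attribution $C(abcabc) = \langle b, abc \rangle$ to \cite[Lemma 4.2]{vaskou2023isomorphism} inside the proof of Lemma \ref{lemabcabcfixed}). So there is no ``paper's own proof'' to compare against; the proposition functions here as a quotation of known structure.

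Your outline is a sound reconstruction, and your diagnosis of the main obstacle is correct: the nontrivial inclusion is $C(z) \leq H$, and in the paper's ecosystem this is handled entirely by citation (Proposition \ref{PropCentralisers} case 3.3, itself quoted from \cite{martin2023characterising}, together with \cite[Lemma 4.2]{vaskou2023isomorphism}) rather than by an argument carried out here. Your brief detour through $(\mathbb{Z}/2\mathbb{Z}) * (\mathbb{Z}/2\mathbb{Z})$ is a red herring you rightly discard; the cleanest route to $(*)$ is the Baumslag--Solitar presentation of $A_4$ already recorded in Lemma \ref{dihedralArtinTreeForm}: writing $H \cong \langle x, t \mid t x^2 t^{-1} = x^2 \rangle$ with $x = st = abc$ and $z = x^2$, one reads off $H/\langle z\rangle = \langle x, t \mid x^2 = 1\rangle \cong (\mathbb{Z}/2\mathbb{Z}) * \mathbb{Z}$, and the change of generators from $(\overline{x},\overline{t})$ to $(\overline{abc},\overline{b}) = (\overline{x}, \overline{t}\,\overline{x})$ is an automorphism of the free product. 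Your final Bass--Serre step is fine as stated.
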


\begin{definition} \label{DefinitionPrincipalTriangle}
    Let $a, b, c \in V(\Gamma)$ be such that $m_{ab} = m_{ac} = m_{bc} = 3$.
    The \emph{principal triangle} associated with the triplet $a, b, c$ is the subcomplex $K$ of the fundamental domain $K_{\Gamma}$ that is spanned by the vertices $v_{\emptyset}$, $v_a$, $v_b$, $v_c$, $v_{ab}$, $v_{ac}$ and $v_{bc}$ (see Figure \ref{FigureExoticMinset1}). A subcomplex of $X_{\Gamma}$ in the $A_{\Gamma}$-orbit of a principal triangle will also be called a principal triangle.     
\end{definition}

\begin{remark} \label{RemSystemColours}
    (1) A principal triangle is the union of $6$ Euclidean $2$-simplices, and one can easily check that their union is equilateral Euclidean relatively to the Moussong metric, i.e. can be isometrically embedded in the Euclidean plane as an equilateral triangle.
    \\(2) When working with principal triangles, we will usually see $K$ with a broader simplicial structure: the on obtained by not considering its edges of type $0$, and its vertices of type $0$ and $1$. In this setting, an \emph{edge} of $K$ will be a union such as $e^a \coloneqq e_{a,ab} \cup e_{a,ac}$, and $K$ has three edges (see Figure \ref{FigureExoticMinset1}).  
    \\(3) Throughout the paper we will be led to work with subcomplexes of $X_{\Gamma}$ that are tiled by principal triangles. Let $Y$ be such a subcomplex. We will often set a \emph{system of colours} on the edges of $Y$, i.e., a colouring of its edges such that two edges have the same colour if and only if they are in the same $A_{\Gamma}$-orbit. It directly follows that for every $g \in A_{\Gamma}$ for which $g \cdot Y = Y$, the simplicial isometry induced by the action of $g$ preserves the system of colours of $Y$. An example of a system of colours in showed in Figure \ref{FigureExoticMinset1}.
\end{remark}

\begin{figure}[H]
    \centering
    \includegraphics[scale=0.9]{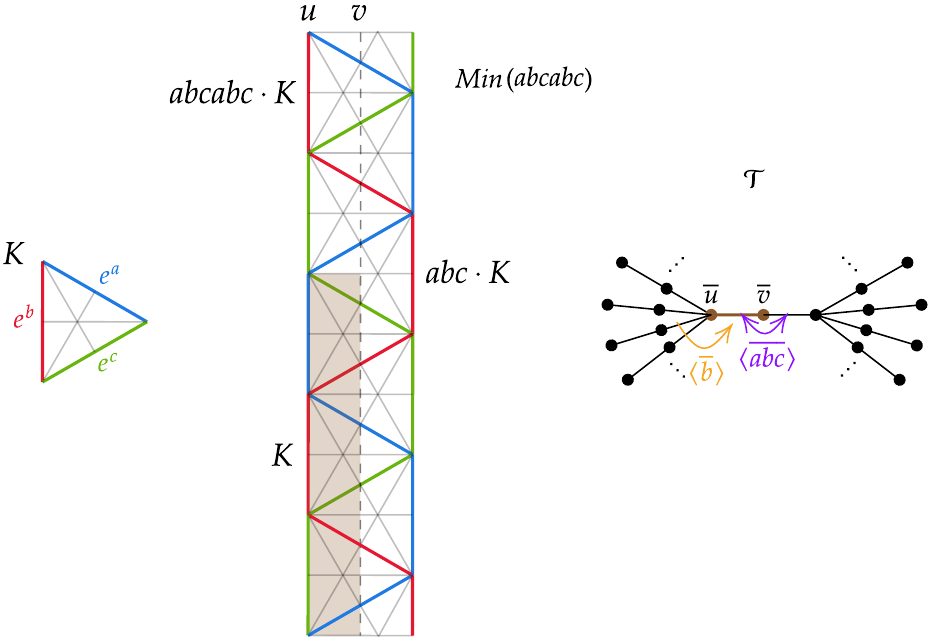}
    \caption{Consider $abcabc$ as in Theorem \ref{TheoremExotic}. \underline{Left:} A principal polygon $K$ associated with $a$, $b$ and $c$. \underline{Middle:} A small part of $Min(abcabc)$. The action of $abcabc$ is vertical. The brown area represents the fundamental domain of the action of $C(abcabc)$ on $Min(abcabc)$. Edges with the same colour are in the same $A_{\Gamma}$-orbit. \underline{Right:} Part of the transverse-tree $\mathcal{T}$ associated with $abcabc$. The brown edge represents the fundamental domain of the action of $\overline{C(abcabc)}$ on $\mathcal{T}$.}
    \label{FigureExoticMinset1}
\end{figure}

We finish with the following classification:

\begin{lemma} \cite[Section 3, Lemma 4.13]{vaskou2023isomorphism} \label{LemmaPossibleMinsets}
    Let $A_{\Gamma}$ be a large-type Artin group, and let $g \in A_{\Gamma}$ be hyperbolic. We suppose that the transverse-tree $\mathcal{T}$ is unbounded. Then we are in one of the following situations:
    \begin{enumerate}
        \item $g$ has an axis $u$ that is contained in a standard tree. Then there exists some $n \neq 0$ such that $g = z^n$, where $z$ is the element from Theorem \ref{TheoremExotic}. Moreover, $C(g) = C(z)$ is the dihedral Artin subgroup $H$ from Theorem \ref{TheoremExotic}. The minimal set $Min(g) = Min(z)$ is described in Proposition \ref{PropBassSerreExotic} and Figure \ref{FigureExoticMinset1}.
        \item $g$ has no axis that is contained in a standard tree. Then $Min(g)$ is a subcomplex of $X_{\Gamma}$ isometric to a tiling of the Euclidean plane by principal triangles. Moreover, $C(g)$ is contained in the isometry group of this tiled plane.
    \end{enumerate}
\end{lemma}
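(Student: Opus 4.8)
The plan is to analyze the action of a hyperbolic element $g \in A_\Gamma$ on $X_\Gamma$ via its minimal set $Min(g) \cong \mathcal{T} \times \mathbb{R}$, where by hypothesis the transverse-tree $\mathcal{T}$ is unbounded. The dichotomy in the statement is driven by whether some axis of $g$ lies inside a standard tree, so I would first fix an axis $u = \bar{u} \times \mathbb{R}$ and consider the local structure of $X_\Gamma$ along $u$: at each point of $u$ we see either the interior of a top-dimensional (Euclidean) simplex, the interior of an edge, or a vertex. Since $X_\Gamma$ is $2$-dimensional and $Min(g)$ is convex and contains the flat strip $u \times [0,\epsilon]$ for small $\epsilon$ (as $\mathcal{T}$ is unbounded, hence not a point), the axis is "fat" on at least one side, and a local CAT(0) argument at each point of $u$ shows the link must open up enough to embed a half-disc. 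This forces $u$ to pass through edges of the essential $1$-skeleton and through vertices of type $1$ or $2$ in a controlled pattern.

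\textbf{Case 1: $u$ lies in a standard tree.} Here I would invoke the already-established machinery: a standard tree is $h X_\Gamma^a$ for some $a \in V(\Gamma)$, stabilised by $h \langle a \rangle h^{-1}$, and the fact that $u$ is an axis of a hyperbolic $g$ contained in (a neighbourhood of) such a tree while still having an unbounded transverse direction is extremely restrictive. The point is that the only hyperbolic elements whose axes run along standard trees and whose minsets are genuinely $2$-dimensional are (powers of) the exotic element $z = abcabc$ from Theorem \ref{TheoremExotic}: the standard tree $X_\Gamma^a$ sits inside $Min(z)$ as one of its "walls", and any hyperbolic $g$ with an axis there must, by the uniqueness of parabolic closures (\cite{cumplido2023parabolic}) and the classification of maximal exotic dihedral subgroups, satisfy $P_g = P_z$, whence $g \in H = \langle s,t\rangle$ acts hyperbolically on $H$'s Bass-Serre tree, forcing $g = z^n$ for some $n \neq 0$ (the only hyperbolic elements of a dihedral Artin group with unbounded "transverse" behaviour here are the powers of the central-type element $z$). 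Then $C(g) = C(z^n) = C(z) = H$ and the structure of $Min(z)$ is exactly Proposition \ref{PropBassSerreExotic} and Figure \ref{FigureExoticMinset1}.

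\textbf{Case 2: no axis of $g$ lies in a standard tree.} Now I would argue that $Min(g)$, being a convex $2$-dimensional CAT(0) subcomplex containing a genuine flat (from $\mathcal{T}$ unbounded times $\mathbb{R}$), must be a union of Euclidean $2$-simplices. Using Remark \ref{RemSystemColours}(1)–(2) I would show that the only way top simplices can fit together convexly in $X_\Gamma$ along a flat, without the flat's axes ever entering a standard tree, is for them to assemble into principal triangles: locally around any type-$2$ vertex of $Min(g)$ the link is a circle of simplices of the standard shape, and convexity (angles summing to $\geq 2\pi$ around interior vertices) together with the absence of standard-tree axes forces the link to be exactly the full circle realised by six triangles, i.e. a principal triangle's worth of simplices, and these tile to give the Euclidean plane. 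The centraliser $C(g)$ preserves $Min(g)$ by Theorem \ref{TheoremBH}, hence preserves this tiled plane and its system of colours, so $C(g)$ embeds in $\Isom$ of the tiled plane.

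\textbf{Main obstacle.} I expect the hard part to be Case 2 — specifically, proving that a convex $2$-dimensional flat subcomplex of $X_\Gamma$ with no standard-tree axis is forced to be tiled by \emph{principal} triangles (rather than some other convex arrangement of the Moussong simplices), and that this tiling is necessarily the full Euclidean plane rather than a strip or half-plane. The strip/half-plane possibilities must be ruled out precisely because $\mathcal{T}$ is unbounded, so $Min(g)$ contains flats of every width; combined with local finiteness of the complex and the rigidity of how triangles of angle $\pi/(2m_{ab})$ can meet around a vertex in a CAT(0) way, this should pin down the global structure, but the combinatorial case analysis of vertex links — ensuring no "exotic" local configuration other than the principal-triangle one survives — is where the real work lies.
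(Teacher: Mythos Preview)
The paper does not contain a proof of this lemma at all: it is stated with the citation \cite[Section 3, Lemma 4.13]{vaskou2023isomorphism} and used as a black box, so there is no proof in the present paper to compare your proposal against. Any comparison would have to be with the cited reference.

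That said, your sketch has a genuine gap in Case~1. You write that ``any hyperbolic $g$ with an axis there must, by the uniqueness of parabolic closures \ldots, satisfy $P_g = P_z$, whence $g \in H$''. This does not follow: the parabolic closure $P_z$ is the rank-$3$ parabolic $A_{abc}$, not the exotic dihedral subgroup $H = \langle b, abc \rangle$, and knowing $P_g = A_{abc}$ (even if you could establish that) does not by itself force $g$ into $H$, let alone into $\langle z \rangle$. The actual mechanism in \cite{vaskou2023isomorphism} is geometric rather than via parabolic closures: one analyses how a flat strip in $X_\Gamma$ can run along a standard tree, uses the Moussong angles to see that this forces the triple $(m_{ab},m_{ac},m_{bc}) = (3,3,3)$, and then reads off from the explicit tiling of $Min(z)$ that the only translations along such an axis are powers of $z$. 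Your Case~2 outline is closer to the mark --- the argument really is a link/angle analysis around type-$2$ vertices combined with the Flat Strip/Flat Plane theorem --- and you correctly identify the combinatorial case analysis there as the substantive work.
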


More will be said about the structure of the centralisers of hyperbolic elements in Proposition \ref{PropCentralisers}.

\subsection{Connections with fixed subgroups of automorphisms}

The question of understanding the fixed subgroups is easier for inner automorphisms, as $\Fix(\varphi_g)$ is simply the centraliser $C(g)$. In this section we give several basic results regarding automorphisms, and we discuss the connections between fixed subgroups and stabilisers of subcomplexes in the Deligne complex, hinting at how to compute fixed subgroups in the more general case.

The action of $\Aut_\Gamma(A_\Gamma)$ described in Section 2.2 is \emph{compatible} in the sense of the following definition.

\begin{definition}\label{defCompatible}
Let $G$ be a group acting on a metric space $X$ via $\Omega\colon G \rightarrow \Isom(X)$. We say that a subgroup $A$ satisfying $\Inn(G) \leq A \leq \Aut(G)$ has a \emph{compatible} action on $X$ if there is an action $\Psi\colon A \rightarrow \Isom(X)$ such that $\Psi \circ i = \Omega$, where $i\colon G \rightarrow A$ is the homomorphism $g \mapsto \varphi_g$.
\end{definition}

\begin{remark}
    It is immediate from Theorem \ref{DefinitionActionOfAut} that the action of $\Aut_\Gamma(A_\Gamma)$ described is compatible in the sense of Definition \ref{defCompatible}.
\end{remark}

We will freely use the following lemmas to manipulate automorphisms.

\begin{lemma} \label{LemmaCommuteWithInner}
    Let $G$ be a group, let $g \in G$ and let $\psi \in \Aut(G)$. Then $\psi \varphi_g = \varphi_{\psi(g)} \psi$.
\end{lemma}

\begin{proof}
    This follows from the fact that for every element $h \in A_{\Gamma}$, we have
    $$\psi \varphi_g (h) = \psi(g h g^{-1}) = \psi(g) \psi(h) \psi(g)^{-1} = \varphi_{\psi(g)} \psi(h).$$
\end{proof}

\begin{lemma}\label{compatibilityLemma}
Let $G$ be a group acting on a metric space $X$ such that $\Inn(G) \leq A \leq \Aut(G)$ acts on $X$ in a compatible way. Then for all $\psi \in A$, $g \in G$, and $x \in X$, $\psi \cdot gx = \psi(g) (\psi \cdot x)$.
\end{lemma}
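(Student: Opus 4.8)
The plan is to unwind the definition of compatibility directly. Recall that $A$ acts via $\Psi\colon A \to \Isom(X)$, that $G$ acts via $\Omega\colon G \to \Isom(X)$, and that compatibility says $\Psi \circ i = \Omega$ where $i\colon G \to A$ sends $g$ to $\varphi_g$. In the notation of the lemma, the dot denotes the $\Psi$-action of $A$, while juxtaposition $gx$ denotes the $\Omega$-action of $G$; by compatibility $gx = \varphi_g \cdot x$ for every $g \in G$ and $x \in X$.

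First I would rewrite $gx$ as $\varphi_g \cdot x$, so that $\psi \cdot gx = \psi \cdot (\varphi_g \cdot x) = (\psi \varphi_g) \cdot x$, using that $\Psi$ is a homomorphism. Next I would apply Lemma \ref{LemmaCommuteWithInner} to the composite $\psi \varphi_g$, which gives $\psi \varphi_g = \varphi_{\psi(g)} \psi$ inside $\Aut(G)$, hence inside $A$. Therefore
\[
\psi \cdot gx = (\varphi_{\psi(g)} \psi) \cdot x = \varphi_{\psi(g)} \cdot (\psi \cdot x).
\]
Finally I would use compatibility once more in the other direction: since $\psi(g) \in G$, the element $\varphi_{\psi(g)} \cdot y$ equals $\psi(g) y$ for any $y \in X$; applying this with $y = \psi \cdot x$ yields $\varphi_{\psi(g)} \cdot (\psi \cdot x) = \psi(g)(\psi \cdot x)$, which is exactly the claimed identity.

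There is no real obstacle here; the only thing to be careful about is not conflating the two actions (the $A$-action written with $\cdot$ and the $G$-action written by juxtaposition) and making sure Lemma \ref{LemmaCommuteWithInner} is invoked as an identity of automorphisms before passing to the action on $X$. One should also note that $\varphi_g \in A$ makes sense precisely because $\Inn(G) \leq A$, which is part of the hypothesis of a compatible action.
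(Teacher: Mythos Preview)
Your proof is correct and follows exactly the same approach as the paper: rewrite $gx$ as $\varphi_g \cdot x$ via compatibility, apply Lemma~\ref{LemmaCommuteWithInner} to swap $\psi \varphi_g = \varphi_{\psi(g)} \psi$, and then use compatibility again to turn $\varphi_{\psi(g)} \cdot (\psi \cdot x)$ into $\psi(g)(\psi \cdot x)$. The paper simply compresses these three steps into a single display line.
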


\begin{proof}
Using Lemma \ref{LemmaCommuteWithInner}, we have
$$\psi \cdot gx = \psi \varphi_g \cdot x = \varphi_{\psi(g)} \psi \cdot x = \psi(g) (\psi \cdot x).$$
\end{proof}

The following corollary relates fixed subgroups and minimal sets.

\begin{definition}
    Given a group $G$ acting on a space $X$ and $Y \subseteq X$, the \emph{setwise stabiliser} of $Y$ is defined as $\Stab(Y) \coloneqq \{g \in G \ | \ \forall y \in Y, \ gy \in Y\}$.
\end{definition}

\begin{corollary}\label{fixAction}
If $G$, $A$ and $X$ are as above, then for all $\psi \in A$, we have an inclusion $\Fix(\psi) \leq \Stab(X^\psi)$.
\end{corollary}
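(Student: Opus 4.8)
The plan is to unwind the definitions and use the compatibility relation from Lemma \ref{compatibilityLemma}. Let $\psi \in A$ and take an arbitrary $g \in \Fix(\psi)$, so that $\psi(g) = g$. I want to show $g \in \Stab(X^\psi)$, i.e. that $g$ maps $X^\psi$ into itself. Recall $X^\psi$ is the minimal set of $\psi$ under the action $\Psi\colon A \to \Isom(X)$, so for $x \in X^\psi$ we have $d(x, \psi \cdot x) = \|\psi\|$, and I need $d(gx, \psi \cdot (gx)) = \|\psi\|$ as well.

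First I would compute $\psi \cdot (gx)$ using Lemma \ref{compatibilityLemma}: $\psi \cdot (gx) = \psi(g)(\psi \cdot x) = g(\psi \cdot x)$, where the last equality uses $g \in \Fix(\psi)$. Then, since $g$ acts on $X$ by an isometry (it acts via $\Omega$, or equivalently via $\Psi \circ i$), we get
$$d(gx, \psi \cdot (gx)) = d(gx, g(\psi \cdot x)) = d(x, \psi \cdot x) = \|\psi\|.$$
Hence $gx \in X^\psi$ for every $x \in X^\psi$, which is exactly the statement $g \in \Stab(X^\psi)$. Since $g$ was arbitrary in $\Fix(\psi)$, this gives $\Fix(\psi) \leq \Stab(X^\psi)$.

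There is essentially no obstacle here — the only subtlety worth a sentence is to be careful that $\|\psi\|$ really is attained (so that $X^\psi$ is non-empty and the equation $d(x,\psi\cdot x) = \|\psi\|$ makes sense); this is guaranteed in our setting because the action of $\Aut_\Gamma(A_\Gamma)$ on $X_\Gamma$ is by simplicial isometries of a piecewise-Euclidean complex with finitely many shapes, so every element is elliptic or hyperbolic and the minimal set is non-empty, as recorded after Definition \ref{defMinSet}. One should also note that $\Stab(X^\psi)$ as defined only requires $gX^\psi \subseteq X^\psi$, not equality, so the one-sided inclusion $gx \in X^\psi$ suffices; no surjectivity argument is needed. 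The whole proof is three lines once Lemma \ref{compatibilityLemma} is invoked.
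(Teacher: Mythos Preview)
Your proof is correct and matches the paper's argument essentially line for line: both invoke Lemma \ref{compatibilityLemma} to compute $\psi\cdot(gx) = g(\psi\cdot x)$ for $g\in\Fix(\psi)$, then use that $g$ acts by isometries to conclude $d(gx,\psi\cdot gx)=d(x,\psi\cdot x)$. Your additional remarks about non-emptiness of $X^\psi$ and the one-sided definition of $\Stab$ are accurate but not needed for the argument to go through.
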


\begin{proof}
Using Lemma \ref{compatibilityLemma}, for any $g \in \Fix(\psi)$ and any $x \in X$ we have
$$d(gx, \psi \cdot gx) = d(gx, \psi(g) (\psi \cdot x)) = d(gx, g(\psi \cdot x)) = d(x, \psi \cdot x).$$
So $x \in X^\psi$ implies $gx \in X^\psi$. It follows that $g \in \Stab(X^\psi)$.
\end{proof}

\begin{remark}
    Notice that the action of $G$ on $X$ restricts to an action of $\Fix(\psi) \leq G$ on $X$, and by Corollary \ref{fixAction} this restricts to an action of $\Fix(\psi)$ on $X^\psi$.
\end{remark}

\begin{corollary}\label{corPreserveFixMinsets}
      If $G$, $A$ and $X$ are as above, then for all $\psi \in A$ and $g \in G$ such that $g \in \Fix(\psi)$, $\psi$ preserves $X^g$.
\end{corollary}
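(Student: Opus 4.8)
The plan is to unwind the definitions and reduce the claim to the computation in Corollary~\ref{fixAction}. Fix $\psi \in A$ and $g \in G$ with $g \in \Fix(\psi)$, i.e. $\psi(g) = g$. We want to show that for every $x \in X^g = \{x \in X \mid d(x, gx) = \|g\|\}$, the point $\psi \cdot x$ also lies in $X^g$; equivalently, $d(\psi \cdot x, g(\psi \cdot x)) = \|g\|$ for all $x$, and $\|g\|$ is realised, so $X^g$ is nonempty precisely when $X^{g}$ is (which is automatic since $g$ is elliptic or hyperbolic in our setting, by the remark after Definition~\ref{defMinSet}).

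The key computation is to rewrite $g(\psi \cdot x)$ using compatibility. By Lemma~\ref{compatibilityLemma}, for any $h \in G$ and $y \in X$ we have $\psi \cdot hy = \psi(h)(\psi \cdot y)$. Applying this with $h = g$ and $y = x$, and using $\psi(g) = g$, gives $\psi \cdot (gx) = \psi(g)(\psi \cdot x) = g(\psi \cdot x)$. Since $\psi$ acts by isometries on $X$, it follows that
$$d(\psi \cdot x, g(\psi \cdot x)) = d(\psi \cdot x, \psi \cdot (gx)) = d(x, gx).$$
Hence $d(x,gx) = \|g\|$ forces $d(\psi \cdot x, g(\psi \cdot x)) = \|g\|$, so $\psi \cdot x \in X^g$. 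This shows $\psi(X^g) \subseteq X^g$; applying the same argument to $\psi^{-1}$ (which also fixes $g$, since $\psi^{-1}(g) = g$ follows from $\psi(g)=g$) gives $\psi^{-1}(X^g) \subseteq X^g$, hence $\psi(X^g) = X^g$, i.e. $\psi$ preserves $X^g$.

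I do not anticipate any real obstacle here: this is essentially a one-line consequence of Lemma~\ref{compatibilityLemma} together with the fact that $\psi$ acts isometrically, mirroring the proof of Corollary~\ref{fixAction} with the roles of the automorphism and the group element interchanged. The only point worth a moment's care is making sure "preserves $X^g$" is read as setwise invariance (both inclusions), which is why invoking the argument for $\psi^{-1}$ as well is worthwhile; alternatively one notes that $\psi$ is a bijection of $X$ and $X^g$ is determined by the isometry-invariant condition $d(x,gx) = \|g\|$, so an isometry commuting suitably with the $g$-action must carry $X^g$ onto $X^g$.
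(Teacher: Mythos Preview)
Your proof is correct and essentially identical to the paper's: both compute $d(\psi\cdot x, g(\psi\cdot x)) = d(x, gx)$ via Lemma~\ref{compatibilityLemma} together with $\psi(g)=g$ (the paper phrases the same identity using $\psi^{-1}(g)=g$). Your explicit treatment of the reverse inclusion via $\psi^{-1}$ is a small addition the paper omits, but it is harmless and makes ``preserves'' unambiguous.
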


\begin{proof}
    Similarly to Corollary \ref{fixAction}, $$d(\psi \cdot x, g(\psi \cdot x)) = d(\psi \cdot x, \psi \cdot \psi^{-1}(g)x) = d(x, \psi^{-1}(g)x) = d(x, gx),$$ so if $x \in X^g$ then $\psi \cdot x \in X^g$ too.
\end{proof}

\begin{lemma} \label{reductionToStabs}
    If $G$, $A$ and $X$ are as above, $\psi \in A$, and $X^{\psi}$ is a single point $v$, then $\psi$ restricts to an automorphism of $\Stab(v)$ and $\Fix(\psi) = \Fix(\left. \psi \right|_{\Stab(v)})$.
\end{lemma}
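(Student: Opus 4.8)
The plan is to first isolate the one geometric fact needed: every isometry preserves its own minimal set, so $\psi$ preserves $X^{\psi}$; since $X^{\psi} = \{v\}$ by hypothesis, this forces $\psi \cdot v = v$. I would also record that $\psi^{-1}$ has the same property: its displacement function $x \mapsto d(x, \psi^{-1}\cdot x)$ is obtained from that of $\psi$ by precomposing with the bijection $\psi^{-1}$ of $X$, so it has the same infimum and is minimised exactly on $\psi^{-1}\cdot X^{\psi} = \{v\}$. Hence $X^{\psi^{-1}} = \{v\}$ as well, and in particular $\psi^{-1}\cdot v = v$.

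Next I would show that $\psi$ maps $\Stab(v)$ into itself. Let $g \in \Stab(v)$, so $gv = v$. By Lemma \ref{compatibilityLemma}, $\psi \cdot (gv) = \psi(g)\,(\psi \cdot v)$; the left-hand side equals $\psi \cdot v = v$, and on the right $\psi \cdot v = v$, so $\psi(g)\,v = v$, i.e.\ $\psi(g) \in \Stab(v)$. Applying the same computation to $\psi^{-1}$ (which is legitimate since $\psi^{-1} \in A$ and, as just noted, $\psi^{-1}\cdot v = v$) shows that $\psi^{-1}$ also preserves $\Stab(v)$. Therefore $\psi|_{\Stab(v)}$ is a well-defined automorphism of $\Stab(v)$.

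For the equality of fixed subgroups, the inclusion $\Fix(\psi|_{\Stab(v)}) \subseteq \Fix(\psi)$ is immediate from the definition of restriction. Conversely, if $g \in \Fix(\psi)$, then Corollary \ref{fixAction} gives $g \in \Stab(X^{\psi}) = \Stab(\{v\}) = \Stab(v)$, so $g$ lies in the domain of $\psi|_{\Stab(v)}$ and is fixed by it; hence $g \in \Fix(\psi|_{\Stab(v)})$. This yields $\Fix(\psi) = \Fix(\psi|_{\Stab(v)})$, completing the proof. The only point requiring attention is the bookkeeping between the two actions — the $A$-action written ``$\cdot$'' and the $G$-action written by juxtaposition — together with transferring the hypothesis on $X^{\psi}$ to $\psi^{-1}$; once these are in place the statement is a direct consequence of Lemma \ref{compatibilityLemma} and Corollary \ref{fixAction}, so I do not anticipate a serious obstacle.
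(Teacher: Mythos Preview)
Your proof is correct and follows essentially the same approach as the paper's: use compatibility (Lemma \ref{compatibilityLemma}) to show $\psi$ and $\psi^{-1}$ preserve $\Stab(v)$, then invoke Corollary \ref{fixAction} for the fixed-subgroup equality. One small remark: the detour through $X^{\psi^{-1}}$ is unnecessary (once $\psi\cdot v = v$ you get $\psi^{-1}\cdot v = v$ immediately), and in fact $d(x,\psi^{-1}\cdot x) = d(\psi\cdot x, x) = d(x,\psi\cdot x)$, so $X^{\psi^{-1}} = X^{\psi}$ directly rather than $\psi^{-1}\cdot X^{\psi}$; this does not affect your conclusion since both equal $\{v\}$.
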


\begin{proof}
    By hypothesis $\psi$ fixes $v$, so for every $g \in \Stab(v)$ we have
    $$\psi(g) v = \psi(g) (\psi \cdot v) = \psi \cdot gv = v,$$
    and thus $\psi(g) \in Stab(v)$. So in particular $\psi$ restricts to an endomorphism of $\Stab(v)$. Doing the same argument with $\psi^{-1}$ shows that it restricts to an automorphism. Finally by Corollary \ref{fixAction}, we get that $\Fix(\psi) = \Fix(\left. \psi \right|_{\Stab(v)})$.
\end{proof}
 
Our strategy for computing $\Fix(\psi)$ will usually be to determine $X_\Gamma^\psi$, then by Corollary \ref{fixAction} restrict our attention to $\Stab(X_{\Gamma}^\psi)$. The following lemma will be useful to compute such fixed sets:

\begin{lemma} \label{LemmaUsefulEquation}
Let $A_{\Gamma}$ be a $2$-dimensional Artin group, and let us consider some $h \in A_{\Gamma}$ and some automorphism $\psi \coloneqq \sigma \iota^{\varepsilon}$, where $\varepsilon \in \{0, 1\}$. Let $g v_S$ be any vertex of $X_{\Gamma}$. Then:
\\ $\bullet$ $\varphi_h \psi \cdot g v_S = h \psi(g) v_{\sigma(S)}$ ;
\\ $\bullet$ $g v_S \in X_{\Gamma}^{\varphi_h \psi} \Longleftrightarrow h \psi(g) \in g A_S$ and $\sigma(S) = S.$
\end{lemma}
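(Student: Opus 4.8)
The plan is to unwind the definitions of the action of $\Aut_\Gamma(A_\Gamma)$ on $X_\Gamma$ given in Definition \ref{DefinitionActionOfAut}, combined with the compatibility relation of Lemma \ref{compatibilityLemma}, and to use that the action preserves the type of vertices (Remark \ref{remTypePreserving}(3)).

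For the first bullet point, I would start from $\varphi_h \psi \cdot g v_S$ and compute it step by step. Since $\psi = \sigma \iota^\varepsilon$, write $\psi \cdot g v_S$ first: the graph automorphism $\sigma$ sends $g v_S$ to $\sigma(g) v_{\sigma(S)}$, and the global inversion $\iota$ fixes the subscript $S$ (it acts as $\iota(g) v_S$), so composing gives $\psi \cdot g v_S = \psi(g) v_{\sigma(S)}$ (one needs here only that $\iota$ and $\sigma$ commute appropriately on subscripts, which is immediate from the formulas, as $\iota$ never changes $S$ and $\sigma$ changes it to $\sigma(S)$ regardless). Then apply $\varphi_h$, which by Definition \ref{DefinitionActionOfAut} acts by left-multiplication by $h$, yielding $h\psi(g) v_{\sigma(S)}$. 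This establishes the first bullet. I should be slightly careful about the order of composition (whether $\varphi_h \psi$ means "first $\psi$ then $\varphi_h$"), but with the standard convention this is exactly the computation above; alternatively one can invoke Lemma \ref{compatibilityLemma} directly, writing $\varphi_h\psi \cdot gv_S = \varphi_h \cdot (\psi \cdot gv_S)$.

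For the second bullet point, note that $gv_S \in X_\Gamma^{\varphi_h\psi}$ means $\varphi_h\psi \cdot gv_S = gv_S$, i.e.\ by the first bullet $h\psi(g) v_{\sigma(S)} = g v_S$. Two vertices $g' v_{S'}$ and $g v_S$ of $X_\Gamma$ are equal if and only if $g' A_{S'} = g A_S$ as left cosets; since vertices $v_S$ of distinct ``shape'' have distinct types, and the type is the number of generators in $S$ (Remark \ref{remTypePreserving}(2)), equality forces $|\sigma(S)| = |S|$. Actually more is needed: I want $v_{\sigma(S)} = v_S$ as the standard representatives, equivalently $\sigma(S) = S$. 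This follows because the only vertex of $K_\Gamma$ in the $A_\Gamma$-orbit of $v_T$ is $v_T$ itself (the vertices $v_\emptyset, v_s, v_{st}$ are distinct representatives of distinct orbits — this is built into the structure of the fundamental domain $K_\Gamma$), so $h\psi(g) v_{\sigma(S)} = g v_S$ with both $v_{\sigma(S)}$ and $v_S$ being standard vertices forces $\sigma(S) = S$ and then $h\psi(g) A_S = g A_S$, i.e.\ $g^{-1} h \psi(g) \in A_S$, which rearranges to $h\psi(g) \in g A_S$. Conversely, if $\sigma(S) = S$ and $h\psi(g) \in gA_S$, then $h\psi(g) v_{\sigma(S)} = h\psi(g) v_S = g v_S$ since $A_S$ is precisely the stabiliser of $v_S$. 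This gives the claimed equivalence.

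The main obstacle is the bookkeeping around when two vertex-cosets of $X_\Gamma$ coincide: one must be sure that $g' v_{S'} = g v_S$ really does force $S' = S$ (and not merely $|S'| = |S|$ or conjugacy of $A_{S'}$ and $A_S$). This is where the structure of the fundamental domain $K_\Gamma$ as the cone over $\Gamma_{bar}$ is used — distinct subsets $S \subseteq V(\Gamma)$ of size $\leq 2$ spanning an edge of $\Gamma$ give genuinely distinct vertices of $K_\Gamma$ lying in distinct $A_\Gamma$-orbits, so there is no collapsing. Everything else is a direct unwinding of Definition \ref{DefinitionActionOfAut} and Lemma \ref{compatibilityLemma}, and should be routine.
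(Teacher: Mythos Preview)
Your proposal is correct and follows essentially the same approach as the paper's proof: unwind Definition \ref{DefinitionActionOfAut} to get the first bullet, then for the second bullet use that the index $S$ determines the $A_\Gamma$-orbit of $v_S$ (forcing $\sigma(S)=S$) and that $A_S$ is the stabiliser of $v_S$ (giving the coset condition). The paper's proof is simply a terser version of exactly this argument.
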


\begin{proof}
    The first statement directly follows from the way the action of $\Aut_{\Gamma}(A_{\Gamma})$ on $X_{\Gamma}$ was defined (Definition \ref{DefinitionActionOfAut}). The second statement follows from the first statement. Indeed, the index $S$ of $v$ determines its orbit, so we must have $S = \sigma(S)$. Now two vertices $g v_S$ and $g' v_S$ agree if and only if $g' g^{-1}$ belongs to the local group at $v_S$, that is, $A_S$. The result follows.
\end{proof}

We introduce one further notion, which will let us reduce the computation of fixed subgroups to a small number of cases.

\begin{definition}\label{definitionIsogredience}
    Given a group $G$ and $\psi_1, \psi_2 \in \Aut(G)$, we say $\psi_1$ and $\psi_2$ are \emph{isogredient} if there exists $h$ such that $\psi_1 = \varphi_h \psi_2 \varphi_h^{-1}$. 
\end{definition}

If $\psi_1 = \varphi_h \psi_2 \varphi_h^{-1}$, we will sometimes write $\psi_1 \sim_h \psi_2$. Notice that isogredience is an equivalence relation, but $\sim_h$ of course is not even symmetric in general.
\medskip

It turns out that up to conjugation, the fixed subgroup depends only on the isogredience class of an automorphism:

\begin{lemma}\label{lemmaIsogredience}
    Suppose $\psi_1 \sim_h \psi_2$. Then $\Fix(\psi_1) = h\Fix(\psi_2)h^{-1}$.
\end{lemma}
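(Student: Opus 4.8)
The plan is to unwind the definitions directly, since this is an elementary group-theoretic statement with no geometry involved. Recall that $\psi_1 \sim_h \psi_2$ means $\psi_1 = \varphi_h \psi_2 \varphi_h^{-1}$, where $\varphi_h$ denotes conjugation by $h$. I want to show $\Fix(\psi_1) = h \Fix(\psi_2) h^{-1}$, so it suffices to show that for every $g \in A_\Gamma$, we have $\psi_1(g) = g$ if and only if $\psi_2(h^{-1} g h) = h^{-1} g h$.

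First I would compute $\psi_1(g)$ explicitly for an arbitrary $g$. Writing out the composition, $\psi_1(g) = \varphi_h \psi_2 \varphi_h^{-1}(g) = \varphi_h \psi_2 (h^{-1} g h) = h \, \psi_2(h^{-1} g h) \, h^{-1}$. Therefore the equation $\psi_1(g) = g$ is equivalent to $h \, \psi_2(h^{-1} g h)\, h^{-1} = g$, which after conjugating both sides by $h^{-1}$ becomes $\psi_2(h^{-1} g h) = h^{-1} g h$, i.e. $h^{-1} g h \in \Fix(\psi_2)$. This shows $g \in \Fix(\psi_1) \iff h^{-1} g h \in \Fix(\psi_2) \iff g \in h \Fix(\psi_2) h^{-1}$, which is exactly the claim.

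There is essentially no obstacle here: the only thing to be a little careful about is bookkeeping the direction of the conjugations (whether $\varphi_h^{-1} = \varphi_{h^{-1}}$ sends $g$ to $h^{-1} g h$ or $h g h^{-1}$), but this is fixed by the paper's convention $\varphi_g \colon h \mapsto g h g^{-1}$ from the definition of $\Inn(A_\Gamma)$. One could alternatively phrase the argument using Lemma \ref{LemmaCommuteWithInner}, but the direct computation above is the cleanest route and I would present it as such, noting simply that the chain of equivalences gives the set equality $\Fix(\psi_1) = h\Fix(\psi_2)h^{-1}$.
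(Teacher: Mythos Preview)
Your proof is correct and is essentially the same direct unwinding of the definition as the paper's own proof; the only cosmetic difference is that the paper applies $\psi_1$ to $hgh^{-1}$ rather than to $g$, yielding the same equivalence.
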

\begin{proof}
    Notice that, for any $g \in G$, $\psi_1(hgh^{-1}) = (\varphi_h \psi_2 \varphi_h^{-1}) (hgh^{-1}) = h\psi_2(g)h^{-1}$. The result follows immediately. 
\end{proof}

\section{Fixed subgroups in Dihedral Artin Groups}\label{sectionDihedral}

Many of our subsequent arguments to understand fixed subgroups of large-type Artin groups $A_{\Gamma}$ will rely on understanding the fixed subgroups of the induced automorphisms on dihedral Artin parabolic subgroups (in particular, those which are set-wise fixed so the notion of an induced automorphism makes sense). Thus, in this section, we study fixed subgroups in dihedral Artin groups. Throughout this section we let $A_m$ be a dihedral Artin group with coefficient $m$, that is $$A_m = \langle a, b \mid  \ \underbrace{aba\cdots}_{m \text{ terms}} = \underbrace{bab\cdots}_{m \text{ terms}} \rangle.$$

A dihedral Artin group $A_m$ will be called \emph{odd} or \emph{even} if $m$ is odd or even respectively. We will assume $m \geq 3$. We will write $\gars$ for the \emph{Garside element} of $A_m$, that is
$$\gars = \underbrace{aba\cdots}_{m \text{ terms}} = \underbrace{bab\cdots}_{m \text{ terms}}.$$
Note we omit reference to $m$, which will be clear from context.

Unlike in the case where $A_\Gamma$ is large-type and rank at least 3, dihedral Artin groups have non-trivial cyclic centre.

\begin{lemma}\cite{brieskorn1972artin}
    The dihedral Artin group $A_n$ has infinite cyclic centre generated by $z_{ab}$, where $z_{ab} = \gars$ (if $m_{ab}$ is even), and $z_{ab} = \gars^2$ (if $m_{ab}$ is odd).
\end{lemma}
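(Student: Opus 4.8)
The statement to prove is the classical fact that the centre of the dihedral Artin group $A_m$ is infinite cyclic, generated by $\gars$ when $m$ is even and by $\gars^2$ when $m$ is odd. The plan is as follows.

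\textbf{Step 1: $\gars$ (resp.\ $\gars^2$) is central.} First I would recall the standard Garside-theoretic fact that conjugation by $\gars$ induces the automorphism of $A_m$ swapping $a$ and $b$ when $m$ is odd, and the identity when $m$ is even. Concretely, one checks $\gars a \gars^{-1}$ and $\gars b \gars^{-1}$ directly from the relation $\underbrace{aba\cdots}_{m} = \underbrace{bab\cdots}_{m}$: rewriting $\gars a$ using both expressions of $\gars$ shows $\gars a = \underbrace{\cdots}_{m+1}$, and comparing the leftmost letters in the two reduced forms gives that $\gars a \gars^{-1}$ equals $a$ if $m$ is even, $b$ if $m$ is odd (and symmetrically for $b$). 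Hence $\gars$ commutes with both generators when $m$ is even, giving $\gars \in Z(A_m)$; and $\gars^2$ always commutes with both generators (the swap squares to the identity), so $\gars^2 \in Z(A_m)$ for all $m$.

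\textbf{Step 2: the centre is exactly $\langle z_{ab}\rangle$.} For the reverse inclusion I would invoke Theorem \ref{ThmTorsionAndCentres} from the excerpt: a $2$-dimensional irreducible Artin group has centre $\mathbb{Z}$ if spherical and trivial otherwise. A dihedral Artin group $A_m$ with $m \geq 3$ is large-type hence $2$-dimensional (Remark after Definition \ref{DefiDimension2}), is irreducible, and is spherical (its Coxeter group is the finite dihedral group of order $2m$). So $Z(A_m) \cong \mathbb{Z}$. It then remains only to identify the generator. Since $\langle \gars\rangle$ (for $m$ even) or $\langle \gars^2\rangle$ (for $m$ odd) is an infinite cyclic subgroup of the infinite cyclic group $Z(A_m)$, I need to rule out that $z_{ab}$ is a proper power in $Z(A_m)$. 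Using the height morphism $ht\colon A_m \to \mathbb{Z}$ of Definition \ref{definitionHeight}, one computes $ht(\gars) = m$, so $ht(z_{ab}) = m$ ($m$ even) or $2m$ ($m$ odd). If $Z(A_m) = \langle w\rangle$ with $z_{ab} = w^k$, then $ht(w)$ divides $ht(z_{ab})$; but $w$ itself commutes with $a$, and an element commuting with $a$ must — by Step 1 applied in reverse, or by a short direct argument about the centraliser of $a$ — have height a multiple of $m$ (resp.\ of $2m$ if $m$ odd, since conjugation by any central element must be trivial and only powers of $\gars^2$ achieve this when $m$ is odd). This forces $k = \pm 1$, so $z_{ab}$ generates.

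\textbf{Main obstacle.} The genuine content is Step 2's final identification that $z_{ab}$ is not a proper power inside the centre — Theorem \ref{ThmTorsionAndCentres} hands us $Z(A_m)\cong\mathbb{Z}$ for free, but pinning down the generator requires an honest argument. I expect the cleanest route is: the quotient $A_m / Z(A_m)$ is the Coxeter group $W_m$ (dihedral of order $2m$) precisely when we quotient by $\langle z_{ab}\rangle$ — this is a well-known presentation fact — so $\langle z_{ab}\rangle$ is the kernel of $A_m \twoheadrightarrow W_m$, which is central and cannot be properly contained in a larger central (hence still kernel-respecting) cyclic subgroup without the quotient failing to be $W_m$. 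Alternatively one cites Brieskorn--Saito directly. I would present the height-function argument as the self-contained version since all needed tools ($ht$, Theorem \ref{ThmTorsionAndCentres}) are already available in the paper.
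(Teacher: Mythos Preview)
The paper does not prove this lemma; it is stated with a bare citation to Brieskorn--Saito. So there is no ``paper's proof'' to compare against, and your attempt stands on its own.

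Your Step 1 is correct and standard. Your Step 2, however, has a genuine gap in the final identification of the generator. Both approaches you sketch fail:

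\textbf{The height argument.} You claim that a central element $w$, by virtue of commuting with $a$, must have height a multiple of $m$ (resp.\ $2m$). But the centraliser of $a$ in $A_m$ contains $a$ itself, which has height $1$; commuting with $a$ imposes no height constraint whatsoever. The parenthetical justification (``only powers of $\gars^2$ achieve this when $m$ is odd'') is exactly the statement you are trying to prove, so the argument is circular.

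\textbf{The Coxeter-quotient argument.} The claim that $A_m / \langle z_{ab}\rangle \cong W_m$ is false. For instance when $m=4$, quotienting by $\gars = abab$ gives $\langle a,b \mid (ab)^2 = 1\rangle \cong \mathbb{Z} * C_2$, which is infinite, not the dihedral group of order $8$. The kernel of $A_m \twoheadrightarrow W_m$ is the pure Artin group, which is much larger than the centre.

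A correct route, entirely within the paper's toolkit, is via the Bass--Serre tree of Lemma \ref{dihedralArtinTreeForm}. A central element $w$ commutes with every group element, so if $w$ fixes one vertex it fixes every vertex (hence every edge), forcing $w$ into the edge group $\langle x^n\rangle = \langle \gars\rangle$ (even case) or $\langle x^2\rangle = \langle \gars^2\rangle$ (odd case). And $w$ must fix a vertex: otherwise $w$ is hyperbolic with a unique axis that all of $A_m$ would have to preserve, contradicting that the tree is not a line. This is essentially how Proposition \ref{dihedralCentralisers} recovers the centre later in the paper. Note also that invoking Theorem \ref{ThmTorsionAndCentres} for the spherical case is mildly circular, since that part of the theorem is itself attributed to Brieskorn--Saito.
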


We will restrict our attention to the fixed subgroups of automorphisms in a specific subgroup of the automorphism group of the dihedral Artin groups. As advertised, this will be the subgroup $\Aut_\Gamma(A_m)$, where $\Gamma$ is the obvious defining graph, which is just an edge labelled with $m$. In particular, $\Aut_\Gamma(A_m)$ is generated by inner automorphisms, the graph automorphism $\sigma\colon a \mapsto b, b \mapsto a$ and the global inversion $\iota\colon a \mapsto a^{-1}, b \mapsto b^{-1}$.

\begin{remark}
    In an odd dihedral Artin group $A_{2n+1}$, the graph automorphism $\sigma$ is equal to the inner automorphism $\varphi_{\gars}$. The quotient $\Aut_\Gamma(A_{2n+1})/\Inn(A_{2n+1})$ therefore forms a $C_2$ subgroup of $\Out(A_{2n+1})$. In an even dihedral Artin group $A_{2n}$, the quotient $\Aut_\Gamma(A_{2n})/\Inn(A_{2n})$ forms a $C_2 \times C_2$ subgroup of $\Out(A_{2n})$, since neither $\sigma$, $\iota$ nor their product are inner.
\end{remark}

It is worth noting that there exist automorphisms (outside of $\Aut_\Gamma(A_m)$) of even dihedral Artin groups with non-finitely generated fixed subgroups.

\begin{example}\label{infinitelyGenerated}
Let $A_{2n} = \langle a, b \ | \ (ab)^n = (ba)^n \rangle$, where $n \geq 2$, be a dihedral Artin group. Then $\gamma\colon G \rightarrow G$ given by $a \mapsto (ab)^{-n}a$ and $b \mapsto b(ab)^n$ is an automorphism. Furthermore, $\Fix(\gamma) = \{g \in G | f(g) = 0\}$, where $f\colon G \rightarrow \mathbb{Z}$ is given by $f\colon a \mapsto -1, b \mapsto 1$. This follows from the fact that $(ab)^n$ is in the centre of $G$. This fixed subgroup is not finitely generated.
\end{example}

Throughout this section we will restrict our attention to the automorphisms in $\Aut_\Gamma(A_m)$. To calculate fixed subgroups in $A_m$, we will make use of the following splittings.

\begin{lemma}\label{dihedralArtinTreeForm}
    \begin{enumerate}
        \item There is an isomorphism $\Phi\colon \langle x, y \ | \ x^2 = y^{2n+1} \rangle \rightarrow A_{2n+1} $, given by $$\Phi(x) = b(ab)^n \ \text{ and } \ \Phi(y) = ab.$$

        \item There is an isomorphism $\Psi\colon \langle x, t \ | \ tx^nt^{-1} = x^n \rangle \rightarrow A_{2n}$, given by $$\Psi(x) = ab \ \text{ and } \ \Psi(t) = b.$$
    \end{enumerate}
\end{lemma}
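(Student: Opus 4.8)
The plan is to verify directly that the two given maps are well-defined homomorphisms and are mutually inverse to explicit homomorphisms in the other direction. For part (1), I first check that $\Phi$ respects the relation of $\langle x, y \mid x^2 = y^{2n+1}\rangle$: one computes $\Phi(x)^2 = (b(ab)^n)^2$ and $\Phi(y)^{2n+1} = (ab)^{2n+1}$, and shows both equal the central element $z_{ab} = \gars^2$ using the Garside identity $\gars = aba\cdots = bab\cdots$ with $m = 2n+1$ terms (so $\gars = (ab)^n a = b(ab)^n$, whence $\gars^2 = b(ab)^n \cdot (ab)^n a = \dots = (ab)^{2n+1}$, after commuting $\gars$ past generators as it is central up to $\sigma$). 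Then I define a candidate inverse $\Phi^{-1}\colon A_{2n+1} \to \langle x, y \mid x^2 = y^{2n+1}\rangle$ by $a \mapsto y^{-1}x y^{-n}\cdot(\text{something})$ — more cleanly, since $ab = \Phi(y)$ and $b(ab)^n = \Phi(x)$ we get $b = \Phi(xy^{-n})$ and $a = \Phi(y \cdot (xy^{-n})^{-1}) = \Phi(y^{n+1}x^{-1})$, so set $\Phi^{-1}(a) = y^{n+1}x^{-1}$, $\Phi^{-1}(b) = xy^{-n}$. One checks this respects the Artin relation $\underbrace{aba\cdots}_{2n+1} = \underbrace{bab\cdots}_{2n+1}$, and that $\Phi^{-1}\Phi$ and $\Phi\Phi^{-1}$ are the identity on generators, hence on the groups. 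Part (2) is analogous: for $\Psi$, the relation $tx^nt^{-1} = x^n$ becomes $b(ab)^nb^{-1} = (ab)^n$, which holds because $(ab)^n = \gars$ is central in $A_{2n}$ (here $m = 2n$ is even, so $z_{ab} = \gars = (ab)^n = (ba)^n$); and the inverse is $\Psi^{-1}(a) = xt^{-1}$, $\Psi^{-1}(b) = t$, which visibly recovers $x = ab$, and one checks it respects $(ab)^n = (ba)^n$.

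The genuinely load-bearing computations are the centrality facts: that $(ab)^n = \gars$ is central in $A_{2n}$ and that $(ab)^{2n+1} = \gars^2$ is central in $A_{2n+1}$ with $\gars^2 = \Phi(x)^2 = \Phi(y)^{2n+1}$. These follow from the cited result \cite{brieskorn1972artin} identifying the centre, together with the standard rewriting $\gars w \gars^{-1} = \sigma(w)$ for all $w \in A_m$ (so $\gars^2$ is genuinely central); I would state this as a recalled fact rather than reprove it. Given centrality, all the relation-checks reduce to short monomial manipulations in the free group modulo the Artin relation.

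I expect the main obstacle to be purely bookkeeping: getting the exponents in $\Phi^{-1}$ and $\Psi^{-1}$ exactly right and confirming the Artin relations are preserved under the inverse maps, since a single off-by-one in $n$ versus $2n+1$ propagates. There is no conceptual difficulty — these are well-known presentations of torus-knot-group type ($\langle x,y\mid x^2=y^{2n+1}\rangle$ is the $(2,2n+1)$ torus knot group, and $\langle x,t\mid tx^nt^{-1}=x^n\rangle$ is an HNN / amalgam giving the graph-of-$\mathbb{Z}$ splitting advertised in the introduction) — so the cleanest writeup is simply to exhibit the four maps on generators and remark that the relation-checks and the compositions-are-identity checks are routine, spelling out only the centrality inputs in detail.
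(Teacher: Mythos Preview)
Your proposal is correct and takes essentially the same approach as the paper: exhibit the explicit inverse maps $\Phi^{-1}(a) = y^{n+1}x^{-1}$, $\Phi^{-1}(b) = xy^{-n}$ and $\Psi^{-1}(a) = xt^{-1}$, $\Psi^{-1}(b) = t$, and note that the relation checks are routine. The paper's proof is even terser than yours---it simply states the inverse formulas and calls the verifications straightforward---but your additional discussion of why the relations hold via centrality of $\gars$ (resp.\ $\gars^2$) is exactly the content of those checks.
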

\begin{proof}
    It is straightforward to check that the proposed isomorphisms are homomorphisms, and that inverse homomorphisms are given by  $\Phi^{-1}(a) = y^{n+1}x^{-1}$, $\Phi^{-1}(b) = xy^{-n}$, and $\Psi^{-1}(a) = xt^{-1}$, $\Psi^{-1}(b) = t$.

\end{proof}

From here, we will often write the presentations in Lemma \ref{dihedralArtinTreeForm} for the corresponding dihedral Artin groups $A_m$. We always implicitly identify them via the isomorphisms in the lemma.

The presentations in Lemma \ref{dihedralArtinTreeForm} expose splittings as an amalgamated product (for $A_{2n+1}$) or HNN extension (for $A_{2n}$). These splittings give rise to actions on the Bass-Serre trees associated with the splittings, which will be a key tool in this section. We briefly postpone the precise definition of the Bass-Serre trees to discuss the strategy.

Our method for finding fixed subgroups in the dihedral case will mimic our methods for Artin groups of higher rank later in the paper. We will make use of a compatible action of the automorphism group in the sense of Definition \ref{defCompatible}. For the dihedral Arting groups, the compatible action will be on the Bass-Serre tree corresponding to the splittings in Lemma \ref{dihedralArtinTreeForm}.

The fixed subgroups of odd dihedral Artin groups are computed by this method in \cite{jones2023fixed}, which classifies fixed subgroups for all torus knot groups, that is groups with a presentation $\langle x, y \ | \ x^p = y^q \rangle$. We summarise the relevant results in Theorem \ref{dihedralFixPointsOdd}.

\begin{theorem}\cite{jones2023fixed}\label{dihedralFixPointsOdd}
    Let $A_{2n+1}$ be an odd dihedral Artin group. Then the fixed subgroups of non-trivial automorphisms are as follows:
    
    \begin{enumerate}
        \item If $\varphi_g$ is of finite order, then $\Fix(\varphi_g) \cong \mathbb{Z}$ with $\langle g \rangle \leq_{f.i.} \Fix(\varphi_g)$. 

        Moreover, restricting to the case $g = \gars$, $\Fix(\varphi_{\gars}) = \langle \gars \rangle$.
        \item If $\varphi_g$ is of infinite order, then $\Fix(\varphi_g) \cong \mathbb{Z}^2$ with $\langle g \rangle \times \langle \gars^2 \rangle \leq_{f.i} \Fix(\varphi_g)$.
        \item If $\varphi_g\iota$ is of finite order, then $\Fix(\varphi_g\iota) = \{1\}$.
        \item If $\varphi_g\iota$ is of infinite order, then $\Fix(\varphi_g\iota) \cong \mathbb{Z}$ with $ \langle g \rangle \leq_{f.i.} \Fix(\varphi_g\iota)$.
    \end{enumerate}

\end{theorem}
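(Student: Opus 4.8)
The plan is to exploit the amalgamated product structure $A_{2n+1} \cong \langle x, y \mid x^2 = y^{2n+1}\rangle$ from Lemma \ref{dihedralArtinTreeForm}(1), which is a central extension $1 \to \langle z\rangle \to A_{2n+1} \to \mathbb{Z}/2 * \mathbb{Z}/(2n+1) \to 1$ with $z = x^2 = y^{2n+1} = \gars^2$. First I would set up the compatible action of $\Aut_\Gamma(A_{2n+1})$ on the Bass--Serre tree $T$ of this amalgam; since $\sigma = \varphi_{\gars}$ in the odd case (as noted in the Remark), every automorphism in $\Aut_\Gamma(A_{2n+1})$ is of the form $\varphi_g$ or $\varphi_g\iota$, so it suffices to treat those two families. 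For $\psi$ of one of these forms, $\Fix(\psi) \leq \Stab(T^\psi)$ by Corollary \ref{fixAction}, and the key is that the quotient $\overline{C(g)} = C(g)/\langle z\rangle$ embeds into (an edge or vertex stabiliser, or a small subtree of) $T$, whose vertex groups are $\mathbb{Z}$ and edge groups are trivial.

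For the inner automorphisms: $\Fix(\varphi_g) = C(g)$, so I would compute centralisers in $A_{2n+1}$ directly. Using isogredience (Lemma \ref{lemmaIsogredience}) I can conjugate $g$ into a normal form. If $\varphi_g$ has finite order in $\Out$, then $g$ projects to a torsion element (or the identity) in $\mathbb{Z}/2 * \mathbb{Z}/(2n+1)$, hence is conjugate to a power of $x$, $y$, or to a power of $z$; its centraliser in the free product quotient is a finite cyclic factor, so pulling back, $C(g)$ is virtually cyclic containing $\langle g\rangle$, and since $A_{2n+1}$ is torsion-free (Theorem \ref{ThmTorsionAndCentres}) with infinite cyclic centre, $C(g) \cong \mathbb{Z}$ with $\langle g\rangle$ of finite index; the special case $g = \gars$ gives $C(\gars) = \langle \gars\rangle$ by a direct check that nothing of smaller height centralises $\gars$. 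If $\varphi_g$ has infinite order, then $g$ projects to an infinite-order (hyperbolic) element of the free product, whose centraliser there is infinite cyclic, so $C(g)/\langle z\rangle \cong \mathbb{Z}$ and $C(g) \cong \mathbb{Z}^2$ containing $\langle g\rangle \times \langle z\rangle = \langle g\rangle \times \langle\gars^2\rangle$ with finite index.

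For the automorphisms $\varphi_g\iota$: since $\iota$ acts on $T$ and on the central $\mathbb{Z} = \langle z\rangle$ by inversion, the induced map on $H_1$ or on the abelianisation forces strong constraints. I would use Lemma \ref{LemmaUsefulEquation}-type fixed-point equations on $T$ together with the height homomorphism $ht$ (Definition \ref{definitionHeight}): $\iota$ negates height, so $\Fix(\varphi_g\iota)$ lies in $\ker(ht)$, and analysing the action on the tree shows $T^{\varphi_g\iota}$ is either empty/a point (finite order case, giving $\Fix = \{1\}$ since a nontrivial fixed element would have to live in a vertex stabiliser $\mathbb{Z}$ on which $\varphi_g\iota$ acts as an order-2 automorphism, i.e. inversion, with trivial fixed set) or a single axis (infinite order case), whose stabiliser modulo $\langle z\rangle$ is $\mathbb{Z}$ but $\varphi_g\iota$ again acts with $\mathbb{Z}$-fixed set, yielding $\Fix(\varphi_g\iota) \cong \mathbb{Z}$ with $\langle g\rangle$ of finite index.

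The main obstacle I anticipate is the bookkeeping in the $\varphi_g\iota$ cases: one must carefully track how $\varphi_g\iota$ acts on vertex and edge stabilisers of $T$ (which are cyclic, but the action need not be by the ``obvious'' inversion after conjugation), and rule out sporadic fixed elements. This is exactly the kind of tree-combinatorics that the paper defers to \cite{jones2023fixed}, so in practice I would cite that computation; the conceptual content is entirely captured by the central extension plus the structure of centralisers in a free product of cyclic groups.
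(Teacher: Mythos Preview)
The paper does not give its own proof of this theorem; it cites \cite{jones2023fixed} and only adds the short Remark that $\Fix(\varphi_{\gars}) = \langle \gars \rangle$ follows because $\langle \gars \rangle$ is a maximal cyclic subgroup (being a vertex stabiliser in the Bass--Serre tree). Your proposal correctly reconstructs the argument along exactly the lines the paper uses for the even case in Section~\ref{sectionDihedral}: the central extension $1 \to \langle z \rangle \to A_{2n+1} \to \mathbb{Z}/2 * \mathbb{Z}/(2n+1) \to 1$, centralisers computed via the free-product quotient, a compatible action on the Bass--Serre tree, and the height constraint forcing $\Fix(\varphi_g\iota) \subseteq \ker(ht)$.

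One small point: in part~4 you assert $\langle g \rangle \leq_{f.i.} \Fix(\varphi_g\iota)$ without checking that $g$ is actually fixed. The element that is manifestly fixed is $g\iota(g)$ (compare Lemma~\ref{dihedralInfiniteOrder} for the even case), and indeed the even analogue in Theorem~\ref{dihedralFixPointsEven}.(4) is stated with $g\iota(g)$ rather than $g$; you should either verify $g \in \Fix(\varphi_g\iota)$ or, more safely, work with $g\iota(g)$ as the finite-index generator. This is bookkeeping rather than a genuine gap, and you already flag the $\varphi_g\iota$ cases as the place where the details need care.
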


\begin{remark}
    The only point in the above theorem not explicitly shown in \cite{jones2023fixed} is the exact fixed subgroup for $\varphi_{\gars}$, but it follows easily from the proofs in that paper. In particular, it follows from the general statement that $\langle \gars \rangle \leq \Fix(\gars) \cong \mathbb{Z}$, and equality is achieved because $\langle \gars \rangle$ is a maximal cyclic subgroup (as it stabilises a vertex in the Bass-Serre tree considered in that paper).
\end{remark}

We now compute the fixed subgroups of automorphisms in $\Aut_\Gamma(A_m)$ for dihedral Artin groups $A_{2n}$. Throughout we will only consider the large-type case, that is $n \geq 2$. The remainder of this section will be proofs of the following, collected together at the end of the section.

\begin{theorem}\label{dihedralFixPointsEven}
    Let $A_{2n}$ be an even dihedral Artin group. Then we determine the fixed subgroups of each inducible automorphism.

    \begin{enumerate}

        \item If $\varphi_g$ is of finite order, then $\Fix(\varphi_g) \cong \mathbb{Z}$ with $\langle g \rangle \leq_{f.i.} \Fix(\varphi_g)$. Moreover $\langle \gars \rangle \leq_{f.i.} \Fix(\varphi_g)$.

        \item If $\varphi_g$ is of infinite order, then $\Fix(\varphi_g) \cong \mathbb{Z}^2$ with $\langle g \rangle \times \langle \Delta_{ab} \rangle \leq_{f.i.} \Fix(\varphi_g)$.

        \item If $\varphi_g\iota$ is finite order, then $\Fix(\varphi_g\iota) = \{1\}$.

        \item If $\varphi_g\iota$ is of infinite order, then $\Fix(\varphi_g\iota) \cong \mathbb{Z}$ with $\langle g\iota(g) \rangle \leq_{f.i.} \Fix(\varphi_g\iota)$ . 

        \item If $\varphi_g\sigma$ is finite order then $\Fix(\varphi_g\sigma) \cong \mathbb{Z}$. 
        
        Moreover, restricting to the case $g = 1$, $\Fix(\sigma) = \langle \Delta_{ab} \rangle$.

        \item If $\varphi_g\sigma$ is of infinite order then $\Fix(\varphi_g\sigma) \cong \mathbb{Z}^2$ with $\langle g\sigma(g) \rangle \times \langle \gars \rangle \leq_{f.i.} \Fix(\varphi_g\sigma)$.

        \item If $\varphi_g\sigma\iota$ is finite order then $\Fix(\varphi_g\sigma\iota) \cong \mathbb{Z}$. 
        
        Moreover, restricting to the case $g = 1$: for $n$ even, $\Fix(\sigma\iota) = \langle (ab)^{\frac{n}{2}}(ba)^{-\frac{n}{2}} \rangle$; For $n$ odd, $\Fix(\sigma\iota) = \langle b(ab)^{\frac{n-1}{2}}(ba)^{-\frac{n-1}{2}}a^{-1} \rangle$.

        \item If $\varphi_g\sigma\iota$ is of infinite order then $\Fix(\varphi_g \iota \sigma)  \cong \mathbb{Z}$ with $\langle g\sigma(g) \rangle \leq_{f.i.} \Fix(\varphi_g \iota \sigma)$.

    \end{enumerate}
\end{theorem}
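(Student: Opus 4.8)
The plan is to run the strategy outlined in the introduction in the concrete setting of the even dihedral Artin group $A_{2n}$ acting on the Bass-Serre tree $T = T_m$ coming from the HNN decomposition $A_{2n} \cong \langle x, t \mid t x^n t^{-1} = x^n\rangle$ of Lemma \ref{dihedralArtinTreeForm}(2). First I would set up the compatible action $\Aut_\Gamma(A_{2n}) \curvearrowright T$ (in the sense of Definition \ref{defCompatible}): the vertex stabilisers are conjugates of $\langle x\rangle = \langle ab\rangle$, the edge stabiliser is $\langle x^n\rangle = \langle \Delta_{ab}\rangle$, and one checks directly how $\sigma$ and $\iota$ permute vertices and edges, so that $\varphi_h$ acts as $h$. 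Since $T$ is a tree and all isometries are semisimple, each $\psi \in \Aut_\Gamma(A_{2n})$ is either elliptic (fixes a vertex or an edge, hence a subtree $T^\psi$) or hyperbolic (has an axis $T^\psi \cong \mathbb{R}$). By Corollary \ref{fixAction}, $\Fix(\psi) \leq \Stab(T^\psi)$, and this is the lever that reduces everything to a bounded-complexity computation.

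Next I would organise the proof by the eight cases, which correspond to the image of $\psi$ in the $C_2\times C_2$ quotient $\Aut_\Gamma(A_{2n})/\Inn(A_{2n})$ (trivial, $\iota$, $\sigma$, $\sigma\iota$) together with whether $\psi$ is elliptic or hyperbolic on $T$ (equivalently, finite or infinite order modulo the part that fixes a point). Cases (1)–(2) ($\psi = \varphi_g$) are just centralisers: $\Fix(\varphi_g) = C(g)$, and I would read off $C(g)$ from the tree — if $g$ is elliptic it lies in a vertex group $h\langle ab\rangle h^{-1}$ and $C(g) = h\langle ab\rangle h^{-1}$ contains $\langle g\rangle$ and $\langle\Delta_{ab}\rangle$ with finite index; if $g$ is hyperbolic then $C(g)$ preserves its axis and, using that $\Delta_{ab}$ is central, $C(g)\cong\mathbb{Z}^2$ with $\langle g\rangle\times\langle\Delta_{ab}\rangle$ finite index. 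For the cases involving $\iota$, $\sigma$, or $\sigma\iota$ I would use Lemma \ref{LemmaUsefulEquation}-style bookkeeping: write $\psi = \varphi_h\rho$ with $\rho\in\{\iota,\sigma,\sigma\iota\}$, determine $T^\psi$, and then compute $\Fix(\psi)$ inside $\Stab(T^\psi)$. When $T^\psi$ is a single vertex $v$, Lemma \ref{reductionToStabs} says $\Fix(\psi) = \Fix(\psi|_{\Stab(v)})$, reducing the problem to a fixed subgroup of an automorphism of $\mathbb{Z} = \langle ab\rangle$, which is either all of $\mathbb{Z}$ (if the induced map is the identity) or trivial (if it is inversion) — this yields $\{1\}$ or $\mathbb{Z}$. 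When $T^\psi$ is an edge or a line, $\Stab(T^\psi)$ is virtually $\mathbb{Z}$ or virtually $\mathbb{Z}^2$ and the same finite-index analysis gives the stated answers, with the explicit generators ($\langle g\iota(g)\rangle$, $\langle g\sigma(g)\rangle$, etc.) falling out of tracking how $\psi$ acts on a chosen hyperbolic element or on $\Delta_{ab}$. The "moreover" clauses for $g = 1$ (e.g. $\Fix(\sigma) = \langle\Delta_{ab}\rangle$, $\Fix(\sigma\iota) = \langle(ab)^{n/2}(ba)^{-n/2}\rangle$ for $n$ even, etc.) I would verify by a direct short computation in $A_{2n}$, using the normal form / the central quotient, since here the ambient automorphism is already in "standard position" and no conjugation is needed.

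The main obstacle I anticipate is the hyperbolic sub-cases where $\psi$ involves $\sigma$ or $\sigma\iota$ and has infinite order: here $T^\psi$ is an axis, $\psi$ acts on it as a nontrivial (possibly flip) isometry of $\mathbb{R}$, and one must (a) show $\Stab(T^\psi)$ is exactly a virtually-$\mathbb{Z}^2$ or virtually-$\mathbb{Z}$ group by understanding how line stabilisers interact with the central element $\Delta_{ab}$, and then (b) inside it, pin down the fixed subgroup of the induced affine action — the subtlety being that $\psi$ may reverse orientation of the axis or combine a translation with the flip, so the fixed subgroup can drop from $\mathbb{Z}^2$ to $\mathbb{Z}$ and one must identify the surviving generator explicitly as $\langle g\sigma(g)\rangle$ (the "norm" element), analogous to how $g\iota(g)$ appears in the $\iota$ case. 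A secondary technical point is making sure the elliptic-versus-hyperbolic dichotomy on $T$ matches the "finite versus infinite order" dichotomy in the statement; this requires checking that an automorphism fixing a point of $T$ and inducing the identity (or inversion) on the local $\mathbb{Z}$ is exactly the finite-order case, using torsion-freeness of $A_{2n}$ (Theorem \ref{ThmTorsionAndCentres}) and the structure of $\Aut_\Gamma(A_{2n})/\Inn$.
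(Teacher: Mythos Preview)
Your plan is correct and matches the paper's approach closely: the paper uses exactly this Bass--Serre tree, builds a compatible action of all of $\Aut(A_{2n})$ on it (after translating $\iota$ and $\sigma$ into the generators $\alpha,\beta,\gamma$ of $\Out(A_{2n})\cong C_2\times D_\infty$), and then treats each outer class by computing $T^\psi$ and restricting to its stabiliser. One recalibration: the case that costs the most work in the paper is not the hyperbolic one you flag as the main obstacle --- the infinite-order cases are dispatched uniformly via $\Fix(\psi)\leq\Fix(\psi^2)=C(g\psi(g))$ --- but the \emph{elliptic} $\sigma\iota$ case (point 7), where $T^{\sigma\iota}$ is a bi-infinite line rather than a single vertex, and pinning it down to exactly a line (not a larger subtree) requires a careful local analysis of which neighbours of a fixed vertex are themselves fixed.
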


For the proofs, we will use the presentation as a Baumslag-Solitar group from Lemma \ref{dihedralArtinTreeForm}, so $A_{2n} = \langle x,t | tx^nt^{-1} = x^n \rangle$. As advertised, we will consider the Bass-Serre tree corresponding to this splitting, which we are now ready to define (specialised to the even case).

\begin{definition}
    Given $A_{2n}$ viewed as the Baumslag-Solitar group $\langle x, t | tx^nt^{-1} = x^n \rangle$, the \emph{Bass-Serre tree}, $T$, is the following graph. Take vertices $v_{g\langle x \rangle}$ and edges $e_{g\langle x^n \rangle}$, identifying simplices where the corresponding cosets are equal. The endpoints of $e_{g\langle x^n \rangle}$ are $v_{g\langle x \rangle}$ and $v_{gt\langle x \rangle}$.
\end{definition}

We will write $G_s$ for the stabiliser of a simplex $s$. 

\begin{remark}\label{dihedralEvenOrientation}
     Each edge $e_{g\langle x \rangle}$ can be viewed as an oriented edge from $v_{g\langle x \rangle}$ to $v_{gt\langle x \rangle}$, and the group action respects this orientation.

\end{remark}

\begin{lemma}\cite{serre2002trees}\label{bassSerreLemma}
    The graph $T$ is a tree, justifying the name Bass-Serre tree. $A_{2n+1}$ acts on the left without inversion by left multiplication on cosets. The stabilisers are as follows, with vertices uniquely determined by their stabilisers: $G_{gv_{\langle x \rangle}} = g\langle x \rangle g^{-1}$ and $G_{ge_{\langle x^n \rangle}} = \langle x^n \rangle$.
\end{lemma}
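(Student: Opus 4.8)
First, note that the tree $T$ and the action in question are those attached to the group $A_{2n}$: the symbol ``$A_{2n+1}$'' in the statement must be read as $A_{2n}$, since $T$ was just defined as the Bass--Serre tree of the splitting of $A_{2n}$ coming from Lemma~\ref{dihedralArtinTreeForm}(2) (the presentation $\langle x,t\mid t x^{n} t^{-1}=x^{n}\rangle$ is that of $A_{2n}$, not $A_{2n+1}$). The plan is to recognise this presentation as an HNN extension and to read off every assertion from the standard Bass--Serre correspondence. Writing $H=\langle x\rangle\cong\mathbb{Z}$ and $A=\langle x^{n}\rangle\leq H$, the defining relation exhibits $A_{2n}$ as the HNN extension of $H$ with stable letter $t$ and associated edge group $A$, the identifying isomorphism $A\to A$ being the identity. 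With this identification the graph $T$ above --- vertices $gH$, edges $gA$, and $e_{gA}$ running between $v_{gH}$ and $v_{gtH}$ --- is precisely the Bass--Serre tree of this splitting.

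Applying the structure theorem for HNN extensions \cite{serre2002trees} then yields simultaneously that $T$ is a tree, that $A_{2n}$ acts on it by left multiplication on cosets, and that the vertex and edge stabilisers are the conjugates of the vertex and edge groups, namely $G_{g v_{\langle x\rangle}}=g\langle x\rangle g^{-1}$ and $G_{g e_{\langle x^{n}\rangle}}=g\langle x^{n}\rangle g^{-1}$. That the action is without inversions is immediate from the edge orientation recorded in Remark~\ref{dihedralEvenOrientation}, which left multiplication preserves. The edge stabiliser simplifies further: since $x^{n}$ commutes with $x$ trivially and with $t$ by the defining relation, and $x,t$ generate the group, $x^{n}$ is central; hence $g\langle x^{n}\rangle g^{-1}=\langle x^{n}\rangle$ for every $g$, giving $G_{g e_{\langle x^{n}\rangle}}=\langle x^{n}\rangle$ as stated.

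It remains to check that vertices are uniquely determined by their stabilisers, which is the only step not formally contained in the HNN correspondence. For this it suffices to show that each vertex is the unique point fixed by its stabiliser. Suppose $S\coloneqq g\langle x\rangle g^{-1}$ fixed a vertex $w\neq g v_{\langle x\rangle}$; since $T$ is a tree, $S$ would then fix the geodesic joining $w$ to $g v_{\langle x\rangle}$ pointwise, and in particular fix an incident edge $e$, forcing $S\leq G_{e}=\langle x^{n}\rangle$ (using centrality of $x^{n}$). This would give $\langle x\rangle=g^{-1}Sg\leq\langle x^{n}\rangle$, impossible because $[\langle x\rangle:\langle x^{n}\rangle]=n\geq 2$. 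Hence $S$ fixes only $g v_{\langle x\rangle}$, so two vertices with equal stabilisers must coincide. The one genuinely non-formal point is this uniqueness argument; the only other thing to watch is that the paper's orientation convention (endpoints $v_{g\langle x\rangle}$ and $v_{gt\langle x\rangle}$) matches the standard HNN one, so that the stabiliser identifications transfer without an index shift.
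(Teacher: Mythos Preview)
Your proof is correct. You rightly identify the typo (the statement concerns $A_{2n}$, not $A_{2n+1}$), correctly recognise the presentation as an HNN extension with base $\langle x\rangle$ and central edge group $\langle x^n\rangle$, and your argument that vertices are determined by their stabilisers is sound: if a vertex stabiliser fixed a second vertex it would fix an incident edge and hence embed in $\langle x^n\rangle$, contradicting $n\geq 2$.

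The paper, however, does not supply a proof at all: the lemma is simply attributed to Serre's book via the citation \cite{serre2002trees}, and the authors move on. So there is nothing to compare your argument against beyond noting that you have filled in what the paper leaves as a reference. Your write-up is more self-contained than what the paper offers; in particular, the uniqueness-of-stabilisers claim is used later (in the construction of the compatible $\Aut(A_{2n})$-action) and is worth having spelled out, as you do.
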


Next we will build a compatible action of $\Aut(A_{2n})$ on $T$. The (outer) automorphism group is as follows.

\begin{theorem}{\cite[Theorem D]{ghrm2000treeaction}}\label{dihedralEvenOut}
    $\Out(A_{2n}) \cong C_2 \times D_{\infty}$, with generators $[\alpha]$, $[\beta]$, $[\gamma]$,
    \begin{align*}
    \alpha\colon& \ x \mapsto x^{-1}, t \mapsto t\\
    \beta\colon& \ x \mapsto x, t \mapsto t^{-1}\\
    \gamma\colon& \ x \mapsto x, t \mapsto tx,
    \end{align*}
    where $[\beta]$ and $[\gamma]$ generate the $D_\infty$ factor, and $[\alpha\beta]$ generates the $C_2$ factor.
\end{theorem}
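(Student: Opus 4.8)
This is quoted from \cite{ghrm2000treeaction}, so in the paper it is used as a black box; but here is the route I would take. The anchor is the fact recorded above that $A_{2n}$ has infinite cyclic centre $Z=\langle\gars\rangle$, which under the presentation of Lemma \ref{dihedralArtinTreeForm} equals $\langle x^n\rangle$. Every automorphism preserves $Z$, giving a homomorphism $\epsilon\colon\Aut(A_{2n})\to\Aut(Z)\cong C_2$, and it is onto because the global inversion (equivalently $\alpha$) acts as $-1$ on $Z$. The plan is then to show that $[\alpha],[\beta],[\gamma]$ generate $\Out(A_{2n})$, that they span an internal direct product $\langle[\alpha\beta]\rangle\times\langle[\beta],[\gamma]\rangle$, and that these factors are $C_2$ and $D_\infty$ respectively.

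The inclusion $C_2\times D_\infty\hookrightarrow\Out(A_{2n})$ is a finite check. One verifies that $\alpha,\beta,\gamma$ respect the relation $[t,x^n]=1$ and have two-sided inverses, so are automorphisms; that $(\alpha\beta)^2=\beta^2=\mathrm{id}$ and $\alpha\beta=\beta\alpha$; and, modulo $\Inn(A_{2n})$, that $\alpha\gamma\alpha=\beta\gamma\beta=\gamma^{-1}$ and $(\gamma\beta)^2\in\Inn(A_{2n})$. Thus $[\alpha\beta]$ is a central involution in $\Out(A_{2n})$, while $[\beta]$ and $[\gamma\beta]$ are involutions whose product is $[\gamma]^{-1}$. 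That $[\gamma]$ has infinite order in $\Out(A_{2n})$ comes from $\gamma^k\colon x\mapsto x,\ t\mapsto tx^k$ together with $C_{A_{2n}}(x)=\langle x\rangle$ (the latter obtained by projecting to $A_{2n}/Z\cong C_n\ast\mathbb{Z}$ and using that centralisers of non-trivial elements of a free factor lie in that factor): if $\gamma^k=\varphi_{x^m}$, Britton's Lemma forces $k=0$. So $\langle[\beta],[\gamma]\rangle\cong D_\infty$, and $[\alpha\beta]\notin\langle[\beta],[\gamma]\rangle$ since $\epsilon$ is trivial on the latter but $\epsilon(\alpha\beta)=-1$; this exhibits the claimed $C_2\times D_\infty$ inside $\Out(A_{2n})$.

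The substance is the reverse inclusion: every automorphism lies, modulo $\Inn(A_{2n})$, in $\langle[\alpha],[\beta],[\gamma]\rangle$. I would study the induced action on $\bar G:=A_{2n}/Z\cong C_n\ast\langle\bar t\rangle$. The crucial point is that for any $\phi\in\Aut(A_{2n})$ the element $\phi(x)$ is conjugate to $x^{\pm1}$: indeed $\phi(x)^n=\phi(x^n)$ must be a generator of $Z$, and every $n$-th root of a generator of $Z$ is conjugate to $x^{\pm1}$ (this is exactly why the answer does not depend on $n$ --- the $\Aut(C_n)$-worth of automorphisms of $\bar G$ does \emph{not} lift). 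Hence $\bar\phi$ carries the $C_n$-factor to a conjugate of itself via $\bar x\mapsto\bar x^{\pm1}$, and combining this with the Kurosh subgroup theorem and the standard generating set of $\Aut$ of a free product (factor automorphisms, inversion of the $\mathbb{Z}$-factor, transvections, inner) one shows $\mathrm{im}(\Aut(A_{2n})\to\Aut(\bar G))$ is generated by the images of $\alpha,\beta,\gamma$ together with $\Inn(\bar G)$. One then computes the kernel of $\Aut(A_{2n})\to\Aut(\bar G)$ inside $\mathrm{Hom}(A_{2n}^{\mathrm{ab}},Z)\cong\mathbb{Z}^2$ --- it is generated by $\gamma^n$ (with a small extra term when $n=2$) --- and reassembles, using $\Inn(A_{2n})\cap\ker=\{1\}$, to conclude $\Out(A_{2n})=\langle[\alpha],[\beta],[\gamma]\rangle$. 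A more conceptual alternative: the one-loop HNN splitting $A_{2n}=\langle x\rangle\ast_{\langle x^n\rangle}$ is canonical (its edge group is $Z$), so $\Aut(A_{2n})$ acts on the Bass--Serre tree $T$, and $\Out(A_{2n})$ can then be read off from the Bass--Jiang/Levitt description of automorphisms of a graph of groups \cite{bass1996automorphism,levitt2007automorphism} --- the edge flip is $\beta$, the vertex-group automorphism compatible with the two edge inclusions is $\alpha$, and the twist of the stable letter by the edge group is $\gamma$.

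The main obstacle is precisely this completeness statement: ruling out automorphisms beyond the edge flip, the vertex inversion and the twists --- concretely, the observation that an automorphism must send $x$ to a conjugate of $x^{\pm1}$, so that the $\Aut(C_n)$-symmetries of $A_{2n}/Z$ are not inherited by $A_{2n}$. The remaining ingredients --- that $\alpha,\beta,\gamma$ are automorphisms, the relations among them, and the non-degeneracy of the resulting $C_2\times D_\infty$ --- are routine bookkeeping with HNN normal forms.
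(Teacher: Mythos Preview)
The paper does not prove this statement; it is cited verbatim from \cite{ghrm2000treeaction} and used as a black box. So there is no ``paper's own proof'' to compare against --- you correctly anticipated this and offered a self-contained sketch.

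Your sketch is sound. The inclusion $C_2\times D_\infty\hookrightarrow\Out(A_{2n})$ is a direct verification, and your checks are correct (note that $\beta\gamma\beta=\gamma^{-1}$ holds only modulo $\Inn$, via $(\beta\gamma)^2=\varphi_{x^{-1}}$, which you state but which is worth flagging since $\alpha\gamma\alpha=\gamma^{-1}$ holds on the nose). For the reverse inclusion, the key observation --- that $\phi(x)$ is conjugate to $x^{\pm1}$ because $n$-th roots of a generator of $Z$ are conjugate to $x^{\pm1}$ --- is exactly right, and it is what cuts the image in $\Aut(C_n\ast\mathbb{Z})$ down from the full Fouxe-Rabinovitch generating set to the one you want. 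The kernel computation is also correct, including the $n=2$ wrinkle where $\alpha$ becomes trivial on $\bar G$.

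Your ``more conceptual alternative'' via the canonical one-edge HNN splitting and the Bass--Jiang/Levitt machinery is in fact the route taken in \cite{ghrm2000treeaction} (as the title suggests), so if you want to align with the cited source, that paragraph is the one to expand. Your central-quotient argument is a legitimate and slightly more hands-on alternative; it buys you an explicit handle on which automorphisms of $A_{2n}/Z$ lift, at the cost of the small case analysis in the kernel.
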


We are now ready to construct the compatible action. This is in the spirit of \cite{ghrm2000treeaction}, which builds compatible actions for many one relator groups acting on trees, but does not handle this case.

\begin{proposition}
    Let $A_{2n} = \langle x, t | tx^nt^{-1} = x^n \rangle$ be an even dihedral Artin group and $T$ be the Bass-Serre tree. Then there is a compatible action of $\Aut(A_{2n})$ on $T$, where, given a vertex $v$ and an automorphism $\varphi$, $$G_{\varphi \cdot v} = \varphi(G_v).$$ Given that vertices of $T$ are uniquely determined by their stabilisers, this defines the action.
\end{proposition}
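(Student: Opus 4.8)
The plan is to define the action of $\Aut(A_{2n})$ on the vertex set of $T$ by the formula $G_{\varphi \cdot v} = \varphi(G_v)$, and then verify that this is well-defined, simplicial, and compatible. Since Lemma \ref{bassSerreLemma} tells us vertices are uniquely determined by their stabilisers, the first task is to check that $\varphi(G_v)$ really is the stabiliser of some vertex of $T$, i.e. that it is a conjugate of $\langle x \rangle$. For this I would use the characterisation of $\langle x \rangle$ up to conjugacy inside $A_{2n}$: concretely, $\langle x \rangle$ is a maximal parabolic-like subgroup, or more elementarily, one can check directly that the three generating automorphisms $\alpha, \beta, \gamma$ of $\Out(A_{2n})$ from Theorem \ref{dihedralEvenOut} (together with inner automorphisms) each send $\langle x \rangle$ to a conjugate of itself — indeed $\alpha(\langle x \rangle) = \langle x \rangle$, $\beta(\langle x \rangle) = \langle x \rangle$, and $\gamma(\langle x \rangle) = \langle x \rangle$, and inner automorphisms obviously send conjugates of $\langle x \rangle$ to conjugates of $\langle x \rangle$. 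Since these generate $\Aut(A_{2n})$, every automorphism preserves the conjugacy class of $\langle x \rangle$, so $\varphi(G_v)$ is a vertex stabiliser; uniqueness of the vertex with that stabiliser makes $\varphi \cdot v$ well-defined.

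Next I would check this assignment is a group action: for $\varphi, \psi \in \Aut(A_{2n})$ we have $G_{(\varphi\psi)\cdot v} = (\varphi\psi)(G_v) = \varphi(\psi(G_v)) = \varphi(G_{\psi \cdot v}) = G_{\varphi \cdot (\psi \cdot v)}$, and again uniqueness of stabilisers forces $(\varphi\psi)\cdot v = \varphi\cdot(\psi\cdot v)$; the identity automorphism clearly acts trivially. Then I would extend the action to edges. An edge $e$ of $T$ is determined by its pair of endpoints $\{v, v'\}$ (equivalently by the unordered pair of their stabilisers, or by the edge stabiliser $G_e = G_v \cap G_{v'}$, which for the standard edge is $\langle x^n \rangle$). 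Define $\varphi \cdot e$ to be the edge with endpoints $\varphi \cdot v$ and $\varphi \cdot v'$; one must check such an edge exists, i.e. that $\varphi\cdot v$ and $\varphi \cdot v'$ are joined by an edge in $T$. This follows because adjacency of $v,v'$ is equivalent to $G_v \cap G_{v'}$ being conjugate to $\langle x^n \rangle$ (and of the appropriate index), a condition preserved by the automorphism $\varphi$; alternatively, write $v = g\langle x\rangle$, $v' = gt\langle x\rangle$ and track images coset-wise. This makes the action simplicial and without inversions (the orientation of Remark \ref{dihedralEvenOrientation} is also respected, since $\gamma, \alpha, \beta$ and inner automorphisms all send the oriented edge class to itself).

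Finally, compatibility: I need $\Psi \circ i = \Omega$ where $i(g) = \varphi_g$, i.e. that the automorphism $\varphi_g$ acts on $T$ in the same way as the element $g$ under the Bass-Serre action. This is immediate from the stabiliser formula: $G_{\varphi_g \cdot v} = \varphi_g(G_v) = g G_v g^{-1} = G_{gv}$, and uniqueness of stabilisers gives $\varphi_g \cdot v = gv$; hence the restriction of the $\Aut(A_{2n})$-action to $\Inn(A_{2n})$ agrees with the left-multiplication action of $A_{2n}$ on $T$.

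The main obstacle is the well-definedness step — showing that an arbitrary automorphism of $A_{2n}$ preserves the conjugacy class of $\langle x \rangle$ (and, for the edge case, of $\langle x^n\rangle$). The cleanest route is to reduce to generators of $\Aut(A_{2n})$ via Theorem \ref{dihedralEvenOut} and check the three explicit automorphisms $\alpha, \beta, \gamma$ by hand; the slight subtlety is that Theorem \ref{dihedralEvenOut} describes $\Out$, so one must remember that lifting to $\Aut$ only introduces inner automorphisms, which trivially preserve conjugacy classes, so checking the named representatives suffices. Everything else is bookkeeping with cosets and the uniqueness-of-stabilisers principle from Lemma \ref{bassSerreLemma}.
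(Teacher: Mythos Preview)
Your proof is essentially the same as the paper's: both reduce well-definedness to checking that the generators $\alpha,\beta,\gamma$ fix $\langle x\rangle$ (so any automorphism sends a conjugate of $\langle x\rangle$ to a conjugate), verify adjacency is preserved by tracking the coset representatives on generators, and derive compatibility from $G_{\varphi_g\cdot v}=gG_vg^{-1}=G_{gv}$.

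One correction: your parenthetical claim that the action is without inversions and that $\beta$ respects the orientation of Remark~\ref{dihedralEvenOrientation} is false. The automorphism $\beta$ sends $t\mapsto t^{-1}$, so it swaps the endpoints of the standard edge $e_{\langle x^n\rangle}$ (whose oriented endpoints are $v_{\langle x\rangle}$ and $v_{t\langle x\rangle}$); the paper records this explicitly in Remark~\ref{dihedralAutOrientation}. This does not affect the proposition itself---compatibility does not require the action to be without inversions---but you should drop that claim. Also, your proposed abstract criterion for adjacency via $G_v\cap G_{v'}$ being conjugate to $\langle x^n\rangle$ is delicate here (since $\langle x^n\rangle$ is central, every edge stabiliser literally equals $\langle x^n\rangle$, and non-adjacent vertices can also have intersection $\langle x^n\rangle$); the coset-tracking alternative you mention is the right move and is exactly what the paper does.
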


\begin{proof}
    First, we show that the vertex stabilisers are permuted by the automorphism. Write an arbitrary automorphism as $\varphi_h\psi$ where $\varphi_h$ is conjugation by $h$ and $\psi$ is in the subgroup generated by $\alpha$, $\beta$, $\gamma$, as defined in Theorem \ref{dihedralEvenOut}. Notice that $\psi(\langle x \rangle) = \langle x \rangle$, so $$\varphi_h\psi(g\langle x \rangle g^{-1}) = h\psi(g)\langle x \rangle \psi(g)^{-1}h^{-1}.$$ So $\varphi_h\psi$ sends vertex stabilisers to vertex stabilisers, and does so surjectively. To see this is well-defined and injective, notice that $\varphi_h\psi(g_1\langle x \rangle g_1^{-1}) = \varphi_h\psi(g_2\langle x \rangle g_2^{-1})$ if and only if $\psi(g_2^{-1}g_1) \in \langle x \rangle$. Since $\psi(\langle x \rangle) = \langle x \rangle$, this is equivalent to $g_2^{-1}g_1 \in \langle x \rangle$, and therefore $g_1\langle x \rangle g_1^{-1} = g_2 \langle x \rangle g_2^{-1}$. Thus, $\Aut(A_{2n})$ acts on the vertex set.
    
    To see that this extends to an action on the tree, we need to show that adjacent vertices are sent to adjacent vertices. We only need to check this for a generating set of $\Aut(A_{2n})$. For inner automorphisms, this follows by Lemma \ref{bassSerreLemma} and the fact that $A_{2n}$ acts on $T$. Now we consider $\gamma$, as defined in Theorem \ref{dihedralEvenOut}, and two adjacent vertices $gv_{\langle x \rangle}$ and $gtv_{\langle x \rangle}$. The image of the stabilisers is as follows, $$\gamma(g\langle x \rangle g^{-1}) = \gamma(g) \langle x \rangle \gamma(g)^{-1},$$ and, $$\gamma(gt\langle x \rangle t^{-1}g^{-1}) = \gamma(g)tx \langle x \rangle x^{-1}t^{-1}\gamma(g)^{-1} = \gamma(g)t\langle x \rangle t^{-1}\gamma(g)^{-1}.$$ In particular $\gamma \cdot gv_{\langle x \rangle} = \gamma(g)v_{\langle x \rangle}$ and $\gamma \cdot gtv_{\langle x \rangle} = \gamma(g)tv_{\langle x \rangle}$, so the image vertices are still adjacent. The check for $\alpha$ and $\beta$ is similar.
    
    Finally, we check compatibility. Given $g \in A_{2n}$, $G_{\varphi_g \cdot v} = gG_vg^{-1}$. Since $gG_vg^{-1} = G_{gv}$ by Lemma \ref{bassSerreLemma}, and vertices are characterised by their stabilisers, it must be that $\varphi_g \cdot v = gv$ as required.
\end{proof}

\begin{remark}\label{dihedralAutOrientation}
    
The action described above has edge inversions.

In particular, the action of $\beta$ inverts the orientation on all the edges of the Bass-Serre tree mentioned in Remark \ref{dihedralEvenOrientation}, and any element which inverts the orientation of the edges and setwise fixes an edge will invert that edge.

It is an easy check that $\alpha$, $\gamma$ and the inner automorphisms all preserve the orientation of the edges.

\end{remark}

The following lemma characterises the $\Aut_\Gamma(A_{2n})$.

\begin{lemma}\label{inducibleBSAutomorphisms}
    The $\Aut_\Gamma(A_{2n})$ is exactly the union of outer automorphism classes $[1]$, $[\alpha\beta]$, $[\alpha\gamma]$ and $[\beta\gamma]$.
\end{lemma}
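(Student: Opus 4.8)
The plan is to match up the two generating sets for $\Aut_\Gamma(A_{2n})$ — the algebraic one from the definition (inner automorphisms, the graph flip $\sigma$, the global inversion $\iota$) and the Baumslag--Solitar one from Theorem \ref{dihedralEvenOut} (the classes $[\alpha]$, $[\beta]$, $[\gamma]$) — by expressing each generator of one in terms of the other, modulo inner automorphisms. Since $\Inn(A_{2n})$ is contained in both $\Aut_\Gamma(A_{2n})$ and the group generated by $\alpha,\beta,\gamma$ (the latter because $\Out(A_{2n})$ is the quotient by $\Inn$), it suffices to work at the level of $\Out(A_{2n}) \cong C_2 \times D_\infty$ and identify which cosets the images of $\sigma$ and $\iota$ land in.

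First I would compute $[\iota]$. Under the isomorphism $\Psi$ of Lemma \ref{dihedralArtinTreeForm} with $\Psi(x) = ab$, $\Psi(t) = b$, the global inversion $\iota\colon a,b \mapsto a^{-1},b^{-1}$ acts by $x = ab \mapsto a^{-1}b^{-1} = (ba)^{-1}$ and $t = b \mapsto b^{-1}$. Now $(ba)^{-1}$ is conjugate to $(ab)^{-1} = x^{-1}$ (conjugate by $b = t$, say, since $b(ab)^{-1}b^{-1} = (ba)^{-1}$), so up to composing with an inner automorphism $\iota$ sends $x \mapsto x^{-1}$ and $t \mapsto t^{\pm 1}$; a short bookkeeping computation pins down the sign and the possible extra power of $x$ on $t$, landing $[\iota]$ in the class $[\alpha\beta]$ (the $C_2$ factor). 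Then I would compute $[\sigma]$ similarly: $\sigma\colon a \leftrightarrow b$ sends $x = ab \mapsto ba$ (conjugate to $x$) and $t = b \mapsto a = (ab)b^{-1} = x t^{-1}$, i.e. after an inner correction $\sigma$ looks like $x \mapsto x$, $t \mapsto x^{\epsilon} t^{-1}$, placing $[\sigma]$ in the coset $[\beta\gamma]$ or $[\beta]$ inside the $D_\infty$ factor; I expect $[\beta\gamma]$ after the correction. Finally $[\sigma\iota]$ is then the product of these two classes. Having located $[\iota]$, $[\sigma]$, $[\sigma\iota]$, and noting $[1]$ is the class of all inner automorphisms, I would check that $\{[1], [\iota], [\sigma], [\sigma\iota]\}$ equals $\{[1], [\alpha\beta], [\beta\gamma], [\alpha\gamma]\}$ as a set (it is the Klein four-subgroup $\langle [\alpha\beta], [\beta\gamma]\rangle$ of $C_2 \times D_\infty$), and conversely that each of $[\alpha\beta], [\beta\gamma], [\alpha\gamma]$ is hit, so that the subgroup of $\Out$ generated by $[\iota], [\sigma]$ is exactly this $V_4$. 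Pulling back along $\Out(A_{2n}) = \Aut(A_{2n})/\Inn(A_{2n})$ gives the claim.

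The main obstacle is purely computational rather than conceptual: carefully tracking the inner-automorphism corrections (the stray conjugators and the powers of $x$ attached to $t$) when rewriting $\iota$ and $\sigma$ in the $x,t$ coordinates, so that one correctly identifies the $\Out$-classes rather than being off by a $[\gamma]$. The orientation observations in Remarks \ref{dihedralAutOrientation} and \ref{dihedralEvenOrientation} provide a useful sanity check here: $\beta$ is the unique generator inverting edge orientations, so $[\sigma]$ and $[\sigma\iota]$ — which should invert orientations — must have odd $\beta$-parity, while $[\iota]$ should not; this parity constraint rules out several of the potential miscalculations and confirms $[\iota] \in [\alpha\beta]$ has even $\beta$-content only in the sense that... (one must be slightly careful since $\alpha\beta$ does contain a $\beta$) — more precisely one checks directly from the formulas which of $\alpha,\beta,\gamma$ appear, using that $\iota$ fixes no edge-orientation data in the problematic way. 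Modulo this care, the argument is a finite check.
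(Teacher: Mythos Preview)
Your approach is correct and essentially identical to the paper's: compute $[\iota]$ and $[\sigma]$ in the Baumslag--Solitar coordinates and identify the resulting Klein four-subgroup of $\Out(A_{2n})$. The paper short-circuits the bookkeeping you anticipate by writing down the exact identities $\iota = \varphi_t\,\alpha\beta$ and $\sigma = \varphi_t\,\beta\gamma$ directly (making your orientation sanity check---which is slightly muddled, since $[\iota] = [\alpha\beta]$ \emph{does} involve $\beta$ and hence does invert edge orientations---unnecessary).
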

\begin{proof}
    The inner automorphisms are the same regardless of presentation, so are certainly elements of $\Aut_\Gamma(A_{2n})$. This means $\Aut_\Gamma(A_{2n})$ is a union of outer automorphism classes.

    Now it is not hard to check, using the isomorphisms from Lemma \ref{dihedralArtinTreeForm}, that the automorphism $\iota$ known as the global inversion is given by $\varphi_t \alpha \beta$, and $\sigma$, the sole graph automorphism, is given by $\varphi_t \beta \gamma$. So the $C_2 \times C_2$ subgroup of $\Out(A_{2n})$ comprising of the $\Aut_\Gamma(A_{2n})/\Inn(A_{2n})$ is given by exactly the classes in the statement.
\end{proof}

We are now ready to consider fixed subgroups of the automorphisms in $\Aut_\Gamma(A_{2n})$. First we describe the centralisers. The following result is well known, but proved here using the language of compatible actions for completeness. Notice that for any element $g$ in any group, $C(g) = \{h | ghg^{-1} = h\} = \Fix(\varphi_g)$. We will also make use of centralisers when arguing about fixed subgroups of non-inner automorphisms.

\begin{proposition}\label{dihedralCentralisers}
    The centralisers in $A_{2n}$ are as follows:

    \begin{enumerate}
        \item If $g = x^{kn}$ for some integer $k$, then $C(g) = A_{2n}$.
        \item If $g = hx^kh^{-1}$  where $n \nmid k$, then $C(g) = \langle hxh^{-1} \rangle \cong \mathbb{Z}$.
        \item If $g$ acts hyperbolically on $T$, then $C(g) = \langle \hat{g} \rangle \times \langle x^n \rangle \cong \mathbb{Z}^2$, where $\hat{g}$ is an element of minimal non-zero translation length acting along $T^g$, the axis of translation of $g$.
    \end{enumerate}
\end{proposition}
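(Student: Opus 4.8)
The plan is to analyse the action of $g$ on the Bass-Serre tree $T$ and split into the cases according to whether $g$ is elliptic or hyperbolic, using the fact that the centraliser $C(g)$ leaves $T^g$ (a point or an axis) invariant. First I would recall from Lemma \ref{bassSerreLemma} that a vertex stabiliser is a conjugate $h\langle x\rangle h^{-1}$, the edge stabilisers are all equal to $\langle x^n\rangle$, and that $\langle x^n\rangle$ is central in $A_{2n}$. So if $g = x^{kn}$ lies in (the centre) $\langle x^n\rangle$, then trivially $C(g) = A_{2n}$, giving case (1).

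For case (2), suppose $g$ is elliptic but $g \notin \langle x^n\rangle$, so after conjugating we may assume $g = x^k$ with $n \nmid k$; then $g$ fixes $v_{\langle x\rangle}$ and no edge (since $x^k \notin \langle x^n\rangle$), hence $T^g = \{v_{\langle x\rangle}\}$ is a single vertex. Any element of $C(g)$ must preserve $T^g$, so $C(g) \le G_{v_{\langle x\rangle}} = \langle x\rangle$; conversely $\langle x\rangle$ is abelian so centralises $x^k$, giving $C(x^k) = \langle x\rangle$, and conjugating back yields $C(g) = \langle hxh^{-1}\rangle \cong \mathbb{Z}$. The key point here is that $x^k$ with $n\nmid k$ fixes exactly one vertex; this is where the hypothesis $n\nmid k$ is essential, since otherwise $g$ would fix an edge and hence infinitely many vertices.

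For case (3), suppose $g$ acts hyperbolically, with axis $T^g$ and $\hat g$ an element of minimal positive translation length along $T^g$ (so $g = \hat g^m$ for some $m\neq 0$). By Theorem \ref{TheoremBH} applied to the tree (or directly: in a tree the minimal set is the axis), $C(g)$ leaves $T^g$ invariant and acts on it by isometries; since $A_{2n}$ acts without inversions, $C(g)$ maps to a subgroup of $\mathrm{Isom}^+(\mathbb{R}) \cong \mathbb{Z}$ via the translation-length homomorphism, with kernel the pointwise stabiliser of $T^g$. The pointwise stabiliser of any edge of $T^g$ is $\langle x^n\rangle$ (up to conjugacy all edge stabilisers equal $\langle x^n\rangle$, and this is central so genuinely fixes the whole axis), and I would argue that the pointwise stabiliser of the whole axis is exactly $\langle x^n\rangle$ by intersecting with an adjacent vertex stabiliser, which is infinite cyclic. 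The image of $C(g)$ in $\mathbb{Z}$ is nontrivial (it contains the image of $g$) hence infinite cyclic, and I claim it is generated by the image of $\hat g$: indeed any $c \in C(g)$ translates along $T^g$ by a multiple of the translation length of $\hat g$ by minimality, so $c\hat g^{-j}$ is elliptic fixing the axis pointwise for suitable $j$, i.e.\ lies in $\langle x^n\rangle$. Thus $C(g) = \langle \hat g, x^n\rangle$. Finally $x^n$ is central so commutes with $\hat g$, and $\langle \hat g\rangle \cap \langle x^n\rangle = \{1\}$ because $\hat g$ is hyperbolic while $x^n$ is elliptic; hence $C(g) = \langle \hat g\rangle \times \langle x^n\rangle \cong \mathbb{Z}^2$.

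The main obstacle I anticipate is the bookkeeping in case (3): one must verify carefully that $\hat g \in C(g)$ (equivalently that $\hat g$ commutes with $g = \hat g^m$, which is clear) but also that $\hat g$ actually centralises $g$ and not merely normalises $\langle g\rangle$ — this is automatic since powers of $\hat g$ commute — and, more substantively, that the pointwise stabiliser of the axis is precisely $\langle x^n\rangle$ rather than something larger, which requires knowing the explicit vertex and edge stabilisers and using centrality of $x^n$. A secondary subtlety is ensuring that the element of minimal translation length $\hat g$ can be chosen inside $C(g)$; this follows from the argument above once one knows the translation-length image of $C(g)$ is cyclic and generated by an element realising the minimum on the axis. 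Everything else is routine once the tree-theoretic dichotomy elliptic/hyperbolic is set up.
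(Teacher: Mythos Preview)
Your proposal is correct and follows essentially the same approach as the paper: analyse the action on the Bass--Serre tree, use that $C(g)$ preserves $T^g$, and in the hyperbolic case reduce to the translation-length map with kernel $\langle x^n\rangle$. One small imprecision: the parenthetical ``so $g=\hat g^{m}$ for some $m\neq 0$'' is not literally true, only $g\hat g^{-m}\in\langle x^n\rangle$; the paper sidesteps this by observing that $\Stab(T^g)=\langle\hat g\rangle\times\langle x^n\rangle$ is abelian and contains $g$, which immediately gives both inclusions $C(g)\subseteq\Stab(T^g)\subseteq C(g)$ and hence $\hat g\in C(g)$ without needing $g$ to be a pure power of $\hat g$.
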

\begin{proof}
    Since $x^{kn}$ commutes with $x$ and $t$, it is in the centre (in fact, as we will see from the other cases, the centre is exactly $\langle x^n \rangle$). So clearly its centraliser is the whole group.

    If $n \nmid k$, the action of $hx^kh^{-1}$ fixes the vertex $hv_{\langle x \rangle}$ and nothing else, so by Corollary \ref{fixAction} the centraliser of $hx^kh^{-1}$ is contained in $\langle hxh^{-1} \rangle$. Each element in this subgroup clearly commutes with $hx^kh^{-1}$ so this is the whole centraliser.

    If $g$ is not of either of the previous forms then it fixes no vertices so must act hyperbolically. By Corollary \ref{fixAction} the centraliser of $C(g) \leq \Stab(T^g)$. Take $\hat{g}$ an element acting along $T^g$ such that $l(\hat{g})$ is minimal and non-zero, where $l(\cdot)$ is the translation length.

    Suppose $h$ acts along $T^g$. Replacing $h$ by $h^{-1}$ if necessary, we may assume it acts in the same direction as $\hat{g}$. Write $l(h) = ql(\hat{g}) + r$, where $q$ and $0 \leq r < l(\hat(g)$ are integers. Then $h\hat{g}^{-q}$ acts on $T^g$ with translation length $r$. By the assumed minimality of $l(\hat{g})$, it must be that $r = 0$. Therefore $l(h) = ql(\hat{g})$, where $q$ is an integer.

    Notice that $h\hat{g}^{-q}$ fixes the entire axis $T^g$ pointwise. In particular, it is in the stabiliser of some edge, that is $h\hat{g}^{-q} \in \langle x^n \rangle$, or $h \in \langle \hat{g} \rangle \times \langle x^n \rangle$. So $\Stab(T^g) = \langle \hat{g} \rangle \times \langle x^n \rangle$. This is an abelian subgroups containing $g$ so $\Stab(T^g) \leq C(g)$. With both inclusions shown, this completes the proof.
\end{proof}

\begin{lemma}\label{dihedralOrder}
    Take $\psi \in \Aut_\Gamma(A_{2n})$. Then $\psi$ is finite order in $\Aut(A_{2n})$ if and only if the action of $\psi$ fixes a point on $T$.
\end{lemma}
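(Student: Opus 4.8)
The plan is to prove both implications via the dichotomy between elliptic and hyperbolic isometries of the tree $T$ (recall $T$ is a simplicial tree, so every isometry in our compatible action is either elliptic — fixing a point — or hyperbolic — translating along an axis, since there are finitely many shapes). The non-trivial direction is: if $\psi \in \Aut_\Gamma(A_{2n})$ fixes a point of $T$, then $\psi$ has finite order in $\Aut(A_{2n})$.

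First I would reduce to a convenient representative. By Lemma \ref{inducibleBSAutomorphisms}, $\psi$ lies in one of the outer classes $[1]$, $[\alpha\beta]$, $[\alpha\gamma]$, $[\beta\gamma]$, so write $\psi = \varphi_h \psi_0$ with $\psi_0 \in \{1, \alpha\beta, \alpha\gamma, \beta\gamma\}$. Using Lemma \ref{lemmaIsogredience} and the fact that isogredient automorphisms have conjugate fixed subgroups (and conjugate — hence equal-order — as elements of $\Aut(A_{2n})$), together with Corollary \ref{fixAction}-type considerations, it suffices to understand which $\psi$ fix a point: note that $\psi$ fixing a point $v$ with $\psi(G_v) = G_v$ means $\psi$ restricts to an automorphism of a vertex stabiliser $g\langle x\rangle g^{-1} \cong \mathbb{Z}$ (by Lemma \ref{bassSerreLemma} and the construction of the compatible action, $G_{\psi\cdot v} = \psi(G_v)$). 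Up to isogredience we may take $v = v_{\langle x\rangle}$, so $\psi$ stabilises $\langle x\rangle$.

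For the forward direction (finite order $\Rightarrow$ fixes a point): if $\psi$ has finite order $k$, then $\langle \psi\rangle$ is a finite group acting on the tree $T$, which is $\mathrm{CAT}(0)$; by the Bruhat–Tits fixed point theorem a finite group acting on a complete $\mathrm{CAT}(0)$ space has a fixed point, so $\psi$ fixes a point of $T$. (One must check the action is by isometries and $T$ is complete — both clear as $T$ is a simplicial tree with the path metric and the action is simplicial.)

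For the reverse direction (fixes a point $\Rightarrow$ finite order): suppose $\psi\cdot v = v$. Since $T$ is a tree, $\psi$ fixes the star of $v$ setwise; but $v$ has infinitely many edges emanating from it (the edges $g\langle x^n\rangle$ with $g \in \langle x\rangle$, permuted transitively by $G_v = \langle x\rangle$), so this alone does not bound the order — instead I would argue as follows. The compatible action gives a homomorphism $\Psi \colon \Stab_{\Aut(A_{2n})}(v) \to \mathrm{Isom}(T)$, but more usefully, $\psi$ stabilising $v$ forces $\psi(\langle x\rangle) = \langle x\rangle$, hence $\psi(x) = x^{\pm 1}$. Now run through the four cases $\psi_0 \in \{1,\alpha\beta,\alpha\gamma,\beta\gamma\}$: compose $\varphi_h\psi_0$ and use that $\psi(x)=x^{\pm1}$ and $\psi(t) = $ (something of the form $h\psi_0(t)h^{-1}$, an explicit word) to pin down $h$ modulo the centraliser $\langle x^n\rangle$ of $x^n$ and the relation $tx^nt^{-1}=x^n$. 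The key computation is that once $\psi(x) = x^{\varepsilon}$ with $\varepsilon = \pm1$ and $h$ is suitably normalised, $\psi$ sends $t \mapsto x^j t^{\pm1}$ for a bounded $j$, and one checks directly that $\psi^2$ or $\psi^4$ is inner by an element of $\langle x^n\rangle$, which is central — hence $\psi$ has finite order in $\Aut(A_{2n})$. I expect the bookkeeping in these four cases to be the main obstacle: one must carefully track the conjugating element $h$ and exploit that $\langle x^n\rangle = Z(A_{2n})$ so that inner automorphisms by powers of $x^n$ are trivial, collapsing what looks like an infinite-order phenomenon to finite order. A cleaner alternative for this direction, which I would try first, is: $\psi$ fixing $v$ restricts to an automorphism of $G_v \cong \mathbb{Z}$, and simultaneously (by the tree structure and because $\psi$ preserves or inverts edge-orientations, Remark \ref{dihedralAutOrientation}) either fixes or swaps the two "ends" of the BS-tree structure near $v$; combined with the explicit description of $\Out(A_{2n}) \cong C_2 \times D_\infty$ in Theorem \ref{dihedralEvenOut}, the infinite-order elements of $\Out$ are exactly the powers of $[\beta\gamma]$'s $D_\infty$-translation part, which act hyperbolically on $T$ (translating a point $v_{\langle x\rangle}$ to $v_{t^k\langle x\rangle}$-type vertices) — so a point-fixing $\psi$ cannot have infinite-order image in $\Out(A_{2n})$, and since $\Inn(A_{2n}) \cong A_{2n}/Z \cong $ (torsion-free), finite order in $\Out$ plus the structure forces finite order in $\Aut$.
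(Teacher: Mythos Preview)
Your forward direction is fine and matches the paper. The reverse direction, however, has a genuine gap in both approaches you sketch.

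In your second (``cleaner'') approach, the claim that $\Inn(A_{2n}) \cong A_{2n}/Z(A_{2n})$ is torsion-free is simply false: the centre is $\langle x^n\rangle$, and the image of $x$ in the quotient has order $n$. So ``finite order in $\Out$ plus torsion-free $\Inn$'' does not force finite order in $\Aut$; indeed $\varphi_{x}$ is inner (hence trivial in $\Out$) and has order $n$, not $1$, in $\Aut$. The point-fixing hypothesis is doing essential work that you never actually invoke in this paragraph.

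In your first approach, the claim that after normalisation ``$\psi^2$ or $\psi^4$ is inner by an element of $\langle x^n\rangle$'' is also wrong. Take $\psi = \varphi_{x^k}\beta\gamma$: since $\beta\gamma(x)=x$ and $(\beta\gamma)^2 = \varphi_{x^{-1}}$, one gets $\psi^2 = \varphi_{x^{2k-1}}$, which is not inner by a central element unless $n \mid 2k-1$, and iterating to $\psi^4$ does not help. The correct exponent is $2n$, not $2$ or $4$.

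The paper's argument is the clean version of what you are groping towards: since every $\psi \in \Aut_\Gamma(A_{2n})$ has outer class of order at most $2$ (Lemma~\ref{inducibleBSAutomorphisms}), $\psi^2 = \varphi_h$ for some $h \in A_{2n}$. Now use the point-fixing hypothesis: $\psi^2$ fixes the same point as $\psi$, so by compatibility $h$ acts elliptically on $T$, hence $h$ lies in a conjugate of $\langle x\rangle$ and therefore $h^n$ is central. Thus $\psi^{2n} = \varphi_{h^n} = \mathrm{id}$. This is the missing step: you need to feed the ellipticity of $h$ (not just finite order of $[\psi]$) back into the algebra.
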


\begin{proof}
    Suppose $\psi$ is of finite order. Then it acts by a finite order isometry of $T$, so must fix a point on $T$ since $T$ is a tree.

    Conversely, suppose $\psi$ fixes a point on $T$. It follows from the automorphism definitions in Theorem \ref{dihedralEvenOut} that the outer automorphism classes $[\alpha\beta]$, $[\beta\gamma]$, $[\alpha\gamma]$ generate a $C_2 \times C_2$ subgroup of $\Out(A_{2n})$. Therefore it must be that $\psi^2$ is inner, write $\varphi_h = \psi^2$. The inner automorphism $\varphi_h$ must fix the point fixed by $\psi$ and, by compatibility, $h$ acts in the same way so is elliptic. By Lemma \ref{dihedralCentralisers} means $h^n$ is central, and $\psi^{2n} = \varphi_{h^n}$ is the identity. So $\psi$ is of finite order as required.
\end{proof}

The fixed subgroups of infinite order automorphisms are closely related to centralisers.

\begin{lemma}\label{dihedralInfiniteOrder}
    Suppose $\varphi_g\psi$ an inducible automorphism of $A_{2n}$ of infinite order, where $\psi \in \{\alpha\beta, \beta\gamma, \alpha\gamma\}$. 

    If $\psi = \beta\gamma$ then $\Fix(\varphi_g\psi) = C(g\psi(g)x) \cong \mathbb{Z}^2$.
    
    Otherwise, $\Fix(\varphi_g\psi) \cong \mathbb{Z}$ with $g\psi(g) \leq_{f.i.} \Fix(\varphi_g\psi)$.
\end{lemma}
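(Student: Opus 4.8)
The plan is to use the compatible action of $\Aut(A_{2n})$ on the Bass-Serre tree $T$ and the relation $\psi\varphi_g = \varphi_{\psi(g)}\psi$ (Lemma \ref{LemmaCommuteWithInner}) to analyse how $\varphi_g\psi$ acts on $T$. First I would record the basic reduction: since $\psi^2$ is inner (the classes $[\alpha\beta],[\beta\gamma],[\alpha\gamma]$ have order $2$ in $\Out$), we have $(\varphi_g\psi)^2 = \varphi_g\psi\varphi_g\psi = \varphi_{g\psi(g)}\psi^2 = \varphi_{g\psi(g)\,w}$ for the appropriate $w \in A_{2n}$ with $\psi^2 = \varphi_w$; a direct check gives $w = 1$ when $\psi \in \{\alpha\beta,\alpha\gamma\}$ and $w = x^{n}$ (hence central) when $\psi = \beta\gamma$, so in the latter case $(\varphi_g\psi)^2 = \varphi_{g\psi(g)x^n}$, while in the former $(\varphi_g\psi)^2 = \varphi_{g\psi(g)}$. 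Using that $\varphi_g\psi$ has infinite order and Lemma \ref{dihedralOrder}, $\varphi_g\psi$ fixes no point of $T$, so by compatibility the element $h_0 := g\psi(g)$ (resp. $g\psi(g)x^n$) that represents $(\varphi_g\psi)^2$ acts hyperbolically on $T$; call its axis $L$.

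Next I would show $\Fix(\varphi_g\psi) = \Fix((\varphi_g\psi)^2) = C(h_0)$ up to the obvious subtlety. One inclusion is trivial. For the reverse, note $\Fix((\varphi_g\psi)^2) = C(h_0)$ by definition of centraliser, and by Proposition \ref{dihedralCentralisers}(3), $C(h_0) = \langle \hat{h_0}\rangle \times \langle x^n\rangle \cong \mathbb{Z}^2$, which is abelian. Now $\varphi_g\psi$ commutes with its square, hence it normalises $\Fix((\varphi_g\psi)^2)=C(h_0)$ and in fact acts on this $\mathbb{Z}^2$; an element of $\Fix((\varphi_g\psi)^2)$ lies in $\Fix(\varphi_g\psi)$ iff it is fixed by this induced action. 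So I must understand the action of $\varphi_g\psi$ on $C(h_0) = \langle \hat h_0\rangle \times \langle x^n\rangle$. By Corollary \ref{corPreserveFixMinsets} (applied with $g = h_0 \in \Fix((\varphi_g\psi)$-square, though one must be slightly careful) or more directly by the displacement computation, $\varphi_g\psi$ preserves $T^{h_0} = L$, hence sends the $\mathbb{Z}^2$ to itself; it preserves the edge-stabiliser subgroup $\langle x^n\rangle$ since that is the kernel of the translation-length function on $C(h_0)$, acting on it by $\pm 1$; and it acts on $\langle \hat h_0\rangle$ modulo $\langle x^n\rangle$ by $\pm 1$ (translation lengths are preserved up to sign).

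The case split is then governed by orientations, via Remark \ref{dihedralAutOrientation}. In the $\psi = \beta\gamma$ case, $\psi$ and hence $\varphi_g\psi$ preserves the orientation of edges of $T$ (as $\beta\gamma$ does; note $\beta$ reverses and $\gamma$ preserves, wait — here the relevant fact is that $\varphi_g\psi$ acts on $L$ as an \emph{orientation-preserving} isometry translating along $L$, equivalently its square $h_0 = g\psi(g)x^n$ already appears as "$\varphi_g\psi$ followed by itself" with matching direction), so $\varphi_g\psi$ acts on $C(h_0)\cong\mathbb{Z}^2$ by an automorphism fixing both $\hat h_0$-direction and $x^n$; tracking constants shows it fixes $\langle x^n\rangle$ pointwise and fixes $\hat h_0$, giving $\Fix(\varphi_g\psi) = C(h_0) = C(g\psi(g)x^n)\cong\mathbb{Z}^2$, and one identifies $\hat h_0$ up to the central factor with $g\psi(g)x$ to match the statement $C(g\psi(g)x)$. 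In the $\psi\in\{\alpha\beta,\alpha\gamma\}$ cases, $\psi$ \emph{reverses} edge orientations (as $\beta$ does, and $\alpha,\gamma$ preserve), so $\varphi_g\psi$ translates along $L$ but acts on the transverse data by inversion: it acts on $\langle x^n\rangle$ by $x^n\mapsto x^{-n}$, so the only fixed elements of $C(h_0)$ lie in the $\langle \hat h_0\rangle$-direction, giving $\Fix(\varphi_g\psi)\cong\mathbb{Z}$; and since $(\varphi_g\psi)^2 = \varphi_{g\psi(g)}$ has $g\psi(g)$ in this fixed $\mathbb{Z}$ as a finite-index subgroup (index dividing $2$, by the usual square-root ambiguity), we get $g\psi(g) \leq_{f.i.} \Fix(\varphi_g\psi)$.

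The main obstacle I anticipate is pinning down the orientation bookkeeping precisely: one has to verify carefully which of $\alpha,\beta,\gamma$ (and their products with the inner $\varphi_t$ implicit in how $\iota,\sigma$ decompose) preserve versus reverse the edge orientation of $T$, and then translate "$\varphi_g\psi$ reverses orientation" into the statement "$\varphi_g\psi$ acts by $-1$ on the edge-stabiliser $\langle x^n\rangle$ inside $C(h_0)$" — this is where the distinction between the $\mathbb{Z}^2$ and $\mathbb{Z}$ conclusions is forced. A secondary technical point is the exact finite index: one should check whether $\langle g\psi(g)\rangle$ equals $\Fix(\varphi_g\psi)$ or has index $2$, which amounts to deciding whether $\hat h_0$ itself (not just $\hat h_0^2 = h_0$ up to centre) is fixed by $\varphi_g\psi$; asserting only finite index sidesteps this but it is worth a remark.
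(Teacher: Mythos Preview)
Your overall strategy matches the paper's: square the automorphism to get $\varphi_{h_0}$ for a hyperbolic $h_0$, obtain $\Fix(\varphi_g\psi) \leq C(h_0) \cong \mathbb{Z}^2$ from Proposition~\ref{dihedralCentralisers}, and then determine $\Fix(\varphi_g\psi)$ by computing how $\varphi_g\psi$ acts on the central factor $\langle x^n\rangle$. The paper also verifies directly that $h_0 \in \Fix(\varphi_g\psi)$, which you only gesture at; you should include that computation. But the main problem is your execution of the case split.

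Your orientation argument is wrong on two counts. First, the bookkeeping: by Remark~\ref{dihedralAutOrientation}, $\beta$ reverses edge orientation while $\alpha,\gamma$ preserve it, so $\beta\gamma$ \emph{reverses} orientation and $\alpha\gamma$ \emph{preserves} it---the opposite of what you assert (you seem to notice this mid-sentence for $\beta\gamma$ but never resolve it, and for $\alpha\gamma$ you invoke $\beta$ even though $\beta$ does not appear). Second, and more fundamentally, edge orientation in $T$ has nothing to do with the action on the edge stabiliser $\langle x^n\rangle$. Orientation is governed by $t \mapsto t^{\pm 1}$ (i.e.\ by whether $\beta$ appears), whereas the action on $x^n$ is governed by $x \mapsto x^{\pm 1}$ (i.e.\ by whether $\alpha$ appears). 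Your implication ``reverses edge orientation $\Rightarrow$ inverts $x^n$'' is simply false, and it happens to give the right answer only by coincidence for $\alpha\beta$. The paper sidesteps this entirely with the one-line computation $\varphi_g\psi(x^n) = g\,\psi(x^n)\,g^{-1} = x^{\pm n}$ (using that $x^n$ is central), reading off $+$ for $\psi=\beta\gamma$ and $-$ for $\psi\in\{\alpha\beta,\alpha\gamma\}$ directly from the definitions in Theorem~\ref{dihedralEvenOut}. That is the ``obstacle'' you anticipated, and it dissolves in one line.

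There is also a computational slip: for $\psi=\beta\gamma$ one has $\psi^2=\varphi_{x^{-1}}$, not $\varphi_{x^n}$, so the relevant element is $h_0=g\psi(g)x^{-1}$, and the paper checks explicitly that $\varphi_g\psi(g\psi(g)x^{-1})=g\psi(g)x^{-1}$. With the direct computation of $\psi(x^n)$ in place of the orientation heuristic, and this correction, your outline becomes the paper's proof.
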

\begin{proof}

    For any automorphism $\varphi$, $\Fix(\varphi) \leq \Fix(\varphi^2)$, since if an element is fixed by an automorphism it is clearly fixed by its square.
    
    Notice that $(\varphi_g\psi)^2 = \varphi_{g\psi(g)}\psi^2$ (this follows from Lemma \ref{LemmaCommuteWithInner}). It is not hard to compute that $(\alpha\beta)^2 = (\alpha\gamma)^2 = id$, and $(\beta\gamma)^2 = \varphi_{x^{-1}}$. Thus, $\Fix(\varphi_g\psi) \leq C(g\psi(g))$, if $\psi \in \{\alpha\beta, \alpha\gamma\}$, and $\Fix(\varphi_g\beta\gamma) \leq C(g\beta\gamma(g)x^{-1})$.

    In the case $\psi \in \{\alpha\beta, \alpha\gamma\}$ we calculate, $$\varphi_g\psi(g\psi(g)) = g\psi(g)\psi^2(g)g^{-1} = g\psi(g),$$ so $\langle g\psi(g) \rangle \leq \Fix(\varphi_g\psi)$. Furthermore, $\varphi_g\psi$ restricts to an automorphism of $C(g\psi(g))$. To see this take $h \in C(g\psi(g))$ and notice,
    $$1 = [h, g \psi(g)]  \overset{apply \ (\varphi_g \psi)} = [\varphi_g \psi(h), g \psi(g)]$$ where in the first equality we use that $h$ and $g\psi(g)$ commute, and in the second we use that $g\psi(g) \in \Fix(\varphi_g\psi)$, and that homomorphisms fix the identity. 

    In the case that $\psi = \beta\gamma$, we observe $$\varphi_g\psi(g\psi(g)x^{-1}) = g\psi(g)\psi^2(g)x^{-1}g^{-1} = g\psi(g)x^{-1}gxx^{-1}g^{-1} = g\psi(g)x^{-1},$$ where the second equality is because $\psi^2 = \varphi_{x^{-1}}$. So $\langle g\psi(g)x^{-1} \rangle \leq \Fix(\varphi_g\psi) \leq C(g\psi(g)x^{-1})$, with $\varphi_g\psi$ inducing an automorphism of $C(g\psi(g)x^{-1})$ by the same arugment as in the previous case. 

    Write $h = g\psi(g)$ in the case $\psi \in \{\alpha\beta, \beta\gamma\}$, and $h = g\psi(g)x^{-1}$ in the case $\psi = \beta\gamma$.

    By Lemma \ref{dihedralOrder}, $\varphi_g\psi$ acts on $T$ without fixing a point, and thus so does $(\varphi_g\psi)^2 = \varphi_h$. Since the action is compatible, $h$ also acts hyperbolically, and by \ref{dihedralCentralisers} $C(h) \cong \mathbb{Z}^2$, with $h$ and $x^n$ spanning a finite index subgroup. 

    If $\psi = \beta\gamma$, then $\Fix(\varphi_g\psi) = C(g\psi(g)x^{-1})$. This is because $\varphi_g\psi(x^n) = x^n$, so $\varphi_g\psi$ fixes $\langle g\psi(g)x^{-1} \rangle \times \langle x^n \rangle$, and the only way to extend this to an automorphism of $C(g\psi(g)x^{-1})$ is as the identity (by viewing the automorphisms of $\mathbb{Z}^2$ as elements of $GL_2(\mathbb{Z})$, it is clear that as soon as a rank 2 subgroup of $C(h) \cong \mathbb{Z}^2$ is fixed by an automorphism, that automorphism must fix all of $C(h)$). 

    Otherwise, if $\psi = \alpha\beta$ or $\psi = \alpha\gamma$, $\varphi_g\psi(x^n) = x^{-n}$. This means that $\Fix(\varphi_g\psi)$ is a rank 1 subgroup of $C(g\psi(g))$, so is isomorphic to $\mathbb{Z}$. Since $g\psi(g)$ is fixed, the result follows.
\end{proof}

We will now turn to understanding fixed subgroups of finite order elements of $\Aut_\Gamma(A_{2n})$. By Lemma \ref{dihedralOrder} such automorphisms fix a point. The following lemma allows us to reduce to automorphisms fixing a point near $v_{\langle x \rangle}$.

\begin{lemma}\label{dihedralReduction}
    Suppose $\varphi_g\psi$ is a finite order automorphism of $A_{2n}$, where $\psi \in \{\alpha\beta, \beta\gamma, \alpha\gamma\}$.
    
    Suppose $\varphi_g\psi$ fixes a vertex $hv_{\langle x \rangle}$, where $h \in A_{2n}$. Then there is $k \in \mathbb{Z}$ such that $\varphi_g\psi = \varphi_h\varphi_{x^k}\psi\varphi_h^{-1}$. Furthermore, $\Fix(\varphi_g\psi) = h\Fix(\varphi_{x^k}\psi)h^{-1}$.

    Suppose instead that $\varphi_g\psi$ does not fix any vertex. Then $\Fix(\varphi_g\psi) \leq \langle x^n \rangle$.
\end{lemma}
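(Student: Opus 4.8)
The plan is to exploit the compatible action of $\Aut(A_{2n})$ on the Bass-Serre tree $T$, together with Lemma \ref{lemmaIsogredience} (the isogredience lemma) and the classification of edge/vertex stabilisers from Lemma \ref{bassSerreLemma}. Suppose first that $\varphi_g\psi$ fixes a vertex of $T$. Since $A_{2n}$ acts transitively on the vertices of $T$ of the form $hv_{\langle x\rangle}$ (all vertices have this form), write the fixed vertex as $hv_{\langle x\rangle}$. Conjugating $\varphi_g\psi$ by $\varphi_h^{-1}$, i.e.\ passing to the isogredient automorphism $\varphi_h^{-1}(\varphi_g\psi)\varphi_h = \varphi_{h^{-1}g\psi(h)}\psi'$ where $\psi' = \varphi_h^{-1}\psi\varphi_h$; but more cleanly, since $\psi$ permutes vertex stabilisers and fixes $\langle x\rangle$ (this holds for each of $\alpha\beta,\beta\gamma,\alpha\gamma$, as all of $\alpha,\beta,\gamma$ fix $\langle x\rangle$), the automorphism $\varphi_h^{-1}\varphi_g\psi\,\varphi_h$ fixes $v_{\langle x\rangle}$. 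By Lemma \ref{reductionToStabs} such an automorphism restricts to an automorphism of $\Stab(v_{\langle x\rangle}) = \langle x\rangle$. The only automorphisms of $A_{2n}$ of the form $\varphi_{g'}\psi$ fixing $v_{\langle x\rangle}$ send $\langle x\rangle$ to itself, so $\varphi_{g'}\psi$ agrees with $\psi$ composed with an inner automorphism by an element stabilising $v_{\langle x\rangle}$, i.e.\ $g' \in \langle x\rangle\cdot(\text{something})$; more precisely $\varphi_{g'}\psi(x) = x^{\pm 1}$ forces, after absorbing, that $\varphi_{g'}\psi = \varphi_{x^k}\psi$ for some $k\in\mathbb{Z}$ (the sign $\pm1$ is already built into $\psi$ via $\alpha$, and the inner part must stabilise $v_{\langle x\rangle}$ hence lie in $\langle x\rangle$). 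Then Lemma \ref{lemmaIsogredience} gives $\Fix(\varphi_g\psi) = h\,\Fix(\varphi_{x^k}\psi)\,h^{-1}$.

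For the second case, suppose $\varphi_g\psi$ fixes no vertex of $T$. By Lemma \ref{dihedralOrder} it is then of infinite order (contrapositive), but we do not even need that: the point is to locate $\Fix(\varphi_g\psi)$. Take any $w\in\Fix(\varphi_g\psi)$. By Corollary \ref{fixAction}, $\Fix(\varphi_g\psi) \leq \Stab(X^{\varphi_g\psi})$ where $X^{\varphi_g\psi}$ is the minimal set of $\varphi_g\psi$ acting on $T$. Since $\varphi_g\psi$ fixes no vertex, its minimal set is either a single midpoint of an edge (if it inverts an edge) or an axis. I would argue via Remark \ref{dihedralAutOrientation}: among $\alpha\beta,\beta\gamma,\alpha\gamma$, exactly $\alpha\beta$ and $\beta\gamma$ invert edge orientations (they contain a $\beta$) while $\alpha\gamma$ preserves orientation. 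An orientation-preserving isometry of a tree fixing no vertex must act hyperbolically with an axis; an orientation-reversing one fixing no vertex either inverts an edge (minimal set a midpoint) or is hyperbolic. In every case $\Stab$ of the minimal set is contained in $\langle x^n\rangle$: for an inverted edge $e_{g\langle x^n\rangle}$, the setwise stabiliser in $A_{2n}$ of that edge is its pointwise stabiliser $\langle x^n\rangle$ (since $A_{2n}$ acts without inversions by Lemma \ref{bassSerreLemma}), so $\Fix(\varphi_g\psi)\leq\langle x^n\rangle$ immediately; for a hyperbolic axis $T^{\varphi_g\psi}$, by Corollary \ref{fixAction} $\Fix(\varphi_g\psi)$ preserves the axis, and by Corollary \ref{corPreserveFixMinsets} any $w\in\Fix(\varphi_g\psi)$ has $\varphi_g\psi$ preserving $T^w$; if $w$ acted hyperbolically along this axis then $\varphi_g\psi$ would translate along it, contradicting that conjugation by $w$, i.e.\ the inner automorphism, and $\varphi_g\psi$ interact — I would instead note that if $w$ is hyperbolic, its axis $T^w$ equals $T^{\varphi_g\psi}$, and then $w$ and $\varphi_g\psi$ both act on $T^w\cong\mathbb{R}$ by translations, which is fine, so this case needs the extra input that $\varphi_g\psi$ restricted to $\Stab(T^{\varphi_g\psi})$ has no hyperbolic fixed elements, coming from the $x^n$-translation-length being reversed or shifted.

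The cleaner route, which I would actually take for the no-vertex case, is: let $h := g\psi(g)$ (for $\psi\in\{\alpha\beta,\beta\gamma,\alpha\gamma\}$ with $\psi^2$ inner — indeed $\psi^2\in\{\mathrm{id},\varphi_{x^{-1}}\}$ as computed in Lemma \ref{dihedralInfiniteOrder}), so that $(\varphi_g\psi)^2 = \varphi_{g\psi(g)}\psi^2$ is inner-by-(small power of $x$), hence $\Fix(\varphi_g\psi)\leq\Fix((\varphi_g\psi)^2) = C(g\psi(g)\cdot x^{\epsilon})$ for suitable $\epsilon\in\{0,-1\}$. Since $\varphi_g\psi$ fixes no vertex, nor does $(\varphi_g\psi)^2$, so by compatibility $g\psi(g)x^{\epsilon}$ acts hyperbolically on $T$, and by Proposition \ref{dihedralCentralisers}(3) its centraliser is $\langle\hat h\rangle\times\langle x^n\rangle$. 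Now $\varphi_g\psi$ restricts to an automorphism of this $\mathbb{Z}^2$; the subgroup $\langle x^n\rangle$ is characteristic in $A_{2n}$ (it is the centre), so $\varphi_g\psi$ preserves it, acting as $x^n\mapsto x^{\pm n}$ depending on whether $\psi$ contains an $\alpha$; and $\varphi_g\psi$ does not fix $\langle\hat h\rangle$ pointwise unless... — here one checks directly on the $\mathbb{Z}^2$, viewing $\varphi_g\psi|_{C}$ in $GL_2(\mathbb{Z})$, that its fixed subgroup is contained in the $\langle x^n\rangle$-line precisely when $\varphi_g\psi$ acts non-trivially on the $\hat h$-coordinate, which it must since $\varphi_g\psi$ is not the identity on $C$ and $x^n\mapsto\pm x^n$ already constrains the matrix. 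Concluding, $\Fix(\varphi_g\psi)\leq\langle x^n\rangle$.

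The main obstacle I anticipate is the bookkeeping in the first case: showing that after conjugating to fix $v_{\langle x\rangle}$, the automorphism is \emph{exactly} of the form $\varphi_{x^k}\psi$ rather than $\varphi_{x^k c}\psi$ for $c$ some stabiliser element — but since $\Stab(v_{\langle x\rangle}) = \langle x\rangle$ by Lemma \ref{bassSerreLemma}, any inner part must be by an element of $\langle x\rangle$, so $c\in\langle x\rangle$ and it collapses. In the no-vertex case, the delicate point is ruling out that $\Fix(\varphi_g\psi)$ contains a hyperbolic element of $A_{2n}$; this is handled by the $GL_2(\mathbb{Z})$ argument on $C(h)$ together with the fact that $\langle x^n\rangle$ is the centre and hence characteristic, forcing the matrix of $\varphi_g\psi|_{C(h)}$ into a form ($\begin{psmallmatrix}\pm1 & *\\ 0 & \pm1\end{psmallmatrix}$-like, but I'll avoid needing `psmallmatrix`) whose fixed subgroup lies in the $\langle x^n\rangle$-coordinate.
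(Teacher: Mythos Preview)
Your first case is essentially the paper's argument: conjugate by $\varphi_h^{-1}$, observe that $\varphi_h^{-1}\varphi_g\psi\varphi_h = \varphi_{h^{-1}g\psi(h)}\psi$ fixes $v_{\langle x\rangle}$, and since $\psi$ itself fixes $v_{\langle x\rangle}$, compatibility forces $h^{-1}g\psi(h)\in\Stab(v_{\langle x\rangle})=\langle x\rangle$, so $h^{-1}g\psi(h)=x^k$. That is fine.

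Your second case, however, is overcomplicated because you have lost the hypothesis. The lemma assumes $\varphi_g\psi$ is \emph{finite order}. You write ``By Lemma \ref{dihedralOrder} it is then of infinite order (contrapositive)'', but Lemma \ref{dihedralOrder} says finite order is equivalent to fixing a \emph{point}, not a \emph{vertex}. A finite-order isometry of a tree always fixes a point; if it fixes no vertex, then the fixed point is the midpoint of an edge, which the isometry inverts. So the hyperbolic case simply does not occur, and the entire discussion of axes, centralisers, and the $GL_2(\mathbb{Z})$ analysis is unnecessary. The paper's argument is one line: the minimal set $T^{\varphi_g\psi}$ is the midpoint of a single inverted edge $e$, so by Corollary \ref{fixAction}, $\Fix(\varphi_g\psi)\leq\Stab(\{\text{midpoint}\})\cap A_{2n} = G_e = \langle x^n\rangle$.

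Your ``cleaner route'' also contains a genuine error: you assert that if $\varphi_g\psi$ fixes no vertex then neither does $(\varphi_g\psi)^2$. This is false precisely in the edge-inversion situation that actually arises here: if $\varphi_g\psi$ inverts an edge $e$, then $(\varphi_g\psi)^2$ fixes $e$ pointwise, including both endpoints. So $g\psi(g)x^{\epsilon}$ would be elliptic, not hyperbolic, and Proposition \ref{dihedralCentralisers}(3) does not apply.
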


\begin{proof}
    Suppose that $\varphi_g \psi$ fixes a vertex $h v_{\langle x \rangle}$. Then,
    \begin{align*}
        hv_{\langle x \rangle} &= \varphi_g\psi \cdot hv_{\langle x \rangle}\\
        &= \varphi_g  \cdot \psi(h) (\psi \cdot v_{\langle x \rangle})\\
        &= g \psi(h) v_{\langle x \rangle},\\
    \end{align*}
    where the first equality is by assumption, the second is by Lemma \ref{compatibilityLemma}, and the third is by compatibility and the fact that $\psi \cdot v_{\langle x \rangle} = v_{\langle x \rangle}$.

    Rearranging we see that $h^{-1} g \psi(h) v_{\langle x \rangle} = v_{\langle x \rangle}$, that is $h^{-1} g \psi(h) = x^k$ for some $k \in \mathbb{Z}$. Now it is clear that $\varphi_g\psi = \varphi_{hx^k\psi(h)^{-1}} = \varphi_h\varphi_{x^k}\psi\varphi_h^{-1}$. It follows immediately that $\Fix(\varphi_g\psi) = h\Fix(\varphi_{x^k}\psi)h^{-1}$ by Lemma \ref{lemmaIsogredience}.

    Now instead suppose that $\varphi_g\psi$ does not fix any vertex. Then it must fix the midpoint of a single edge $e$, which is inverted. Since $e$ is inverted, it must be that $T^{\varphi_g\psi}$ is a single point, and $\Fix(\varphi_g\psi) \leq G_e = \langle x^n \rangle$.
\end{proof}

For finite order automorphisms $\psi$ in the classes of $\alpha\beta$ and $\beta\gamma$, $T^\psi$ is a single point.

\begin{lemma}\label{ellipticAlphaBetaBetaGamma}
    Suppose $\varphi_g\alpha\beta$ is a finite order automorphism of $A_{2n}$. Then $\Fix(\varphi_g\alpha\beta) = \{1\}$.

    Suppose $\varphi_g\beta\gamma$ is a finite order automorphism of $A_{2n}$. Then the fixed subgroups of the automorphism are as follows. 

    \begin{enumerate}
        \item If it fixes a vertex $hv_{\langle x \rangle}$ then $\Fix(\varphi_g\beta\gamma) = h\langle x \rangle h^{-1}$.
        \item Otherwise, $\Fix(\varphi_g\beta\gamma) = \langle x^n \rangle$.
    \end{enumerate}
\end{lemma}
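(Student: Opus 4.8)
The plan is to use a single structural observation about orientations of edges in $T$. By Remark \ref{dihedralAutOrientation}, the isometry $\beta$ reverses the orientation of every edge of $T$ while $\alpha$, $\gamma$ and all inner automorphisms preserve it; hence both $\alpha\beta = \alpha\circ\beta$ and $\beta\gamma = \beta\circ\gamma$ reverse every orientation, and composing with the orientation-preserving $\varphi_g$ shows that $\varphi_g\alpha\beta$ and $\varphi_g\beta\gamma$ reverse every orientation too. I would then isolate the claim that \emph{any isometry $f$ of $T$ that reverses all orientations and fixes a vertex $v$ has minimal set $T^f = \{v\}$}: since $f$ fixes $v$ it permutes the edges at $v$, and since it reverses orientations it swaps outgoing with incoming edges at $v$, so it fixes no edge at $v$; it inverts no edge at $v$ either, since inverting an edge swaps its endpoints while $v$ is fixed; as the fixed set is a subtree through $v$ and contains no edge incident to $v$, it must be $\{v\}$.

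With this in hand I would argue as follows for $\varphi_g\alpha\beta$ of finite order. By Lemma \ref{dihedralOrder} it acts elliptically on $T$, so it either fixes a vertex or inverts an edge. In the first case, Lemma \ref{dihedralReduction} conjugates it to $\varphi_{x^k}\alpha\beta$ fixing $v_{\langle x \rangle}$; the observation gives $T^{\varphi_{x^k}\alpha\beta} = \{v_{\langle x \rangle}\}$, so by Lemma \ref{reductionToStabs} $\Fix(\varphi_{x^k}\alpha\beta) = \Fix(\varphi_{x^k}\alpha\beta|_{\langle x \rangle})$, and since $\varphi_{x^k}\alpha\beta$ sends $x \mapsto x^{-1}$ this is trivial. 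In the second case, Lemma \ref{dihedralReduction} gives $\Fix(\varphi_g\alpha\beta) \leq \langle x^n \rangle$, and $\varphi_g\alpha\beta(x^n) = g\,\alpha\beta(x^n)\,g^{-1} = x^{-n}$ (using that $x^n$ is central, Proposition \ref{dihedralCentralisers}), so the fixed subgroup is again trivial. Either way $\Fix(\varphi_g\alpha\beta) = \{1\}$.

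For $\varphi_g\beta\gamma$ of finite order I would run the identical scheme. If it fixes a vertex $hv_{\langle x \rangle}$, Lemma \ref{dihedralReduction} gives $\Fix(\varphi_g\beta\gamma) = h\Fix(\varphi_{x^l}\beta\gamma)h^{-1}$ with $\varphi_{x^l}\beta\gamma$ fixing $v_{\langle x \rangle}$; the observation and Lemma \ref{reductionToStabs} reduce this to $\Fix(\varphi_{x^l}\beta\gamma|_{\langle x \rangle})$, which is all of $\langle x \rangle$ because $\varphi_{x^l}\beta\gamma$ sends $x \mapsto x$, so $\Fix(\varphi_g\beta\gamma) = h\langle x \rangle h^{-1}$. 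If it fixes no vertex it inverts an edge, so $\Fix(\varphi_g\beta\gamma) \leq \langle x^n \rangle$ by Lemma \ref{dihedralReduction}, while $\varphi_g\beta\gamma(x^n) = g\,\beta\gamma(x^n)\,g^{-1} = x^n$ as $x^n$ is central, forcing $\Fix(\varphi_g\beta\gamma) = \langle x^n \rangle$.

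I expect the main obstacle to be the orientation observation itself — specifically, verifying that an orientation-reversing elliptic isometry with a fixed vertex has a \emph{singleton} minimal set (rather than a larger fixed subtree), which is exactly what makes Lemma \ref{reductionToStabs} applicable — together with disposing of the edge-inversion case cleanly via the central element $x^n$. Once those are in place the remaining content is just reading off the induced automorphism on the vertex stabiliser $\langle x \rangle$ or on the central subgroup $\langle x^n \rangle$.
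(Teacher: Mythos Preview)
Your proposal is correct and follows essentially the same route as the paper: reduce via Lemma~\ref{dihedralOrder} and Lemma~\ref{dihedralReduction} to either the edge-inversion case (handled by testing $x^n$) or the vertex-fixing case $\varphi_{x^k}\psi$, then use the orientation-reversal of $\beta$ to conclude that the fixed set is the single vertex $v_{\langle x\rangle}$, and finish by reading off the induced map on $\langle x\rangle$. Your write-up is in fact a touch more careful than the paper's, since you spell out explicitly why an orientation-reversing isometry fixing $v$ can neither fix nor invert any edge at $v$, and you invoke Lemma~\ref{reductionToStabs} by name rather than jumping straight to $\Fix\leq\langle x\rangle$.
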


\begin{proof}
    Suppose $\varphi_g\psi$ is finite order, where $\psi \in \{\alpha\beta, \beta\gamma\}$. By Lemma \ref{dihedralOrder}, $\varphi_g\psi$ fixes a point of $T$.
    
    Suppose no vertex is fixed, then, by Lemma \ref{dihedralReduction},  $\Fix(\varphi_g\psi) \leq \langle x^n \rangle$. Since $\varphi_g\alpha\beta(x^n) = gx^{-n}g^{-1} = x^{-n}$, where the second equality is because $x^n$ is central, in the $\psi = \alpha\beta$ case $\Fix(\varphi_g\alpha\beta) = \{1\}$ as required. In the $\psi = \beta\gamma$ case, we note $\varphi_g\beta\gamma(x^n) = gx^{-n}g^{-1} = x^n$, so $\Fix(\varphi_g\beta\gamma) = \langle x^n \rangle$.
    
    Now suppose $\varphi_g\psi$ fixes $hv_{\langle x \rangle}$. Then, by Lemma \ref{dihedralReduction}, there is an integer $k$ such that $\Fix(\varphi_g\psi) = h\Fix(\varphi_{x^k}\psi)h^{-1}$. Since $\beta$ inverts the orientation of the edges, and $\alpha$, $\gamma$ and $x^k$ act without inversions, it must be that $v_{\langle x \rangle}$ is the only point of $T$ fixed by $\varphi_{x^k}\psi$ as none of the adjacent edges can be fixed. Therefore, $\Fix(\varphi_{x^k}\psi) \leq \langle x \rangle$.

    It is now easy to check that, in the $\psi = \alpha\beta$ case, $\Fix(\varphi_{x^k}\alpha\beta) = \{1\}$, and so $\Fix(\varphi_g\alpha\beta) = \{1\}$. Similarly in the other case, $\Fix(\varphi_{x^k}\beta\gamma) = \langle x \rangle$, and so $\Fix(\varphi_g\alpha\beta) = h\langle x \rangle h^{-1}$. In both cases for the final equality we use the fact that $\Fix(\varphi_g\psi) = h\Fix(\varphi_{x^k}\psi)h^{-1}$, by Lemma \ref{dihedralReduction}.

\end{proof}

The case of automorphisms $\varphi_g\alpha\gamma$ is different, since $T^{\varphi_g\alpha\gamma}$ may be a bi-infinite line. 

\begin{lemma}
    \label{ellipticAlphaGamma}
    Suppose $\varphi_g\alpha\gamma$ is a finite order automorphism of $A_{2n}$. Then $\Fix(\varphi_g\alpha\gamma) \cong \mathbb{Z}$.

    Specifically, if $g = hx^k(\alpha\gamma)(h)^{-1}$ where $h \in A_{2n}$, $k \in \mathbb{Z}$ (this is always possible by Proposition \ref{dihedralReduction}), then $\Fix(\varphi_g\alpha\gamma)$ is as follows:

    \begin{enumerate}
        \item If $n$ is odd and $k$ is even, $\Fix(\varphi_g\alpha\gamma) = h\langle x^{\frac{k}{2}}tx^{\frac{n-1}{2}}tx^{\frac{-k-n-1}{2}} \rangle h^{-1} \cong \mathbb{Z}$.

        \item If $n$ is odd and $k$ is odd, $\Fix(\varphi_g\alpha\gamma) = h\langle x^{\frac{k+n}{2}}tx^{\frac{n-1}{2}}tx^{\frac{-k-1}{2}} \rangle h^{-1} \cong \mathbb{Z}$.

        \item If $n$ is even and $k$ is even, $\Fix(\varphi_g\alpha\gamma) = h\langle x^{\frac{k}{2}}tx^{\frac{n}{2}}t^{-1}x^{\frac{-k-n}{2}} \rangle h^{-1} \cong \mathbb{Z}$.

        \item If $n$ is even and $k$ is odd, $\Fix(\varphi_g\alpha\gamma) = h\langle x^{\frac{k+1}{2}}t^{-1}x^{\frac{n}{2}}tx^{\frac{-k-n-1}{2}} \rangle h^{-1} \cong \mathbb{Z}$.
    \end{enumerate}
\end{lemma}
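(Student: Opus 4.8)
\emph{Step 1: reduction to $\psi=\varphi_{x^k}\alpha\gamma$.} Since $\varphi_g\alpha\gamma$ has finite order, Lemma \ref{dihedralOrder} gives it a fixed point on $T$; by Remark \ref{dihedralAutOrientation} the automorphisms $\alpha$, $\gamma$ and all inner automorphisms preserve the orientation of the edges of $T$, so $\varphi_g\alpha\gamma$ cannot invert an edge and must in fact fix a vertex $hv_{\langle x\rangle}$. Lemma \ref{dihedralReduction} then lets me write $\varphi_g\alpha\gamma=\varphi_h(\varphi_{x^k}\alpha\gamma)\varphi_h^{-1}$ with $g=hx^k(\alpha\gamma)(h)^{-1}$, and $\Fix(\varphi_g\alpha\gamma)=h\Fix(\varphi_{x^k}\alpha\gamma)h^{-1}$. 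So it suffices to compute $\Fix(\psi)$ for $\psi:=\varphi_{x^k}\alpha\gamma$, and conjugate the answer by $h$ at the end. Using Lemma \ref{LemmaCommuteWithInner} together with $(\alpha\gamma)^2=\mathrm{id}$ (from Lemma \ref{dihedralInfiniteOrder}) and $(\alpha\gamma)(x^k)=x^{-k}$, one gets $\psi^2=\varphi_{x^k(\alpha\gamma)(x^k)}=\mathrm{id}$, so $\psi$ is an involution; note also $\psi(x)=x^{-1}$ and $\psi(x^n)=x^{-n}$.

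\emph{Step 2: $T^\psi$ is a bi-infinite line.} As the fixed set of a simplicial automorphism of a tree, $T^\psi$ is a subtree; I claim every vertex of $T^\psi$ has degree exactly $2$, which forces $T^\psi$ to be a geodesic line $L$ (fixed pointwise by $\psi$). A vertex $uv_{\langle x\rangle}$ lies in $T^\psi$ iff $\psi(u)\in u\langle x\rangle$, and conjugating $\psi$ by $u$ produces an automorphism of the same shape $\varphi_{x^{k'}}\alpha\gamma$ fixing $v_{\langle x\rangle}$; so it is enough to count the $\psi$-fixed edges at $v_{\langle x\rangle}$. Since $\psi$ preserves orientation, it permutes the $n$ outgoing and the $n$ incoming edges at $v_{\langle x\rangle}$ separately; using compatibility (Lemma \ref{compatibilityLemma}), $\psi(x)=x^{-1}$ and the explicit form of $\psi(t)$, these two permutations of $\mathbb{Z}/n$ are the involutions $j\mapsto k-j$ and $j\mapsto k+1-j$. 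Hence the number of fixed edges at $v_{\langle x\rangle}$ is the number of solutions of $2j\equiv k$ plus that of $2j\equiv k+1\pmod n$, and a direct check shows this total is $2$ in each of the four parity cases for $(n,k)$ — this is exactly where the four-case structure of the statement is born.

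\emph{Step 3: identifying $\Fix(\psi)$ inside $\Stab(L)$.} By Corollary \ref{fixAction}, $\Fix(\psi)\leq\Stab(L)$. The pointwise stabiliser of $L$ equals $\langle x^n\rangle$: it is contained in any edge stabiliser, which is $\langle x^n\rangle$ by Lemma \ref{bassSerreLemma}, and conversely $x^n$ is central and acts without inversions, so it fixes $L$ pointwise. Thus $\Stab(L)/\langle x^n\rangle$ embeds in $\Isom(\mathbb{R})$; since $A_{2n}$ is torsion-free (Theorem \ref{ThmTorsionAndCentres}) this quotient has no reflection, so it is infinite cyclic and $\Stab(L)\cong\mathbb{Z}^2$. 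Because $L\subseteq T^\psi$, for $g\in\Stab(L)$ the isometries $\psi(g)$ and $g$ agree on $L$, so $\delta(g):=\psi(g)g^{-1}\in\langle x^n\rangle$; and $\delta\colon\Stab(L)\to\langle x^n\rangle$ is a homomorphism since $\langle x^n\rangle$ is central. Clearly $\Fix(\psi)=\ker\delta$. As $\delta(x^n)=\psi(x^n)x^{-n}=x^{-2n}\neq 1$, the restriction $\delta|_{\langle x^n\rangle}$ is injective, so $\Fix(\psi)\cap\langle x^n\rangle=\{1\}$ and $\Fix(\psi)$ embeds in the infinite cyclic group $\Stab(L)/\langle x^n\rangle$; hence $\Fix(\psi)\cong\mathbb{Z}$ (it is non-trivial by Step 4).

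\emph{Step 4: the explicit generator.} For each parity of $n$ and $k$ I solve the equation $\psi(w)=w$ for $w$ of the shape $x^a t x^{b} t^{\pm 1} x^{c}$, using the Baumslag–Solitar normal form (i.e. that $x^n$ slides freely past $t$): the congruences from Step 2 determine $a,c$ modulo $n$, and the requirement that $w$ be fixed exactly (not merely up to $\langle x^n\rangle$) pins the total $x$-exponent, producing the four displayed words $w_{n,k}$. One checks directly that each $w_{n,k}$ is cyclically reduced with two $t$-syllables, hence acts hyperbolically on $T$ with axis $L$, and that it has minimal translation length in $\Stab(L)$, i.e. its image generates $\Stab(L)/\langle x^n\rangle$; combined with $\Fix(\psi)=\ker\delta$, $\delta(w_{n,k})=1$ and $\delta(x^n)=x^{-2n}$, this forces $\Fix(\psi)=\langle w_{n,k}\rangle$. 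Conjugating by $h$ gives $\Fix(\varphi_g\alpha\gamma)=h\langle w_{n,k}\rangle h^{-1}\cong\mathbb{Z}$.

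\emph{Main obstacle.} The formal skeleton (Steps 1 and 3) is routine; the real work is the explicit bookkeeping in Steps 2 and 4 — determining $L$ and the element $w_{n,k}$ in each parity case, and above all verifying that $w_{n,k}$ is \emph{primitive} in $\Stab(L)$, i.e. that $\Stab(L)$ contains no element of translation length $1$ (equivalently, no square root of a translate of $w_{n,k}$). I expect this primitivity/minimality check, and keeping the four parity bookkeepings straight, to be the delicate part.
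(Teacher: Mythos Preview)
Your proposal is correct and follows essentially the same strategy as the paper: reduce via isogredience to $\psi=\varphi_{x^k}\alpha\gamma$, show the fixed set $T^\psi$ is a bi-infinite line $L$, use $\Stab(L)\cong\mathbb{Z}^2$ together with $\psi(x^n)=x^{-n}$ to force $\Fix(\psi)\cong\mathbb{Z}$, and then identify the explicit generator. The only organisational difference is in how you show $T^\psi$ is a line: the paper first exhibits the fixed element $s$, then verifies $T^\psi=T^s$ by checking the $1$-neighbourhoods of two $\langle s\rangle$-orbit representatives, whereas you argue uniformly that every vertex of $T^\psi$ has degree exactly $2$ by conjugating to $v_{\langle x\rangle}$ (with a possibly different $k'$) and counting fixed edges via the two congruences $2j\equiv k\pmod n$ and $2j\equiv k\pm 1\pmod n$. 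Your route is arguably cleaner since it handles all four parity cases at once and doesn't need the fixed element in advance.

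On the primitivity of $w_{n,k}$: the paper does not carry out this check in any more detail than you do --- it simply asserts that $\langle s\rangle$ is the maximal cyclic subgroup containing $s$. So your caution that this is ``the delicate part'' is well placed, but you are not missing anything the paper provides; a short argument (e.g.\ via the $t$-exponent homomorphism in the odd-$n$ case, or via height in the even-$n$ case as in the proof of Theorem~\ref{dihedralFixPointsEven}) suffices.
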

\begin{proof}
   
    First notice that, by Remark \ref{dihedralAutOrientation}, $\varphi_g\alpha\gamma$ respects the orientation of the edges, so acts without inversions. Therefore $\varphi_g\alpha\gamma$ must fix a vertex $hv_{\langle x \rangle}$ given that it fixes a point.

    As in Lemma \ref{dihedralReduction}, $\varphi_g\alpha\gamma = \varphi_h\varphi_{x^k}\alpha\gamma\varphi_h^{-1}$, and it is enough to consider automorphisms of the form $\varphi_{x^k}\alpha\gamma$ for $k \in \mathbb{Z}$. Notice that $v_{\langle x \rangle} \in T^{\varphi_{x^k}\alpha\gamma}$, because each of $\gamma$, $\alpha$ and $\varphi_{x^k}$ fixes $v_{\langle x \rangle}$.

    In order to understand $T^{\varphi_{x^k}\alpha\gamma}$, we will restrict our attention to two vertices in $T^{\varphi_{x^k}\alpha\gamma}$ ($v_{\langle x \rangle}$ and one of its neighbours), then argue these are the only two $\Fix(\varphi_g\alpha\gamma)$ orbits of vertices in $T^{\varphi_{x^k}\alpha\gamma}$. From here we will reconstruct all of $T^{\varphi_{x^k}\alpha\gamma}$. We separate into two cases based on the parity of $n$.

    \bigskip
    \noindent \underline{Case 1:} $n$ is odd.

    We begin by finding an element $s \in \Fix(\varphi_{x^k}\alpha\gamma)$ acting hyperbolically on $T$.

    If $k$ is even, then $s \coloneqq x^{\frac{k}{2}}tx^{\frac{n-1}{2}}tx^{\frac{-k-n-1}{2}} \in \Fix(\varphi_{x^k}\alpha\gamma)$. Indeed, \begin{align*}
        (\varphi_{x^k}\alpha\gamma)(x^{\frac{k}{2}}tx^{\frac{n-1}{2}}tx^{\frac{-k-n-1}{2}}) &= x^k x^{-\frac{k}{2}}tx^{-\frac{n-1}{2}-1}tx^{\frac{k+n+1}{2}-1} x^{-k}\\
        &= x^{\frac{k}{2}}tx^{\frac{-n-1}{2}}tx^{\frac{-k+n-1}{2}}\\
        &= x^{\frac{k}{2}}tx^{\frac{n-1}{2}}x^{-n}tx^nx^{\frac{-k-n-1}{2}}\\
        &= x^{\frac{k}{2}}tx^{\frac{n-1}{2}}tx^{\frac{-k-n-1}{2}},\\
    \end{align*} as required. If $k$ is odd, then $s \coloneqq x^{\frac{k+n}{2}}tx^{\frac{n-1}{2}}tx^{\frac{-k-1}{2}} \in \Fix(\varphi_{x^k}\alpha\gamma)$, by a similar calculation.

    In either case, $s$ acts along an axis $T^s$ through $v_{\langle x \rangle}$, which must be a subtree of $T^{\varphi_{x^k}\alpha\gamma}$ by Lemma \ref{fixAction}. Since $s$ acts with translation length 2, $T^s$ has 2 orbits of vertices under the action of $s$. We now use argue that locally to these two orbits of vertices, $T^{\varphi_{x^k}\alpha\gamma}$ coincides exactly with $T^s$. Since $T^{\varphi_{x^k}\alpha\gamma}$ is connected, this implies $T^{\varphi_{x^k}\alpha\gamma} = T^s$.

    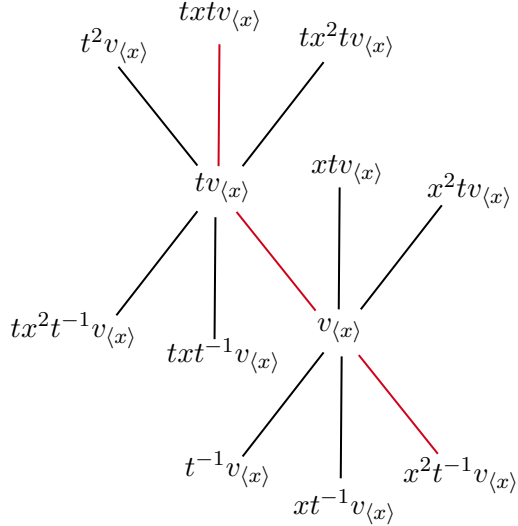
\begin{figure}[h]
    \centering

\tikzset{every picture/.style={line width=0.75pt}} %set default line width to 0.75pt        

\begin{tikzpicture}[x=0.75pt,y=0.75pt,yscale=-1,xscale=1]
%uncomment if require: \path (0,300); %set diagram left start at 0, and has height of 300

%Straight Lines [id:da8215577442289863] 
\draw [color={rgb, 255:red, 208; green, 2; blue, 27 }  ,draw opacity=1 ]   (265.33,126.33) -- (304.78,176) ;
%Straight Lines [id:da6073917902073314] 
\draw    (316.78,114) -- (316.33,175.33) ;
%Straight Lines [id:da7529091086772848] 
\draw    (327.33,175.33) -- (367.78,123) ;
%Straight Lines [id:da9464962575480784] 
\draw    (205.33,178.33) -- (245.78,126) ;
%Straight Lines [id:da5338634474922226] 
\draw    (254.78,129) -- (254.33,190.33) ;
%Straight Lines [id:da4240532938364818] 
\draw    (268.33,249.33) -- (308.78,197) ;
%Straight Lines [id:da1519177563151306] 
\draw [color={rgb, 255:red, 208; green, 2; blue, 27 }  ,draw opacity=1 ]   (326.33,198.33) -- (365.78,248) ;
%Straight Lines [id:da43441828268945315] 
\draw    (317.78,199) -- (317.33,260.33) ;
%Straight Lines [id:da3038932121134783] 
\draw    (268.33,103.33) -- (308.78,51) ;
%Straight Lines [id:da5584881622733995] 
\draw    (206.33,53.33) -- (245.78,103) ;
%Straight Lines [id:da6310283215481924] 
\draw [color={rgb, 255:red, 208; green, 2; blue, 27 }  ,draw opacity=1 ]   (256.78,42) -- (256.33,103.33) ;

% Text Node
\draw (304,178.4) node [anchor=north west][inner sep=0.75pt]    {$v_{\langle x\rangle }$};
% Text Node
\draw (243,105.4) node [anchor=north west][inner sep=0.75pt]    {$tv_{\langle x\rangle }$};
% Text Node
\draw (301,96.4) node [anchor=north west][inner sep=0.75pt]    {$xtv_{\langle x\rangle }$};
% Text Node
\draw (359,105.4) node [anchor=north west][inner sep=0.75pt]    {$x^{2} tv_{\langle x\rangle }$};
% Text Node
\draw (229,188.4) node [anchor=north west][inner sep=0.75pt]    {$txt^{-1} v_{\langle x\rangle }$};
% Text Node
\draw (151,175.4) node [anchor=north west][inner sep=0.75pt]    {$tx^{2} t^{-1} v_{\langle x\rangle }$};
% Text Node
\draw (238,245.4) node [anchor=north west][inner sep=0.75pt]    {$t^{-1} v_{\langle x\rangle }$};
% Text Node
\draw (292,264.4) node [anchor=north west][inner sep=0.75pt]    {$xt^{-1} v_{\langle x\rangle }$};
% Text Node
\draw (347,248.4) node [anchor=north west][inner sep=0.75pt]    {$x^{2} t^{-1} v_{\langle x\rangle }$};
% Text Node
\draw (186,32.4) node [anchor=north west][inner sep=0.75pt]    {$t^{2} v_{\langle x\rangle }$};
% Text Node
\draw (235,19.4) node [anchor=north west][inner sep=0.75pt]    {$txt v_{\langle x\rangle }$};
% Text Node
\draw (295,29.4) node [anchor=north west][inner sep=0.75pt]    {$tx^{2} t v_{\langle x\rangle }$};

\end{tikzpicture}

    \caption{\label{diagramOfActionOdd} A fragment of the Bass-Serre tree for $A_6$. To find $T^{\alpha\gamma}$ (i.e., in the case $k = 0$, when $n = 3$), we identify $txtx^{-2}$ as a fixed element of $\alpha\gamma$. We notice that $txtx^{-2}$ acts along the red axis, then argue locally to $\langle x \rangle$ and $t \langle x \rangle$ to show that this axis is all of $T^{\alpha\gamma}$.}
    \end{figure}

    First, we study the action of $\varphi_{x^k}\alpha\gamma$ on the 1-neighbourhood of $v_{\langle x \rangle}$.

    We consider the vertices $x^itv_{\langle x \rangle}$ (for $0 \leq i < n$), which should be thought of as those directly "above" $v_{\langle x \rangle}$ in the tree (this is reflected in the figures, where vertical direction is determined by the orientation from Remark \ref{dihedralEvenOrientation}).

    Notice that $\varphi_{x^k}\alpha\gamma \cdot x^itv_{\langle x \rangle} = x^{k-i}tv_{\langle x \rangle}$. So $x^itv_{\langle x \rangle} \in T^{\varphi_{x^k}\alpha\gamma}$ if and only if $k = 2i$ (mod n). Since $n$ is odd this happens for exactly one vertex $x^itv_{\langle x \rangle}$.

    Now we consider the vertices $x^it^{-1}v_{\langle x \rangle}$ (for $0 \leq i < n$), that is those which are directly below $v_{\langle x \rangle}$ in the tree.  Notice that $\varphi_{x^k}\alpha\gamma \cdot x^it^{-1}v_{\langle x \rangle} = x^{k-i-1}t^{-1}v_{\langle x \rangle}$. So $x^it^{-1}v_{\langle x \rangle} \in T^{\varphi_{x^k}\alpha\gamma}$ if and only if $k-i+1 = i$ (mod $n$), that is if and only $k+1 = 2i$ (mod $n$). Since $n$ is odd this happens for exactly one vertex $x^it^{-1}v_{\langle x \rangle}$. 

    Second, we study the action of $\varphi_{x^k}\alpha\gamma$ on the 1-neighbourhood of $x^itv_{\langle x \rangle}$, where $i$ is fixed such that $2i = k$ (mod $n$). 

    We consider the vertices $x^itx^jtv_{\langle x \rangle}$ (for $0 \leq j < n$), that is those which are directly above $x^itv_{\langle x \rangle}$ in the tree. Notice that $\varphi_{x^k}\alpha\gamma \cdot x^itx^jtv_{\langle x \rangle} = x^{k-i}tx^{-(j+1)}tv_{\langle x \rangle}$. So $x^itx^jtv_{\langle x \rangle} \in T^{\varphi_{x^k}\alpha\gamma}$ if and only if $-j-1=j$ (mod $n$), that is $2j=n-1$ (mod $n$). Since $n$ is odd this happens for exactly one vertex $x^itx^jtv_{\langle x \rangle}$.

    Now we consider the vertices $x^itx^jt^{-1}v_{\langle x \rangle}$ (for $0 \leq j < n$), that is those which are directly below $x^itv_{\langle x \rangle}$ in the tree.  Notice that $\varphi_{x^k}\alpha\gamma \cdot x^itx^jt^{-1}v_{\langle x \rangle} = x^{k-i}tx^{-j}t^{-1}v_{\langle x \rangle}$. So $x^itx^jt^{-1}v_{\langle x \rangle} \in T^{\varphi_{x^k}\alpha\gamma}$ if and only if $-j = j$ (mod $n$), that is if and only $j = 0$ (mod $n$). That is, $v_{\langle x \rangle}$ was the only vertex below $x^itv_{\langle x \rangle}$ in $T^{\varphi_{x^k}\alpha\gamma}$.

    Since $v_{\langle x \rangle}$ and $x^itv_{\langle x \rangle}$ are representatives of the two orbits of vertices in $T^s$ under the action of $s$, and $s$ acts on $T^{\varphi_{x^k}\alpha\gamma}$ by Lemma \ref{fixAction}, the arguments above suffice to show $T^{\varphi_{x^k}\alpha\gamma} = T^s$.

    So $\Fix(\varphi_{x^k}\alpha\gamma) \leq \Stab(T^s) \cong \mathbb{Z}^2$. Since $(\varphi_{x^k}\alpha\gamma)(x^n) = x^{-n}$, $\Fix(\varphi_{x^k}\alpha\gamma) \cap \langle x^n \rangle = \{1\}$, so $\Fix(\varphi_{x^k}\alpha\gamma) \cong \mathbb{Z}$. Since $\langle s \rangle$ is the maximal cyclic subgroup containing $s$ (that is, $s$ is not a proper power), it must be that $\Fix(\varphi_{x^k}\alpha\gamma) = \langle s \rangle$. 

    \bigskip
    \noindent \underline{Case 2:} $n$ is even.    

    Again, we begin by finding an element $s \in \Fix(\varphi_{x^k}\alpha\gamma)$ acting hyperbolically on $T$.

    If $k$ is even, then $s \coloneqq x^{\frac{k}{2}}tx^{\frac{n}{2}}t^{-1}x^{\frac{-k-n}{2}} \in \Fix(\varphi_{x^k}\alpha\gamma)$. Indeed, \begin{align*}
        (\varphi_{x^k}\alpha\gamma)(x^{\frac{k}{2}}tx^{\frac{n}{2}}t^{-1}x^{\frac{-k-n}{2}}) &= x^k x^{-\frac{k}{2}}tx^{-\frac{n}{2}}t^{-1}x^{\frac{k+n}{2}} x^{-k}\\
        &= x^{\frac{k}{2}}tx^{\frac{n}{2}}x^{-n}t^{-1}x^nx^{\frac{-k-n}{2}}\\
        &= x^{\frac{k}{2}}tx^{\frac{n}{2}}t^{-1}x^{\frac{-k-n}{2}}\\
    \end{align*} as required. If $k$ is odd, then $s \coloneqq x^{\frac{k+1}{2}}t^{-1}x^{\frac{n}{2}}tx^{\frac{-k-n-1}{2}} \in \Fix(\varphi_{x^k}\alpha\gamma)$, by a similar calculation.

    As before, we see $T^s \subseteq T^{\varphi_{x^k}\alpha\gamma}$ and will argue they are equal by arguing locally to vertices representing the two orbits under $s$ acting along $T^s$. We will work in the case where $k$ is even -- if $k$ is odd the result follows similarly.

    \begin{figure}[h]
    \centering
    \begin{tikzpicture} [%
        nd/.style = {circle,fill=white,text=black,inner sep=0pt},
        tn/.style = {node distance=1pt},
        ed/.style={-,black,fill=none},
        ax/.style={-,red,fill=none}]
    
        \node[nd] (x) at (0,0) {\sffamily $v_{\langle x \rangle}$};
        
        \node[nd] (tx) [above left=of x] {\sffamily $t v_{\langle x \rangle}$};
        \node[nd] (xtx) [above right=of x] {\sffamily $xt v_{\langle x \rangle}$};
    
        \node[nd] (t^2x) [above left=of tx] {\sffamily $t^2 v_{\langle x \rangle}$};
        \node[nd] (txtx) [above right=of tx] {\sffamily $txt v_{\langle x \rangle}$};

        \node[nd] (txt^-1x) [below left=of tx] {\sffamily $txt^{-1} v_{\langle x \rangle}$};
    
        \node[nd] (t^-1x) [below left=of x] {\sffamily $t^{-1} v_{\langle x \rangle}$};
        \node[nd] (xt^-1x) [below right=of x] {\sffamily $xt^{-1} v_{\langle x \rangle}$};

        \draw[ax] (x) -- (tx);
        \draw[ax] (x) -- (xtx);
    
        \draw[ed] (tx) -- (t^2x);
        \draw[ed] (tx) -- (txtx);
    
        \draw[ax] (tx) -- (txt^-1x);
    
        \draw[ed] (x) -- (t^-1x);
        \draw[ed] (x) -- (xt^-1x);
        
    \end{tikzpicture}
    \caption{\label{diagramOfActionEven} A fragment of the Bass-Serre tree for $A_4$. Again, the example is for the automorphism $\alpha\gamma$, so $k=0$ and $n=2$. We use the same strategy as before, but this time $txtx^{-2}$ is our principal fixed element of $\alpha\gamma$. Notice in the even case the action is “horizontal” instead of “vertical”.}
    \end{figure}
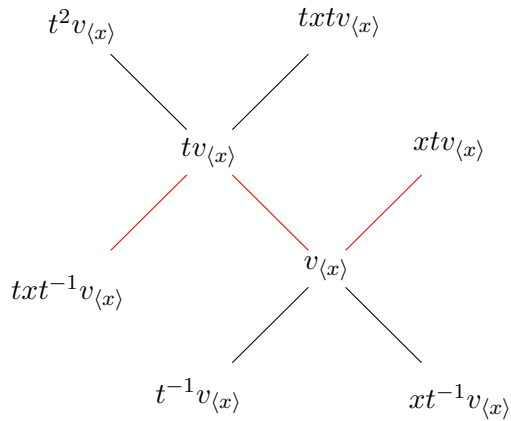

    First, we study the action of $\varphi_{x^k}\alpha\gamma$ on the 1-neighbourhood of $v_{\langle x \rangle}$.

    We consider the vertices $x^itv_{\langle x \rangle}$ (for $0 \leq i < n$), that is those which are directly above $v_{\langle x \rangle}$ in the tree. Notice that $\varphi_{x^k}\alpha\gamma \cdot x^itv_{\langle x \rangle} = x^{k-i}tv_{\langle x \rangle}$. So $x^itv_{\langle x \rangle} \in T^{\varphi_{x^k}\alpha\gamma}$ if and only if $k = 2i$ (mod n). Since $n$ and $k$ are both even, this happens for exactly two vertices $x^itv_{\langle x \rangle}$ (corresponding to $i = \frac{k}{2}$ and $i = \frac{k+n}{2}$).

    Now we consider the vertices $x^it^{-1}v_{\langle x \rangle}$ (for $0 \leq i < n$), that is those which are directly below $v_{\langle x \rangle}$ in the tree.  Notice that $\varphi_{x^k}\alpha\gamma \cdot x^it^{-1}v_{\langle x \rangle} = x^{k-i-1}t^{-1}v_{\langle x \rangle}$. So $x^it^{-1}v_{\langle x \rangle} \in T^{\varphi_{x^k}\alpha\gamma}$ if and only if $k-i+1 = i$ (mod $n$), that is if and only $k+1 = 2i$ (mod $n$). Since $n$ and $k$ are both even, this does not occur. 

    Second, we study the action of $\varphi_{x^k}\alpha\gamma$ on the 1-neighbourhood of $x^itv_{\langle x \rangle}$, where $i$ is fixed such that $2i = k$ (mod $n$). 

    We consider the vertices $x^itx^jtv_{\langle x \rangle}$ (for $0 \leq j < n$), that is those which are directly above $x^itv_{\langle x \rangle}$ in the tree. Notice that $\varphi_{x^k}\alpha\gamma \cdot x^itx^jtv_{\langle x \rangle} = x^{k-i}tx^{-(j+1)}tv_{\langle x \rangle}$. So $x^itx^jtv_{\langle x \rangle} \in T^{\varphi_{x^k}\alpha\gamma}$ if and only if $-j-1=j$ (mod $n$), that is $2j=n-1$ (mod $n$). Since $n$ is even, this does not occur.

    Now we consider the vertices $x^itx^jt^{-1}v_{\langle x \rangle}$ (for $0 \leq j < n$), that is those which are directly below $x^itv_{\langle x \rangle}$ in the tree.  Notice that $\varphi_{x^k}\alpha\gamma \cdot x^itx^jt^{-1}v_{\langle x \rangle} = x^{k-i}tx^{-j}t^{-1}v_{\langle x \rangle}$. So $x^itx^jt^{-1}v_{\langle x \rangle} \in T^{\varphi_{x^k}\alpha\gamma}$ if and only if $-j = j$ (mod $n$), that is if and only $j = 0$ (mod $n$). There are two solutions: $j=0$, which recovers $v_{\langle x \rangle} \in T^{\varphi_{x^k}\alpha\gamma}$, and $j = \frac{n}{2}$.

    As before, these local arguments suffice to show $T^s = T^{\varphi_{x^k}\alpha\gamma}$, that is $\Fix(\varphi_g\alpha\gamma) \leq \Stab(T^s) \cong \mathbb{Z}^2$. As in the previous case, $\Fix(\varphi_{x^k}\alpha\gamma) = \langle s \rangle$.
    
\end{proof}

We now put our results together, translating back to the Artin presentation, to completely classify the fixed subgroups of inducible automorphisms of even dihedral Artin groups.

\begin{proof}[Proof of Theorem \ref{dihedralFixPointsEven}]
        Several points follow immediately from our work up to this point. Points 1 and 2 follow from Proposition \ref{dihedralCentralisers}; point 3 from Proposition \ref{ellipticAlphaBetaBetaGamma}; point 4, 6 and 8 from \ref{dihedralInfiniteOrder}.

        The first part of point 5 follows from Lemma \ref{ellipticAlphaBetaBetaGamma}. 

        For the second part, note that by the proof of Lemma \ref{inducibleBSAutomorphisms}, we can see $\sigma$ as the automorphism $\varphi_t\beta\gamma$ in the Baumslag-Solitar presentation. Direct computation reveals this automorphism fixes only the midpoint of the edge from $v_{\langle x \rangle}$ to $tv_{\langle x \rangle}$, so $\Fix(\sigma)$ is contained in the centre $\langle (ab)^n \rangle$, and since $\sigma((ab)^n) = (ba)^n = (ab)^n$, $\Fix(\sigma) = \langle (ab)^n \rangle$.

        The first part of point 8 follows from Lemma \ref{ellipticAlphaGamma}. For the second part, we restrict to the case where $n$ is even. The odd case is similar. Notice that, \begin{align*}
            \sigma\iota((ab)^{\frac{n}{2}}(ba)^{-\frac{n}{2}}) &= (ab)^{-\frac{n}{2}}(ba)^{\frac{n}{2}}\\
            &= (ab)^{-\frac{n}{2}}(ab)^n(ba)^{-n}(ba)^{\frac{n}{2}}\\
            &= (ab)^{\frac{n}{2}}(ba)^{-\frac{n}{2}},
        \end{align*} so the proposed group is certainly a subgroup of $\Fix(\sigma\iota)$.
        
        We show the element $(ab)^{\frac{n}{2}}(ba)^{-\frac{n}{2}}$ has no roots. Direct computation reveals it acts on the Bass-Serre tree with translation length 2, so any root would act with translation length 1. It is height 0 (see Definition \ref{definitionHeight}), so any root would be height 0. No element with height 0 can act with translation length 1, as the element acting with translation length 1 are conjugates of $b(ab)^k$ (or its inverse) where $k \in \mathbb{Z}$, therefore no root can exist.
        
        Since $\Fix(\sigma\iota) \cong \mathbb{Z}$, it must be that $\Fix(\sigma\iota) = \langle (ab)^{\frac{n}{2}}(ba)^{-\frac{n}{2}} \rangle$ as required.
\end{proof}

\section{Fixed Subgroups of Large-Type Artin Groups}\label{sectionGeneral}

In this section we come back to the primary case of an Artin group $A_{\Gamma}$ of rank at least $3$. We will suppose throughout the section that $A_{\Gamma}$ is large-type, or at least $2$-dimensional.

Recall that that $\Aut_{\Gamma}(A_{\Gamma})$ acts on $X_{\Gamma}$ by simplicial automorphisms (see Definition \ref{DefinitionActionOfAut}). The goal of this section is to compute the fixed subgroups associated with every automorphism $\gamma \in \Aut_{\Gamma}(A_{\Gamma})$. When $\gamma$ is inner, this comes down to understanding centralisers of elements in $A_{\Gamma}$. This is dealt with in Section 4.1.

When $\gamma$ is not inner, we will generally have two methods to compute its fixed subgroups. In Section 4.2 we suppose that $\gamma$ acts elliptically on $X_{\Gamma}$. In particular, it fixes some (convex) subcomplex $X_{\Gamma}^{\gamma}$, and we know by Corollary \ref{fixAction} that $\Fix(\gamma) \leq \Stab(X_{\Gamma}^{\gamma})$. Most of the work in order to understand $\Fix(\gamma)$ will then reduce to understanding $X_{\Gamma}^{\gamma}$. In Section 4.3 we deal with the case where $\gamma$ acts hyperbolically on $X_{\Gamma}$. In that case, we will show that there always exists some element $h \in A_{\Gamma}$ such that the fixed subgroup $\Fix(\gamma)$ lies within the centraliser $C(h)$ (see Lemma \ref{infiniteOrderIntoCentraliser}). This will simplify the search for fixed points.

\begin{notation} \label{NotationAutomorphisms}
    Throughout this section, and unless mentioned otherwise, we will use $\gamma$ to describe an arbitrary element of $\Aut_{\Gamma}(A_{\Gamma})$. As $\Inn(A_{\Gamma})$ is normal in $\Aut_{\Gamma}(A_{\Gamma})$, every automorphism $\gamma$ decomposes as a product $\gamma = \varphi_g \sigma \iota^{\varepsilon}$ for some $g \in A_{\Gamma}$ and $\varepsilon \in \{0, 1\}$, where $\varphi_g$ is inner, $\sigma$ is induced by a (possibly trivial) graph automorphism, and $\iota$ is the global inversion. Finally, we will use $\psi$ to refer to an automorphism that lies in the subgroup generated by the graph automorphisms and the global inversion, i.e. $\psi = \sigma \iota^{\varepsilon}$.
\end{notation}

\subsection{Inner automorphisms}

The fixed subgroup $\Fix(\varphi_g)$ of an inner automorphism is the centraliser $C(g)$. Thus to understand the fixed subgroups of inner automorphisms we need to understand centralisers of elements in $A_{\Gamma}$. The following proposition gives the desired result:

\begin{proposition} \cite[Remark 3.6]{martin2023characterising} \label{PropCentralisers} Let $A_{\Gamma}$ be a large-type Artin group, and let $g \in A_{\Gamma} \backslash \{1\}$. Then up to conjugation, the centraliser $C(g)$ can be described as follows:
\begin{enumerate}
\item If $g$ acts elliptically on $X_{\Gamma}$ and $X_{\Gamma}^g$ is the standard tree $X_{\Gamma}^a$ for some $a \in V(\Gamma)$, then $C(g) = \langle a \rangle \times F \cong \mathbb{Z} \times F$, where $F$ is a (possibly abelian) finitely generated free group.
    
\item If $g$ acts elliptically on $X_{\Gamma}$ and $X_{\Gamma}^g$ is the single vertex $v_{ab}$, then:
    \begin{enumerate}
        \item If $g \in Z(A_{ab})$, then $C(g)$ is the dihedral Artin subgroup $A_{ab}$.
        \item If $g \notin Z(A_{ab})$ but $g^n \in Z(A_{ab})$ for some $n \neq 0$, then $C(g) \cong \mathbb{Z}$ is the maximal $\mathbb{Z}$-subgroup that contains $g$.
        \item Otherwise, $C(g) \cong \mathbb{Z}^2$ with $\langle g \rangle \times Z(A_{ab}) \leq_{f.i.} C(g)$.
        
    \end{enumerate}
\item If $g$ acts hyperbolically on $X_{\Gamma}$, we consider the transverse-tree $\mathcal{T}$ associated with $g$ (see Lemma \ref{LemmaTransverseTree}). Then:
    \begin{enumerate}
        \item If $\mathcal{T}$ is bounded, and some axis of $g$ is contained in a standard tree of the form $X_{\Gamma}^a$, then $C(g) = \langle g_0, a \rangle \cong \mathbb{Z}^2$, where $g_0$ acts with minimal non-trivial translation length along the axes of $g$.
        \item If $\mathcal{T}$ is bounded, and no axis of $g$ is contained in a standard tree of $X_{\Gamma}$, then $C(g) = \langle g_0 \rangle \cong \mathbb{Z}$, where $g_0$ acts with minimal non-trivial translation length along the axes of $g$.
        \item If $\mathcal{T}$ is unbounded and $g$ has an axis that is contained in a standard tree of the form $X_{\Gamma}^b$, then $g = (abcabc)^k$ for some $k \neq 0$ and $C(g) = \langle b, abc \rangle$, where $a, b, c$ are three standard generators with coefficients $m_{ab} = m_{ac} = m_{bc} = 3$. In that case, the centraliser $C(g)$ is abstractly isomorphic to the dihedral Artin group $\langle x, y \ | \ xyxy = yxyx \rangle$.
        \item If $\mathcal{T}$ is unbounded and $g$ has no axis that is contained in a standard tree of $X_{\Gamma}$, then $C(g) \cong \mathbb{Z}^2$ with $\langle g, abcabc \rangle \leq_{f.i.} C(g)$, where $a, b, c$ are three standard generators with coefficients $m_{ab} = m_{ac} = m_{bc} = 3$.
    \end{enumerate}
\end{enumerate}
\end{proposition}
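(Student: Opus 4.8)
The plan is to read off $C(g) = \Fix(\varphi_g)$ from the dynamics of $g$ on the $\mathrm{CAT}(0)$ complex $X_\Gamma$, following the case division in the statement. Two principles recur. First, $C(g)$ preserves $X_\Gamma^g$: for $g$ elliptic this is Corollary \ref{fixAction} applied to the action of $A_\Gamma$ itself, and for $g$ hyperbolic it is part of Theorem \ref{TheoremBH}; so one always gets a restricted action $C(g) \curvearrowright X_\Gamma^g$ whose kernel and image can be analysed by Bass--Serre theory. Second, by Lemma \ref{LemmaClassificationByType} one may conjugate $g$ to a convenient representative; in the type-$1$ case $g$ becomes a power $a^k$ of a standard generator, and $C(a^k) = C(a)$ by uniqueness of parabolic closures (two rank-$1$ parabolic subgroups sharing a non-trivial element coincide, and heights exclude $a \mapsto a^{-1}$).

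\emph{Elliptic $g$.} If $X_\Gamma^g$ is a standard tree we may take $g = a$. Then $C(a)$ acts on the tree $X_\Gamma^a$ with kernel the pointwise stabiliser $\langle a\rangle = A_a$, and the induced action of $C(a)/\langle a\rangle$ on $X_\Gamma^a$ is free: the stabiliser of a vertex $h v_a$ is $C(a) \cap h\langle a\rangle h^{-1}$, and any non-trivial element there is a type-$1$ element commuting with $a$, forcing $h\langle a\rangle h^{-1} = \langle a\rangle$ again by parabolic-closure uniqueness (edges are handled identically, and the bipartite tree has no inversions). Since $X_\Gamma^a/C(a)$ is finite, $C(a)/\langle a\rangle$ is a finitely generated free group $F$; as $\langle a\rangle$ is central and $F$ is free the extension splits, giving $C(a) \cong \langle a\rangle \times F$. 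If instead $X_\Gamma^g$ is the single vertex $v_{ab}$, then $g$ has type $2$ and $C(g) \leq \Stab(v_{ab}) = A_{ab}$, so everything reduces to centralisers in the dihedral Artin group $A_{ab}$: passing to the virtually free quotient $A_{ab}/Z(A_{ab})$ and using torsion-freeness (Theorem \ref{ThmTorsionAndCentres}) yields the trichotomy — $C(g) = A_{ab}$ when $g$ is central, the maximal cyclic subgroup through $g$ when $\bar g$ is torsion, and a central extension of $\mathbb Z$ by $\mathbb Z$, hence $\mathbb Z^2$ with $\langle g\rangle \times Z(A_{ab})$ of finite index, when $\bar g$ has infinite order — exactly as in Proposition \ref{dihedralCentralisers}.

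\emph{Hyperbolic $g$.} By Theorem \ref{TheoremBH} and Lemma \ref{LemmaTransverseTree} write $\minset(g) \cong \mathcal T \times \mathbb R$ with $\mathcal T$ a real tree on which $C(g)$ acts, and split on whether $\mathcal T$ is bounded. If $\mathcal T$ is bounded then $C(g)$ fixes its circumcentre, hence preserves a single axis $u$ of $g$ and acts on $u \cong \mathbb R$; no element acts as a reflection (that would conjugate $g$ to $g^{-1}$), and there is a positive lower bound on translation lengths of hyperbolic isometries of $X_\Gamma$ (finitely many shapes), so the image is infinite cyclic, generated by the image of an element $g_0$ of minimal positive translation length. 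The kernel is the pointwise stabiliser $N$ of $u$ in $C(g)$: if some axis lies in a standard tree $X_\Gamma^a$ then $N = \langle a\rangle$ (and $a \in C(g)$, as any $h \in C(g)$ maps the standard tree containing $u$ to itself), which is central, so $C(g) = \langle g_0, a\rangle \cong \mathbb Z^2$; if no axis lies in a standard tree then $N$ is trivial and $C(g) = \langle g_0\rangle \cong \mathbb Z$. If $\mathcal T$ is unbounded we are in the situation of Lemma \ref{LemmaPossibleMinsets}: either $g$ is a power of the exotic element $z = abcabc$, whence $C(g) = C(z) = \langle b, abc\rangle$ is the dihedral Artin subgroup of Theorem \ref{TheoremExotic} and Proposition \ref{PropBassSerreExotic}; or $\minset(g)$ is a plane tiled by principal triangles, and $C(g)$, lying in its isometry group and commuting with the translation $g$ while preserving the tiling, is $\mathbb Z^2$ with $\langle g, abcabc\rangle$ of finite index.

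\emph{Main obstacle.} The genuinely non-formal input is the rigidity of parabolic closures in large-type Artin groups: that commuting type-$1$ elements have equal parabolic closures, together with the refinement identifying the pointwise stabiliser $N$ of an axis in the bounded transverse-tree case and deciding — according to whether an axis lies in a standard tree — whether $N$ is infinite cyclic or trivial. These are malnormality/acylindricity-type statements about parabolic subgroups (available via \cite{cumplido2023parabolic}); granting them, the remaining ingredients are Bass--Serre theory, the circumcentre argument on real trees, and the already-quoted structure of exotic dihedral subgroups and their minimal sets, so the proof assembles as in \cite{martin2023characterising}.
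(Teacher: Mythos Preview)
Your outline follows the same architecture as the paper (and \cite{martin2023characterising}): read off $C(g)$ from the action on $X_\Gamma^g$, reduce the elliptic type-$2$ case to dihedral Artin groups, and in the hyperbolic case split on boundedness of the transverse tree. For cases 1, 2, 3.1--3.3 your sketch is essentially what the paper does (or cites).

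The gap is in case 3.4. You write that ``$C(g)$, lying in its isometry group and commuting with the translation $g$ while preserving the tiling, is $\mathbb{Z}^2$''. But the isometry group of a plane tiled by equilateral triangles contains glide reflections, and a glide reflection with axis parallel to the translation direction of $g$ commutes with $g$. So commutation and tiling-preservation alone only give that $C(g)$ is \emph{virtually} $\mathbb{Z}^2$ --- indeed the paper explicitly flags that \cite{martin2023characterising} proves only this much. To get exactly $\mathbb{Z}^2$ the paper does additional work: first a torsion-freeness argument rules out elliptic isometries of the plane coming from $A_\Gamma$, and then the system of colours on the edges of the principal-triangle tiling (Remark \ref{RemSystemColours}) is used to show that no glide reflection along an axis parallel to $g$ can preserve the $A_\Gamma$-orbits of edges, for each of the three possible directions. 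This colouring argument is the missing ingredient in your proposal; without it you have not excluded an index-$2$ overgroup of $\mathbb{Z}^2$ inside $C(g)$.
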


\begin{remark}
    In Proposition \ref{PropCentralisers}.1, a basis of the free subgroup $F$ can be given explicitly. See Remark \ref{RemFreeGroupExplicitly}, or see \cite[Remark 4.6]{martin2022acylindrical} or \cite[Corollary 34]{cumplido2023parabolic}.
\end{remark}

\begin{proof}[Proof of Proposition \ref{PropCentralisers}]
    This Proposition was proved in \cite{martin2023characterising}, although some of the centralisers, namely the ones of points 2.2, 2.3 and 3.4, were only showed to be virtually abelian. We remove the “virtual” condition thereafter:
    \bigskip

    \noindent \underline{Cases 2.2. and 2.3.} First of all, note that $C(g) \subseteq A_{ab}$ (apply Lemma \ref{reductionToStabs} to $\varphi_g$).
    
    Thus the argument can be reduced to an argument on dihedral Artin groups, which was proved in Section 3 (see Theorem \ref{dihedralFixPointsOdd}.1-2.), and Theorem \ref{dihedralFixPointsEven}.1-2.). Notice that the arguments in Section 3 also justify that the maximal $\mathbb{Z}$-subgroup containing $g$ in Case 2.2 is well-defined, because this case is exactly when $g$ is in the stabiliser of a single vertex but no edge, so that vertex stabiliser is the appropriate copy of $\mathbb{Z}$.
    \medskip

    \noindent \underline{Case 3.4.} By Lemma \ref{LemmaPossibleMinsets}, we know that $X_\Gamma^g$ is a Euclidean plane $P$ tiled with principal triangles, such as describe in Figure \ref{FigureTiledPlane}. We also know that $C(g)$ is contained in $\Isom(P)$, which is virtually $\mathbb{Z}^2$. We make the following general observation:
    \medskip

    \noindent \underline{Claim:} For any Euclidean plane $P_0$ in $X_{\Gamma}$, there is no non-trivial elliptic element in $\Isom(P_0)$.
    \medskip

    \noindent \underline{Proof of the claim:} Suppose that there is some non-trivial elliptic element $g \in \Isom(P_0)$. Then $g$ fixes some vertex $v \in P_0$. Because $P_0$ is locally finite, it must be that some power $g^n$ with $n \neq 0$ fixes the star of $v$ pointwise. This star contains points with trivial stabiliser, so the above gives $g^n = 1$, which contradicts $A_{\Gamma}$ being torsion-free (Theorem \ref{ThmTorsionAndCentres}). This proves the claim.
    \medskip

    It follows that $\Isom(P)$ can only contain the identity, glide reflections, and pure translations. Consider now a glide reflection $h \in C(g) \subseteq \Isom(P)$ with axis $u$. Since the action of $h$ on $P$ commutes with that of $h$ which is a pure translation, their axes must be parallel, i.e. $u$ is an axis of $g$ too. Let us consider the vertical purple axis $\gamma$ from Figure \ref{FigureTiledPlane}. We split in three cases:
    \begin{enumerate}
        \item The angle $\angle (\gamma, u)$ is one of $0$, $\frac{\pi}{3}$,  $\frac{2 \pi}{3}$ or $\pi$ (the purple axes) ;
        \item The angle $\angle (\gamma, u)$ is one of $\frac{\pi}{6}$, $\frac{\pi}{2}$ or $\frac{5 \pi}{6}$ (the brown axes) ;
        \item None of the above (eg. the orange axis).
    \end{enumerate}
    \begin{figure}[H]
    \centering
    \includegraphics[scale=1.1]{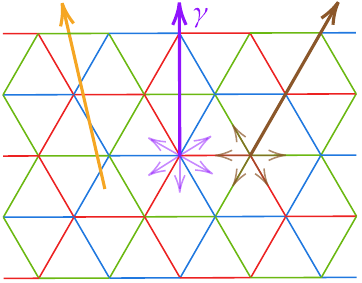}
    \caption{A partial picture of $P = X_\Gamma^g$. The possible angles that line can make with $\gamma$ are drawn in different colours. The edges of the principal triangles are given a system of colours (see Remark \ref{RemSystemColours}).}
    \label{FigureTiledPlane}
    \end{figure}

    \noindent The Case (3.) immediately gives a contradiction, as one cannot reflect along such an axis while preserving the tiling of $P$. For Case (2.), we know by hypothesis that $g$ has no axis that is contained in a standard tree. One can easily compute that if the axis $u$ of $g$ falls in Case (2.) above then $u$ is actually contained in a standard tree. This also excludes Case (2.).

    We now suppose that we are in Case (1.). The glide reflection $h$ must respect the colouring of the tiling of $P$, as it is an isometry that is induced by the group action (see Remark \ref{RemSystemColours}). One easily checks that any reflection along a line of Case (1.) does not preserve the colouring. For instance, if the line under consideration is the line $\gamma$ on Figure \ref{FigureTiledPlane}, then a glide reflection along $\gamma$ sends the pair of blue-red (from left to right) horizontal edges onto a pair of red-blue (from left to right) edges, which do not exist in $P$. This also excludes Case (1.).

    It follows that no glide reflection exists in $C(g)$, i.e. $C(g)$ only consists of pure translations. Together with the fact that $C(g)$ is virtually isomorphic to $\mathbb{Z}^2$, the above shows that $C(g)$ is actually isomorphic to $\mathbb{Z}^2$. The translations induced by $g$ and by $abcabc$ are always along different directions, as the latter induces a translation for which certain axes are contained in standard trees, while the former does not. In particular, $\langle g, abcabc \rangle \cong \mathbb{Z}^2$, and thus $\langle g, abcabc \rangle \leq_{f.i.} C(g)$.
\end{proof}

\subsection{Automorphisms acting elliptically}

In this section we compute the fixed subgroups of the (non-inner) automorphisms of $A_{\Gamma}$ that act elliptically on $X_{\Gamma}$. Let $\gamma$ be such an automorphism. Section 4.2.1 is dedicated to introducing various general tools. In Section 4.2.2 we compute the fixed subgroup of $\gamma$ when $\gamma = \varphi_g \sigma$, i.e. when there is no global inversion involved. In Section 4.2.3 we do the same thing when $\gamma = \varphi_g \sigma \iota$, i.e. when the global inversion is involved.

\begin{theorem}\label{mainTheoremElliptic}
    Let $A_\Gamma$ be a large-type Artin group of rank at least 3. Suppose $\gamma \in \Aut_\Gamma(A_\Gamma)$ is an automorphism acting elliptically on $X_\Gamma$. Then we are in one of the following cases. In the following, $h \in A_{\Gamma}$, $\sigma$ is a (possibly trivial) graph automorphism, and $a$ is a generator such that $\sigma(a) = a$.

    \begin{enumerate}
        \item $\gamma \sim_h \sigma$ and $\Fix(\gamma)$ is is the free product of a parabolic subgroup and a finitely generated free group. In particular, $$\Fix(\gamma) = h ( A_{\Gamma^\sigma}*\langle \{ \Delta_{st} \ | \ \sigma \text{ transposes $s$ and $t$}\}\rangle ) h^{-1},$$ where $\Gamma^\sigma$ is the subgraph of $\Gamma$ fixed by $\sigma$.

        \item $\gamma \sim_h \sigma\iota$ and $\Fix(\gamma)$ is a finitely generated free group. In particular, $$\Fix(\gamma) = h\langle \{w_{st} \ | \ \sigma \text{ transposes $s$ and $t$, and $m_{st}$ is even}\}\rangle h^{-1},$$ where,
        
        \begin{equation*}
            w_{st} =
            \begin{cases}
                (st)^n(ts)^{-n}, & \text{if}\ m_{st}=4n \\
                t(st)^n(ts)^{-n}s^{-1}, & \text{if}\ m_{st}=4n+2
            \end{cases}
        \end{equation*}

        \item $\gamma \sim_h \varphi_{a^k}\sigma$ and $\Fix(\varphi)$ is a direct product of $\mathbb{Z}$ and a finitely generated free group. In particular, $$\Fix(\gamma) = h(\langle a \rangle \times F)h^{-1},$$ where $F$ is the free group described explicitly in Remark \ref{RemFreeGroupExplicitly}.

        \item $\gamma \sim_h \varphi_a\sigma\iota$, and $\Fix(\varphi)$ is a finitely generated free group. In particular, $$\Fix(\gamma) = hFh^{-1},$$ where $F$ is the free group described explicitly in Remark \ref{iotaagenerators}.
        
        \item None of the previous cases apply, and $\gamma \sim_h \varphi_g\sigma\iota^{\varepsilon}$, where $g \in A_{st}$ for some standard dihedral parabolic subgroup $A_{st}$ and $\varepsilon \in \{0,1\}$. The automorphism $\varphi_{h^{-1}}\gamma\varphi_h$ restricts to an automorphism of $A_{st}$ and its fixed subgroup is isomorphic to $\{1\}$, $\mathbb{Z}$ or $\mathbb{Z}^2$, as outlined by Theorem \ref{dihedralFixPointsOdd} and Theorem \ref{dihedralFixPointsEven}.
    \end{enumerate}
\end{theorem}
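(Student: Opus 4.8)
The plan is to exploit that an elliptic $\gamma$ has a non-empty convex fixed subcomplex $X_\Gamma^\gamma$, so that by Corollary \ref{fixAction} the computation of $\Fix(\gamma)$ localises inside $\Stab(X_\Gamma^\gamma)$, a group we already control once we know what $X_\Gamma^\gamma$ is. First I would observe that the action of $\Aut_\Gamma(A_\Gamma)$ on $X_\Gamma$ is without inversions, since the vertices of a simplex of $X_\Gamma$ have pairwise distinct types (Remark \ref{remTypePreserving}) and the action is type-preserving; hence by Lemma \ref{LemmaMinsetConvex} the set $X_\Gamma^\gamma$ is a non-empty simplicial subcomplex, and in particular it contains a vertex. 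Let $d\in\{0,1,2\}$ be the smallest type of a fixed vertex and fix such a vertex $v$. As $A_\Gamma$ acts transitively on the vertices of each type-orbit, Lemma \ref{lemmaIsogredience} lets me replace $\gamma$ by an isogredient automorphism so that $v\in\{v_\emptyset,v_a,v_{ab}\}$, at the cost of the conjugation by $h$ appearing in the statement (and $d$ is unchanged, being an isogredience invariant). Writing $\gamma=\varphi_g\psi$ with $\psi=\sigma\iota^\varepsilon$, Lemma \ref{LemmaUsefulEquation} applied to $v_S$ gives that $\gamma$ fixes $v_S$ if and only if $g\in A_S$ and $\sigma(S)=S$. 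The proof then splits on the value of $d$.

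If $d=0$ then $g=1$ and $\gamma=\sigma\iota^\varepsilon$. For $\varepsilon=0$ (Case 1) I would describe $X_\Gamma^\sigma$ directly from the coset description and analyse the induced action of $\Fix(\sigma)$ on it, showing that a fundamental domain yields a graph-of-groups decomposition whose vertex groups are the parabolic part $A_{\Gamma^\sigma}$ together with the infinite cyclic groups $\langle\Delta_{st}\rangle$ coming from the $\sigma$-invariant edges $e^{st}$ on which $\sigma$ transposes $s$ and $t$ (each such $\Delta_{st}$ being $\sigma$-fixed), amalgamated trivially; this produces the stated free product and amounts to a geometric reproof of Crisp's theorem in the large-type case. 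For $\varepsilon=1$ (Case 2), $\sigma\iota$ restricts on each $\sigma$-invariant dihedral parabolic $A_{st}$ to an automorphism in $\Aut_{\{s,t\}}(A_{st})$ involving the global inversion, so Theorems \ref{dihedralFixPointsOdd} and \ref{dihedralFixPointsEven} give a trivial fixed subgroup when $m_{st}$ is odd and an infinite cyclic group $\langle w_{st}\rangle$ when $m_{st}$ is even; a final argument that these $w_{st}$ span a subtree of $X_\Gamma^{\sigma\iota}$ on which $\Fix(\sigma\iota)$ acts freely upgrades this to the claimed finitely generated free group.

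If $d=1$ then $g=a^k$ and $\sigma(a)=a$, so $\gamma=\varphi_{a^k}\sigma\iota^\varepsilon$. For $\varepsilon=0$ one has $k\neq 0$ (otherwise $v_\emptyset$ would be fixed); using $\Fix(\gamma)\leq\Fix(\gamma^2)=C(a^{2k})$, which by Proposition \ref{PropCentralisers}(1) is again of the form $\langle a\rangle\times F$, together with the fact that $\varphi_{a^k}$ acts trivially on this centraliser, the problem reduces to computing $\Fix(\sigma|_{C(a^{2k})})$; since $\langle a\rangle$ is a $\sigma$-fixed direct factor, this is $\langle a\rangle$ times a finitely generated free group (finiteness of rank by Bestvina--Handel), giving Case 3 and its description via Remark \ref{RemFreeGroupExplicitly}. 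For $\varepsilon=1$, conjugating by powers of $a$ changes $k$ only modulo $2$ and $k$ even is excluded (it would yield a representative fixing $v_\emptyset$), so $k=1$; here $\gamma(a)=a^{-1}$, and I would show that $X_\Gamma^\gamma$ is a subtree of the standard tree $X_\Gamma^a$ and run the computation on $\Stab(X_\Gamma^\gamma)$ to obtain the finitely generated free group of Case 4 (Remark \ref{iotaagenerators}).

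Finally, if $d=2$ the fixed vertex is $v_{ab}$ with $g\in A_{ab}$ and $\sigma(\{a,b\})=\{a,b\}$; since type-$2$ vertices of $X_\Gamma$ are maximal and pairwise non-adjacent, a connected fixed subcomplex containing no vertex of type $\leq 1$ must be the single point $v_{ab}$, so Lemma \ref{reductionToStabs} gives that $\gamma$ restricts to an automorphism of $\Stab(v_{ab})=A_{ab}$ lying in $\Aut_{\{a,b\}}(A_{ab})$ with $\Fix(\gamma)$ its fixed subgroup, classified by Theorems \ref{dihedralFixPointsOdd} and \ref{dihedralFixPointsEven} --- this is Case 5. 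I expect the genuine difficulty to lie in Case 1 (building the free-product decomposition from the action on $X_\Gamma^\sigma$, where one must control precisely which cosets are $\sigma$-fixed and how $\Fix(\sigma)$ permutes them) and, to a lesser extent, in Case 4 (identifying $X_\Gamma^\gamma$ as a subtree and extracting its free fixed subgroup), whereas Cases 2, 3 and 5 come down to the dihedral theorems and the known structure of centralisers.
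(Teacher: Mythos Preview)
Your case-split on the minimal type $d$ of a fixed vertex is exactly Lemma \ref{LemmaReduction}, and your treatments of Cases 1, 2 and 5 match the paper's approach.

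In Case 3 there is a small slip: $\gamma^2=\varphi_{a^{2k}}\sigma^2$, not $\varphi_{a^{2k}}$; you need $\gamma^n$ with $n$ the order of $\sigma$ to land in $C(a^{nk})=C(a)$. Once fixed, your reduction to $\Fix(\sigma|_{C(a)})$ is valid and is a more algebraic route than the paper's. The paper instead proves directly that $X_\Gamma^{\varphi_{a^k}\sigma}$ is a subtree of the standard tree $X_\Gamma^a$ (Lemma \ref{LemmaFixOfSigmaa}) and then shows $\Stab(X_\Gamma^{\varphi_{a^k}\sigma})=\Fix(\varphi_{a^k}\sigma)$ via a graph-of-groups argument (Corollary \ref{CoroStabSigmaa}). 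Your route gives the isomorphism type $\langle a\rangle\times F$ by applying Bestvina--Handel to the induced automorphism of $C(a)/\langle a\rangle$, but to extract the explicit basis of Remark \ref{RemFreeGroupExplicitly} you would still need the geometric description of that quotient as a graph, which is essentially the paper's computation.

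The genuine gap is in Case 4: the tree $X_\Gamma^{\varphi_a\sigma\iota}$ is \emph{not} contained in the standard tree $X_\Gamma^a$. A type-$1$ vertex $gv_s$ fixed by $\varphi_a\iota$ satisfies (for a suitable representative) $g^{-1}a\,\iota(g)=s$, which is not $g^{-1}ag=s$ unless $\iota(g)=g$, i.e.\ $g=1$; so for $s\neq a$ these vertices typically lie outside $X_\Gamma^a$. The paper therefore works inside $X_\Gamma^{(1)-ess}$ rather than any standard tree: Lemma \ref{lemmaSigmaIotaaTree} determines the local structure of $X_\Gamma^{\varphi_a\sigma\iota}$ (one fixed type-$1$ neighbour of each fixed type-$2$ vertex per orbit when $m_{st}$ is odd, and only one in total when $m_{st}$ is even), and Lemma \ref{sigmaiotaafreecofinite} shows the $\Fix(\varphi_a\sigma\iota)$-action on this tree is free and cofinite. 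Freeness is not automatic here --- there is no surviving $\langle a\rangle$ in the edge stabilisers as in Case 3 --- and comes from the fact that the restriction of $\varphi_a\sigma\iota$ to each type-$2$ stabiliser is a finite-order dihedral automorphism involving $\iota$, hence has trivial fixed subgroup by Theorems \ref{dihedralFixPointsOdd}(3) and \ref{dihedralFixPointsEven}(3).
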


\begin{proof}
    The reduction to the five cases follows from Lemma \ref{LemmaReduction}. Then, Case 1 follows from Corollary \ref{CoroFixedPointsOfSigma}; Case 2 from Corollary \ref{iotaSigmaFixedPoints}; Case 3 from Corollary \ref{CoroStabSigmaa}; Case 4 from Corollary \ref{coroFixSigmaIotaa}; and Case 5 from Lemma \ref{reductionToStabs}.
\end{proof}

As outlined in the introduction, in the below (and throughout this paper) we always use rank in the Artin sense, that is, the fewest number of vertices in any defining graph.

\begin{proposition}\label{propRankElliptic}
    The rank of $\Fix(\gamma)$ for the automorphism $\gamma$ of Theorem \ref{mainTheoremElliptic} is at most $n^2 - 2n + 2$, where $n = |V(\Gamma)|$. Moreover, there exist Artin groups $A_\Gamma$ and automorphisms $\gamma$ realising this upper bound for each $n$.
\end{proposition}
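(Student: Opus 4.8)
The plan is to bound the rank in each of the five cases of Theorem \ref{mainTheoremElliptic}, and then to exhibit a family realising the bound. For the upper bound, note that rank is a conjugacy invariant, so it suffices to bound the rank of $\Fix(\psi)$ where $\psi \in \{\sigma, \sigma\iota, \varphi_{a^k}\sigma, \varphi_a\sigma\iota, \varphi_g\sigma\iota^\varepsilon|_{A_{st}}\}$. Cases 2, 4, 5 give free groups or rank at most $2$ subgroups, which are easily seen to be dominated by $n^2-2n+2$ for $n \geq 3$; in Case 3 the rank is $1 + \mathrm{Rank}(F)$, and in Case 1 it is $\mathrm{Rank}(A_{\Gamma^\sigma}) + \#\{\text{transposed pairs}\}$. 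The crux is therefore to bound the rank of the free factor $F$ appearing in Case 3 (equivalently, the free factor in Proposition \ref{PropCentralisers}.1, via Remark \ref{RemFreeGroupExplicitly}), and to bound the Case 1 quantity. I would count: $\Gamma^\sigma$ has at most $n$ vertices but any transposed pair removes two vertices from $\Gamma^\sigma$ and contributes one generator $\Delta_{st}$; a careful count of the contribution from the explicit generating set of $F$ in Remark \ref{RemFreeGroupExplicitly} (which is indexed by edges/triangles of $\Gamma$ incident to the fixed vertex $a$) should give a bound that, after optimising over $|V(\Gamma^\sigma)|$, is maximised at $n^2-2n+2$.

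More precisely, in Case 3 the fixed subgroup is $\langle a\rangle \times F$ where $F$ is the free part of $C(a)$; by Remark \ref{RemFreeGroupExplicitly} / \cite{cumplido2023parabolic} a basis of $F$ is given by elements associated to the standard generators $b \neq a$ together with, for each such $b$ with $m_{ab} < \infty$, an extra relator-type element, so $\mathrm{Rank}(F) \leq (n-1) + (n-1) = 2n-2$ is a first crude estimate, giving $\mathrm{Rank}(\Fix(\gamma)) \leq 2n-1$ — too weak. The correct count must instead come from Case 1 with a nontrivial graph automorphism $\sigma$: there $\mathrm{Rank}(\Fix(\gamma)) = \mathrm{Rank}(A_{\Gamma^\sigma}) + t$ where $t$ is the number of pairs transposed by $\sigma$ and $A_{\Gamma^\sigma}$ is a parabolic on $n - 2t$ vertices, which is at most $n-2t$; this gives $\leq n - t$, still too small. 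So the extremal case must actually be a \emph{composite} reduction — the quantity $n^2 - 2n + 2$ strongly suggests the free group $F$ of Case 3 has rank growing quadratically, which happens precisely because $C(a)$ in a large-type Artin group contains a free group whose rank counts \emph{pairs} of generators, not single generators. I would therefore recompute $\mathrm{Rank}(F)$ directly from the Deligne-complex description: $F$ acts on the standard tree $X_\Gamma^a$, and $\mathrm{Rank}(F)$ equals the rank of the fundamental group of the quotient graph, whose edges are indexed by the generators $b \neq a$ with $m_{ab}<\infty$ and whose vertices are indexed by type-$2$ vertices $v_{ab}$ — a standard Euler characteristic computation then yields a bound of the stated quadratic shape.

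For optimality, I would build an explicit family: take $\Gamma$ the complete graph $K_n$ with all $m_{ab} = 3$ (so $A_\Gamma$ is large-type and free-of-infinity), pick a vertex $a$, and consider $\gamma = \varphi_{a}\sigma$ with $\sigma = \mathrm{id}$, so $\Fix(\gamma) = C(a) = \langle a \rangle \times F$. Computing $F$ via the action on the standard tree $X_\Gamma^a$: the quotient $X_\Gamma^a / C(a)$ has one vertex of type $1$ (image of $v_a$) and $n-1$ vertices of type $2$ (images of $v_{ab}$, $b \neq a$), with edges organised by the $m_{ab} = 3$ local structure; unwinding Proposition \ref{PropBassSerreExotic}-style combinatorics should show $\mathrm{Rank}(F) = (n-1)^2$, whence $\mathrm{Rank}(\Fix(\gamma)) = (n-1)^2 + 1 = n^2 - 2n + 2$. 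The main obstacle I anticipate is getting the precise edge/vertex count of this quotient graph right — in particular correctly accounting for how many $C(a)$-orbits of edges emanate from each $v_{ab}$ in the standard tree, which is governed by the structure of the dihedral parabolic $A_{ab}$ and its intersection with $C(a)$ — and then checking that no other case of Theorem \ref{mainTheoremElliptic} ever exceeds this, i.e. that Case 1 with nontrivial $\sigma$ genuinely gives a smaller value for all admissible $\Gamma$.
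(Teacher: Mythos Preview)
Your overall strategy matches the paper's: Cases 1, 2, 4, 5 are easily dominated, Case 3 is the extremal case, and the optimal example is $K_n$ with all labels $3$ and $\gamma = \varphi_a$. However, the execution has a genuine gap and an error.

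\textbf{The gap.} You never actually prove the upper bound in Case 3. The paper does this in one line by reading off the generating set of $F$ from Remark \ref{RemFreeGroupExplicitly}: the basis of $F$ consists of (i) one element $g_{st} z_{st} g_{st}^{-1}$ for each vertex $v^{st}$ in $\Gamma_{a,\mathrm{odd}}^\sigma$, of which there are at most $|E(\Gamma)| \leq \binom{n}{2}$, plus (ii) a free basis of $\pi_1(\Gamma_{a,\mathrm{odd}}^\sigma)$, of rank at most $|E(\Gamma)| - |V(\Gamma)| + 1 \leq \binom{n}{2} - n + 1$. Adding the factor $\langle a\rangle$ gives $1 + \binom{n}{2} + \binom{n}{2} - n + 1 = n^2 - 2n + 2$. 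Your ``standard Euler characteristic computation'' gestures toward (ii) but misses the contribution (i), which is where half the quadratic growth comes from.

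\textbf{The error.} Your description of the quotient $X_\Gamma^a / C(a)$ as having ``one vertex of type $1$ and $n-1$ vertices of type $2$'' is wrong. Since all labels are odd, every standard generator is conjugate to $a$, so the standard tree $X_\Gamma^a$ meets every $A_\Gamma$-orbit of type $1$ vertex; by Remark \ref{RemFreeGroupExplicitly} the quotient is the full barycentric subdivision $\Gamma_{\mathrm{bar}}$ of $K_n$, with $n$ generator-vertices $v^s$ and $\binom{n}{2}$ edge-vertices $v^{st}$. Your final number $(n-1)^2$ for $\mathrm{Rank}(F)$ is correct, but it does not follow from the quotient you wrote down; it follows from the count above: $\binom{n}{2} + \big(\binom{n}{2} - n + 1\big) = (n-1)^2$.
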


\begin{proof}
    Notice that a simplicial graph on $n$ vertices has at most $\frac{n(n-1)}{2}$ edges. Almost all cases follow immediately from this observation, and the fact that the rank of a parabolic subgroup $A_{\Gamma'}$ is $|V(\Gamma')|$. The only difficult cases are for automorphisms isogredient to $\varphi_{a^k}\sigma$ or $\varphi_{a}\sigma\iota$, falling in cases 3 or 4 of the theorem.
    
    First we consider automorphisms falling into case 3. By Remark \ref{RemFreeGroupExplicitly}, the free group $F$ has at most:
    \medskip
    
    \noindent $\bullet$ one generator $a$ of the $\mathbb{Z}$-factor ;
    \medskip
    
    \noindent $\bullet$ one generator for each edge of $\Gamma$, of which there are at most $\frac{n(n-1)}{2}$ ;
    \medskip
    
    \noindent $\bullet$ a free basis of $\pi_1(\Gamma)$, which is of rank $|E(\Gamma)| - |V(\Gamma)| + 1 \leq \frac{n(n-1)}{2} - n + 1$.
    \medskip
    
    \noindent This gives a total of at most $n^2 - 2n +2$ generators. The generators generate this fixed subgroup as an Artin group, so this is a bound on the rank in the sense of Artin groups. Moreover, it is clear from Remark \ref{RemFreeGroupExplicitly} that this bound is realised if $\Gamma$ is complete on $n$ vertices with all labels 3, by the fixed subgroup $\Fix(\varphi_a)$.

    Now we cover automorphisms falling under case 4. Here it follows from \ref{coroFixSigmaIotaa} that $F$ is the fundamental group of a subgraph of $\Gamma$, so its rank is bounded above by the rank of $\pi_1(\Gamma)$ which is (noticing that rank in our sense and the conventional sense coincide for free groups) equal to $|E| - |V| + 1 \leq \frac{n(n-1)}{2} - n + 1 = \frac{1}{2}(n^2 - 3n + 2)$, which is no more than $n^2 - 2n + 2$ for all positive integers.
\end{proof}

\subsubsection{Preliminaries}

Let $\gamma = \varphi_g \psi \in \Aut_{\Gamma}(A_{\Gamma})$ act elliptically on $X_{\Gamma}$. We know from Corollary \ref{fixAction} that $\Fix(\gamma) \leq \Stab(X_{\Gamma}^{\gamma})$, so our main goal will be to study $X_{\Gamma}^{\gamma}$.

First of all, Lemma \ref{LemmaReduction} together with Lemma \ref{reductionToStabs} show that we can reduce the study of $\Fix(\gamma)$ to that of $\Fix(\psi)$, some appropriate $\Fix(\varphi_{a^k} \psi)$, or the fixed subgroup of an automorphism of a dihedral Artin group.

\begin{lemma}\label{LemmaReduction}
Let $A_{\Gamma}$ be a $2$-dimensional Artin group, and let us consider an automorphism $\varphi_g \psi \in \Aut_{\Gamma}(A_{\Gamma})$ acting elliptically on $X_\Gamma$, where $g \in A_{\Gamma}$ and $\psi = \sigma \iota^{\varepsilon}$ for some $\varepsilon \in \{0, 1\}$. Then one of the following happens:
    \begin{enumerate}
        \item $g = h\psi(h)^{-1}$ for some $h \in A_{\Gamma}$. In particular, $\varphi_g\psi \sim_h \psi$ and $\Fix(\varphi_g\psi) = h\Fix(\psi)h^{-1}$;
        \item $g=ha^k\psi(h)^{-1}$ where $a$ is a standard generator such that $\sigma(a) = a$, $k \neq 0$ and $h \in A_{\Gamma}$. Moreover, if $\psi = \sigma\iota$, we can arrange that $k = 1$.
        
        In particular, $\varphi_g\psi \sim_h \varphi_{a^k}\psi$ and $\Fix(\varphi_g\psi) = h\Fix(\varphi_{a^k}\psi)h^{-1}$;
        
        \item $X_{\Gamma}^{\varphi_g \psi}$ is a single type 2 vertex.
    \end{enumerate}
\end{lemma}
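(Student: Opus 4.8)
The plan is to analyse the fixed set $X_{\Gamma}^{\varphi_g\psi}$ using the criterion from Lemma \ref{LemmaUsefulEquation}, which tells us that a vertex $hv_S$ lies in $X_{\Gamma}^{\varphi_g\psi}$ if and only if $g\psi(h) \in hA_S$ and $\sigma(S) = S$. Since $\varphi_g\psi$ acts elliptically, $X_{\Gamma}^{\varphi_g\psi}$ is a non-empty convex simplicial subcomplex (Lemma \ref{LemmaMinsetConvex}, noting the action is without inversions as it preserves types by Remark \ref{remTypePreserving}), so it contains at least one vertex. If the only vertices it contains are of type $2$, then by convexity it must be a single type $2$ vertex and we are in case 3. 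So we may assume $X_{\Gamma}^{\varphi_g\psi}$ contains a vertex $hv_S$ of type $0$ or $1$, i.e. $S = \emptyset$ or $S = \{a\}$ for some standard generator $a$.

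First I would handle the type $0$ case: if $hv_{\emptyset} \in X_{\Gamma}^{\varphi_g\psi}$, then the criterion gives $g\psi(h) \in hA_{\emptyset} = h\langle 1\rangle$, so $g = h\psi(h)^{-1}$, which is exactly case 1; the isogredience statement $\varphi_g\psi \sim_h \psi$ follows from $\varphi_{h\psi(h)^{-1}}\psi = \varphi_h\psi\varphi_h^{-1}$ (a direct computation using Lemma \ref{LemmaCommuteWithInner}), and then $\Fix(\varphi_g\psi) = h\Fix(\psi)h^{-1}$ by Lemma \ref{lemmaIsogredience}. Next, the type $1$ case: if $hv_{a} \in X_{\Gamma}^{\varphi_g\psi}$ and no type $0$ vertex is fixed, then the criterion gives $\sigma(\{a\}) = \{a\}$, i.e. $\sigma(a) = a$, and $g\psi(h) \in hA_{a} = h\langle a\rangle$, so $g = ha^k\psi(h)^{-1}$ for some $k \in \mathbb{Z}$. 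Here $k \neq 0$, for otherwise $g = h\psi(h)^{-1}$ puts us back in case 1 (equivalently $hv_{\emptyset}$ would then be fixed, contradicting our assumption). Again $\varphi_g\psi \sim_h \varphi_{a^k}\psi$ and $\Fix(\varphi_g\psi) = h\Fix(\varphi_{a^k}\psi)h^{-1}$ by Lemmas \ref{LemmaCommuteWithInner} and \ref{lemmaIsogredience}.

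It remains to justify the refinement in case 2 that when $\psi = \sigma\iota$ we may take $k = 1$. The point is that $\varphi_{a^k}\sigma\iota$ and $\varphi_{a}\sigma\iota$ are isogredient via a suitable power of $a$: since $\sigma\iota$ sends $a$ to $a^{-1}$, conjugating $\varphi_{a^k}\sigma\iota$ by $\varphi_{a^j}$ yields $\varphi_{a^j}\varphi_{a^k}\sigma\iota\varphi_{a^{-j}} = \varphi_{a^{j}}\varphi_{a^k}\varphi_{(a^{-j})^{-1}}\sigma\iota = \varphi_{a^{k+2j}}\sigma\iota$ (using $\psi(a^{-j}) = a^{j}$ and Lemma \ref{LemmaCommuteWithInner}), so the exponent $k$ can be shifted by any even integer, and also $a^k v_a = v_a$ reduces everything modulo the vertex stabiliser considerations — more carefully, one checks $\Fix(\varphi_{a^k}\sigma\iota)$ only depends on the parity of $k$, and when $k$ is even the automorphism is isogredient to $\sigma\iota$ itself (case 1), so in the genuinely new situation $k$ is odd and hence isogredient to $\varphi_a\sigma\iota$. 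The main obstacle I anticipate is getting this last bookkeeping exactly right: making sure the dichotomy between cases 1 and 2 is clean (no type $0$ vertex fixed forces $k \neq 0$), and correctly tracking how the conjugating element interacts with $\psi$ when $\iota$ is present, since $\iota$ inverts the relevant powers of $a$. Everything else is a routine application of the fixed-point criterion together with the isogredience lemma.
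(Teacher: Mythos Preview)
Your proposal is correct and follows essentially the same approach as the paper: pick a fixed vertex of minimal type, apply the criterion of Lemma \ref{LemmaUsefulEquation}, and treat types $0$, $1$, $2$ separately. The only stylistic difference is in the $k=1$ refinement when $\psi=\sigma\iota$: the paper writes $k=2q+r$ and replaces $h$ by $ha^q$ directly (computing $g=(ha^q)a^r\psi(ha^q)^{-1}$, with $r=0$ contradicting minimality), whereas you phrase the same replacement as the isogredience $\varphi_{a^j}(\varphi_{a^k}\sigma\iota)\varphi_{a^{-j}}=\varphi_{a^{k+2j}}\sigma\iota$; these are the same manipulation, and your observation that even $k$ lands back in case 1 is exactly the paper's ``$r=0$ contradicts minimality''.
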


\begin{proof}
    Choose a point of $X_\Gamma^{\varphi_g \psi}$ of lowest possible type, and write it as $hv_S$ where $h \in A_{\Gamma}$ and $v_S$ is a point in the fundamental domain $K_{\Gamma}$. We split into 3 cases, based on the type of $v_S$.
    \medskip
    
    \noindent \underline{Case 1: $type(v_S) = 0$.} By Lemma \ref{LemmaUsefulEquation}, we have $g \psi(h) \in h A_S = \{h\}$. This yields $g = h\psi(h)^{-1}$, as required. For the second part, notice that $\varphi_g\psi = \varphi_h\varphi_{\psi(h)^{-1}}\psi = \varphi_h\psi\varphi_h^{-1}$. The result follows from Lemma \ref{lemmaIsogredience}.
    \medskip

    \noindent 
    \noindent \underline{Case 2: $type(v_S) = 1$.} By Lemma \ref{LemmaUsefulEquation}, we have $g \psi(h) \in h A_S = h \langle a \rangle$ for some $a \in V(\Gamma)$. This means there is a $k \in \mathbb{Z}$ such that $g = ha^k\psi(h)^{-1}$. Furthermore it can't be that $k = 0$, or $\varphi_g \psi$ would fix the point $hv_{\emptyset}$, contradicting the minimality of $type(v_S)$:
    $$\varphi_{h\psi(h)^{-1}} \psi \cdot hv_\emptyset = h\psi(h)^{-1}\psi(h)(\psi \cdot v_\emptyset) = hv_\emptyset.$$
    It is direct from Lemma \ref{LemmaUsefulEquation} that $\sigma(a) = a$. Moreover, assume that $\psi = \sigma\iota$. Decompose $k = 2q+r$ where $q \in \mathbb{Z}$ and $r \in \{0, 1\}$. We calculate
    $$g = h a^k \psi(h)^{-1} = (h a^q) a^r (a^q) \psi(h)^{-1} = (h a^q) a^r \psi(h a^q)^{-1},$$
    and so by replacing $h$ with $h a^q$ we get the desired result (again, if $r = 0$ this would contradict the minimality of $type(v_S)$). For the final part, notice that $\varphi_g\psi = \varphi_h\varphi_{a^k}\varphi_{\psi(h)^{-1}}\psi = \varphi_h\varphi_{a^k}\psi\varphi_h^{-1}$. The result follows from Lemma \ref{lemmaIsogredience}.
    \medskip

    \noindent \underline{Case 3: $type(v_S) = 2$.} Then $h v_S$ also has type $2$, and all its neighbours in $X_{\Gamma}$ are of type 0 or 1. They are not fixed, by the minimality assumption. By Lemma \ref{LemmaMinsetConvex} the set $X_{\Gamma}^{\varphi_g \psi}$ is a convex subcomplex, so we must have $X_{\Gamma}^{\varphi_g \psi} = \{h v_S\}$.
\end{proof}

\begin{remark} \label{RemarkCases}
    Lemma \ref{LemmaReduction} has strong consequences. It implies that any automorphism $\gamma = \varphi_g \sigma \iota^{\varepsilon}$ acting elliptically, with $g \in A_{\Gamma}$ and $\varepsilon \in \{0, 1\}$, can be reduced to one of $\sigma$, $\sigma \iota$, $\varphi_{a^k} \sigma$, $\varphi_a \sigma \iota$, or an automorphism that fixes a single type $2$ vertex. In the last case, understanding $\Fix(\varphi_g \psi)$ reduces to understanding the fixed subgroups of automorphisms of dihedral Artin groups, which has already been addressed in Section \ref{sectionDihedral}.
\end{remark}

The following lemma gives more information about the structure of $X_{\Gamma}^{\gamma}$ when $\gamma$ is one of $\varphi_{a^k} \sigma$ or $\varphi_a \sigma \iota$.

\begin{lemma}\label{LemmaEss1Skeleton}
Let $A_{\Gamma}$ be a $2$-dimensional Artin group, and consider a non-trivial automorphism of the form $\varphi_{a^k}\psi$ where $k \neq 0$ and $\psi \coloneqq \sigma \iota^{\varepsilon}$ for some $\varepsilon \in \{0, 1\}$. Suppose either that $\varepsilon = 0$ or that $k = 1$. Then the set $X_{\Gamma}^{\varphi_a^k \psi}$ is a simplicial tree contained in $X_{\Gamma}^{(1)-ess}$.
\end{lemma}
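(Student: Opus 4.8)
The plan is to establish three things about $Y \coloneqq X_{\Gamma}^{\varphi_{a^k}\psi}$: that it is non-empty and a simplicial subcomplex, that it is a tree (i.e. $1$-dimensional and simply connected), and that every edge it contains lies in $X_{\Gamma}^{(1)-ess}$. Non-emptiness is immediate since $\varphi_{a^k}\psi$ is assumed to act elliptically, and by Lemma \ref{LemmaMinsetConvex} the fixed set is a convex simplicial subcomplex (the action is without inversions because, being type-preserving by Remark \ref{remTypePreserving}(3), it cannot swap the two endpoints of an edge, which have distinct types). Convexity will be used repeatedly.

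First I would show $Y$ is at most $1$-dimensional, i.e. contains no $2$-simplex. Suppose it did; by convexity and the fact that the action is simplicial, $Y$ would then contain some closed $2$-simplex $\Delta$, which (up to the $A_\Gamma$-action) has vertices $g v_{\emptyset}, g v_a', g v_{a'b'}$. In particular $\varphi_{a^k}\psi$ fixes a type $0$ vertex $g v_{\emptyset}$. By Lemma \ref{LemmaReduction} (Case 1 of its proof, via Lemma \ref{LemmaUsefulEquation}) this forces $\varphi_{a^k}\psi \sim_g \psi$, so $\varphi_{a^k}\psi = \varphi_g \psi \varphi_g^{-1}$ is a conjugate of $\psi$; but then $a^k = g \psi(g)^{-1}$, and a height computation ($ht(a^k) = k \neq 0$ while $ht(g\psi(g)^{-1}) = ht(g) - ht(\psi(g)) = ht(g) \mp ht(g)$, which is $0$ when $\psi$ involves $\iota$ and is also $0$ since $\psi$ is induced by a graph automorphism composed possibly with $\iota$, hence $ht(\psi(g)) = \pm ht(g)$)—wait, more carefully: $ht \circ \sigma = ht$ and $ht \circ \iota = -ht$, so $ht(g\psi(g)^{-1})$ is $0$ if $\varepsilon = 1$ and is $ht(g) - ht(g) = 0$ if $\varepsilon = 0$; either way it is $0 \neq k$, a contradiction. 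Hence $Y$ is a graph.

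Next I would show $Y$ has no embedded cycle, so it is a tree. A cycle in $Y$, being convex in the CAT(0) space $X_\Gamma$, would have to be a geodesic circle, which is impossible in a CAT(0) space (a convex subset is itself CAT(0), hence contractible). So $Y$ is a forest, and being convex it is connected, hence a tree.

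Finally, the containment $Y \subseteq X_{\Gamma}^{(1)-ess}$: I must show every edge $e \subseteq Y$ has non-trivial stabiliser in $A_\Gamma$. Every edge of $X_\Gamma$ joins a vertex of type $i$ to one of type $i+1$; the only edges with trivial stabiliser are those of the form $g e_a$ joining the type $0$ vertex $g v_{\emptyset}$ to a type $1$ vertex $g v_a$ (the edges $e_{a,ab}$ of type $\{1,2\}$ always have stabiliser containing the type $1$ vertex group). So it suffices to rule out $Y$ containing any type $0$ vertex. But by the same argument as in the dimension step, if $\varphi_{a^k}\psi$ fixes a type $0$ vertex then a height contradiction ensues. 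Hence $Y$ contains no type $0$ vertex, so every edge of $Y$ is of type $\{1,2\}$ or $\{2, \dots\}$—in a $2$-dimensional complex, of type $\{1,2\}$—and all such edges have non-trivial stabiliser, giving $Y \subseteq X_{\Gamma}^{(1)-ess}$.

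The main obstacle I anticipate is making the "no type $0$ vertex" step fully rigorous in the presence of $\psi$: one needs the precise interplay of the height homomorphism with graph automorphisms and the global inversion, plus the translation of "fixes a type $0$ vertex" into the equation $a^k = g\psi(g)^{-1}$ via Lemma \ref{LemmaUsefulEquation}. Everything else (convexity, no-cycles-in-CAT(0), type-preservation ruling out inversions) is standard and already available in the excerpt.
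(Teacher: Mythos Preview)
Your approach is essentially the paper's: rule out type~$0$ vertices via a height argument, deduce the fixed set is a graph contained in $X_{\Gamma}^{(1)-ess}$, and invoke convexity (hence CAT(0), hence simply connected) to conclude it is a tree. However, your height computation in the case $\varepsilon = 1$ contains a genuine error. If $\psi = \sigma\iota$ then $ht(\psi(g)) = -ht(g)$, so
\[
ht\bigl(g\,\psi(g)^{-1}\bigr) \;=\; ht(g) - ht(\psi(g)) \;=\; 2\,ht(g),
\]
which is \emph{not} $0$ in general. This is precisely where the hypothesis ``$k=1$ when $\varepsilon=1$'' enters: $2\,ht(g)$ is always even, while $k=1$ is odd, so the equation $a^k = g\,\psi(g)^{-1}$ still gives a contradiction --- but via parity, not by the expression vanishing. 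The paper phrases this as ``$\sigma\iota$ preserves the parity of height''. As written, your argument does not use the hypothesis $k=1$ at all, and without it the $\varepsilon=1$ case collapses (indeed, for $\varepsilon=1$ and even $k$ one \emph{can} have fixed type~$0$ vertices, which is exactly why Lemma~\ref{LemmaReduction} arranges $k=1$ in that case).

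A minor structural point: your separate step ruling out $2$-simplices is redundant, since any $2$-simplex in $X_\Gamma$ has a type~$0$ vertex. Proving ``no type~$0$ vertex'' once, as the paper does, simultaneously yields $1$-dimensionality and containment in $X_{\Gamma}^{(1)-ess}$.
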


\begin{proof}
    Suppose that $X_{\Gamma}^{\varphi_a^k \psi}$ contains a type $0$ vertex, say some $g v_{\emptyset}$. By Lemma \ref{LemmaUsefulEquation}, we must have $a^k \psi(g) = g$.

    The argument now relies on height, noticing that $\sigma$ preserves height, while $\sigma \iota$ preserves the parity of height (as $\iota$ reverses height).
    If $\varepsilon = 0$, then $ht(\psi(g)) = ht(g)$. As $ht(a^k) = k \neq 0$, we obtain a contradiction to the previous equation. If $k = 1$ then $ht(\psi(g))$ and $ht(g)$ have the same parity. But $ht(a) = 1$, so we also get a contradiction to the previous equation.

    Recall that $X_{\Gamma}^{\varphi_a^k \psi}$ is a subcomplex of $X_{\Gamma}$ by Lemma \ref{LemmaMinsetConvex}. Hence the above proves that under our hypotheses, $X_{\Gamma}^{\varphi_a^k \psi}$ is a graph in $X_{\Gamma}$. By Lemma \ref{LemmaMinsetConvex} again, $X_{\Gamma}^{\varphi_a^k \psi}$ is convex. It follows that it is $\operatorname{CAT}(0)$, therefore simply connected, and hence it is a tree.
\end{proof}

We now introduce some notation that will be used throughout the rest of the section.

\begin{notation} Given a graph automorphism $\sigma$, we call:
\begin{enumerate}
    \item $F_{\sigma}$ the subset of $V(\Gamma)$ of the vertices that are fixed by $\sigma$ ;
    \item $T_{\sigma}$ the subset of $V(\Gamma)$ of the vertices that are transposed by $\sigma$, where $s \in V(\Gamma)$ is said to be \emph{transposed} by $\sigma$ if there exists some $t \in V(\Gamma) \backslash \{s\}$ satisfying $m_{st} < \infty$ such that $\sigma \cdot s = t$ and $\sigma \cdot t = s$. In that case, we say that $s$ and $t$ are \emph{$\sigma$-transposed}.
\end{enumerate}
\end{notation}

\begin{lemma}\label{LemmaBaseCases}
Let $A_{\Gamma}$ be a $2$-dimensional Artin group. Let $\gamma$ be one of $\sigma$, $\sigma \iota$, $\varphi_{a^k} \sigma$ or $\varphi_a \sigma \iota$, where $a \in F_{\sigma}$ and $k \neq 0$. Then the intersection $X_{\Gamma}^{\gamma} \cap K_{\Gamma}$ is the flag completion of the following vertices:
    \begin{enumerate}
        \item \underline{If $\gamma$ is $\sigma$ or $\sigma \iota$:} The vertices are $v_{\emptyset}$, the $v_s$ with $s \in F_{\sigma}$, and the $v_{st}$ where $s, t \in F_{\sigma}$ or $s, t \in T_{\sigma}$ are $\sigma$-transposed.
        \item \underline{If $\gamma$ is $\varphi_{a^k} \sigma$ or $\varphi_a \sigma \iota$:} The vertices are $v_a$, and the $v_{as}$ where $s \in F_{\sigma}$.
        
    \end{enumerate}
\end{lemma}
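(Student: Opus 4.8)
The plan is to work directly with the description of the fundamental domain $K_\Gamma$ as the cone over the barycentric subdivision $\Gamma_{bar}$, whose vertices are exactly the $v_\emptyset$, the $v_s$ ($s \in V(\Gamma)$), and the $v_{st}$ (for $e^{st} \in E(\Gamma)$, i.e.\ $m_{st} < \infty$). Since each $\gamma$ in the statement acts simplicially and, by Lemma \ref{LemmaMinsetConvex}, has fixed set a simplicial subcomplex of $X_\Gamma$, it suffices to decide, for each vertex $v_S$ of $K_\Gamma$, whether $v_S \in X_\Gamma^\gamma$; the higher simplices then follow by the flag/convexity property (Lemma \ref{LemmaMinsetConvex}), which is why the statement only asks for the flag completion of a vertex set. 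So the whole proof reduces to a finite case check governed by Lemma \ref{LemmaUsefulEquation}: for $\gamma = \varphi_h \sigma \iota^\varepsilon$, one has $g v_S \in X_\Gamma^\gamma$ iff $h\,\psi(g) \in gA_S$ and $\sigma(S) = S$, which for the vertices of $K_\Gamma$ itself (i.e.\ $g = 1$) becomes $h \in A_S$ and $\sigma(S) = S$.

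First I would handle the cases $\gamma = \sigma$ and $\gamma = \sigma\iota$ (so $h = 1$). By Lemma \ref{LemmaUsefulEquation} with $g = 1$, $v_S \in X_\Gamma^\gamma$ iff $\sigma(S) = S$. For $S = \emptyset$ this is automatic, giving $v_\emptyset$. For $S = \{s\}$, $\sigma(\{s\}) = \{s\}$ exactly means $s \in F_\sigma$, giving the $v_s$ with $s$ fixed. For $S = \{s,t\}$, $\sigma(\{s,t\}) = \{s,t\}$ means either $\sigma$ fixes both $s$ and $t$ (so $s,t \in F_\sigma$), or $\sigma$ swaps them (so $s,t \in T_\sigma$ are $\sigma$-transposed — note the edge $e^{st}$ must exist for $v_{st}$ to be a vertex of $K_\Gamma$, which is exactly the finiteness condition $m_{st} < \infty$ built into the definition of "$\sigma$-transposed"). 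This yields precisely the list in (1), and the two sub-cases $\sigma$ vs.\ $\sigma\iota$ give the same answer because the condition produced by Lemma \ref{LemmaUsefulEquation} only involves $\sigma$, not $\iota$.

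Next I would handle $\gamma = \varphi_{a^k}\sigma$ and $\gamma = \varphi_a \sigma\iota$ with $a \in F_\sigma$. Here Lemma \ref{LemmaUsefulEquation} gives $v_S \in X_\Gamma^\gamma$ iff $a^k \in A_S$ (resp.\ $a \in A_S$) and $\sigma(S) = S$. The crucial point is a height/parity obstruction, exactly as in the proof of Lemma \ref{LemmaEss1Skeleton}: $\sigma$ preserves height and $\iota$ reverses it, so for $\varphi_{a^k}\sigma$ the relation $a^k \psi(g) = g$ applied to $g$ with $v_S = v_\emptyset$ forces $ht(a^k) = k = 0$, a contradiction; similarly $\varphi_a\sigma\iota$ forces a parity contradiction. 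Hence $v_\emptyset \notin X_\Gamma^\gamma$, and by convexity $X_\Gamma^\gamma \cap K_\Gamma$ contains no vertex of type $0$; this is really the heart of the argument. It remains to check which $v_s$ and $v_{st}$ survive. The condition $a^k \in A_S = A_s = \langle s\rangle$ forces $s = a$ (using that $\langle a \rangle \cap \langle s\rangle = \{1\}$ for distinct standard generators, which follows from parabolic subgroup intersection / the height argument again), and $\sigma(\{a\}) = \{a\}$ holds since $a \in F_\sigma$; so $v_a$ is the unique fixed type-$1$ vertex. For $S = \{s,t\}$ we need $a^k \in A_{st}$, which by the parabolic intersection $\langle a\rangle \cap A_{st}$ being nontrivial forces $a \in \{s,t\}$, say $s = a$; then $\sigma(\{a,t\}) = \{a,t\}$ with $a$ fixed forces $\sigma(t) = t$, i.e.\ $t \in F_\sigma$. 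This gives exactly the $v_{at}$ with $t \in F_\sigma$, matching (2); and again the cases $\varphi_{a^k}\sigma$ and $\varphi_a\sigma\iota$ yield the same vertex set.

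The main obstacle I anticipate is not any single computation but making the "parabolic intersection" steps airtight — i.e.\ justifying cleanly that $a^k \in A_{st}$ (or $\in \langle s \rangle$) forces $a \in \{s,t\}$ (resp.\ $s = a$). The cheapest route is the height morphism: if $a \notin \{s,t\}$ then $A_{st}$ lies in a parabolic not containing $a$, and one can compose with a retraction or use the standard fact (cited earlier via \cite{van1983homotopy}) that $A_{st} \cap \langle a\rangle$ is itself parabolic, hence trivial since it is a proper parabolic of the cyclic group $\langle a\rangle$; alternatively, since $A_{\Gamma}$ is $2$-dimensional one could invoke the known structure of intersections of standard parabolics. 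I would also need to double-check the edge-case bookkeeping in the definition of $\sigma$-transposed (the $m_{st} < \infty$ clause) so that no spurious $v_{st}$ is introduced or omitted. Everything else is a direct unwinding of Lemma \ref{LemmaUsefulEquation} together with Lemma \ref{LemmaMinsetConvex}.
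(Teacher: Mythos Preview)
Your proposal is correct and follows essentially the same route as the paper: both reduce to applying Lemma~\ref{LemmaUsefulEquation} at each vertex of $K_\Gamma$, and both observe that the presence of $\iota$ is irrelevant on $K_\Gamma$ (the paper phrases this as ``$\iota$ acts trivially on $K_\Gamma$'', you phrase it as ``the condition produced by Lemma~\ref{LemmaUsefulEquation} only involves $\sigma$, not $\iota$''). One small simplification: your height/parity obstruction for ruling out $v_\emptyset$ in case~(2) is unnecessary, since with $g=1$ the condition from Lemma~\ref{LemmaUsefulEquation} is just $a^k \in A_\emptyset = \{1\}$, which fails directly because $k \neq 0$ and $A_\Gamma$ is torsion-free; on the other hand, your explicit discussion of why $a^k \in \langle s\rangle$ or $a^k \in A_{st}$ forces $a \in S$ (via parabolic intersections) is a genuine clarification that the paper's ``the result follows'' leaves implicit.
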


\begin{proof}
    The automorphism $\iota$ acts trivially on $K_{\Gamma}$, so the cases $\sigma \iota$ and $\varphi_a \sigma \iota$ can be brought back to the cases $\sigma$ and $\varphi_a \sigma$, respectively. Now we have that:
    \\(1) Lemma \ref{LemmaUsefulEquation} gives $v_S \in X_{\Gamma}^{\sigma} \Longleftrightarrow \sigma(S) = S$. The result follows.
    \\(2) Lemma \ref{LemmaUsefulEquation} gives $v_S \in X_{\Gamma}^{\varphi_a^k \sigma} \Longleftrightarrow a^k \in A_S \text{ and } \sigma(S) = S$. The result follows.
\end{proof}

\subsubsection{Elliptic automorphisms involving no global inversion}

In this section we compute $\Fix(\gamma)$ when $\gamma$ acts elliptically on $X_{\Gamma}$ and does not involve $\iota$. More explicitly, and following Remark \ref{RemarkCases}, we will assume that $\gamma$ is one of $\sigma$ or $\varphi_{a^k} \sigma$. Note that Lemma \ref{LemmaFixOfSigma} and Corollary \ref{CoroFixedPointsOfSigma} were already known by Crisp (see \cite{crisp2000symmetrical}). We include proofs in our language here for completeness.

\begin{lemma} \label{LemmaFixOfSigma}
    A vertex $g v_S$ belongs to $X_{\Gamma}^{\sigma}$ if and only if $v_S \in X_{\Gamma}^{\sigma} \cap K_{\Gamma}$ and $g$ is a product of elements of the following forms:
        \begin{itemize}
            \item standard generators $s$ where $s \in F_{\sigma}$ ;
            \item powers of Garside elements $\Delta_{st}^k$ where $s, t \in T_{\sigma}$ are $\sigma$-transposed.
        \end{itemize}
\end{lemma}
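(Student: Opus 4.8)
The plan is to prove both implications. One direction is immediate: if $g$ is a product of generators $s \in F_\sigma$ and powers $\Delta_{st}^k$ with $s,t$ $\sigma$-transposed, then $\sigma(g) = g$ (each such factor is fixed by $\sigma$, using $\sigma(\Delta_{st}) = \Delta_{ts} = \Delta_{st}$), and if additionally $v_S \in X_\Gamma^\sigma \cap K_\Gamma$ then by Lemma \ref{LemmaUsefulEquation} we have $\varphi_g\sigma \cdot v_S = \sigma \cdot v_S$... wait, more directly: $\sigma \cdot g v_S = \sigma(g) v_{\sigma(S)} = g v_S$, using $\sigma(g) = g$ and $\sigma(S) = S$. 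So $g v_S \in X_\Gamma^\sigma$. The content is the converse.

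**The converse via the combinatorial/geometric structure of $X_\Gamma^\sigma$.** Suppose $g v_S \in X_\Gamma^\sigma$. By Lemma \ref{LemmaUsefulEquation}, $\sigma(S) = S$, so $v_S \in X_\Gamma^\sigma \cap K_\Gamma$ (the latter was computed in Lemma \ref{LemmaBaseCases}), and $\sigma(g) \in g A_S$. The first idea is to reduce to the case $S = \emptyset$: since $X_\Gamma^\sigma$ is a connected simplicial subcomplex (Lemma \ref{LemmaMinsetConvex}), and $K_\Gamma$ is a cone on $\Gamma_{bar}$ with cone point $v_\emptyset$, any vertex $v_S$ in $X_\Gamma^\sigma \cap K_\Gamma$ is connected to $v_\emptyset$ inside $X_\Gamma^\sigma \cap K_\Gamma$; translating, $g v_S$ is joined to $g v_\emptyset$ by an edge of $X_\Gamma^\sigma$, so $g v_\emptyset \in X_\Gamma^\sigma$ too. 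Then it suffices to show: every $g$ with $g v_\emptyset \in X_\Gamma^\sigma$ is a product of the allowed factors. Indeed once we know this for $g v_\emptyset$, the original statement follows since $v_S$ is already accounted for by Lemma \ref{LemmaBaseCases}.

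**Proving $g v_\emptyset \in X_\Gamma^\sigma \Rightarrow g$ is an allowed product, by induction on combinatorial distance.** I would induct on $d^C(v_\emptyset, g v_\emptyset)$ in the tree-like subcomplex $X_\Gamma^\sigma$ (it is $\mathrm{CAT}(0)$, hence connected). For the base case $g v_\emptyset = v_\emptyset$, we get $g \in A_\emptyset = \{1\}$. For the inductive step, take a geodesic edge path in $X_\Gamma^\sigma$ from $v_\emptyset$ to $g v_\emptyset$. Its first two edges go $v_\emptyset \to w \to g' v_\emptyset$ where $w$ has type $1$ or $2$ and $g' v_\emptyset$ is closer. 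The key local claim is: if $v_\emptyset$ and $h v_\emptyset$ are both neighbours of a common vertex $w$ in $X_\Gamma^\sigma$, then $h$ is a single allowed factor. This splits into two subcases according to the type of $w$. If $\mathrm{type}(w) = 1$, then $w$ is of the form $f v_s$ with $f \in \{1\} \cup$ (stabiliser considerations); since $v_\emptyset$ is a neighbour, $w = v_s$ with $s \in F_\sigma$ (by Lemma \ref{LemmaBaseCases}), its other type-$0$ neighbours are $s^j v_\emptyset$, $j \in \mathbb{Z}$, and fixedness forces... actually $w = v_s$ is fixed, and $h v_\emptyset$ fixed and adjacent to $v_s$ means $h \in \langle s \rangle$ and $\sigma(h) = h$ which holds automatically since $s \in F_\sigma$; so $h = s^j$. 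If $\mathrm{type}(w) = 2$, then $w = v_{st}$ with $\{s,t\}$ either in $F_\sigma$ or $\sigma$-transposed, and the type-$0$ neighbours of $v_{st}$ in $X_\Gamma$ are $A_{st}$-translates of $v_\emptyset$; fixedness of $h v_\emptyset$ together with $\sigma(v_{st}) = v_{st}$ forces $h \in A_{st}$ with $\sigma(h) = h \cdot (\text{elt of } A_\emptyset) = h$, i.e. $h \in \mathrm{Fix}(\sigma|_{A_{st}})$. Here I invoke the dihedral computation: $\mathrm{Fix}(\sigma|_{A_{st}}) = \langle \Delta_{st} \rangle$ when $s,t$ are $\sigma$-transposed (this is Theorem \ref{dihedralFixPointsEven}, point 5, and the odd analogue in Theorem \ref{dihedralFixPointsOdd} where $\sigma = \varphi_{\Delta_{st}}$, so $\mathrm{Fix}(\sigma|_{A_{st}}) = \langle \Delta_{st}\rangle$), while if $s,t \in F_\sigma$ then $\sigma|_{A_{st}} = \mathrm{id}$ and $h \in A_{st}$ is itself a product of the standard generators $s,t \in F_\sigma$. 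Finally one concatenates: writing $g v_\emptyset$ at distance $2k$ from $v_\emptyset$ along a geodesic passing through fixed type-$0$ vertices $v_\emptyset = g_0 v_\emptyset, g_1 v_\emptyset, \dots, g_k v_\emptyset = g v_\emptyset$ (the geodesic alternates type $0$ and type $1$-or-$2$), each $g_{i}^{-1} g_{i+1}$ is an allowed factor by the local claim applied after translating by $g_i^{-1}$, so $g = (g_0^{-1}g_1)(g_1^{-1}g_2)\cdots(g_{k-1}^{-1}g_k)$ is an allowed product.

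**Main obstacle.** The delicate point is the local analysis at a type-$2$ vertex $w = v_{st}$ where $s,t \in F_\sigma$: there, $\mathrm{Fix}(\sigma|_{A_{st}})$ is all of $A_{st}$, so the "allowed factor" coming out of that local step is a product of two standard generators $s, t$ rather than a single generator — one must be careful that this is still consistent with the claimed generating set (it is, since $s, t \in F_\sigma$ are themselves allowed). More substantively, one has to make sure the induction is set up on a genuine geodesic and that a shortest edge-path in $X_\Gamma^\sigma$ from $v_\emptyset$ to $g v_\emptyset$ really does alternate between type-$0$ vertices and type-$1$-or-$2$ vertices; this uses that $X_\Gamma$ (being the Deligne complex of a $2$-dimensional Artin group) has the property that consecutive vertices on an edge have strictly nested stabilisers, so types strictly increase or decrease along an edge, and that $v_\emptyset$ has minimal type $0$. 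I would also double-check the edge case where the geodesic from $v_\emptyset$ to $g v_\emptyset$ first visits a type-$1$ vertex then immediately a type-$0$ vertex versus going up to type $2$; both are handled by the same local claim, so no real difficulty, but it should be spelled out.
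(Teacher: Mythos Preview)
Your overall approach is close to the paper's, and the local analysis at type-$1$ and type-$2$ vertices (including the appeal to the dihedral computations Theorems \ref{dihedralFixPointsOdd} and \ref{dihedralFixPointsEven}) is correct. However there are two genuine gaps.

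First, the reduction to $S = \emptyset$ fails as written. You claim that if $gv_S \in X_\Gamma^\sigma$ then $gv_\emptyset \in X_\Gamma^\sigma$, arguing by ``translating'' the edge from $v_\emptyset$ to $v_S$ by $g$. But $g$ need not preserve $X_\Gamma^\sigma$. Concretely: if $s,t$ are $\sigma$-transposed and $g = s$, then $gv_{st} = v_{st} \in X_\Gamma^\sigma$, yet $gv_\emptyset = sv_\emptyset$ and $\sigma \cdot sv_\emptyset = tv_\emptyset \neq sv_\emptyset$. The reduction can be repaired by using connectedness of $X_\Gamma^\sigma$ to find \emph{some} representative $g' \in gA_S$ with $g'v_\emptyset \in X_\Gamma^\sigma$, but this requires a small extra argument (in the $\sigma$-transposed case one must observe that $gv_{st}$ has no type-$1$ neighbours in $X_\Gamma^\sigma$, so its neighbour on a path to $v_\emptyset$ is necessarily of type $0$).

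Second, the claim that a combinatorial geodesic in $X_\Gamma^\sigma$ between two type-$0$ vertices alternates between type-$0$ and type-$>0$ vertices is not justified, and the reason you give under ``Main obstacle'' (types change strictly along edges) does not establish it: a geodesic can go $0$--$2$--$1$--$2$--$0$. For instance, with $s,t,u \in F_\sigma$ and $m_{st}, m_{tu} < \infty$, there are length-$4$ geodesics in $X_\Gamma^\sigma$ of exactly this shape. You would need to show either that one can always \emph{choose} a geodesic hitting a type-$0$ vertex every other step, or handle the intermediate type-$1$ vertices separately.

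The paper sidesteps both issues by not reducing to type $0$ and not relying on geodesic shape. It defines $Y$ as the subcomplex spanned by all $gv_S$ of the claimed form and proves by induction on $n$ that the combinatorial balls $B_Y(v_\emptyset,n)$ and $B_{X_\Gamma^\sigma}(v_\emptyset,n)$ coincide: given any vertex $gv_S$ in both (with $g$ already an allowed product, so $\sigma(g) = g$), it checks case by case on the type of $S$ that a neighbour $ghv_{S'}$ lies in $X_\Gamma^\sigma$ if and only if $v_{S'} \in X_\Gamma^\sigma \cap K_\Gamma$ and $h$ is an allowed factor. This is the same local analysis you carry out, but framed so that every vertex type is treated at every radius, which makes the induction go through without any claim about geodesic structure or any preliminary reduction.
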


\begin{proof}
    We know by Lemma \ref{LemmaBaseCases} that $v_{\emptyset} \in X_{\Gamma}^{\sigma}$. Let $Y$ be the subcomplex of $X_{\Gamma}$ spanned by all the vertices of the form $g v_S$ where $v_S \in X_{\Gamma}^{\sigma} \cap K_{\Gamma}$ and $g$ is a product of elements of one of the two forms described in the lemma. We want to show that $Y = X_{\Gamma}^{\sigma}$. Because the two sets meet at $v_{\emptyset}$, we can proceed by induction on the combinatorial distance to $v_{\emptyset}$. More rigorously, let $B_Y(v_{\emptyset}, n)$ and $B_{X_{\Gamma}^{\sigma}}(v_{\emptyset}, n)$ be the balls of radius $n$ around $v_{\emptyset}$ in $Y$ and in $X_{\Gamma}^{\sigma}$ respectively, relatively to the combinatorial metric $d^C$. We will show by induction on $n$ that $B_Y(v_{\emptyset}, n) = B_{X_{\Gamma}^{\sigma}}(v_{\emptyset}, n)$.

    To do so, we suppose that we reached some vertex $g v_S$ in $B_Y(v_{\emptyset}, n) = B_{X_{\Gamma}^{\sigma}}(v_{\emptyset}, n)$. In particular, we can pick $g$ such that it is a product of elements of one of the two forms described in the lemma. It is then straightforward to check that $\sigma(g) = g$. The neighbours of $g v_S$ take the form $gh v_{S'}$ for some $h \in A_S$ and some appropriate $S'$. We will show that $gh v_{S'} \in X_{\Gamma}^{\sigma}$ if and only if $v_{S'} \in X_{\Gamma}^{\sigma} \cap K_{\Gamma}$ and $h$ is of one of the two forms described in the lemma. Note that the former condition can be understood easily thanks to Lemma \ref{LemmaBaseCases}.
    \medskip

    \noindent \underline{Case 1: $g v_S = g v_{\emptyset}$ has type 0:}
    The neighbours of $g v_{\emptyset}$ have the form $g v_s$ or $g v_{st}$, where $m_{st} < \infty$. Then:
    \begin{itemize}
        \item $g v_s \in X_{\Gamma}^{\sigma} \overset{\ref{LemmaUsefulEquation}} \Longleftrightarrow \sigma(s) = s \Longleftrightarrow s \in F_{\sigma}$ ;
        \item $g v_{st} \in X_{\Gamma}^{\sigma} \overset{\ref{LemmaUsefulEquation}} \Longleftrightarrow \sigma(\{s, t\}) = \{s, t\} \Longleftrightarrow$ either $s, t \in F_{\sigma}$ or $s, t \in T_{\sigma}$ are $\sigma$-transposed.
    \end{itemize}
    In this case $h = 1$ and $v_{S'} \in \{v_s, v_{st}\}$ belongs to $X_{\Gamma}^{\sigma} \cap K_{\Gamma}$.
    \medskip

    \noindent \underline{Case 2: $g v_S = g v_s$ has type 1:} 
    
    By Lemma \ref{LemmaUsefulEquation}, we have $s \in F_{\sigma}$. The neighbours of $g v_s$ have the form $g s^n v_{\emptyset}$ or $g s^n v_{st} = g v_{st}$ for some $n \in \mathbb{Z}$, where $m_{st} < \infty$. Then:
    \begin{itemize}
        \item $g s^n v_{\emptyset} \in X_{\Gamma}^{\sigma} \overset{\ref{LemmaUsefulEquation}} \Longleftrightarrow \sigma(s) = s$, \text{ which always holds true} ;
        \item $g v_{st} \in X_{\Gamma}^{\sigma} \overset{\ref{LemmaUsefulEquation}} \Longleftrightarrow \sigma(\{s, t\}) = \{s, t\} \overset{s \in F_{\sigma}} \Longleftrightarrow$ $s, t \in F_{\sigma}$.
    \end{itemize}
    In this case $h \in \langle s \rangle$ where $s \in F_{\sigma}$, and $v_{S'} \in \{v_{\emptyset}, v_{st}\}$ belongs to $X_{\Gamma}^{\sigma} \cap K_{\Gamma}$.
    \medskip

    \noindent \underline{Case 3: $g v_S = g v_{st}$ has type 2:} By Lemma \ref{LemmaUsefulEquation}, either $s, t \in F_{\sigma}$ or $s, t \in T_{\sigma}$ are $\sigma$-transposed. The neighbours of $g v_{st}$ take the form $gh v_{\emptyset}$ or $gh v_s$ (without loss of generality) for some $h \in A_{st}$. Then:
    \begin{itemize}
        \item $gh v_{\emptyset} \in X_{\Gamma}^{\sigma} \overset{\ref{LemmaUsefulEquation}} \Longleftrightarrow \sigma(h) = h \overset{(*)} \Longleftrightarrow \begin{cases} h \in A_{st} \text{ if } s, t \in F_{\sigma} \\ h = \Delta_{st}^k \text{ if } s, t \in T_{\sigma} \text{ are } \sigma\text{-transposed} \end{cases}$ ;
        \item $gh v_s \in X_{\Gamma}^{\sigma} \overset{\ref{LemmaUsefulEquation}} \Longleftrightarrow \sigma(h) \in h \langle s \rangle$ and $\sigma(s) = s \overset{\sigma(\{s, t\}) = \{s, t\}} \Longleftrightarrow$ $s, t \in F_{\sigma}$ and $h \in A_{st}$ ;
    \end{itemize}
    where $(*)$ follows from Theorem \ref{dihedralFixPointsOdd}.(1) if $m_{st}$ is odd, and from Theorem \ref{dihedralFixPointsEven}.(5) if $m_{st}$ is even. In this case either $h \in A_{st}$ and $v_{S'} \in \{v_{\emptyset}, v_s, v_t\}$ belongs to $X_{\Gamma}^{\sigma} \cap K_{\Gamma}$ (if $s, t \in F_{\sigma}$), or $h = \Delta_{st}^k$ and $v_{S'} = v_{\emptyset}$ belongs to $X_{\Gamma}^{\sigma} \cap K_{\Gamma}$ (if $s, t \in T_{\sigma}$ are $\sigma$-transposed).
\end{proof}

\begin{corollary} \label{CoroFixedPointsOfSigma}
    The subgroup $\Fix(\sigma)$ is exactly the subgroup generated by the two kinds of elements described in the statement of Lemma \ref{LemmaFixOfSigma}. In particular, $\Fix(\sigma)$ is abstractly the Artin group $A_{\Gamma^\sigma}$, where
    $$\Gamma^{\sigma} \coloneqq Span(F_{\sigma}) \sqcup \left( \bigsqcup\limits_{s, t \in T_{\sigma} \ \sigma\text{-transp.}} w_{st} \right)$$
    is the graph obtained from $\Gamma$ by considering the subgraph $\Gamma_{\sigma}$ spanned by all the vertices of $F_{\sigma}$, and then adding an isolated vertex for every pair of $\sigma$-transposed generators.
\end{corollary}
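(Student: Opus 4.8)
The statement packages two claims: a generating set for $\Fix(\sigma)$, and its isomorphism type. The first I would prove directly. Write $P$ for the set of $\sigma$-transposed pairs $\{s,t\}$ (so $m_{st}<\infty$), and let $H\leq A_\Gamma$ be the subgroup generated by $\{s: s\in F_\sigma\}$ together with $\{\Delta_{st}:\{s,t\}\in P\}$. The inclusion $H\leq\Fix(\sigma)$ is immediate, since $\sigma$ fixes each $s\in F_\sigma$ by definition of $F_\sigma$, and fixes each $\Delta_{st}$ because the Garside element $\Delta_{st}=\Delta_{ts}$ is symmetric in $s$ and $t$. For the reverse inclusion, if $g\in\Fix(\sigma)$ then $\sigma\cdot(gv_\emptyset)=\sigma(g)v_{\sigma(\emptyset)}=gv_\emptyset$, so $gv_\emptyset\in X_\Gamma^\sigma$; since $A_\emptyset=\{1\}$ the vertex $gv_\emptyset$ has a unique coset representative, and Lemma \ref{LemmaFixOfSigma} (applied with $v_S=v_\emptyset\in X_\Gamma^\sigma\cap K_\Gamma$) says precisely that $g$ is a product of the two prescribed kinds of elements, i.e. $g\in H$. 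Hence $\Fix(\sigma)=H$, which is the first assertion.

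For the isomorphism type there is an evident surjection $q\colon A_{\Gamma^\sigma}\twoheadrightarrow\Fix(\sigma)$ sending the generator at $s\in F_\sigma$ to $s$ and the isolated vertex $w_{st}$ to $\Delta_{st}$; it is well defined because the only defining relations of $A_{\Gamma^\sigma}$ are the Artin relations among the $F_\sigma$-generators, which hold in the standard parabolic $A_{Span(F_\sigma)}\leq A_\Gamma$ (van der Lek, \cite{van1983homotopy}), and it is onto by the first paragraph. Everything then reduces to showing $q$ is injective, equivalently that $A_{Span(F_\sigma)}$ together with the cyclic subgroups $\langle\Delta_{st}\rangle$ $(\{s,t\}\in P)$ generate their free product inside $A_\Gamma$. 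I would establish this through the action of $\Fix(\sigma)$ on $X_\Gamma^\sigma$. By Lemma \ref{LemmaMinsetConvex} this is a convex subcomplex of the $\mathrm{CAT}(0)$ complex $X_\Gamma$ (Theorem \ref{ThmCAT(0)}), hence itself $\mathrm{CAT}(0)$ and in particular simply connected, and the action is type-preserving, hence without inversions, by Remark \ref{remTypePreserving}(3). By Lemma \ref{LemmaBaseCases}(1) a strict fundamental domain for $\Fix(\sigma)\curvearrowright X_\Gamma^\sigma$ is $Q\coloneqq X_\Gamma^\sigma\cap K_\Gamma$: it is strict because $K_\Gamma$ is strict for the larger group $A_\Gamma$, and it exhausts $X_\Gamma^\sigma$ by the surjectivity half of Lemma \ref{LemmaFixOfSigma}. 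Concretely $Q$ is the fundamental domain $K_{Span(F_\sigma)}$ of $A_{Span(F_\sigma)}$ (spanned by $v_\emptyset$, the $v_s$ and the $v_{st}$ with $s,t\in F_\sigma$) together with, for each $\{s,t\}\in P$, one extra whisker edge joining $v_\emptyset$ to the type-$2$ vertex $v_{st}$. The stabiliser in $\Fix(\sigma)$ of a vertex $v_S$ of $Q$ is $A_S\cap\Fix(\sigma)=\Fix(\sigma|_{A_S})$: over the $F_\sigma$-part it is $\{1\}$, $\langle s\rangle$ or $A_{st}$ (since $\sigma$ acts trivially there), while for a transposed pair it equals $\langle\Delta_{st}\rangle\cong\mathbb{Z}$, by Theorem \ref{dihedralFixPointsOdd}(1) when $m_{st}$ is odd (there $\sigma|_{A_{st}}=\varphi_{\Delta_{st}}$) and Theorem \ref{dihedralFixPointsEven}(5) when $m_{st}$ is even.

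This realises $\Fix(\sigma)$ as the fundamental group of the developable complex of groups over $Q$ with the above local groups and simply connected development $X_\Gamma^\sigma$. Over the $F_\sigma$-part this complex of groups is visibly the one carried by the action $A_{Span(F_\sigma)}\curvearrowright X_{Span(F_\sigma)}$ --- same underlying complex, same local groups (the spherical parabolics of $A_{Span(F_\sigma)}$ of type at most $2$ are exactly $\{1\}$, the $\langle s\rangle$ and the $A_{st}$ with $s,t\in F_\sigma$), and $X_{Span(F_\sigma)}$ is simply connected --- so its fundamental group is $A_{Span(F_\sigma)}$. Each whisker is an edge with trivial edge group running out to a vertex group $\langle\Delta_{st}\rangle\cong\mathbb{Z}$, and so contributes a free $\mathbb{Z}$-factor; iterating over $P$ gives $\Fix(\sigma)\cong A_{Span(F_\sigma)}*F_{|P|}=A_{\Gamma^\sigma}$, so $q$ is an isomorphism. (Equivalently, one can collapse each $\Fix(\sigma)$-translate of $X_{Span(F_\sigma)}$ inside $X_\Gamma^\sigma$ to a point, check the resulting quotient is a tree, and run Bass-Serre theory on it.)

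The genuinely delicate points are: correctly identifying the vertex stabilisers of the transposed $v_{st}$'s, which is exactly where the dihedral computations of Section \ref{sectionDihedral} enter; checking carefully that $Q$ really is a strict fundamental domain with no identifications on its boundary (here the strictness of $K_\Gamma$ for $A_\Gamma$ together with Lemma \ref{LemmaFixOfSigma} does the job); and the final bookkeeping --- that a contractible base with trivial-edge-group whiskers attached has fundamental group the free product of the base group with the terminal vertex groups, with strictness of the fundamental domain ruling out any spurious extra relations. I expect this last step to be the main obstacle to write cleanly, since it is where one must be most careful about the complex-of-groups formalism. This argument recovers, in the large-type case, the corresponding theorem of Crisp \cite{crisp2000symmetrical}.
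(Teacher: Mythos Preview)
Your proposal is correct and follows essentially the same approach as the paper. The first part (identifying the generating set) is identical: both use that $g\in\Fix(\sigma)$ forces $gv_\emptyset\in X_\Gamma^\sigma$, then invoke Lemma \ref{LemmaFixOfSigma}. For the isomorphism type, both arguments rest on the action $\Fix(\sigma)\curvearrowright X_\Gamma^\sigma$ with strict fundamental domain $X_\Gamma^\sigma\cap K_\Gamma$ and the dihedral stabiliser computation at the transposed $v_{st}$'s; the paper goes directly to the tree obtained by equivariantly collapsing the $F_\sigma$-part (which you mention as your ``equivalently'' alternative), while your primary framing is via the complex-of-groups formalism, but these are the same argument in different clothing.
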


\begin{proof}
    We first prove the first statement. Let us call $H$ the subgroup generated by the two kinds of elements described in the statement of Lemma \ref{LemmaFixOfSigma}.

    \medskip
    \noindent ($\Fix(\sigma) \subseteq H$) By Corollary \ref{fixAction} we know that $\Fix(\sigma) \leq \Stab(X_{\Gamma}^{\sigma})$. Let now $g \in \Stab(X_{\Gamma}^{\sigma})$. As $v_{\emptyset} \in X_{\Gamma}^{\sigma}$, we must have $g v_{\emptyset} \in X_{\Gamma}^{\sigma}$. By Lemma \ref{LemmaFixOfSigma} then, $g$ belongs to $H$.

    \medskip
    \noindent ($H \subseteq \Fix(\sigma)$) This is immediate from the fact that all of the generators of $H$ are fixed by $\sigma$.
    \medskip

    We now prove the second statement. We know by Corollary \ref{fixAction} that $\Fix(\sigma)$ acts on $X_\Gamma^\sigma$. The subcomplex $X_\Gamma^\sigma \cap K_{\Gamma}$ is a fundamental domain for this action. To see this, note that any point $gv_S \in X_\Gamma^\sigma$ is a translate of a point of $v_S \in X_\Gamma^\sigma \cap K_{\Gamma}$ by an element of $H$, which is clearly fixed by $\sigma$. Moreover, this translate is unique because $K_{\Gamma}$ is a fundamental domain for the action $A_{\Gamma} \curvearrowright X_{\Gamma}$.

    Consider the quotient of $Y_\Gamma^\sigma$ which collapses the subcomplex spanned by vertices $v_S$ (where $S \subseteq F_\sigma$) to $v_\emptyset$, and acts trivially on the vertices $v_S$ where $S$ is a $\sigma$-transposed pair. This quotient map can be propagated equivariantly to a continuous map $r$ from $X_\Gamma^\sigma$ onto a tree. By equivariance, the resulting tree still carries a $\Fix(\sigma)$-action, and its fundamental domain, which is $r(X_\Gamma^\sigma \cap K_{\Gamma})$, is a star where the central vertex has stabiliser $A_{F_\sigma}$, and there is one other vertex for each pair $s,t \in T_\sigma$ which were $\sigma$-transposed, with stabiliser $\langle \Delta_{st} \rangle$. The last comment follows from our computation of the fixed subgroup of $\sigma$ restricted to the relevant dihedral, in Theorem \ref{dihedralFixPointsOdd}.1 and Theorem \ref{dihedralFixPointsEven}.5.

    The stabilisers of the edges in this graph of groups decomposition are trivial (owing to the fact that their preimages under the quotient map are single type 0 points), so the graph of groups decomposition gives rise to the proposed free product decomposition of $\Fix(\sigma)$.

\end{proof}

Recall that $a \in V(\Gamma)$ is a standard generator that we assume belongs to $F_{\sigma}$.

\begin{lemma} \label{LemmaFixOfSigmaa}
    The subset $X_{\Gamma}^{\varphi_{a^k} \sigma}$ is the subtree of the standard tree $X_{\Gamma}^{a}$ whose vertices are exactly those of the form $g v_s$ and $g v_{st}$ where $\sigma(g) = g$ and $s, t \in F_{\sigma}$.
\end{lemma}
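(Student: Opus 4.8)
The plan is to combine the structural input of Lemma \ref{LemmaEss1Skeleton} --- which already tells us that $X_\Gamma^{\varphi_{a^k}\sigma}$ is a simplicial tree containing no type-$0$ vertices --- with an explicit vertex-by-vertex analysis via Lemma \ref{LemmaUsefulEquation}, in the same spirit as the proof of Lemma \ref{LemmaFixOfSigma}.

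First I would establish the inclusion $X_\Gamma^{\varphi_{a^k}\sigma} \subseteq X_\Gamma^a$. Writing $d$ for the order of the graph automorphism $\sigma$, one uses Lemma \ref{LemmaCommuteWithInner} together with the assumption $\sigma(a) = a$ to compute $(\varphi_{a^k}\sigma)^d = \varphi_{a^{dk}}$. Hence $X_\Gamma^{\varphi_{a^k}\sigma} \subseteq X_\Gamma^{\varphi_{a^{dk}}}$, and by compatibility of the action this last set is the fixed set of the element $a^{dk} \in A_\Gamma$. Since $a^{dk}$ is a non-trivial power of $a$, its parabolic closure is $\langle a \rangle$, so Lemma \ref{LemmaClassificationByType} identifies $X_\Gamma^{a^{dk}}$ with the standard tree $X_\Gamma^a$.

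Next I would pin down the vertex set. By Lemma \ref{LemmaEss1Skeleton} every vertex of $X_\Gamma^{\varphi_{a^k}\sigma}$ is of the form $gv_s$ or $gv_{st}$, and Lemma \ref{LemmaUsefulEquation} tells us such a vertex $gv_S$ lies in the fixed set precisely when $a^k\sigma(g) \in gA_S$ and $\sigma(S) = S$. For the backward direction, if $s, t \in F_\sigma$, if $\sigma(g) = g$, and if the vertex already lies in $X_\Gamma^a$, then $g^{-1}ag \in A_S$, whence $a^k\sigma(g) = a^k g = g(g^{-1}a^kg) \in gA_S$, so the vertex is fixed. For the forward direction, suppose $gv_S \in X_\Gamma^{\varphi_{a^k}\sigma}$. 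The condition $\sigma(S) = S$ immediately gives $\sigma(s) = s$ in the type-$1$ case. In the type-$2$ case I would first rule out that $s$ and $t$ are $\sigma$-transposed: if they were, then by Lemma \ref{LemmaUsefulEquation} no neighbour $ghv_{s'}$ (with $h \in A_{st}$ and $s' \in \{s,t\}$) of $gv_{st}$ could be fixed, since $\sigma(s') \neq s'$; as there are also no fixed type-$0$ vertices, $gv_{st}$ would be an isolated vertex of the connected tree $X_\Gamma^{\varphi_{a^k}\sigma}$, contradicting the fact that $v_a$ also lies in it. Hence $s, t \in F_\sigma$, and $\sigma$ restricts to the identity on $A_S$. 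Finally, combining $X_\Gamma^{\varphi_{a^k}\sigma} \subseteq X_\Gamma^a$ with Lemma \ref{LemmaUsefulEquation} yields $u \coloneqq g^{-1}\sigma(g) \in A_S$; applying $\sigma$ repeatedly and using $\sigma|_{A_S} = \mathrm{id}$ gives $g = \sigma^d(g) = g u^d$, so $u^d = 1$, and torsion-freeness of $A_\Gamma$ (Theorem \ref{ThmTorsionAndCentres}) forces $u = 1$, i.e. $\sigma(g) = g$.

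Putting these computations together identifies the vertex set of $X_\Gamma^{\varphi_{a^k}\sigma}$ with the one claimed; since Lemma \ref{LemmaEss1Skeleton} already gives that this set is a subtree of $X_\Gamma^{(1)-ess}$, and the previous step places it inside $X_\Gamma^a$, the description as a subtree of the standard tree follows. The step I expect to be the main obstacle is the type-$2$ analysis: arranging that a coset representative $g$ can be chosen with $\sigma(g) = g$ on the nose (and not merely up to the local group $A_{st}$), and cleanly excluding $\sigma$-transposed pairs --- both of which rely on the connectedness of the fixed tree and on torsion-freeness.
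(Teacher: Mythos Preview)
Your argument is correct and takes a genuinely different route from the paper's own proof. The paper proceeds by an inductive ball-by-ball comparison: it defines the candidate subtree $T \subseteq X_\Gamma^a$ and shows $B_T(v_a,n) = B_{X_\Gamma^{\varphi_{a^k}\sigma}}(v_a,n)$ for all $n$, with the bulk of the work being a delicate type-$2$ case. There the paper starts from an already-known neighbouring type-$1$ vertex, derives the equations $\sigma(gh)=gh$, $(gh)^{-1}a(gh)=x$, $s,t\in F_\sigma$, and then proves two claims, splitting into subcases according to whether $x=s$ or $x=t$ and whether $m_{st}$ is odd or even, invoking centraliser structure and the description of standard-tree neighbours from \cite{martin2022acylindrical}. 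By contrast, you obtain the inclusion $X_\Gamma^{\varphi_{a^k}\sigma}\subseteq X_\Gamma^a$ in one stroke via $(\varphi_{a^k}\sigma)^d=\varphi_{a^{dk}}$, dispose of the $\sigma$-transposed type-$2$ vertices by a clean connectedness argument (they would be isolated while $v_a$ lies in the tree), and then extract $\sigma(g)=g$ from the single equation $u=g^{-1}\sigma(g)\in A_S$ together with $\sigma|_{A_S}=\mathrm{id}$ and torsion-freeness. Your route is shorter and conceptually cleaner, avoiding all the dihedral subcase analysis; the paper's approach, on the other hand, makes the local neighbour structure of $X_\Gamma^{\varphi_{a^k}\sigma}$ inside $X_\Gamma^a$ completely explicit, which feeds more directly into the subsequent graph-of-groups computation in Corollary~\ref{CoroStabSigmaa} and Remark~\ref{RemFreeGroupExplicitly}.
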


\begin{proof}
    First of all, we know by Lemma \ref{LemmaEss1Skeleton} that $X_{\Gamma}^{\varphi_{a^k} \sigma}$ is a simplicial tree inside of $X_{\Gamma}^{(1)-ess}$. By Lemma \ref{LemmaBaseCases}, we also know that $v_a \in X_{\Gamma}^{\varphi_{a^k} \sigma}$. We let $T$ be the subtree of $X_{\Gamma}^{a}$ whose vertices are exactly those of the form $g v_s$ or $g v_{st}$ where $\sigma(g) = g$ and $s, t \in F_{\sigma}$. Note that $v_a \in T$.

    We want to show that $T = X_{\Gamma}^{\varphi_{a^k} \sigma}$. To do that, we proceed as in the proof of Lemma \ref{LemmaFixOfSigma}. Our two sets meet at $v_a$, so we can proceed by induction on the combinatorial distance to $v_a$. We let $B_{T}(v_a, n)$ and $B_{X_{\Gamma}^{\varphi_{a^k} \sigma}}(v_a, n)$ be the balls of radius $n$ around $v_a$ in $T$ and in $X_{\Gamma}^{\varphi_{a^k} \sigma}$ respectively, relatively to $d^C$. We show by induction on $n$ that $B_{T}(v_a, n) = B_{X_{\Gamma}^{\varphi_{a^k} \sigma}}(v_a, n)$. The base case is trivial, so we deal with the induction step. As in Lemma \ref{LemmaFixOfSigma}, we assume that $g v_S$ belongs to both balls, and we consider a neighbour $gh v_{S'}$ of $g v_S$, for some $h \in A_S$ and some appropriate $S'$.
    \medskip
    
    \noindent \underline{Case 1: $g v_S = g v_s$ has type $1$:} As $g v_s \in T$ we have $\sigma(g) = g$ and $s \in F_{\sigma}$. As $g v_s \in X_{\Gamma}^{\varphi_{a^k} \sigma}$, we also have $a^k \sigma(g) \in g \langle s \rangle$ by Lemma \ref{LemmaUsefulEquation}.
    
    The neighbours of $g v_s$ in $X_{\Gamma}$ take the form $g v_{st}$. Using the above, we have
    \begin{itemize}
        \item $g v_{st} \in T \Longleftrightarrow \sigma(g) = g$ and $s, t \in F_{\sigma} \Longleftrightarrow t \in F_{\sigma}$ ;
        \item $g v_{st} \in X_{\Gamma}^{\varphi_{a^k} \sigma} \overset{\ref{LemmaUsefulEquation}} \Longleftrightarrow a^k \sigma(g) \in g A_{st}$ and $\sigma(\{s, t\}) = \{s, t\} \Longleftrightarrow t \in F_{\sigma}$.
    \end{itemize}
    It follows that $g v_{st} \in T$ if and only if $g v_{st} \in X_{\Gamma}^{\varphi_{a^k} \sigma}$.
    \medskip

    \noindent \underline{Case 2: $g v_S = g v_{st}$ has type $2$:} By the induction process, we know that there is some type $1$ neighbour of $g v_{st}$ in $B_{T}(v_a, n-1) = B_{X_{\Gamma}^{\varphi_{a^k} \sigma}}(v_a, n-1)$. Call this neighbour $g h v_x$ for some $h \in A_{st}$ and $x \in \{s, t\}$. Because $gh v_x$ belongs to $T$, we have
    $$\sigma(gh) = gh \ \ (1)$$ 
    But by definition of $T$, $gh v_x$ also belongs to the standard tree $X_{\Gamma}^a$. Since $X_{\Gamma}^a = X_{\Gamma}^{\varphi_a}$ by compatibility, Lemma \ref{LemmaUsefulEquation} gives $a gh \in gh \langle x \rangle$. Comparing heights, we actually have $agh = ghx$, and thus
    $$(gh)^{-1} a (gh) = x. \ \ (2)$$
    Finally, using that $g v_{st}$ belongs to $T$, we also obtain
    $$s, t \in F_{\sigma}. \ \ (3)$$
    We now consider the other type $1$ neighbours of $g v_{st}$ in $X_{\Gamma}$. Without loss of generality, and up to replacing $x$ by $y \in \{s, t \} \backslash \{x\}$, they have the form $g h h' v_s$ for some $h' \in A_{st}$. Our first claim is relatively easy:
    \medskip

    \noindent \underline{Claim 1:} $ghh' v_s \in T$ if and only if $ghh' v_s \in X_{\Gamma}^a$.
    \medskip

    \noindent \underline{Proof of Claim 1:} The “only if” case is by definition of $T$. For the “if” part, we already know that $\sigma(gh) = gh$ by (1). As $h' \in A_{st}$ and $s, t \in F_{\sigma}$ by (3), we also have $\sigma(h') = h'$. Altogether, we have $\sigma(ghh') = ghh'$ so $ghh' v_s \in T$ by definition. This finishes the proof of Claim 1.
    \medskip

    Our next claim gives the main statement of the lemma.
    \medskip

    \noindent \underline{Claim 2:} $ghh' v_s \in X_{\Gamma}^{\varphi_{a^k} \sigma}$ if and only if $ghh' v_s \in X_{\Gamma}^a$.
    \medskip

    \noindent \underline{Proof of Claim 2:} Notice that
    \begin{align*}
        ghh' v_s \in X_{\Gamma}^{\varphi_{a^k} \sigma} & \overset{\ref{LemmaUsefulEquation}} \Longleftrightarrow a^k \sigma(ghh') \in ghh' \langle s \rangle \\
        & \overset{(1)} \Longleftrightarrow a^k gh \sigma(h') \in ghh' \langle s \rangle \\
        & \overset{(3)} \Longleftrightarrow a^k ghh' \in ghh' \langle s \rangle \\
        & \Longleftrightarrow a^k ghh' = ghh' s^k \text{ (compare heights)} \\
        & \Longleftrightarrow (gh)^{-1} a^k (gh) = h' s^k (h')^{-1} \\
        & \overset{(2)} \Longleftrightarrow x^k = h' s^k (h')^{-1}. \ \ \ (4)
    \end{align*}
    Now there are three cases to consider. In each case, we will show that (4) is equivalent to $ghh' v_s \in X_{\Gamma}^a$.
    \smallskip

    \noindent \underline{Case 2.1: $x = s$.} Then
    \begin{align*}
        (4) \Longleftrightarrow h' \in C(s^k) \overset{\ref{PropCentralisers}} \Longleftrightarrow h' \in C(s) & \overset{(*)} \Longleftrightarrow h' \in \Stab(X_{\Gamma}^s) \\
        & \Longleftrightarrow h' v_s \in X_{\Gamma}^s \Longleftrightarrow ghh' v_s \in gh X_{\Gamma}^s \overset{(**)} \Longleftrightarrow ghh' v_s \in X_{\Gamma}^a,
    \end{align*}
    where $(*)$ follows from \cite[Lemmas 2.20, 2.21]{vaskou2023isomorphism}, and where $(**)$ is because the standard trees $gh X_{\Gamma}^s$ and $X_{\Gamma}^a$ both contain $gh v_s$, so in particular their local groups are both equal to $gh \langle s \rangle (gh)^{-1}$, hence the two standard trees agree.
    \smallskip

    \noindent \underline{Case 2.2: $x = t$ and $m_{st}$ is even.} Then:
    \begin{align*}
        (4) &\Longleftrightarrow t^k = (h') s^k (h')^{-1} \\
        &\Longleftrightarrow X_{\Gamma}^{t^k} = h' X_{\Gamma}^{s^k} \\
        &\Longleftrightarrow X_{\Gamma}^t = h' X_{\Gamma}^s \\
        &\Longleftrightarrow t = (h') s (h')^{-1}.
    \end{align*}
    The last statement is false as $m_{st}$ is even so $t$ and $s$ are not conjugated in $A_{st}$. Hence this case gives a contradiction.
    \smallskip

    \noindent \underline{Case 2.3: $x = t$ and $m_{st}$ is odd.} We have
    \begin{align*}
        (4) & \overset{Case \ 2.2} \Longleftrightarrow s = (h')^{-1} t (h') \\
        & \ \ \Longleftrightarrow s = (\Delta_{st} h')^{-1} s (\Delta_{st} h') \\
        & \ \ \Longleftrightarrow \Delta_{st} h' \in C_{A_{st}}(s) = \langle s, \Delta_{st}^2 \rangle \\
        & \ \ \Longleftrightarrow h' = \Delta_{st}^{2q+1} s^l \text{ for some } q, l \in \mathbb{Z} \\
        & \ \ \Longleftrightarrow ghh' v_s = gh \Delta_{st}^{2q+1} v_s \text{ for some } q \in \mathbb{Z}. \ \ (5)
    \end{align*}
    Notice that $gh v_t \in gh X_{\Gamma}^t$, and $gh v_t \in X_{\Gamma}^a$ by hypothesis, so by unicity we must have $gh X_{\Gamma}^t = X_{\Gamma}^a$. Now we use \cite[Lemma 4.3]{martin2022acylindrical} with $m_{st}$ odd, that states that the vertices adjacent to $g v_{st}$ in $gh X_{\Gamma}^t$, provided they are in the orbit of $v_s$, are exactly those of the form $gh \Delta_{st}^{2q+1} v_s$ for some $q \in \mathbb{Z}$. So $(5)$ is equivalent to $ghh' v_s \in gh X_{\Gamma}^t = X_{\Gamma}^a$.
    \medskip

    This finishes the proof of Claim 2, and of the lemma.
\end{proof}

\begin{remark} \label{RemFixOfSigmaa}
    It follows from Lemma \ref{LemmaFixOfSigmaa} that $X_{\Gamma}^{\varphi_{a^k} \sigma}$ is contained not only in $X_{\Gamma}^a$ but also in $X_{\Gamma}^{\sigma}$. Indeed, if $g v_s$ is a vertex of $X_{\Gamma}^{\varphi_{a^k} \sigma}$ then $g v_s \in T$ and thus we have $\sigma(g) = g$ and $s \in F_{\sigma}$. In particular then, $\sigma \cdot g v_s = \sigma(g) v_{\sigma(s)} = g v_s$. The same goes if we replace $g v_s$ with $g v_{st}$.
\end{remark}

\begin{corollary} \label{CoroStabSigmaa}
    The subgroup $\Fix({\varphi_{a^k} \sigma})$ is isomorphic to the direct product $\langle a \rangle \times F$, where $F$ is a finitely generated free group.
\end{corollary}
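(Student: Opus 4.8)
The plan is to locate $\Fix(\varphi_{a^k}\sigma)$ inside a centraliser, where Proposition \ref{PropCentralisers} already gives a complete description, and then peel off a central $\mathbb{Z}$-factor. First I would record two inclusions. Since $a\in F_\sigma$ we have $\varphi_{a^k}\sigma(a)=a^k\sigma(a)a^{-k}=a$, so $\langle a\rangle\leq\Fix(\varphi_{a^k}\sigma)$. Conversely, $\sigma(a)=a$ makes $\varphi_{a^k}$ and $\sigma$ commute (Lemma \ref{LemmaCommuteWithInner}), so if $d$ denotes the order of $\sigma$ in $\Aut(\Gamma)$ then $(\varphi_{a^k}\sigma)^d=(\varphi_{a^k})^d\sigma^d=\varphi_{a^{kd}}$. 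As $kd\neq 0$, Lemma \ref{LemmaClassificationByType} identifies $X_\Gamma^{a^{kd}}$ with the standard tree $X_\Gamma^{a}$, and Proposition \ref{PropCentralisers}.1 then yields
$$\Fix(\varphi_{a^k}\sigma)\ \leq\ \Fix(\varphi_{a^{kd}})\ =\ C(a^{kd})\ =\ \langle a\rangle\times F,$$
with $F$ a finitely generated free group and $\langle a\rangle$ the central $\mathbb{Z}$-factor.

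Next I would combine the two inclusions. For $h\in\Fix(\varphi_{a^k}\sigma)\leq\langle a\rangle\times F$, write $h=a^m f$ with $f\in F$; then $f=a^{-m}h\in\Fix(\varphi_{a^k}\sigma)$ because $a\in\Fix(\varphi_{a^k}\sigma)$. Hence $\Fix(\varphi_{a^k}\sigma)=\langle a\rangle\times F'$, where $F'\coloneqq\Fix(\varphi_{a^k}\sigma)\cap F$ is a subgroup of $F$, in particular free. The one genuinely delicate point — the step I expect to be the main obstacle — is showing $F'$ is \emph{finitely generated}: a priori it is only an intersection of a centraliser with a free subgroup, and such intersections need not be finitely generated.

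To settle this I would realise $F'$ as the fixed subgroup of a bona fide automorphism of the finitely generated free group $F$. Since $\varphi_{a^k}\sigma$ fixes $a$, hence $a^{kd}$, it preserves $C(a^{kd})=\langle a\rangle\times F$ and restricts there to an automorphism $\theta$ with $\theta(a)=a$; writing $\theta(f)=a^{\chi(f)}\bar\theta(f)$ defines a homomorphism $\chi\colon F\to\mathbb{Z}$ together with the induced automorphism $\bar\theta\in\Aut(F)$, and $F'=\{f\in F:\bar\theta(f)=f\text{ and }\chi(f)=0\}$. Because $a^{kd}$ is central in $C(a^{kd})$ we have $\theta^{d}=\varphi_{a^{kd}}|_{C(a^{kd})}=\mathrm{id}$, whereas for any $f$ with $\bar\theta(f)=f$ an immediate induction gives $\theta^{j}(f)=a^{\,j\chi(f)}f$; taking $j=d$ forces $a^{d\chi(f)}=1$, hence $\chi(f)=0$. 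Therefore $F'=\Fix(\bar\theta)$, which by the Bestvina--Handel theorem \cite{bestvina1992train} has rank at most $\operatorname{rank}(F)$, so $F'$ is finitely generated (and free) and $\Fix(\varphi_{a^k}\sigma)\cong\langle a\rangle\times F'$, as claimed. For the explicit free basis of $F'$ used in Theorem \ref{mainTheoremElliptic} and Remark \ref{RemFreeGroupExplicitly}, one instead extracts it from the cocompact action of $\Fix(\varphi_{a^k}\sigma)$ on the tree $X_\Gamma^{\varphi_{a^k}\sigma}$ of Lemma \ref{LemmaFixOfSigmaa}: collapsing its kernel $\langle a\rangle$ turns the vertex stabilisers (copies of $\langle a\rangle$ and of $\mathbb{Z}^2$) and the trivial edge groups into a finite graph of groups presenting $F'$.
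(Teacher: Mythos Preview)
Your proof is correct and takes a genuinely different route from the paper's. The paper works geometrically: it shows $X_\Gamma^{\varphi_{a^k}\sigma}$ is a subtree of the standard tree $X_\Gamma^a$ (Lemma \ref{LemmaFixOfSigmaa}), computes $\Stab(X_\Gamma^{\varphi_{a^k}\sigma})$ by Bass--Serre theory on that tree (edge groups all equal to $\langle a\rangle$, so the quotient by $\langle a\rangle$ acts with trivial edge stabilisers and is free), and finally proves the equality $\Fix(\varphi_{a^k}\sigma)=\Stab(X_\Gamma^{\varphi_{a^k}\sigma})$ directly by a height comparison. Your argument is algebraic: the power trick $(\varphi_{a^k}\sigma)^d=\varphi_{a^{kd}}$ embeds the fixed subgroup into $C(a)=\langle a\rangle\times F$, you peel off the central $\langle a\rangle$, and then identify the remaining piece as $\Fix(\bar\theta)$ for a finite-order automorphism $\bar\theta\in\Aut(F)$, invoking Bestvina--Handel for finite generation.

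What each approach buys: yours is shorter and bypasses the delicate combinatorics of Lemma \ref{LemmaFixOfSigmaa} entirely, at the cost of importing the Bestvina--Handel theorem as a black box. The paper's argument is self-contained and, crucially, yields the explicit quotient graph $\Gamma_{a,\mathrm{odd}}^\sigma$ of Remark \ref{RemFreeGroupExplicitly}, from which both the explicit free basis and the sharp rank bound of Proposition \ref{propRankElliptic} are read off. You acknowledge this in your final paragraph, correctly noting that the explicit description still requires the tree analysis; so for the statement of the corollary alone your argument suffices, but for the surrounding theorems one cannot avoid the paper's route.
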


\begin{proof}
    We first start by computing $\Stab(X_{\Gamma}^{\varphi_{a^k} \sigma})$. The arguments are actually tantamount to those of \cite[Lemma 4.5]{martin2022acylindrical}, that we re-explain thereafter. Every edge stabiliser in $X_{\Gamma}^{\varphi_{a^k} \sigma}$ is $\langle a \rangle$  as $X_{\Gamma}^{\varphi_{a^k} \sigma} \subseteq X_{\Gamma}^a$ by Lemma \ref{LemmaFixOfSigmaa}. Let $e$ be such an edge, and let $g \in \Stab(X_{\Gamma}^{\varphi_{a^k} \sigma})$. Then the local group at $g e$ is both $\langle a \rangle$ and $g \langle a \rangle g^{-1}$. Since we can't have $a = g a^{-1} g^{-1}$ for height reasons, it must be that $a = g a g^{-1}$ and thus $g \in C(a)$. This means $\langle a \rangle$ is in the centre of $\Stab(X_{\Gamma}^{\varphi_{a^k} \sigma})$, and thus the quotient
    $$F \coloneqq \quotient{\Stab(X_{\Gamma}^{\varphi_{a^k} \sigma})}{\langle a \rangle}$$
    acts on $X_{\Gamma}^{\varphi_{a^k} \sigma}$ with trivial edge stabilisers. In particular then, $F$ is the fundamental group of the graph of groups $X_{\Gamma}^{\varphi_{a^k} \sigma} / F$. On one hand, the edge groups are trivial. On the other hand, we know by Remark \ref{RemFixOfSigmaa} that $X_{\Gamma}^{\varphi_{a^k} \sigma}$ is contained in $X_{\Gamma}^{\sigma}$, and only contains type $1$ and type $2$ vertices. Using Lemma \ref{LemmaFixOfSigma}, it follows that the vertex groups are:
    \begin{itemize}
        \item Either isomorphic to $\langle s, \Delta_{st} \rangle / \langle a \rangle$ when the vertex is in the orbit of $v_{st}$ and $s, t \in T_{\sigma}$ are $\sigma$-transposed. This quotient is itself virtually isomorphic to $\mathbb{Z}$ given that $\mathbb{Z} \cong \langle a \rangle \subseteq \langle s, \Delta_{st} \rangle \cong \mathbb{Z}^2$ (one can actually remove the “virtually” condition using that $ht(a) = 1$) ;
        \item Or the vertex groups are trivial, in all other scenarios.
    \end{itemize}
    
    Consequently, the group $F$ is a free group. It follows that the short exact sequence associated with the quotient splits and $\Stab(X_\Gamma^{\varphi_{a^k}\sigma}) = \langle a \rangle \rtimes F$. Finally since $a$ is central the action of $F$ is trivial, we have $\Stab(X_{\Gamma}^{\varphi_{a^k} \sigma}) = \langle a \rangle \times F$.

    We know from Corollary \ref{fixAction} that $\Fix(\varphi_{a^k} \sigma) \leq \Stab(X_{\Gamma}^{\varphi_{a^k} \sigma})$. We want to show equality. To do that, let $g$ be any element of $\Stab(X_{\Gamma}^{\varphi_{a^k} \sigma})$, and let $v$ be a type $1$ vertex of $X_{\Gamma}^{\varphi_{a^k} \sigma}$. By hypothesis $gv \in X_{\Gamma}^{\varphi_{a^k} \sigma}$, and hence
    $$gv = \varphi_{a^k} \sigma \cdot gv \overset{\ref{LemmaUsefulEquation}} = a^k \sigma(g) (\sigma \cdot v) \overset{\ref{RemFixOfSigmaa}} = a^k \sigma(g) v.$$
    The above equation implies that $g^{-1} a^k \sigma(g) \in \Stab(v)$. As $v$ has type $1$ and belongs to $X_{\Gamma}^a$, we know that $\Stab(v) = \langle a \rangle$, and hence $g^{-1} a^k \sigma(g) \in \langle a \rangle$. Comparing heights then gives $g^{-1} a^k \sigma(g) = a^k$, which gives $g^{-1} (\varphi_{a^k} \sigma)(g) = 1$. Finally, we have $g \in \Fix(\varphi_{a^k} \sigma)$, as wanted.
\end{proof}

\begin{remark} \label{RemFreeGroupExplicitly}
    As in \cite[Remark 4.6]{martin2022acylindrical}, a basis of $F$ can then be determined explicitly as follows. Let $\Gamma_{bar}$ be the barycentric subdivision of $\Gamma$. In $\Gamma_{bar}$, we call $v^s$ the vertex corresponding the standard generator $s$, and we call $v^{st}$ the vertex corresponding to the edge of $\Gamma$ connecting $s$ and $t$. We see $\Gamma_{bar}$ as having edge coefficients by giving to any edge $(v^s, v^{st})$ the coefficient $m_{st}$. We let $\Gamma^{\sigma}$ be the subgraph of $\Gamma_{bar}$ containing all the edges of the form $(v^s, v^{st})$ where $s, t \in F_{\sigma}$. We then cut $\Gamma^{\sigma}$ along every vertex of the form $v^{st}$ where $m_{st}$ is even, and call the resulting graph $\Gamma_{odd}^{\sigma}$. Finally, we call $\Gamma_{a, odd}^{\sigma}$ the connected component of $\Gamma_{odd}^{\sigma}$ that contains $a$. One can use \cite[Lemma 4.3]{martin2022acylindrical} to obtain that
    $$\Gamma_{a, odd}^{\sigma} = \quotient{X_{\Gamma}^{\varphi_{a^k} \sigma}}{F}.$$
    By Corollary \ref{CoroStabSigmaa}, the local groups of edges are all isomorphic to $\mathbb{Z}$. Choose any ordering $v^{s_1} < \cdots < v^{s_q}$ of the vertices of $\Gamma_{a, odd}^{\sigma}$ that correspond to standard generators. Then label any edge of the form $[v^s, v^{st}]$ satisfying $v^s < v^t$ and $m_{st}$ odd with the element $\Delta_{st}$, and label every other edge with the trivial element. A basis of $F$ is given as follows:
    \medskip

    \noindent $\bullet$ A set of words that label closed paths starting at $v^a$, and that span a free basis of $\pi_1(\Gamma_{a, odd}^{\sigma})$ ;
    \medskip

    \noindent $\bullet$ For every vertex $v^{st}$ in $\Gamma_{a, odd}^{\sigma}$, a word $g_{st} z_{st} g_{st}^{-1}$ where $g_{st}$ labels a path joining $v^a$ and $v^{st}$ and $z_{st} = \Delta_{st}^k$ where $k = lcm(2, m_{st})$.
\end{remark}

\begin{remark}
    While Corollary \ref{CoroStabSigmaa} shows that the stabiliser $\Stab(X_{\Gamma}^{\varphi_{a^k} \sigma})$ splits as a direct product of the form $\langle a \rangle \times F$, Remark \ref{RemFreeGroupExplicitly} describes a basis of the free group $F$. It was showed in \cite[Lemma 4.5, Remark 4.6]{martin2022acylindrical} that the centraliser $C(a)$ splits as a direct product $\langle a \rangle \times F'$ where $F'$ is also a free group. Moreover, there is an explicit basis for $F'$, which is the same as the one described in Remark \ref{RemFreeGroupExplicitly} but for $\sigma \coloneqq id$. While a basis for $F$ cannot always be extracted from a basis for $F'$, a basis for $F$ can always be completed in a basis for $F'$. This holds true because:
    \medskip
    
    \noindent $\bullet$ A basis of $\pi_1(\Gamma_{a, odd}^{\sigma})$ can always be completed to a basis of $\pi_1(\Gamma_{a, odd}^{id})$ given that $\Gamma_{a, odd}^{\sigma}$ is a subgraph of $\Gamma_{a, odd}^{id}$ ;
    \medskip

    \noindent $\bullet$ The set of vertices of the form $v^{st}$ in $\Gamma_{a, odd}^{\sigma}$ is a subset of the set of vertices of the form $v^{st}$ in $\Gamma_{a, odd}^{id}$. 
\end{remark}

\subsubsection{Elliptic automorphisms involving the global inversion}

This section completes the previous section concerning the automorphisms acting elliptically on $X_{\Gamma}$. In other words, here we compute $\Fix(\gamma)$ when $\gamma$ involves $\iota$. In regards to Remark \ref{RemarkCases}, we can assume that $\gamma$ is one of $\sigma \iota$ or $\varphi_{a} \sigma \iota$.

\begin{lemma}\label{LemmaFixOfIotaSigma}
    A vertex $g v_S$ belongs to $X_{\Gamma}^{\sigma \iota}$ if and only if $v_S \in X_{\Gamma}^{\sigma \iota} \cap K_{\Gamma}$ and $g$ is a product of elements of the following form:
        \begin{itemize}
            \item Elements in the $\mathbb{Z}$-subgroup described in Theorem \ref{dihedralFixPointsEven}.(7), where $A_{2n}$ is some even parabolic dihedral $A_{st}$ where $s, t \in T_{\sigma}$ are $\sigma$-transposed.
        \end{itemize}
\end{lemma}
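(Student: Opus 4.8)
The plan is to mimic the inductive argument used to prove Lemma~\ref{LemmaFixOfSigma}, adapting it to the automorphism $\psi = \sigma\iota$. First I would recall from Lemma~\ref{LemmaBaseCases} that $v_\emptyset \in X_\Gamma^{\sigma\iota}$ (since $\iota$ acts trivially on $K_\Gamma$ and $v_\emptyset$ is $\sigma$-fixed), so the two sets — on one side $X_\Gamma^{\sigma\iota}$, on the other the subcomplex $Y$ spanned by vertices $gv_S$ with $v_S \in X_\Gamma^{\sigma\iota}\cap K_\Gamma$ and $g$ a product of the stated generators — both contain $v_\emptyset$. Then I would set up the same combinatorial induction on $d^C(v_\emptyset, -)$: assuming a vertex $gv_S$ lies in both balls $B_Y(v_\emptyset,n) = B_{X_\Gamma^{\sigma\iota}}(v_\emptyset,n)$, with $g$ chosen to be a product of the generators from Theorem~\ref{dihedralFixPointsEven}.(7), one checks that $\sigma\iota(g) = g$ (this is essentially the defining property of those generators, which are explicitly words $w_{st} = (st)^{n}(ts)^{-n}$ or $t(st)^n(ts)^{-n}s^{-1}$ and clearly sit in $\Fix(\sigma\iota)$), and then analyses which neighbours $ghv_{S'}$ of $gv_S$ lie in $X_\Gamma^{\sigma\iota}$.

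The neighbour analysis splits by the type of $v_S$, exactly as in Lemma~\ref{LemmaFixOfSigma}. For $gv_\emptyset$ of type $0$: by Lemma~\ref{LemmaUsefulEquation}, $gv_s \in X_\Gamma^{\sigma\iota}$ iff $\sigma(s) = s$, and $gv_{st} \in X_\Gamma^{\sigma\iota}$ iff $\sigma(\{s,t\}) = \{s,t\}$, so here $h = 1$ and $v_{S'}$ lands in $X_\Gamma^{\sigma\iota}\cap K_\Gamma$ — but crucially there is no new group element being added, consistent with the claim that the only nontrivial generators come from even transposed dihedrals. For $gv_s$ of type $1$ (so $s\in F_\sigma$): the height obstruction from Lemma~\ref{LemmaEss1Skeleton}-type reasoning shows $gs^n v_\emptyset \notin X_\Gamma^{\sigma\iota}$ unless $n=0$ — indeed, $\sigma\iota$ reverses height, while $s^n$ has height $n$, so $\sigma\iota(gs^n) = gs^n$ forces comparison of heights to fail for $n\ne 0$; this is exactly the phenomenon that kills the "$\langle s\rangle$ with $s\in F_\sigma$" contributions that appeared in the $\sigma$-case. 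For $gv_{st}$ of type $2$: the neighbours $ghv_\emptyset$ with $h\in A_{st}$ lie in $X_\Gamma^{\sigma\iota}$ iff $\sigma\iota(h) = h$, and one invokes the dihedral computations — Theorem~\ref{dihedralFixPointsEven}.(7) when $s,t$ are $\sigma$-transposed and $m_{st}$ is even (giving $h \in \langle w_{st}\rangle$), Theorem~\ref{dihedralFixPointsOdd}.(3) or the relevant even case when $s = t$-type configurations arise — to conclude that the only surviving group elements are powers of the $w_{st}$.

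The main obstacle I anticipate is the careful bookkeeping in the type-$2$ case, specifically making sure that $\Fix(\sigma\iota|_{A_{st}})$ is correctly identified in every subcase: when $s,t\in F_\sigma$ the restriction of $\sigma\iota$ to $A_{st}$ is just the global inversion $\iota$ on a dihedral group, whose fixed subgroup is trivial by Theorem~\ref{dihedralFixPointsOdd}.(3)/Theorem~\ref{dihedralFixPointsEven}.(3); when $s,t$ are $\sigma$-transposed one is computing $\Fix(\varphi_g\sigma\iota)$ inside the dihedral, and only the even-coefficient case yields a nontrivial ($\cong\mathbb Z$) answer per Theorem~\ref{dihedralFixPointsEven}.(7). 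One must also verify the odd-coefficient transposed case contributes nothing — this presumably follows from Theorem~\ref{dihedralFixPointsEven}.(8) or an analogous odd statement showing the fixed subgroup is trivial or absorbed into a lower type. Beyond that, the argument is a routine propagation: once the local neighbour conditions match up with "$v_{S'}\in X_\Gamma^{\sigma\iota}\cap K_\Gamma$ and $h$ is a generator of the claimed type", connectedness and convexity of $X_\Gamma^{\sigma\iota}$ (Lemma~\ref{LemmaMinsetConvex}) close the induction, giving $Y = X_\Gamma^{\sigma\iota}$.
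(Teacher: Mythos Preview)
Your approach is essentially identical to the paper's: induction on combinatorial distance from $v_\emptyset$, case-split by type, and invoke the dihedral fixed-subgroup computations at type~$2$. The paper's proof is exactly this, with the same height argument killing the $s^n$-contributions at type~$1$ and the same dihedral results doing the work at type~$2$.

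Two small corrections to your bookkeeping. First, the odd-coefficient $\sigma$-transposed case is handled by Theorem~\ref{dihedralFixPointsOdd}.(3), not Theorem~\ref{dihedralFixPointsEven}.(8): in an odd dihedral the swap $\sigma$ is inner (equal to $\varphi_{\Delta_{st}}$), so $\sigma\iota|_{A_{st}} = \varphi_{\Delta_{st}}\iota$ is a finite-order automorphism of the form $\varphi_g\iota$, whose fixed subgroup is trivial. Second, at a type~$2$ vertex $gv_{st}$ you should also check the type~$1$ neighbours $ghv_s$: when $s,t\in F_\sigma$ one finds $h\in\langle s\rangle$, so $ghv_s = gv_s$ and nothing new is produced; when $s,t$ are $\sigma$-transposed the condition $\sigma(s)=s$ fails outright, so these neighbours are never fixed. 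With these two points filled in, your outline is complete.
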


\begin{proof}
    Our strategy closely follows that of Lemma \ref{LemmaFixOfSigma}, to which we refer the reader. We call $Y$ the subcomplex of $X_{\Gamma}$ spanned by the vertices of the form $g v_S$ where $v_S \in X_{\Gamma}^{\sigma \iota} \cap K_{\Gamma}$ and $g$ is a product of elements of the form described in the lemma. We proceed by induction to show that the combinatorial balls $B_Y(v_{\emptyset}, n)$ and $B_{X_{\Gamma}^{\sigma \iota}}(v_{\emptyset}, n)$ are equal. Let $g v_S$ be a vertex that belongs to both balls of radius $n$. Note that we can pick $g$ such that it is a product of elements of the form described in the lemma, and thus $(\sigma \iota)(g) = g$ by definition (the elements described in the lemma satisfy that equation, by Theorem \ref{dihedralFixPointsEven}). We want to show that a neighbour $g h v_{S'}$ of $g v_S$ belongs to $X_{\Gamma}^{\sigma \iota}$ if and only if $v_{S'} \in X_{\Gamma}^{\sigma \iota} \cap K_{\Gamma}$ and $h$ is of the form described in the lemma. Once again, the former condition can be understood easily using Lemma \ref{LemmaBaseCases}.
    \medskip

    \noindent \underline{Case 1: $g v_S = g v_{\emptyset}$ has type 0:} The neighbours of $g v_{\emptyset}$ have the form $g v_s$ or $g v_{st}$. Then:
    \begin{itemize}
        \item $g v_s \in X_{\Gamma}^{\sigma \iota} \overset{\ref{LemmaUsefulEquation}} \Longleftrightarrow \sigma(s) = s \Longleftrightarrow s \in F_{\sigma}$ ;
        \item $g v_{st} \in X_{\Gamma}^{\sigma \iota} \overset{\ref{LemmaUsefulEquation}} \Longleftrightarrow \sigma(\{s, t\}) = \{s, t\} \Longleftrightarrow$ either $s, t \in F_{\sigma}$ or $s, t \in T_{\sigma}$ are $\sigma$-transposed.
    \end{itemize}
    Here $h = 1$ and $v_{S'} \in \{v_s, v_{st}\}$ belongs to $X_{\Gamma}^{\sigma \iota} \cap K_{\Gamma}$.
    \medskip

    \noindent \underline{Case 2: $g v_S = g v_s$ has type 1:} By Lemma \ref{LemmaUsefulEquation}, we have $s \in F_{\sigma}$. The neighbours of $g v_s$ have the form $g s^n v_{\emptyset}$ or $g s^n v_{st} = g v_{st}$ for some $n \in \mathbb{Z}$, where $m_{st} < \infty$. Then:
    \begin{itemize}
        \item $g s^n v_{\emptyset} \in X_{\Gamma}^{\sigma \iota} \overset{\ref{LemmaUsefulEquation}} \Longleftrightarrow (\sigma \iota)(s^n) = s^n \Longleftrightarrow n = 0$ ;
        \item $g v_{st} \in X_{\Gamma}^{\sigma \iota} \overset{\ref{LemmaUsefulEquation}} \Longleftrightarrow \sigma(\{s, t\}) = \{s, t\} \overset{s \in F_{\sigma}} \Longleftrightarrow$ $s, t \in F_{\sigma}$.
    \end{itemize}
    In this case $h = 1$ and $v_{S'} \in \{v_{\emptyset}, v_{st}\}$ belongs to $X_{\Gamma}^{\sigma \iota} \cap K_{\Gamma}$.
    \medskip

    \noindent \underline{Case 3: $g v_S = g v_{st}$ has type 2:} By Lemma \ref{LemmaUsefulEquation}, either $s, t \in F_{\sigma}$ or $s, t \in T_{\sigma}$ are $\sigma$-transposed. The neighbours of $g v_{st}$ have the form $gh v_{\emptyset}$ or $gh v_s$ (without loss of generality). Then if $s, t \in F_{\sigma}$, we have:
    \begin{itemize}
        \item $gh v_{\emptyset} \in X_{\Gamma}^{\sigma \iota} \overset{\ref{LemmaUsefulEquation}} \Longleftrightarrow (\sigma \iota)(h) = h \Longleftrightarrow \iota(h) = h \Longleftrightarrow h = 1$ by Theorem \ref{dihedralFixPointsOdd}.(3) and Theorem \ref{dihedralFixPointsEven}.(3) ;
        \item $gh v_s \in X_{\Gamma}^{\sigma \iota} \overset{\ref{LemmaUsefulEquation}} \Longleftrightarrow (\sigma \iota)(h) \in h \langle s \rangle \Longleftrightarrow \iota(h) \in \langle s \rangle$ $\Longleftrightarrow h \in \langle s \rangle$.
    \end{itemize}
    In the second point, note that all the possible $h \in \langle s \rangle$ give a common type $1$ vertex, as $g h v_s = g v_s$ in that case. Thus we can freely pick $h = 1$. That means that in both cases, we have $h = 1$ and $v_{S'} \in \{v_{\emptyset}, v_s, v_t\}$ belongs to $X_{\Gamma}^{\sigma \iota} \cap K_{\Gamma}$.
    \medskip

    \noindent Now if $s, t\in T_{\sigma}$ are $\sigma$-transposed, we have:
    \begin{itemize}
        \item $gh v_{\emptyset} \in X_{\Gamma}^{\sigma \iota} \overset{\ref{LemmaUsefulEquation}} \Longleftrightarrow (\sigma \iota)(h) = h \overset{(*)} \Longleftrightarrow \begin{cases} 1 \text{ if } m_{st} \text{ is odd}  \\ w_{st}^k \text{ if } m_{st} \text{ is even}  \end{cases}$ ;
        \item $gh v_s \in X_{\Gamma}^{\sigma \iota} \overset{\ref{LemmaUsefulEquation}} \Longleftrightarrow (\sigma \iota)(h) \in h \langle s \rangle$ and $\sigma(s) = s$, giving a contradiction as $s \notin F_{\sigma}$ ;
    \end{itemize}
    where $(*)$ follows from Theorem \ref{dihedralFixPointsOdd}.(3) if $m_{st}$ is odd, and from Theorem \ref{dihedralFixPointsEven}.(7) if $m_{st}$ is even, and where $w_{st}^k$ is any element of the $\mathbb{Z}$-subgroup described in Theorem \ref{dihedralFixPointsEven}.(7). In this we have $h = w_{st}^k$ and $v_{S'} = v_{\emptyset}$ belongs to $X_{\Gamma}^{\sigma \iota} \cap K_{\Gamma}$.
\end{proof}

\begin{corollary}\label{iotaSigmaFixedPoints}
    The subgroup $\Fix(\sigma \iota)$ is the free group whose rank equals the number of $\sigma$-transposed pairs $s, t \in T_{\sigma}$ with $m_{st}$ even, where each pair gives as generator the element that generates the $\mathbb{Z}$-subgroup described in Theorem \ref{dihedralFixPointsEven}.(7).
\end{corollary}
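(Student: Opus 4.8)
The plan is to mirror exactly the structure of the proof of Corollary \ref{CoroFixedPointsOfSigma}, but using Lemma \ref{LemmaFixOfIotaSigma} in place of Lemma \ref{LemmaFixOfSigma}. First I would let $H$ be the subgroup generated by the elements described in Lemma \ref{LemmaFixOfIotaSigma}, i.e.\ one generator $w_{st}$ (generating the $\mathbb{Z}$-subgroup of Theorem \ref{dihedralFixPointsEven}.(7)) for each $\sigma$-transposed pair $s,t \in T_\sigma$ with $m_{st}$ even. The containment $H \leq \Fix(\sigma\iota)$ is immediate since every such generator is fixed by $\sigma\iota$ (this is the defining property from Theorem \ref{dihedralFixPointsEven}.(7)). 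For the reverse containment, use Corollary \ref{fixAction} to get $\Fix(\sigma\iota) \leq \Stab(X_\Gamma^{\sigma\iota})$, note $v_\emptyset \in X_\Gamma^{\sigma\iota}$ by Lemma \ref{LemmaBaseCases}, so any $g \in \Stab(X_\Gamma^{\sigma\iota})$ sends $v_\emptyset$ into $X_\Gamma^{\sigma\iota}$, and Lemma \ref{LemmaFixOfIotaSigma} then forces $g \in H$. This gives $\Fix(\sigma\iota) = H$.

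Next I would identify the isomorphism type of $H$ via Bass--Serre theory, exactly as in Corollary \ref{CoroFixedPointsOfSigma}. By Corollary \ref{fixAction}, $\Fix(\sigma\iota)$ acts on $X_\Gamma^{\sigma\iota}$, and $X_\Gamma^{\sigma\iota} \cap K_\Gamma$ is a fundamental domain for this action (uniqueness of translates comes from $K_\Gamma$ being a fundamental domain for $A_\Gamma \curvearrowright X_\Gamma$, and every point of $X_\Gamma^{\sigma\iota}$ is an $H$-translate of one in $X_\Gamma^{\sigma\iota}\cap K_\Gamma$ by Lemma \ref{LemmaFixOfIotaSigma}). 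I would then read off the graph-of-groups decomposition from the description of $X_\Gamma^{\sigma\iota} \cap K_\Gamma$ in Lemma \ref{LemmaBaseCases}: the part spanned by the vertices $v_\emptyset$, $v_s$ ($s \in F_\sigma$), and $v_{st}$ ($s,t \in F_\sigma$) has \emph{trivial} vertex and edge stabilisers under the action of $\Fix(\sigma\iota)$, because by the Case 3 analysis of Lemma \ref{LemmaFixOfIotaSigma} (specifically Theorem \ref{dihedralFixPointsOdd}.(3) and Theorem \ref{dihedralFixPointsEven}.(3)) the automorphism $\sigma\iota$ restricted to the relevant dihedral parabolic $A_{st}$ (with $s,t \in F_\sigma$) has trivial fixed subgroup; only the vertices $v_{st}$ with $s,t \in T_\sigma$ $\sigma$-transposed and $m_{st}$ even contribute a nontrivial vertex group, namely a copy of $\mathbb{Z}$ generated by $w_{st}$ (odd $m_{st}$ contributes trivially by Theorem \ref{dihedralFixPointsOdd}.(3)). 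Since all edge stabilisers are trivial, the Bass--Serre decomposition expresses $\Fix(\sigma\iota)$ as a free product of these $\mathbb{Z}$ factors with a free group coming from the fundamental group of the underlying graph; collapsing and simplifying exactly as in the proof of Corollary \ref{CoroFixedPointsOfSigma} shows the whole thing is free, and its rank is the number of contributing vertices, i.e.\ the number of $\sigma$-transposed pairs $s,t \in T_\sigma$ with $m_{st}$ even.

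The main subtlety — and the step I would be most careful about — is verifying that the vertices of the form $v_{st}$ with $s,t \in F_\sigma$ (and the vertices $v_s$, and $v_\emptyset$) genuinely have trivial $\Fix(\sigma\iota)$-stabiliser, not merely that their preimages consist of fixed points. This is where one must invoke that $\iota$ acts nontrivially enough: the key computation is that $\sigma\iota$ restricted to $A_{st}$ with $s,t \in F_\sigma$ is just the global inversion $\iota$ on $A_{st}$ (since $\sigma$ fixes $s$ and $t$), whose fixed subgroup is trivial by Theorem \ref{dihedralFixPointsOdd}.(3) / Theorem \ref{dihedralFixPointsEven}.(3) — so these vertices cannot carry any stabiliser, and likewise the type-$1$ vertices $v_s$ have $\langle s \rangle \cap \Fix(\sigma\iota) = \{1\}$ since $\iota(s^n) = s^{-n} \neq s^n$ for $n \neq 0$. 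Once this is in hand, the graph-of-groups argument is entirely parallel to Corollary \ref{CoroFixedPointsOfSigma} and the conclusion follows. I would keep the write-up short, essentially just citing the parallel with Corollary \ref{CoroFixedPointsOfSigma} and emphasising the one point where the behaviour differs (all $F_\sigma$-type vertices become trivial, whereas for $\sigma$ they carried parabolic stabilisers).
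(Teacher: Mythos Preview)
Your proposal is correct and follows exactly the approach the paper intends: the paper's own proof is the single sentence ``The proof is tantamount to that of Corollary \ref{CoroFixedPointsOfSigma},'' and you have faithfully unpacked that parallel, correctly identifying the one substantive change (all $F_\sigma$-type vertices, and the odd transposed vertices, now carry trivial $\Fix(\sigma\iota)$-stabiliser, leaving only the even transposed vertices to contribute a $\mathbb{Z}$ factor each).
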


\begin{proof}
    The proof is tantamount to that of Corollary \ref{CoroFixedPointsOfSigma}.
\end{proof}

Now, in line with Remark \ref{RemarkCases}, we are left to consider automorphisms of the form $\varphi_a \sigma\iota$ where $a \in F_{\sigma}$. Recall that by Lemma \ref{LemmaEss1Skeleton}, $X_{\Gamma}^{\varphi_a\sigma \iota}$ is a simplicial tree in $X_{\Gamma}$. As in Corollary \ref{CoroStabSigmaa}, we can understand $\Fix(\varphi_a \sigma \iota)$ as the fundamental group of the graph of groups arising from its action on $X_\Gamma^{\varphi_a \sigma \iota}$.

The first lemma is a useful characterisation of type 1 vertices fixed by $\varphi_a \iota$.

\begin{lemma}\label{stabilisedType1Representative}
    Suppose $s \in V(\Gamma)$. Then the vertex $gv_s$ is fixed by $\varphi_a \iota$ if and only if there is $g' \in A_\Gamma$ such that $g'v_s = gv_s$ and $g'^{-1}a\iota(g') = s$.
\end{lemma}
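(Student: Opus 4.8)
The plan is to use the explicit description of the action from Lemma \ref{LemmaUsefulEquation}, which tells us that $g v_s \in X_\Gamma^{\varphi_a \iota}$ if and only if $a \iota(g) \in g A_s = g\langle s\rangle$ (the condition $\sigma(S) = S$ is automatic here since $\sigma$ is trivial on the singleton $\{s\}$). So the hypothesis that $gv_s$ is fixed is equivalent to the existence of some $k \in \mathbb{Z}$ with $a\iota(g) = gs^k$. The content of the lemma is that one can choose the coset representative $g'$ of the vertex $gv_s$ so that this exponent $k$ becomes exactly $1$.

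First I would prove the easy direction: if such a $g'$ exists with $g'^{-1} a \iota(g') = s$, then $a\iota(g') = g's \in g'\langle s\rangle$, so by Lemma \ref{LemmaUsefulEquation} the vertex $g'v_s = gv_s$ is fixed by $\varphi_a\iota$. For the forward direction, start from $a\iota(g) = gs^k$ and look for a replacement $g' = gs^j$ (which satisfies $g'v_s = gs^j v_s = gv_s$ since $s^j \in \langle s\rangle = \Stab(v_s)$ up to the standard-tree identification). Compute $g'^{-1} a \iota(g') = s^{-j} g^{-1} a \iota(g) \iota(s^j) = s^{-j} g^{-1} (g s^k) s^{-j} = s^{k - 2j}$, using $\iota(s^j) = s^{-j}$. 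Thus replacing $g$ by $gs^j$ changes the exponent $k$ to $k - 2j$; choosing $j$ appropriately we can reach either $k = 0$ or $k = 1$ depending on the parity of $k$.

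The main obstacle is therefore the parity issue: the above only lets us normalise $k$ to be $0$ or $1$, and we must rule out $k = 0$. If $k = 0$ then $a\iota(g') = g'$, i.e. $g'^{-1} a \iota(g') = 1$; applying the height homomorphism $ht$ (Definition \ref{definitionHeight}) and using that $\iota$ reverses height gives $ht(a) - ht(g') + ht(g') = ht(1)$... which is not quite a contradiction as written, so the argument needs care: rather, from $a \iota(g') = g'$ we get $a = g' \iota(g')^{-1}$, and comparing heights yields $1 = ht(a) = ht(g') - ht(\iota(g')) = ht(g') + ht(g') = 2\,ht(g')$, which is impossible since $ht(g') \in \mathbb{Z}$. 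Hence the case $k = 0$ never occurs when $gv_s$ is fixed, so we must land in the case $k$ odd, and the normalisation to $k = 1$ goes through. I expect this height/parity computation — and being careful that the substitution $g \mapsto gs^j$ genuinely fixes the vertex and not merely its orbit — to be the only delicate points; everything else is a direct unwinding of Lemma \ref{LemmaUsefulEquation}.
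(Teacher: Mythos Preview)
Your proposal is correct and takes essentially the same approach as the paper: both use Lemma \ref{LemmaUsefulEquation} to reduce to $g^{-1}a\iota(g) = s^k$, then use the height homomorphism together with $ht\circ\iota = -ht$ to force $k$ to be odd, and finally replace $g$ by $gs^j$ to normalise the exponent to $1$. The only cosmetic difference is that the paper computes $ht(g^{-1}a\iota(g)) = 1 - 2\,ht(g)$ up front to see oddness immediately, whereas you first normalise to $k\in\{0,1\}$ and then rule out $k=0$ by the same height parity argument.
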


\begin{proof}
    Suppose $g'^{-1}a\iota(g') = s$. Then, by Lemma \ref{LemmaUsefulEquation}, $g'v_s \in X_\Gamma^{\varphi_a \iota}$. Conversely suppose that $gv_s$ is fixed by $\varphi_a \iota$. Then by Lemma \ref{LemmaUsefulEquation}, $g^{-1}a\iota(g) \in \langle s \rangle$. Notice that $ht(g^{-1}a\iota(g)) = -ht(g) + ht(a) - ht(g) = 1 - 2ht(g)$, which is odd. Thus $g^{-1}a\iota(g) = s^{2k+1}$ for some $k \in \mathbb{Z}$. Then $gs^kv_s = gv_s$, and $(gs^k)^{-1}a\iota(gs^k) = s^{-k}g^{-1}a\iota(g)s^{-k} = s^{-k} s^{2k+1} s^{-k} = s$, so define $g' \coloneqq gs^k$.
\end{proof}

Armed with the lemma, we are ready to investigate the structure of $X_{\Gamma}^{\varphi_a \sigma\iota}$. For the following, recall that if $gv_s$ is type 1, then all its neighbours in $X_{\Gamma}^{(1)-ess}$ are of the form $gv_{st}$, where $t \in V(\Gamma) \backslash \{s\}$ and $m_{st} < \infty$.

\begin{lemma}\label{lemmaSigmaIotaaTree}
    Suppose that $gv_s \in X_{\Gamma}^{\varphi_a \sigma\iota}$ is a type $1$ vertex. Then $\sigma(g) = g$ and $s \in F_{\sigma}$. Furthermore for each $t \in V(\Gamma) \backslash \{s\}$ such that $m_{st} < \infty$, $gv_{st} \in X_{\Gamma}^{\varphi_a \sigma\iota}$ if and only if $t \in F_{\sigma}$. That is, each adjacent type 2 vertex (in $X_\Gamma$) is fixed by $\varphi_a \sigma\iota$ if and only if it is fixed by $\sigma$.

    Suppose $gv_{st} \in X_{\Gamma}^{\varphi_a \sigma\iota}$ is a type 2 vertex. Then $\sigma(g) = g$ and $s, t \in F_{\sigma}$. Furthermore $gv_{st}$ has one type 1 neighbour in $X_{\Gamma}^{\varphi_a \sigma\iota}$ if $m_{st}$ is even, and two type 1 neighbours in $X_{\Gamma}^{\varphi_a \sigma\iota}$ if $m_{st}$ is odd. In the odd case, the two type 1 neighbours are in different $A_\Gamma$-orbits.
\end{lemma}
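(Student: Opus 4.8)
The plan is to use the "useful equation" of Lemma \ref{LemmaUsefulEquation} throughout, together with the height argument underlying Lemma \ref{LemmaEss1Skeleton}, and to reduce the type 2 statement to a computation inside a single dihedral parabolic $A_{st}$ using Theorem \ref{dihedralFixPointsOdd} and Theorem \ref{dihedralFixPointsEven}.

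\emph{Type 1 vertex.} Suppose $gv_s \in X_\Gamma^{\varphi_a\sigma\iota}$. By Lemma \ref{LemmaUsefulEquation} we have $\sigma(s)=s$, i.e.\ $s\in F_\sigma$, and $a\sigma\iota(g)\in g\langle s\rangle$. By Lemma \ref{stabilisedType1Representative} (applied with $\sigma$-equivariance in mind — the proof there goes through verbatim replacing $\iota$ by $\sigma\iota$ since $\sigma$ preserves height) we may replace $g$ by a coset representative $g'$ with $g'v_s=gv_s$ and $g'^{-1}a\sigma\iota(g')=s$; I would simply note that the argument of Lemma \ref{stabilisedType1Representative} applies, using that $ht(g^{-1}a\sigma\iota(g))=1-2ht(g)$ is odd. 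To get $\sigma(g)=g$ itself I would instead argue that $X_\Gamma^{\varphi_a\sigma\iota}\subseteq X_\Gamma^{\varphi_a\sigma}$: indeed $(\varphi_a\sigma\iota)^2=\varphi_{a\sigma\iota(a)}(\sigma\iota)^2=\varphi_{a\cdot a}\sigma^2=\varphi_{a^2}$ (using $\sigma(a)=a$, $\iota(a)=a^{-1}$, and $\sigma,\iota$ commuting), so any fixed vertex lies in $X_\Gamma^{\varphi_{a^2}}=X_\Gamma^a$; combined with Lemma \ref{LemmaFixOfSigmaa}/Remark \ref{RemFixOfSigmaa} this forces the fixed type 1 and type 2 vertices to lie in $X_\Gamma^\sigma$, whence $\sigma(g)=g$ (after choosing the representative $g$ so that $gv_s\in K_\Gamma$-translate lies in the standard tree, as in the proof of Lemma \ref{LemmaFixOfSigmaa}). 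Then for the neighbour $gv_{st}$: by Lemma \ref{LemmaUsefulEquation}, $gv_{st}\in X_\Gamma^{\varphi_a\sigma\iota}$ iff $a\sigma\iota(g)\in gA_{st}$ and $\sigma(\{s,t\})=\{s,t\}$. Since $\sigma(g)=g$ and $a\sigma\iota(g)=gs^k$ for suitable $k$ from the type 1 condition, the first condition is automatic, so the only constraint is $\sigma(\{s,t\})=\{s,t\}$; since $s\in F_\sigma$, this is equivalent to $t\in F_\sigma$. This is exactly the claim that $gv_{st}$ is fixed by $\varphi_a\sigma\iota$ iff fixed by $\sigma$ (the latter being $\sigma(\{s,t\})=\{s,t\}$ by Lemma \ref{LemmaUsefulEquation}).

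\emph{Type 2 vertex.} Suppose $gv_{st}\in X_\Gamma^{\varphi_a\sigma\iota}$. By Lemma \ref{LemmaUsefulEquation}, $\sigma(\{s,t\})=\{s,t\}$ and $a\sigma\iota(g)\in gA_{st}$; the inclusion $X_\Gamma^{\varphi_a\sigma\iota}\subseteq X_\Gamma^\sigma$ from the paragraph above forces $\sigma(g)=g$ and $s,t\in F_\sigma$ (a $\sigma$-transposed pair would not give a $\sigma$-fixed vertex with a $\sigma$-fixed group element, by Lemma \ref{LemmaFixOfIotaSigma}). Now I want to count type 1 neighbours. The neighbours of $gv_{st}$ of type 1 are of the form $ghv_s$ and $ghv_t$ with $h\in A_{st}$, and modulo the local group $\langle s\rangle$ (resp.\ $\langle t\rangle$) these are controlled by the $A_{st}$-action. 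The key reduction: up to conjugating by $g$ we may assume $g=1$ (using Lemma \ref{lemmaIsogredience}; conjugation by $g$ with $\sigma(g)=g$ sends $\varphi_a\sigma\iota$ to $\varphi_{g^{-1}a\sigma\iota(g)}\sigma\iota$ which restricts to an automorphism of $A_{st}$ fixing $v_{st}$). Then $v_{st}\in X_\Gamma^{\varphi_c\sigma\iota}$ where $c=g^{-1}a\sigma\iota(g)\in A_{st}$ lies in the dihedral parabolic, and counting fixed type 1 neighbours of $v_{st}$ amounts to counting the vertices adjacent to $v_{st}$ in the Deligne complex of $A_{st}$ fixed by this (inducible, inversion-and-swap type) automorphism of $A_{st}$. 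By Theorem \ref{dihedralFixPointsEven} (and the structure of $X_{st}$, which is a tree with $v_{st}$ at the centre), such an automorphism fixes either one or two edges at $v_{st}$; the even/odd dichotomy is precisely the point where, for $m_{st}$ odd, $s$ and $t$ are conjugate in $A_{st}$ (so the two fixed type 1 directions lie in different $A_\Gamma$-orbits — one orbit is $v_s$, the other $v_t$, and these are distinct orbits since $\sigma$ fixes both $s$ and $t$), while for $m_{st}$ even $s$ and $t$ are not conjugate and exactly one direction survives. I would make this precise by analysing the restriction $\psi|_{A_{st}}$: $\psi$ restricted to $A_{st}$ is, up to the inner part, a composition of the global inversion on $A_{st}$ with (possibly trivial) graph swap; since $s,t\in F_\sigma$ the graph swap on $A_{st}$ is trivial, so $\psi|_{A_{st}}$ is isogredient to a composition $\varphi_w\iota_{st}$ with $\iota_{st}$ the global inversion of $A_{st}$, whose fixed data on $X_{st}$ is governed by Theorem \ref{dihedralFixPointsOdd}.(3)–(4) and Theorem \ref{dihedralFixPointsEven}.(3)–(4), and in particular $X_{st}^{\varphi_w\iota_{st}}$ is a single point which is a vertex of type $1$ or the midpoint of an edge — translating this back gives the count.

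\emph{Main obstacle.} The delicate point is the type 2 neighbour count, specifically pinning down that for $m_{st}$ odd there are exactly two fixed type 1 directions in \emph{different} $A_\Gamma$-orbits, and for $m_{st}$ even exactly one. This requires carefully tracking how the global inversion (composed with an inner automorphism of $A_{st}$ fixing $v_{st}$) acts on the link of $v_{st}$ in $X_{st}$ — a circle-like tree with two $A_{st}$-orbits of vertices (the $v_s$-orbit and the $v_t$-orbit, which coincide iff $m_{st}$ is odd) — and invoking the precise fixed-subgroup statements of Theorem \ref{dihedralFixPointsEven}, rather than a purely formal argument. The rest (the claims $\sigma(g)=g$, $s,t\in F_\sigma$, and the "fixed by $\varphi_a\sigma\iota$ iff fixed by $\sigma$" equivalence for type 2 neighbours of a type 1 vertex) follows cleanly from Lemma \ref{LemmaUsefulEquation}, the height/parity argument of Lemma \ref{LemmaEss1Skeleton}, and the inclusion $X_\Gamma^{\varphi_a\sigma\iota}\subseteq X_\Gamma^\sigma$.
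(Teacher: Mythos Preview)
Your computation $(\varphi_a\sigma\iota)^2 = \varphi_{a\sigma\iota(a)}(\sigma\iota)^2 = \varphi_{a\cdot a}\sigma^2 = \varphi_{a^2}$ is wrong: since $\sigma(a)=a$ and $\iota(a)=a^{-1}$, we have $\sigma\iota(a)=a^{-1}$, so $a\cdot\sigma\iota(a)=1$ and $(\varphi_a\sigma\iota)^2=\sigma^2$ (the paper itself records this in the proof of Lemma \ref{sigmaiotaafreecofinite}). This collapses your proposed inclusion $X_\Gamma^{\varphi_a\sigma\iota}\subseteq X_\Gamma^{\varphi_{a^2}}=X_\Gamma^a$, and with it your route to $\sigma(g)=g$: from $(\varphi_a\sigma\iota)^2=\sigma^2$ you only get $X_\Gamma^{\varphi_a\sigma\iota}\subseteq X_\Gamma^{\sigma^2}$, which says nothing about $\sigma$ itself. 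Moreover, even had the inclusion into $X_\Gamma^a$ held, invoking Lemma \ref{LemmaFixOfSigmaa}/Remark \ref{RemFixOfSigmaa} would be circular: those results describe $X_\Gamma^{\varphi_{a^k}\sigma}$, not arbitrary vertices of $X_\Gamma^a$, so they do not force $\sigma(g)=g$ for a vertex you only know lies in $X_\Gamma^a$. The paper handles this by an induction on combinatorial distance to $v_a$, proving simultaneously that every fixed vertex $gv_S$ has $\sigma(g)=g$ and $S\subseteq F_\sigma$; this inductive bookkeeping is exactly what your shortcut tries to avoid, and it is not avoidable.

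Your type 2 analysis also has gaps. The parenthetical claim that the $v_s$-orbit and $v_t$-orbit in the link of $v_{st}$ ``coincide iff $m_{st}$ is odd'' is false: $v_s$ and $v_t$ are always in distinct $A_\Gamma$-orbits (conjugacy of $s$ and $t$ is irrelevant to left-coset orbits). More importantly, Theorems \ref{dihedralFixPointsOdd} and \ref{dihedralFixPointsEven} concern the Bass--Serre tree of the dihedral, not the link of $v_{st}$ in $X_\Gamma$, and the translation you gesture at is not immediate. The paper instead argues directly: the triviality of $\Fix(\varphi_{g^{-1}a\iota(g)}\iota|_{A_{st}})$ gives ``at most one fixed neighbour per orbit'' (this part your sketch does capture), but for existence in the odd case the paper exhibits an explicit $h\in A_{st}$ with $(gh)^{-1}a\iota(gh)=t$, and for non-existence in the even case it uses the partial height $ht_s\colon A_{st}\to\mathbb{Z}$ (available only when $m_{st}$ is even) to derive a parity contradiction. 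Your reduction to the dihedral theorems does not recover either of these steps.
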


\begin{proof}
     We will prove inductively on the combinatorial distance to $v_a$ that if $gv_S$ is fixed by $\varphi_a\sigma\iota$, then $\sigma(g) = g$ and $\sigma$ fixes each $s \in S$. For brevity, we will say such vertices are respected by $\sigma$. Note that this holds at $v_a$, which belongs to $X_{\Gamma}^{\varphi_a \sigma \iota}$ by Lemma \ref{LemmaBaseCases}.

     First we consider the statements about the neighbours of type 1 vertices, which we suppose inductively are respected by $\sigma$, so $\sigma(g) = g$ and $s \in F_{\sigma}$. By Lemma \ref{LemmaUsefulEquation}, $a\sigma\iota(g) \in gA_s$. Note that $\Aut(A_{\Gamma})$ acts by type-preserving isometries (see Remark \ref{remTypePreserving}), so the type 2 vertices adjacent to $gv_s$ are fixed as a set. These are exactly the vertices $gv_{st}$, for each $t \in V(\Gamma) \backslash \{s\}$ such that $m_{st} < \infty$. Take $gv_{st}$ an arbitrary such vertex. By Lemma \ref{LemmaUsefulEquation}, given that $\sigma(s) = s$, $gv_{st} \in X^{\varphi_a\sigma\iota}$ if and only $\sigma(t) = t$ and $a\sigma\iota(g) \in gA_{st}$, which occurs if and only if $\sigma(t) = t$, as required, since $a\sigma\iota(g) \in gA_s \leq gA_{st}$. Moreover $\sigma(g) = g$ by assumption, so $gv_{st}$ is respected by $\sigma$.
     \medskip
    
    Now we move to the statement about the neighbourhood of type 2 vertices. Consider $gv_{st}$ which we assume by induction is respected by $\sigma$. An arbitrary type 1 neighbour is of the form $ghv_x$, where $h \in A_{st}$ and $x \in \{s,t\}$. Since $\sigma$ fixes $s$ and $t$, $\sigma$ fixes $h$ (and thus $gh)$ and $x$, thus we obtain that $ghv_x$ is respected by $\sigma$. Furthermore, $\sigma$ acts trivially on the neighbourhood of $gv_{st}$, so for the rest of the proof we will assume $\sigma$ is trivial, as this does not change which vertices in this neighbourhood are fixed.
    \medskip
    
    \noindent \underline{Claim:} Given a type 2 vertex $gv_{st} \in X_{\Gamma}^{\varphi_a \iota}$, at most one of the adjacent type 1 vertices in each $A_\Gamma$-orbit is in $X_{\Gamma}^{\varphi_a \iota}$.
    
    \medskip
    \noindent \underline{Proof of the Claim:} Suppose there are two fixed type 1 vertices in the $A_{\Gamma}$-orbit of $v_s$ that are adjacent to $gv_{st}$. That is, we have $ghv_s, gh'v_s \in X_{\Gamma}^{\varphi_a \iota}$, where $h, h' \in A_{st}$. We will show that $hh'^{-1} \in A_{st}$ is trivial.
    
    By Lemma \ref{stabilisedType1Representative}, $h$ and $h'$ can be chosen so that $h^{-1}g^{-1}a\iota(gh) = s = h'^{-1}g^{-1}a\iota(gh').$ Rearranging, we see that $hh'^{-1} = (g^{-1}a\iota(g)) \iota(hh'^{-1}) (g^{-1}a\iota(g))^{-1}$, which implies
    $$hh'^{-1} \in \Fix(\varphi_{g^{-1}a\iota(g)}\iota).$$
    Given that $\varphi_a \iota \cdot gv_{st} = gv_{st}$, it follows from Lemma \ref{LemmaUsefulEquation} that $g^{-1}a\iota(g) \in A_{st}$, so $\varphi_{g^{-1}a\iota(g)}\iota$ restricts to an automorphism of $A_{st}$. It is easy to compute $\varphi_{g^{-1}a\iota(g)}$ is of finite order, squaring to the identity, so by Theorem \ref{dihedralFixPointsOdd} and Theorem \ref{dihedralFixPointsEven}, $\Fix(\left.(\varphi_{g^{-1}a\iota(g)}\iota)\right|_{A_{st}}) = \{1\}$, so in particular $hh'^{-1} = 1$ as required. This completes the proof of the claim.
    \medskip

    If $gv_{st} \in X_{\Gamma}^{\varphi_a \iota}$, it must have at least one type 1 neighbour in $X_{\Gamma}^{\varphi_a \iota}$. This is because $X_{\Gamma}^{\varphi_a \iota}$ is connected, all of the vertices in $X_\Gamma^{(1)-ess}$ adjacent to $gv_{st}$ are of type 1, and $gv_{st}$ is not all of $X_{\Gamma}^{\varphi_a \iota}$ as $v_a \in X_{\Gamma}^{\varphi_a \iota}$.
    
    Without loss of generality, let this type 1 neighbour be $gv_s$. By the claim, this is the only vertex in the orbit of $v_s$ adjacent to $gv_{st}$ in $X_{\Gamma}^{\varphi_a \iota}$. By Lemma \ref{stabilisedType1Representative}, we also suppose without loss of generality that $g^{-1}a\iota(g) = s$ (this may involve choosing appropriate $g$).

    Next we study type 1 neighbours of $gv_{st}$ in the other orbit. These are all of the form $ghv_t$, where $h \in A_{st}$. We split into two cases, based on whether $m_{st}$ is odd or even.
    \medskip

    \noindent \underline{Case 1: $m_{st} = 2n+1$.} We show existence of $ghv_t \in X_{\Gamma}^{\varphi_a \iota}$.  Let $h = b_1^{-1} \dots b_{n}^{-1} b_{n+1} \dots b_{2n}$, where $b_i = s$  for even $i$, and $b_i = t$ for odd $i$. Now \begin{align*}
        (gh)^{-1}a\iota(gh) &= h^{-1}g^{-1}a\iota(g)\iota(h)\\
        &= h^{-1}s\iota(h)\\
        &= b_{2n}^{-1} \dots b_{n+1}^{-1} b_{n} \dots b_1 s b_1 \dots b_{n} b_{n+1}^{-1} \dots b_{2n}\\
    \intertext{but we notice $b_{n} \dots b_1 s b_1 \dots b_{n}$ is a length $2n+1$ with letters alternating between $s$ and $t$. In particular we can apply the relator, which corresponds to incrementing the relevant indices by 1. That is,}
        &= b_{2n}^{-1} \dots b_{n+1}^{-1} b_{n+1} \dots b_2 t b_2 \dots b_{n+1} b_{n+1}^{-1} \dots b_{2n}\\
    \intertext{and this allows for free reduction, since $b_{n+1+k} = b_{n+1-k}$ for each $k \in \mathbb{Z}$,}
        &= t.
    \end{align*}
    By Lemma \ref{stabilisedType1Representative}, the vertex $ghv_t$ is fixed by $\varphi_a \iota$. This is the unique such vertex by the earlier claim.
    \medskip

    \noindent \underline{Case 2: $m_{st} = 2n$.} Suppose for contradiction that for some $h \in A_{st}$, $ghv_t \in X_{\Gamma}^{\varphi_a \iota}$. Then by Lemma \ref{stabilisedType1Representative} (and appropriate choice of $h$), we see that \begin{align*}
        t &= (gh)^{-1}a\iota(gh)\\
        &= h^{-1}g^{-1}a\iota(g)\iota(h)\\
        &= h^{-1}s\iota(h).
    \end{align*}
    The equation $t = h^{-1}s\iota(h)$ lies in the dihedral Artin subgroup $A_{st} = \langle s, t \ | \ (st)^{n} = (ts)^{n} \rangle$. In $A_{st}$, there is a homomorphism $ht_s\colon A_{st} \rightarrow \mathbb{Z}$ given by $ht_s\colon s \mapsto 1, t \mapsto 0$ (this should be thought of as the $s$-height). Note that $ht_s$ is well-defined because $m_{st}$ is even. Now,
    $$ht_s(h^{-1}s\iota(h)) = ht_s(h^{-1}) + ht_s(s) + ht_s(\iota(h)) = -ht_s(h) + 1 - ht_s(h) = 1-2ht_s(h),$$
    which is odd, but $ht_s(t) = 0$, which contradicts $t = h^{-1}s\iota(h)$.
\end{proof}

Having understood the nature of the tree $X_{\Gamma}^{\varphi_a \sigma\iota}$, we are left to study the action of $\Fix(\varphi_a \sigma\iota)$. In the following, we say an action on a simplicial complex is \emph{cofinite} if their are finitely many orbits of simplices.

\begin{lemma}\label{sigmaiotaafreecofinite}
    The action of $\Fix(\varphi_a \sigma\iota)$ on $X_{\Gamma}^{\varphi_a \sigma\iota}$ is free and cofinite.
\end{lemma}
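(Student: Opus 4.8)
The plan is to prove freeness and cofiniteness separately, both by exploiting the structure of the tree $X_\Gamma^{\varphi_a\sigma\iota}$ established in Lemma \ref{lemmaSigmaIotaaTree}, together with the fact (Lemma \ref{LemmaEss1Skeleton}) that this tree lies inside $X_\Gamma^{(1)-ess}$ and hence consists only of type $1$ and type $2$ vertices.

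\emph{Freeness.} Since $\Fix(\varphi_a\sigma\iota)$ acts on the tree $X_\Gamma^{\varphi_a\sigma\iota}$, it suffices to show that all vertex stabilisers (for this restricted action) are trivial. Let $g \in \Fix(\varphi_a\sigma\iota)$ fix a vertex of $X_\Gamma^{\varphi_a\sigma\iota}$. If that vertex is of type $1$, say $hv_s$, then $g$ lies in the local group $h\langle s\rangle h^{-1}\cong\mathbb{Z}$, so $g = hs^kh^{-1}$ for some $k$. By Lemma \ref{stabilisedType1Representative} we may choose $h$ so that $h^{-1}a\iota(h)=s$; applying $\varphi_a\sigma\iota$ to $g$ and using $\sigma(s)=s$ (from Lemma \ref{lemmaSigmaIotaaTree}) one computes $\varphi_a\sigma\iota(g) = h\,s^{-k}\,h^{-1}$ after substituting the relation for $h^{-1}a\iota(h)$; for this to equal $g$ we need $s^k = s^{-k}$, so $k=0$ and $g=1$. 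If instead $g$ fixes only a type $2$ vertex $hv_{st}$, then $g$ lies in the parabolic $hA_{st}h^{-1}$, and by Lemma \ref{reductionToStabs} (or directly) $g$ lies in $\Fix$ of the induced automorphism of $A_{st}$. One checks this induced automorphism is, up to conjugacy inside $A_{st}$, of the form $\varphi_w\iota$ with $\varphi_w$ of order dividing $2$; by Theorem \ref{dihedralFixPointsOdd}.(3) and Theorem \ref{dihedralFixPointsEven}.(3) its fixed subgroup is trivial, so again $g=1$. (The only remaining possibility, that $g$ fixes a point in the interior of an edge, forces $g$ to fix or invert that edge; since the action is without inversions — $\varphi_a\sigma\iota$ preserves types and $\Fix(\varphi_a\sigma\iota)\le A_\Gamma$ acts without inversions on $X_\Gamma$ — $g$ fixes an endpoint, reducing to the previous cases.) Hence all stabilisers are trivial and the action is free.

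\emph{Cofiniteness.} I would exhibit a finite subcomplex of $X_\Gamma^{\varphi_a\sigma\iota}$ meeting every $\Fix(\varphi_a\sigma\iota)$-orbit of simplices. The candidate is $Y \coloneqq X_\Gamma^{\varphi_a\sigma\iota}\cap K_\Gamma$ where $K_\Gamma$ is the compact fundamental domain for $A_\Gamma\curvearrowright X_\Gamma$; since $K_\Gamma$ is finite, $Y$ is finite. It remains to show every simplex of $X_\Gamma^{\varphi_a\sigma\iota}$ is $\Fix(\varphi_a\sigma\iota)$-equivalent to one in $Y$. Given a vertex $hv_S\in X_\Gamma^{\varphi_a\sigma\iota}$, I want to find $f\in\Fix(\varphi_a\sigma\iota)$ with $fh\in A_S$, i.e. $fhv_S = v_S$. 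This is the standard "transitivity on orbits inside the fixed tree" argument: since $X_\Gamma^{\varphi_a\sigma\iota}$ is connected, induct on combinatorial distance from $v_a$, using Lemma \ref{lemmaSigmaIotaaTree} to control, at each step, the edges of $X_\Gamma^{\varphi_a\sigma\iota}$ emanating from a fixed vertex and producing at each edge-crossing an explicit element of $A_\Gamma$ fixed by $\varphi_a\sigma\iota$ (for type $1$ to type $2$ steps one uses the generator of the appropriate $\mathbb{Z}$ edge-stabiliser inside $X_\Gamma^a$, which is $a$-related hence $\varphi_a\sigma\iota$-fixed; for type $2$ to type $1$ steps one uses the explicit words $h$ built in the proof of Lemma \ref{lemmaSigmaIotaaTree}, e.g. $b_1^{-1}\cdots b_n^{-1}b_{n+1}\cdots b_{2n}$ in the odd case, which satisfy $h^{-1}a\iota(h)\in\{s,t\}$ and conjugate one local group to the next). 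Composing these elements along a path from $v_a$ to $hv_S$ yields the required $f$. Therefore every simplex is equivalent to one of the finitely many simplices in $Y$, and the action is cofinite.

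\emph{Main obstacle.} The delicate point is the cofiniteness argument: one must verify that the "translating" elements produced edge-by-edge genuinely lie in $\Fix(\varphi_a\sigma\iota)$ and that their product carries $hv_S$ into $K_\Gamma$, which requires bookkeeping of the height/parity constraints from Lemma \ref{lemmaSigmaIotaaTree} (for instance ensuring one picks the orbit representative of a type $1$ vertex for which $g^{-1}a\iota(g)$ is exactly a standard generator, as in Lemma \ref{stabilisedType1Representative}). Freeness is comparatively routine once one invokes the dihedral computations, but it does depend on the observation that $\varphi_a\sigma\iota$ sends the generator of a type $1$ edge-stabiliser to its inverse, which is exactly the height-parity phenomenon already used in Lemma \ref{LemmaEss1Skeleton}.
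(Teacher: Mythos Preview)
Your freeness argument is correct and essentially equivalent to the paper's, though the paper dispatches the type $1$ case more quickly by a height argument: since $\varphi_a\sigma\iota$ reverses height, every element of $\Fix(\varphi_a\sigma\iota)$ has height $0$, while non-trivial elements of a type $1$ stabiliser $h\langle s\rangle h^{-1}$ have non-zero height. Your computational version using $h^{-1}a\iota(h)=s$ reaches the same conclusion. The type $2$ case matches the paper's use of Theorems \ref{dihedralFixPointsOdd}.(3) and \ref{dihedralFixPointsEven}.(3).

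Your cofiniteness argument, however, has a genuine gap, which you yourself flag as the ``main obstacle'' but do not resolve. The edge-by-edge elements you propose are \emph{not} in $\Fix(\varphi_a\sigma\iota)$. For instance, the tree $X_\Gamma^{\varphi_a\sigma\iota}$ is not known to lie in the standard tree $X_\Gamma^a$ (unlike the case of $\varphi_{a^k}\sigma$ in Lemma \ref{LemmaFixOfSigmaa}), so there is no ``$\mathbb{Z}$ edge-stabiliser inside $X_\Gamma^a$'' to invoke; and even the generator $a$ itself satisfies $\varphi_a\sigma\iota(a)=a^{-1}\ne a$. Similarly, the explicit words $b_1^{-1}\cdots b_{2n}$ from the proof of Lemma \ref{lemmaSigmaIotaaTree} are coset representatives satisfying $(gh)^{-1}a\iota(gh)=t$, not fixed elements. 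Composing such elements along a path will not in general land in $\Fix(\varphi_a\sigma\iota)$.

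The paper avoids this entirely by arguing \emph{globally} rather than edge-by-edge: given two type $1$ vertices $gv_s, hv_s \in X_\Gamma^{\varphi_a\sigma\iota}$ in the same $A_\Gamma$-orbit, one uses Lemma \ref{stabilisedType1Representative} to normalise so that $g^{-1}a\iota(g)=s=h^{-1}a\iota(h)$, and then a one-line rearrangement shows $a\iota(gh^{-1})a^{-1}=gh^{-1}$; together with $\sigma(g)=g$, $\sigma(h)=h$ from Lemma \ref{lemmaSigmaIotaaTree}, this gives $gh^{-1}\in\Fix(\varphi_a\sigma\iota)$ directly. Since there are only finitely many $A_\Gamma$-orbits of type $1$ vertices, cofiniteness follows. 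This is the missing idea: produce the translating element in one step from the normalised representatives, rather than trying to build it along a path.
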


\begin{proof}
    First we show that the action is free. It is enough to show that the intersection of $\Fix(\varphi_a \sigma \iota)$ with the stabiliser of each point in $X_{\Gamma}^{\varphi_a \sigma\iota}$ is trivial. For type 1 vertices this is immediate since $\varphi_a\sigma\iota$ is height-reversing and no non-trivial elements with non-trivial height stabilise type 1 vertices, so we focus on the type 2 vertices.

    Now, given $gv_{st} \in X_{\Gamma}^{\varphi_a \sigma \iota}$, $\varphi_a \sigma\iota$ restricts to an automorphism of $gA_{st}g^{-1}$. To see this, note that there is some $h \in A_{st}$ such that $a (\sigma \iota)(g) = gh$, by Lemma \ref{LemmaUsefulEquation}. Then, we have
    \begin{align*}
        (\varphi_a\sigma\iota) (gA_{st}g^{-1}) &= (a(\sigma\iota)(g)) (\sigma\iota)(A_{st}) (a(\sigma\iota)(g))^{-1} \\
        &= (gh)\iota(A_{st})(gh)^{-1}\\
        &= g(\varphi_h\iota)(A_{st})g^{-1},
    \end{align*} where $s, t \in F_{\sigma}$ by Lemma \ref{lemmaSigmaIotaaTree}, since $gv_{st} \in X_{\Gamma}^{\varphi_a \sigma \iota}$.

    Notice this restriction of $\varphi_a\sigma\iota$ to $g A_{st} g^{-1}$ is finite order, since $(\varphi_a\sigma\iota)^2 = \varphi_{a\sigma\iota(a)}\sigma^2\iota^2 = \sigma^2$, which is finite order. Therefore the restriction to $g A_{st} g^{-1}$ has no fixed points, because finite order automorphisms $\varphi_h\iota$ (we lose the graph automorphism because $s$ and $t$ are fixed by $\sigma$) of dihedral Artin groups have no fixed points by Theorem \ref{dihedralFixPointsOdd}.(3) and Theorem \ref{dihedralFixPointsEven}.(3). So the action is free.

    Now we show that the action is cofinite. In particular, we show that if two type 1 vertices of $X_{\Gamma}^{\varphi_a \sigma\iota}$ are in the same $A_\Gamma$-orbit, then they are in the same $\Fix(\varphi_a\sigma \iota)$-orbit. Since there are finitely many orbits of type 1 vertices under the action of $A_\Gamma$, and the action on $X_{\Gamma}^{\varphi_a \sigma \iota}$ is dertermined by its action on type 1 vertices, this completes the proof.

    Suppose $gv_s, hv_s \in X_{\Gamma}^{\varphi_a \sigma \iota}$, where $g,h \in A_\Gamma$ and $s \in V(\Gamma)$. It follows by Lemma \ref{lemmaSigmaIotaaTree} that $gv_s, hv_s \in X_{\Gamma}^{\varphi_a \iota}$. Then by Lemma \ref{stabilisedType1Representative}, we may assume that $g^{-1}a\iota(g) = s = h^{-1}a\iota(h)$. Rearranging, we see that $a\iota(gh^{-1})a^{-1} = gh^{-1}$. Since $g$ and $h$ are each fixed by $\sigma$, it is moreover the case that $a\sigma\iota(gh^{-1})a^{-1} = gh^{-1}$. That is, $gh^{-1} \in \Fix(\varphi_a\sigma\iota)$. This demonstrates that $gv_s$ and $hv_s$ are in the same $\Fix(\varphi_a \sigma \iota)$-orbit.
\end{proof}

\begin{corollary}\label{coroFixSigmaIotaa}
    Let $\sigma$ be a (possibly trivial) graph automorphism such that $\sigma(a) = a$. The subgroup $\Fix(\varphi_a\sigma \iota)$ is free of finite rank. In particular,
    $$\Fix(\varphi_a\sigma\iota) \cong \pi_1(\Gamma_{a, odd}^{\sigma}),$$
    where $\Gamma_{a, odd}^{\sigma}$ is the graph from Remark \ref{RemFreeGroupExplicitly} which we recall is the connected component containing $a$ of the subgraph of the barycentric subdivision $\Gamma$ fixed by $\sigma$, cut along all vertices corresponding to even edges.
\end{corollary}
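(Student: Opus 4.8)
The plan is to realize $\Fix(\varphi_a\sigma\iota)$ as the fundamental group of the graph of groups obtained from the action $\Fix(\varphi_a\sigma\iota)\curvearrowright X_\Gamma^{\varphi_a\sigma\iota}$, exactly as in the proof of Corollary \ref{CoroStabSigmaa}, but now exploiting that by Lemma \ref{sigmaiotaafreecofinite} the action is free and cofinite. First I would recall that $X_\Gamma^{\varphi_a\sigma\iota}$ is a simplicial tree (Lemma \ref{LemmaEss1Skeleton}) on which $\Fix(\varphi_a\sigma\iota)$ acts; freeness of the action (Lemma \ref{sigmaiotaafreecofinite}) immediately forces $\Fix(\varphi_a\sigma\iota)$ to be a free group, and cofiniteness forces it to be finitely generated, so its rank equals the rank of the quotient graph $X_\Gamma^{\varphi_a\sigma\iota}/\Fix(\varphi_a\sigma\iota)$, i.e. $|E|-|V|+1$ of that finite quotient graph.

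Next I would identify this quotient graph with $\Gamma^{\sigma}_{a,\mathrm{odd}}$. The strategy mirrors Remark \ref{RemFreeGroupExplicitly}: by Lemma \ref{lemmaSigmaIotaaTree}, every type $1$ vertex $gv_s$ of $X_\Gamma^{\varphi_a\sigma\iota}$ has $s\in F_\sigma$ and $\sigma(g)=g$, and its type $2$ neighbours are exactly the $gv_{st}$ with $t\in F_\sigma$; conversely every fixed type $2$ vertex $gv_{st}$ has $s,t\in F_\sigma$ and has a single type $1$ neighbour per $A_\Gamma$-orbit, with the count being two (in distinct orbits) when $m_{st}$ is odd and one when $m_{st}$ is even. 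Passing to the $\Fix(\varphi_a\sigma\iota)$-quotient, the cofiniteness result (two type $1$ vertices in the same $A_\Gamma$-orbit lie in the same $\Fix$-orbit) tells me that type $1$ vertices of the quotient correspond bijectively to standard generators $s$ reachable from $a$, and type $2$ vertices to edges $e^{st}$ with $s,t\in F_\sigma$; the edge structure is precisely the barycentric picture of $\mathrm{Span}(F_\sigma)$, \emph{cut} at every $v^{st}$ with $m_{st}$ even (since such a vertex has only one adjacent type $1$ vertex rather than two, so it is a leaf rather than a cut point joining two edges), and restricted to the connected component of $a$ (since $X_\Gamma^{\varphi_a\sigma\iota}$ is connected). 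This is exactly the definition of $\Gamma^{\sigma}_{a,\mathrm{odd}}$, so $X_\Gamma^{\varphi_a\sigma\iota}/\Fix(\varphi_a\sigma\iota)\cong \Gamma^{\sigma}_{a,\mathrm{odd}}$ as graphs, and hence $\Fix(\varphi_a\sigma\iota)\cong\pi_1(\Gamma^{\sigma}_{a,\mathrm{odd}})$, which is free of finite rank.

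Finally I would note finiteness of the rank once more: $\Gamma$ has finitely many vertices and edges, so $\Gamma^{\sigma}_{a,\mathrm{odd}}$ is a finite graph and $\pi_1$ is free of finite rank $|E(\Gamma^{\sigma}_{a,\mathrm{odd}})|-|V(\Gamma^{\sigma}_{a,\mathrm{odd}})|+1$, which is bounded by $|E(\Gamma)|-|V(\Gamma)|+1$. The main obstacle I anticipate is the bookkeeping in the graph-identification step: one must be careful about how the "cut along even vertices" operation interacts with orbits --- specifically, checking that an even type $2$ vertex genuinely becomes a leaf (valence one) in the quotient and does not accidentally get identified with another vertex or create a loop, and that the odd type $2$ vertices join exactly two quotient edges in distinct orbits. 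All the needed facts are in Lemma \ref{lemmaSigmaIotaaTree} and Lemma \ref{sigmaiotaafreecofinite}; the proof is then essentially a careful translation, so I would keep it short, citing those two lemmas and Remark \ref{RemFreeGroupExplicitly} for the explicit description of the quotient graph.
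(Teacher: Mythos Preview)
Your proposal is correct and follows essentially the same approach as the paper: use Lemma \ref{LemmaEss1Skeleton} to know $X_\Gamma^{\varphi_a\sigma\iota}$ is a tree, invoke Lemma \ref{sigmaiotaafreecofinite} for freeness and cofiniteness (hence $\Fix(\varphi_a\sigma\iota)$ is free of finite rank), and then identify the quotient graph with $\Gamma_{a,\mathrm{odd}}^{\sigma}$ via Lemma \ref{lemmaSigmaIotaaTree}. The paper phrases the last step as recognizing $X_\Gamma^{\varphi_a\sigma\iota}$ as the universal cover of $\Gamma_{a,\mathrm{odd}}^{\sigma}$, but your more explicit orbit-by-orbit bookkeeping (including the observation that even type $2$ vertices become leaves, which is exactly the ``cut'' operation) is the same argument spelled out in slightly more detail.
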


\begin{proof}
    Since $\Fix(\varphi_a \iota)$ acts freely and cofinitely on a tree, it is free of finite rank.  To see the rank, we notice that $X^{\varphi_a\sigma\iota}$ is exactly the universal cover of the desired subgraph $\Gamma_{a, odd}^{\sigma}$. To see this, lift the basepoint $a \in \Gamma_{a, odd}^{\sigma}$ to $v_a$, which is in $X_{\Gamma}^{\varphi_a \iota}$ by Lemma \ref{LemmaBaseCases}. Now notice that, by Lemma \ref{lemmaSigmaIotaaTree}, there is an edge from $gv_s \in X^{\varphi_a\sigma\iota}$ to $gv_{st}$ if and only if there is an edge from $s$ to $t$ in $\Gamma_{a, odd}^{\sigma}$. 

    By the cofiniteness part of Lemma \ref{sigmaiotaafreecofinite}, $gv_s, g'v_t \in X^{\varphi_a\sigma\iota}$ are in the same orbit if and only if $s = t$, so the quotient $X^{\varphi_a\sigma\iota} / \Fix(\varphi_a\sigma\iota)$ can be identified with $\Gamma_{a, odd}^{\sigma}$. The result follows.
\end{proof}

\begin{remark}\label{iotaagenerators}
    To find an explicit generating set for $\Fix(\varphi_a\sigma\iota)$, we proceed similarly to Remark \ref{RemFreeGroupExplicitly}. Let $v^{s_1} < \cdots < v^{s_q}$ be an ordering of the vertices of $\Gamma_a^{\sigma, odd}$ that correspond to standard generators. Label the directed edge from $v^s$ to $v^{st}$ where $v^s < v^t$ and $m_{st} = 2n+1$ by the word
    $$b_1^{-1}\dots b_n^{-1}b_{n+1}\dots b_{2n},$$
    where $b_i = t$ when $i$ is odd, and $b_i = s$ when $i$ is even. A basis for $\Fix(\varphi_a \sigma \iota)$ is given by a set of words that label closed paths starting at $v^a$, and that span a free basis of $\pi_1(\Gamma_{a, odd}^{\sigma})$, where when travelling backwards along a directed edge labelled $w$ we read the inverse word $w^{-1}$.
\end{remark}

\subsection{Automorphisms acting hyperbolically}

In this section we compute the fixed subgroups of the (non-inner) automorphisms of $A_{\Gamma}$ acting hyperbolically on $X_{\Gamma}$. Let $\gamma$ be such an automorphism. In Section 4.3.1 we recall and introduce various tools. In Section 4.3.2 we compute $\Fix(\gamma)$ when $\gamma = \varphi_g \sigma$, i.e. when there is no global inversion involved. In Section 4.3.3 we compute it when $\gamma = \varphi_g \sigma \iota$, i.e. when the global inversion is involved.

\begin{theorem}\label{mainTheoremHyperbolic}
    Let $A_\Gamma$ be a large-type Artin group of rank at least 3. Suppose $\gamma \in \Aut_\Gamma(A_\Gamma)$ is an automorphism acting hyperbolically on $X_\Gamma$. Write $\gamma = \varphi_g \psi$ where $g \in A_\Gamma$ and $\psi = \sigma\iota^\varepsilon$, where $\sigma$ is a (possibly trivial) graph automorphism, $\iota$ is the global inversion, and $\varepsilon \in \{0,1\}$. Let $n$ be the order of $\psi$, and let $\h \coloneqq g\psi(g)\dots\psi^{n-1}(g)$. Then $\gamma^n$ acts like $\h$, and we are in one of the following cases:

    \begin{enumerate}

        \item $\varepsilon = 0$ and $\h$ is as in case 3.1 of Proposition \ref{PropCentralisers} for some standard tree $X_{\Gamma}^g$, where $g$ is conjugated to a standard generator. Then $\Fix(\gamma) \cong \mathbb{Z}^2$, and more precisely $\langle \h, g \rangle \leq_{f.i.} \Fix(\gamma)$.

        \item $\varepsilon = 0$ and $\h$ is as in case 3.3 of Proposition \ref{PropCentralisers} for some $a, b, c \in V(\Gamma)$ such that $m_{ab} = m_{bc} = m_{ac} = 3$, and there exist $q \in \mathbb{Z} \backslash \{0\}$ and $h \in A_\Gamma$ such that:

        \begin{enumerate}
            \item $\sigma(a) = a, \sigma(b) = b, \sigma(c) = c$, and $\gamma \sim_h \varphi_{(abcabc)^q}\sigma$.

            \item $\sigma(a) = c, \sigma(b) = a, \sigma(c) = b$, and $\gamma \sim_h \varphi_{(abcabc)^q ab}\sigma$.

            \item $\sigma(a) = b, \sigma(b) = c, \sigma(c) = a$, and $\gamma \sim_h \varphi_{(abcabc)^q c^{-1} b^{-1}}\sigma$.
        \end{enumerate}

        Then $\Fix(\gamma) \cong \langle x, y \ | \ xyxy = yxyx \rangle$, and more precisely $\Fix(\gamma) = h\langle abc, b \rangle h^{-1}$.

        \item $\varepsilon = 0$ and $\h$ is as in case 3.4 of Proposition \ref{PropCentralisers} for some $a, b, c \in V(\Gamma)$ such that $m_{ab} = m_{bc} = m_{ac} = 3$, and there exists $h \in A_\Gamma$ such that $\h \in h \langle b, abc \rangle h^{-1}$, but $\langle \h \rangle \cap h \langle abcabc \rangle h^{-1} = \{1\}$. Furthermore suppose $\gamma \sim_h \varphi_{g'}\sigma$, where one of the following occurs:
        
        \begin{enumerate}
            \item $\sigma(a) = a, \sigma(b) = b, \sigma(c) = c$, and $g' \in \langle b, abc \rangle$.

            \item $\sigma(a) = c, \sigma(b) = a, \sigma(c) = b$, and $g' \in \langle b, abc \rangle a$.

            \item $\sigma(a) = b, \sigma(b) = c, \sigma(c) = a$, and $g' \in \langle b, abc \rangle c^{-1}$.
        \end{enumerate}

        Then $\Fix(\gamma) \cong \mathbb{Z}^2$, and more precisely $\Fix(\gamma) = C(\h)$.

        \item None of the previous cases apply. Then $\Fix(\gamma) \cong \mathbb{Z}$, with $\langle \h \rangle \leq_{f.i.} \Fix(\gamma)$.
    \end{enumerate}

    In all cases $\langle \h \rangle \cong \mathbb{Z}$ is fixed. Note that in particular, if $\varepsilon = 1$, then $\Fix(\gamma) \cong \mathbb{Z}$.
\end{theorem}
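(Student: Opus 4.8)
The plan is to reduce everything to the study of the centraliser $C(\h)$, where $\h \coloneqq g\psi(g)\cdots\psi^{n-1}(g)$, and then to match the possible cases against the classification of centralisers in Proposition \ref{PropCentralisers}. The first step is to record the algebraic identity $\gamma^n = (\varphi_g\psi)^n = \varphi_{g\psi(g)\cdots\psi^{n-1}(g)}\psi^n = \varphi_{\h}$, using Lemma \ref{LemmaCommuteWithInner} repeatedly and $\psi^n = \mathrm{id}$; by compatibility this means $\gamma^n$ acts on $X_\Gamma$ exactly like $\h$. Since $\Fix(\gamma) \leq \Fix(\gamma^n) = C(\h)$, the whole problem becomes: understand the automorphism $\gamma$ \emph{restricted to} $C(\h)$, and compute its fixed subgroup there. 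Because $\gamma$ acts hyperbolically, $\h$ cannot be elliptic (if $\h$ were elliptic then $\gamma^n$ would fix a point, and by Corollary \ref{fixAction} so would $\gamma$, contradiction with the remark that simplicial isometries are elliptic or hyperbolic); so $\h$ is a hyperbolic element of $A_\Gamma$, and we are in case 3 of Proposition \ref{PropCentralisers}. This already splits the argument into the four subcases 3.1--3.4 of that proposition, which is where the case division 1--4 of the theorem comes from, with case 4 of the theorem absorbing subcase 3.2 of the Proposition together with all the ``generic'' situations.

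Next I would treat each case. In case 1 ($\h$ as in Prop.\ \ref{PropCentralisers}.3.1, so $C(\h) = \langle \h_0, g\rangle \cong \mathbb{Z}^2$ with $g$ conjugate to a standard generator and $\h$ a power of $\h_0$): one checks $\gamma$ restricts to an automorphism of $C(\h)$ (it commutes with $\varphi_\h$ by Lemma \ref{LemmaCommuteWithInner} and $\gamma(\h) = \h$ since $\h = \gamma^n(1)\cdot\text{stuff}$... more precisely $\gamma(\h) = g\cdot\psi(\h)\cdot g^{-1}\cdot g\psi(g)^{-1}\cdots$ — one verifies directly $\gamma(\h)=\h$, as $\psi(\h) = \psi(g)\cdots\psi^{n}(g) = g^{-1}\h g$), hence $\gamma|_{C(\h)} \in \mathrm{Aut}(\mathbb{Z}^2) = GL_2(\mathbb{Z})$ fixes the vector $\h$; the remaining work is to see that $\varepsilon=0$ forces this matrix to have a second eigenvalue $1$ (because $\gamma$ preserves the standard tree $X_\Gamma^g$ and acts on it preserving height-type data, so it fixes $g$ up to sign, and height considerations as in Lemma \ref{LemmaEss1Skeleton} kill the sign), giving $\Fix(\gamma) \cong \mathbb{Z}^2$ with $\langle \h, g\rangle$ of finite index. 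Cases 2 and 3 are handled similarly but with $C(\h)$ being the exotic dihedral Artin subgroup $H = \langle b, abc\rangle$ (Proposition \ref{PropBassSerreExotic}) or a $\mathbb{Z}^2$ inside it: here one uses Theorem \ref{TheoremExotic} and the structure of $H$ acting on $\mathrm{Min}(z)$; the three sub-subcases (a),(b),(c) come from how the graph automorphism $\sigma$ permutes $\{a,b,c\}$, and in each one a direct computation produces the explicit conjugating element $h$ and identifies $\gamma$ up to isogredience with an automorphism of $H$ whose fixed subgroup is computed by the dihedral results of Section \ref{sectionDihedral} (Theorems \ref{dihedralFixPointsOdd}, \ref{dihedralFixPointsEven}). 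Case 4 is the default: in subcase 3.2 of Prop.\ \ref{PropCentralisers}, $C(\h) \cong \mathbb{Z}$ so trivially $\Fix(\gamma) \leq \mathbb{Z}$ and contains $\langle \h\rangle$ (which is fixed by the identity $\gamma(\h)=\h$), hence $\Fix(\gamma)\cong\mathbb{Z}$ with $\langle\h\rangle$ finite index; in the remaining ``generic'' pieces of 3.1, 3.3, 3.4 not covered by cases 1--3, one shows the action of $\gamma$ on $C(\h)\cong\mathbb{Z}^2$ (or the dihedral Artin group) is not the identity, e.g. via a height/sign argument or a direct eigenvalue computation, so the fixed subgroup drops to rank $1$. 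The final remark, that $\varepsilon = 1$ always yields $\Fix(\gamma)\cong\mathbb{Z}$, follows because $\iota$ reverses height and $\psi = \sigma\iota$ then reverses height on any relevant cyclic or abelian centraliser, so $\gamma$ acts on $C(\h)$ as an automorphism with $-1$ somewhere in its ``spectrum'' (concretely, $\gamma(x^n\text{-type central element}) = x^{-n}$ as in the dihedral computations of Lemmas \ref{dihedralInfiniteOrder}, \ref{ellipticAlphaGamma}), so no rank-$2$ subgroup can be fixed, leaving exactly $\langle\h\rangle$ up to finite index.

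The main obstacle I anticipate is \emph{case 2 and 3}: pinning down, for each of the three actions of $\sigma$ on the triangle $\{a,b,c\}$ with all labels $3$, the precise inner part — the explicit elements $(abcabc)^q$, $(abcabc)^q ab$, $(abcabc)^q c^{-1}b^{-1}$ and the conjugator $h$ — and verifying that $\gamma$ is genuinely isogredient to the claimed normal form. This requires carefully tracking how $\psi$ acts on the generators $s = b^{-1}$, $t = babc$, $z = abcabc$ of $H$ from Theorem \ref{TheoremExotic}, checking the relevant identities in $A_{abc}$ by hand, and then feeding the result into the even/odd dihedral classification — so it is the step where the bookkeeping is heaviest and where an error is most likely. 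A secondary difficulty is making rigorous the ``height and colour'' arguments that force the $GL_2(\mathbb{Z})$-matrix of $\gamma|_{C(\h)}$ to have the right eigenvalues in cases 1 and 3 versus case 4; here I would lean on the system-of-colours technology (Remark \ref{RemSystemColours}) and the fact that $\gamma$ induces a simplicial isometry of $\mathrm{Min}(\h)$ preserving the tiling, exactly as in the proof of Proposition \ref{PropCentralisers}.3.4.
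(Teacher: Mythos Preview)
Your overall strategy is correct and matches the paper's: reduce to $C(\h)$ via $\gamma^n = \varphi_{\h}$ (this is Lemma \ref{infiniteOrderIntoCentraliser}), split into the four hyperbolic cases of Proposition \ref{PropCentralisers}, and handle $\varepsilon = 1$ by a height argument (Proposition \ref{PropFixIotagIsVirtuallyZ}).

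Two places where the paper's execution differs from your sketch are worth flagging. In Case 1, you assume $\gamma$ preserves the standard tree $X_\Gamma^a$ in order to conclude $\gamma(a)=a$, but this is circular: preserving the tree is essentially equivalent to fixing $a$. The paper instead argues that $\gamma(a)$ lies in $C(\h)$ (since $\gamma$ restricts to $C(\h)$), hence $a$ and $\gamma(a)$ commute because $C(\h)$ is abelian, so by the Tits alternative their standard trees intersect in a type $2$ vertex; the local centraliser structure there then forces $\gamma(a)=a$ via a type and height comparison (Lemma \ref{LemmaCase3.1Hyperbolic}). In Cases 2 and 3, you correctly anticipate the hardest step and correctly point to the system-of-colours technology, but the real content is not the bookkeeping for the three cyclic permutations --- those are direct computations --- but rather \emph{ruling out} the three transpositions of $\{a,b,c\}$. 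This is Lemma \ref{lemabcabcfixed}, and it is a genuine geometric argument on $\mathrm{Min}(abcabc)$: one shows that a putative conjugator would have to send certain coloured edges to others in a way that forces an impossible relation like $\Delta_{ab}^2 = b^q a^{-k}$.
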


\begin{proof}
    First of all, $\gamma^n$ acts like $z$ because $\gamma^n = (\varphi_g \psi)^n = \varphi_\h \psi^n = \varphi_\h$. By hypothesis, $\gamma$ acts hyperbolically, which is equivalent to $\h$ acting hyperbolically, by the above. In particular, $\h$ falls under Case 3 in Proposition \ref{PropCentralisers}.

    We first suppose that $\varepsilon = 1$. Then by Lemma \ref{PropFixIotagIsVirtuallyZ} we have $\Fix(\gamma) \cong \mathbb{Z}$, and we are in Case 4 of the Theorem.

    We now suppose that $\varepsilon = 0$. If $\h = h^{-1} \h' h$ for some $h, \h \in A_{\Gamma}$, then by Lemma \ref{isogredienceEqConjugate} we have $\varphi_g \psi \sim_h \varphi_{g'}\psi$, where $g'\psi(g')\dots \psi^{n-1}(g') = \h'$. Using Lemma \ref{lemmaIsogredience} then allows to deduce $\Fix(g)$ from $\Fix(g')$, as both are isomorphic via conjugating by $h$. Thus we only consider $\h$ up to conjugation.

    If $\h$ is as in Case 3.1 of Proposition \ref{PropCentralisers}, then we directly obtain from Lemma \ref{LemmaCase3.1Hyperbolic} that we are in Case 1 of the Theorem. If $\h$ is as in Case 3.2 of Proposition \ref{PropCentralisers}, then by Lemma \ref{infiniteOrderIntoCentraliser} we have $\Fix(\gamma) \cong \mathbb{Z}$. In particular, we are in Case 4 of the Theorem. If $\h$ is as in Case 3.3 of Proposition \ref{PropCentralisers}, then using Lemma \ref{LemmaCase3.3Hyperbolic} either we are in Case 2 of the Theorem, or we have $\Fix(\gamma) \cong \mathbb{Z}$ and we are in Case 4. If $\h$ is as in Case 3.4 of Proposition \ref{PropCentralisers}, then by Lemma \ref{LemmaCase3.4Hyperbolic} either we are in Case 3 of the Theorem, or we have $\Fix(\gamma) \cong \mathbb{Z}$ and we are in Case 4.
\end{proof}

\begin{remark}\label{remHyperbolicRank}
    It is immediately apparent that all the fixed subgroups in the preceding theorem have rank at most 2.
\end{remark}

\subsubsection{Preliminaries}

The following lemma leads the strategy throughout Section 4.3. It allows to reduce the study of $\Fix(\gamma)$ to some centraliser. One again, we follows Notation \ref{NotationAutomorphisms} and we write $\gamma = \varphi_g \psi$ with $\psi = \sigma \iota^{\varepsilon}$.

\begin{lemma}\label{infiniteOrderIntoCentraliser}
    Let $A_{\Gamma}$ be a $2$-dimensional Artin group, let $\gamma = \varphi_g \psi$ be an automorphism of $A_\Gamma$, and let $n$ be the (finite) order of $\psi$. Let $\h = g \psi(g) \cdots \psi^{n-1}(g)$. Then
    $$\langle \h \rangle \leq \Fix(\gamma) \leq C(\h),$$
    and moreover, $\gamma$ restricts to an automorphism of $C(\h)$.
\end{lemma}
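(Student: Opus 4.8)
The plan is to verify each of the three assertions in turn, using the relation $\gamma^n = \varphi_{\h}$, which holds because $\psi$ has order $n$, so $\gamma^n = (\varphi_g\psi)^n = \varphi_{g\psi(g)\cdots\psi^{n-1}(g)}\psi^n = \varphi_{\h}$ (here one uses Lemma \ref{LemmaCommuteWithInner} to collect the inner parts, exactly as in the proof of Lemma \ref{dihedralInfiniteOrder}). First, the inclusion $\langle\h\rangle\leq\Fix(\gamma)$: I would compute $\gamma(\h)$ directly. Since $\gamma=\varphi_g\psi$ and $\psi$ is a homomorphism, $\gamma(\h)=g\,\psi(\h)\,g^{-1}=g\,\psi(g)\psi^2(g)\cdots\psi^{n}(g)\,g^{-1}$; using $\psi^n=\mathrm{id}$ the last factor $\psi^n(g)$ equals $g$, so this telescopes to $g\,\psi(g)\cdots\psi^{n-1}(g)\,g\,g^{-1}=\h$. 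Hence $\gamma$ fixes $\h$, and therefore fixes every power of $\h$, giving $\langle\h\rangle\leq\Fix(\gamma)$.

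Second, the inclusion $\Fix(\gamma)\leq C(\h)$: if $h\in\Fix(\gamma)$ then $h$ is fixed by $\gamma^n=\varphi_{\h}$, i.e. $\h h\h^{-1}=h$, which is exactly $h\in C(\h)$. (This is the same observation used at the start of the proof of Lemma \ref{dihedralInfiniteOrder}, namely $\Fix(\varphi)\leq\Fix(\varphi^n)$.)

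Third, that $\gamma$ restricts to an automorphism of $C(\h)$: I would argue as in the last paragraph of Lemma \ref{dihedralInfiniteOrder}. Take $h\in C(\h)$, so $[h,\h]=1$; applying the automorphism $\gamma$ and using that $\gamma(\h)=\h$ by the first part, we get $1=\gamma([h,\h])=[\gamma(h),\h]$, so $\gamma(h)\in C(\h)$. Thus $\gamma(C(\h))\subseteq C(\h)$; running the same argument with $\gamma^{-1}$ (noting $\gamma^{-1}(\h)=\h$ as well, since $\gamma$ permutes the fixed set $\{\h\}$, or simply because $\h\in\Fix(\gamma)\subseteq\Fix(\gamma^{-1})$) gives the reverse inclusion, so $\gamma|_{C(\h)}$ is an automorphism of $C(\h)$.

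None of these steps presents a genuine obstacle; the only point requiring a little care is the telescoping identity $\gamma(\h)=\h$, where one must correctly track that $\psi^n(g)=g$ makes the product close up — this is the computational heart of the lemma, and it is entirely routine once the indexing is set up. I would present the three inclusions/claims as three short displayed computations and conclude.
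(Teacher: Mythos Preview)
Your proof is correct and follows essentially the same approach as the paper: both rely on the identity $\gamma^n=\varphi_{\h}$, the telescoping computation $\gamma(\h)=\h$ using $\psi^n(g)=g$, and the commutator argument $[h,\h]=1\Rightarrow[\gamma(h),\h]=1$ to show $\gamma$ preserves $C(\h)$. The only cosmetic difference is that the paper handles bijectivity via the biconditional $[r,\h]=1\Leftrightarrow[\gamma(r),\h]=1$ rather than invoking $\gamma^{-1}$ separately.
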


\begin{proof}
    First of all, note that $\gamma^n = (\varphi_g\psi)^n = \varphi_{\h}\psi^n = \varphi_{\h}$. Together with the fact that any fixed point of $\gamma$ is fixed by $\gamma^n$, this yields
    $$\Fix(\gamma) \leq \Fix(\gamma^n) = \Fix(\varphi_{\h}) = C(\h).$$
    For the other inclusion, we have
    $$(\varphi_g \psi)(\h) = g(\psi(g)\psi^2(g) \cdots \psi^n(g))g^{-1} = g\psi(g)\psi^2(g) \cdots \psi^{n-1}(g) = \h,$$
    where in the second equality we used the fact that $\psi^n(g) = g$.

    For the final part of the statement, notice that, for any $r \in A_\Gamma$ we have
    $$[r,\h] = 1 \Longleftrightarrow \gamma([r,\h]) = 1 \Longleftrightarrow [\gamma(r), \h] = 1,$$
    where the final equivalence follows from $\gamma(\h) = \h$. So $\gamma$ restricts to a bijection on $C(\h)$.
\end{proof}

\begin{remark}
    Lemma \ref{infiniteOrderIntoCentraliser} still holds if $\gamma$ acts elliptically on $X_{\Gamma}$. However in that case the element $\h$ is usually trivial, so the statement of the lemma is empty.
\end{remark}

We assume throughout the section that $\gamma \in \Aut_{\Gamma}(A_{\Gamma})$ acts hyperbolically on $X_{\Gamma}$. In particular, so does $\gamma^n$, where $n$ is the order of $\psi$. We recall is equal to $\gamma^n = \varphi_{\h}$ by Lemma \ref{infiniteOrderIntoCentraliser}, where $\h = g \psi(g) \cdots \psi^{n-1}(g)$. By compatibility, this means that the element $\h$ also acts hyperbolically. In particular, $\h$ fits into one of the following cases of Proposition \ref{PropCentralisers}.

\begin{proposition4.2}
    \begin{enumerate}
    \setcounter{enumi}{2}

    \item If $g$ acts hyperbolically on $X_{\Gamma}$, we consider the transverse-tree $\mathcal{T}$ associated with $g$ (see Lemma \ref{LemmaTransverseTree}). Then up to conjugation, we have:
    \begin{enumerate}
        \item If $\mathcal{T}$ is bounded, and some axis of $g$ is contained in a standard tree of the form $X_{\Gamma}^a$, then $C(g) = \langle g_0, a \rangle \cong \mathbb{Z}^2$, where $g_0$ acts with minimal non-trivial translation length along the axes of $g$.
        \item If $\mathcal{T}$ is bounded, and no axis of $g$ is contained in a standard tree of $X_{\Gamma}$, then $C(g) = \langle g_0 \rangle \cong \mathbb{Z}$, where $g_0$ acts with minimal non-trivial translation length along the axes of $g$.
        \item If $\mathcal{T}$ is unbounded and $g$ has an axis that is contained in a standard tree of the form $X_{\Gamma}^b$, then $g = (abcabc)^k$ for some $k \neq 0$ and $C(g) = \langle b, abc \rangle$, where $a, b, c$ are three standard generators with coefficients $m_{ab} = m_{ac} = m_{bc} = 3$. In that case, the centraliser $C(g)$ is abstractly isomorphic to the dihedral Artin group $\langle x, y \ | \ xyxy = yxyx \rangle$.
        \item If $\mathcal{T}$ is unbounded and $g$ has no axis that is contained in a standard tree of $X_{\Gamma}$, then $C(g) \cong \mathbb{Z}^2$ with $\langle g, abcabc \rangle \leq_{f.i.} C(g)$, where $a, b, c$ are three standard generators with coefficients $m_{ab} = m_{ac} = m_{bc} = 3$.
    \end{enumerate}
\end{enumerate}
\end{proposition4.2}

\noindent As usual, we only have to consider automorphisms up to isogredience, by Lemma \ref{lemmaIsogredience}. By the following lemma, this means we only have to consider $\h$ up to conjugacy.

\begin{lemma}\label{isogredienceEqConjugate}
    Let $g_1, g_2 \in A_\Gamma$, let $\psi$ be an order $n$ automorphism of $A_\Gamma$, and define $\h_i \coloneqq g_i\psi(g_i)\dots\psi^{n-1}(g_i)$ for $i \in \{1, 2 \}$. If $\varphi_{g_1}\psi = \varphi_r \varphi_{g_2}\psi \varphi_{r^{-1}}$, then $\h_1 = r\h_2r^{-1}$.
\end{lemma}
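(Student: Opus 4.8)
The plan is a direct computation. We are given $\varphi_{g_1}\psi = \varphi_r \varphi_{g_2}\psi\varphi_{r}^{-1}$, and we want to extract from this the relation $\h_1 = r\h_2 r^{-1}$ on the associated elements $\h_i = g_i\psi(g_i)\cdots\psi^{n-1}(g_i)$. The key observation is that $\h_i$ is exactly the element such that $(\varphi_{g_i}\psi)^n = \varphi_{\h_i}$; this was already recorded in the proof of Lemma \ref{infiniteOrderIntoCentraliser}, where one uses $\psi^n = \mathrm{id}$ and Lemma \ref{LemmaCommuteWithInner} repeatedly to see that $(\varphi_{g_i}\psi)^n = \varphi_{g_i}\varphi_{\psi(g_i)}\cdots\varphi_{\psi^{n-1}(g_i)}\psi^n = \varphi_{g_i\psi(g_i)\cdots\psi^{n-1}(g_i)} = \varphi_{\h_i}$.

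First I would raise the hypothesis to the $n$-th power: from $\varphi_{g_1}\psi = \varphi_r(\varphi_{g_2}\psi)\varphi_r^{-1}$ we get
$$(\varphi_{g_1}\psi)^n = \varphi_r(\varphi_{g_2}\psi)^n\varphi_r^{-1}.$$
By the observation above this reads $\varphi_{\h_1} = \varphi_r\varphi_{\h_2}\varphi_r^{-1} = \varphi_{r\h_2 r^{-1}}$, using Lemma \ref{LemmaCommuteWithInner} (or directly $\varphi_r\varphi_h\varphi_r^{-1} = \varphi_{rhr^{-1}}$) for the last equality.

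It remains to pass from $\varphi_{\h_1} = \varphi_{r\h_2 r^{-1}}$ to $\h_1 = r\h_2 r^{-1}$, i.e. to argue that the map $h\mapsto\varphi_h$ from $A_\Gamma$ to $\Inn(A_\Gamma)$ is injective. This is exactly the statement that $A_\Gamma$ has trivial centre, which holds here by Remark \ref{remarkCentre}: a large-type Artin group of rank at least $3$ is irreducible and non-spherical, hence centreless by Theorem \ref{ThmTorsionAndCentres}, so $\Inn(A_\Gamma)\cong A_\Gamma$. (In the setting of Theorem \ref{mainTheoremHyperbolic} we are always in rank at least $3$; should one want the lemma in full generality one would simply invoke $Z(A_\Gamma)=1$ as a hypothesis.) Applying this to $\varphi_{\h_1}=\varphi_{r\h_2 r^{-1}}$ gives $\h_1 = r\h_2 r^{-1}$, as desired. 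There is no real obstacle here — the only thing to be careful about is the bookkeeping in the power computation $(\varphi_{g}\psi)^n = \varphi_{\h}$, which relies crucially on $\psi$ having order exactly $n$ (or at least order dividing $n$), and on the commutation rule $\psi\varphi_g = \varphi_{\psi(g)}\psi$ of Lemma \ref{LemmaCommuteWithInner}; everything else is formal.
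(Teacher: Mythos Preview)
Your proof is correct. The paper takes a slightly different route: it first extracts from the hypothesis the relation $g_1 = rg_2\psi(r)^{-1}$ (by rewriting $\varphi_r\varphi_{g_2}\psi\varphi_{r^{-1}} = \varphi_{rg_2\psi(r)^{-1}}\psi$), then substitutes this into the defining product $\h_1 = g_1\psi(g_1)\cdots\psi^{n-1}(g_1)$ and observes that the factors $\psi^k(r)^{-1}\psi^k(r)$ telescope to give $r\h_2 r^{-1}$. You instead raise the isogredience relation itself to the $n$-th power and invoke the identity $(\varphi_{g_i}\psi)^n = \varphi_{\h_i}$ already recorded in the proof of Lemma~\ref{infiniteOrderIntoCentraliser}. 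Both arguments ultimately rely on $A_\Gamma$ being centreless --- the paper uses it implicitly when passing from $\varphi_{g_1} = \varphi_{rg_2\psi(r)^{-1}}$ to $g_1 = rg_2\psi(r)^{-1}$, while you make this step explicit via Remark~\ref{remarkCentre}. Your route is a touch more conceptual and avoids the telescoping bookkeeping; the paper's is more self-contained in that it does not cite the proof of another lemma.
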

\begin{proof}
    Notice that, $$\varphi_{g_1}\psi = \varphi_r\varphi_{g_2}\psi\varphi_r = \varphi_{rg_2\psi(r)^{-1}}\psi,$$ so in particular $g_1 = rg_2r^{-1}$. Therefore, \begin{align*}
        \h_1 &= g_1\psi(g_1)\dots\psi^{n-1}(g_1)\\
        &= rg_2\psi(r)^{-1}\psi(r)\psi(g_2)\psi^2(r)^{-1}\dots\psi^{n-1}(r)\psi^{n-1}(g_2)\psi^n(r)^{-1}\\
        &=rg_2\psi(g_2)\dots\psi^{n-1}(g_2)r^{-1}\\
        &= r\h_2r^{-1}.
    \end{align*}
\end{proof}

\subsubsection{Hyperbolic automorphisms involving no global inversion}

In this section we compute $\Fix(\gamma)$ when $\gamma$ acts hyperbolically on $X_{\Gamma}$. We also assume that $\gamma$ does not involve $\iota$, i.e. we can write $\gamma = \varphi_g \sigma$. By Lemma \ref{infiniteOrderIntoCentraliser} we have
$$\langle \h \rangle \leq \Fix(\gamma) \leq C(\h),$$
where $\h \coloneqq g \sigma(g) \cdots \sigma^{n-1}(g)$ acts hyperbolically on $X_{\Gamma}$. We will treat the different possibilities for $\h$ by following Proposition \ref{PropCentralisers}.(3). The interesting cases are those where $C(\h)$ is strictly bigger than $\mathbb{Z}$, that is where $\h$ is as in cases 3.1, 3.2 and 3.4 of Proposition \ref{PropCentralisers}.

\begin{lemma}\label{LemmaCase3.1Hyperbolic}
    Suppose $\h$ is as in case 3.1 of Proposition \ref{PropCentralisers}, so $\langle \h, a \rangle \cong \mathbb{Z}^2$ is a finite index subgroup of $C(\h)$, where $a$ is a standard generator. Then $\Fix(\gamma) = C(\h) \cong \mathbb{Z}^2$.
\end{lemma}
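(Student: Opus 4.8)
We already know from Lemma~\ref{infiniteOrderIntoCentraliser} that $\langle \h \rangle \leq \Fix(\gamma) \leq C(\h)$ and that $\gamma$ restricts to an automorphism of $C(\h)$. By the hypothesis (case 3.1 of Proposition~\ref{PropCentralisers}), $C(\h) \cong \mathbb{Z}^2$, with $\langle \h, a\rangle$ a finite index subgroup, $a$ a standard generator whose standard tree $X_\Gamma^a$ contains an axis of $\h$. So the whole problem reduces to understanding the restriction $\gamma|_{C(\h)}$ as an element of $\Aut(\mathbb{Z}^2) = GL_2(\mathbb{Z})$, and showing it is the identity. The key point is that $\gamma|_{C(\h)}$ already fixes $\h$, so it fixes a rank-one subgroup; I must produce a \emph{second} independent fixed element to conclude (as soon as an automorphism of $\mathbb{Z}^2$ fixes a rank-two subgroup, it fixes everything, exactly as used in the proof of Lemma~\ref{dihedralInfiniteOrder}).

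\textbf{Step 1: locate $a$ inside $C(\h)$ geometrically.} Since $\gamma = \varphi_g\sigma$ acts hyperbolically and $\gamma^n = \varphi_\h$, the element $\h$ acts hyperbolically with $Min(\h) \cong \mathcal{T}\times\mathbb{R}$ and $\mathcal T$ bounded; one axis lies in the standard tree $X_\Gamma^a$. The automorphism $\gamma$ preserves $Min(\h)$ (by Corollary~\ref{corPreserveFixMinsets}, since $\h \in \Fix(\gamma)$), hence permutes the axes of $\h$ and acts on the bounded tree $\mathcal T$. A bounded tree has a canonical centre (circumcentre), which $\gamma$ must fix; I would argue this forces $\gamma$ to preserve \emph{the} axis of $\h$ lying in a standard tree — or at worst a finite $\gamma$-orbit of such axes, but since $\gamma|_{C(\h)}$ has infinite order only in its $\h$-direction I can pass to a power. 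Concretely: the element $a$ spans the ``$\mathcal T$-direction'' complement to $\langle\h\rangle$ inside $C(\h)\otimes\mathbb{Q}$, so it suffices to show $\gamma(a) \in \langle a, \h\rangle$ acts on $\mathcal T$ trivially, i.e. $\gamma(a) = a^{\pm1}\h^{m}$ for some $m$.

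\textbf{Step 2: rule out the sign and the inversion using height / standard-tree rigidity.} Writing $\gamma(a) = a^{\varepsilon'}\h^m$ with $\varepsilon'=\pm1$: since $\gamma=\varphi_g\sigma$ preserves height and $\h$ has nonzero height while $ht(a)=1$, comparing heights of $\gamma(a)$ and $a$ pins down a relation between $\varepsilon'$ and $m$. To get $\varepsilon'=1$ and $m=0$, I would use that $\gamma$ sends the standard tree $X_\Gamma^a = X_\Gamma^{\varphi_a}$ to $X_\Gamma^{\gamma(a)}$; if $\gamma(a)=a^{-1}\h^m$ this standard tree must equal $X_\Gamma^a$ (both contain a common axis of $\h$), forcing $a^{-1}\h^m$ to be conjugate into $\langle a\rangle$, which height and the structure of $C(\h)$ forbid unless... — this is the delicate bookkeeping. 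Once $\gamma(a)=a$, together with $\gamma(\h)=\h$ the restriction $\gamma|_{C(\h)}$ fixes the finite-index subgroup $\langle\h,a\rangle$, hence fixes all of $C(\h)$, so $\Fix(\gamma)=C(\h)\cong\mathbb{Z}^2$, with $\langle\h,a\rangle\leq_{f.i.}\Fix(\gamma)$.

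\textbf{Main obstacle.} The crux is Step 1--2: showing $\gamma$ cannot ``shear'' the $\mathbb{Z}^2=C(\h)$ along the $\h$-direction, i.e. ruling out $\gamma|_{C(\h)} = \begin{psmallmatrix}1 & m\\ 0 & 1\end{psmallmatrix}$ with $m\neq 0$, and cannot flip the $a$-direction. The shearing is excluded because $\gamma$ preserves height and $a,\h$ have definite heights (a nontrivial shear would change $ht(\gamma(a))$); the flip $\gamma(a)=a^{-1}$ must be excluded because $\gamma=\varphi_g\sigma$ involves no global inversion — so $\sigma(a)$ is again a standard generator and one tracks that $\gamma$ maps the positive cone of $C(\h)$ (detected by height, or by the orientation of axes in $X_\Gamma^a$) to itself. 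Assembling these into a clean argument, rather than a case-by-case $GL_2(\mathbb{Z})$ chase, is where the real work lies.
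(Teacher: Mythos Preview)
Your overall plan matches the paper's: reduce to showing $a\in\Fix(\gamma)$, then use that an automorphism of $\mathbb{Z}^2$ fixing a finite-index subgroup is the identity. The gap is in Step 2. You assert ``$\h$ has nonzero height'' to make the height comparison rule out the shear, but this is not justified and can be false: $\h = g\sigma(g)\cdots\sigma^{n-1}(g)$ has height $n\cdot ht(g)$, and nothing prevents $ht(g)=0$. When $ht(\h)=0$, the equation $ht(\gamma(a)) = ht(a)$ becomes $\varepsilon' = 1$ with $m$ completely unconstrained, so height alone cannot exclude $\gamma(a) = a\h^m$ with $m\neq 0$. Your fallback (``$X_\Gamma^{\gamma(a)}$ must equal $X_\Gamma^a$\ldots forcing $a^{-1}\h^m$ to be conjugate into $\langle a\rangle$'') is the right instinct but you do not carry it through.

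The paper bypasses the $GL_2$ bookkeeping by using the \emph{type} of $\gamma(a)$ rather than its height. Since $\gamma = \varphi_g\sigma$, the element $\gamma(a) = g\sigma(a)g^{-1}$ is a conjugate of a standard generator, hence has type $1$ and $X_\Gamma^{\gamma(a)}$ is a standard tree. Because $\h\in\Fix(\gamma)$ one gets $[\h,\gamma(a)] = \gamma([\h,a]) = 1$, so both $a$ and $\gamma(a)$ lie in the abelian group $C(\h)$ and in particular commute with each other; the Tits alternative of \cite{martin2022tits} then forces $X_\Gamma^a \cap X_\Gamma^{\gamma(a)} \neq \emptyset$. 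At a type-$2$ vertex $v$ of this intersection one has $\gamma(a) \in C_{G_v}(a) = \langle a, z_v\rangle$ where $z_v$ generates the centre of the dihedral parabolic $G_v$, so $\gamma(a) = a^n z_v^m$. Now the type-$1$ constraint (not height!) forces $m=0$, and only then does height give $n=1$. This is exactly the ``delicate bookkeeping'' you flagged; the missing idea is to work locally at a type-$2$ vertex and invoke type rather than trying to resolve everything inside the abstract $\mathbb{Z}^2$.
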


\begin{proof}
    The subgroup $\langle \h, a \rangle$ has finite index in $C(\h)$, so in order to prove that $\gamma$ fixes $C(\h)$ it is enough to prove that it fixes $\langle \h, a \rangle$, as only the identity fixes a finite index subgroup of $\mathbb{Z}^2$. We already know that $\h \in \Fix(\gamma)$, so our goal will be to show that $a \in \Fix(\gamma)$ as well. Recall that $\gamma = \varphi_g \sigma$, so $\gamma(a) = g \sigma(a) g^{-1}$ is a conjugate of the standard generator $\sigma(a)$, and $X_{\Gamma}^{\gamma(a)}$ is a standard tree.
    \medskip

    \noindent \underline{Claim:} $\h$ and $\gamma(a)$ commute, and $X_{\Gamma}^{\gamma(a)} \cap X_{\Gamma}^a \neq \emptyset$.
    \medskip

    \noindent \underline{Proof of the Claim:} Because $\h$ is fixed by $\gamma$, we have
    $$[\h, \gamma(a)] = [\gamma(\h), \gamma(a)] = \gamma([\h, a]) = \gamma(1) = 1.$$
    Note that this means both $a$ and $\gamma(a)$ belong to $C(\h)$. Since $C(\h)$ is abelian, it cannot be that $\langle a, \gamma(a) \rangle$ is a non-abelian free group. Using \cite[Proposition C]{martin2022tits}, it follows that $X_{\Gamma}^{\gamma(a)}$ and $X_{\Gamma}^a$ intersect. This finishes the proof of the Claim.
    \medskip

    As any two standard trees which intersect, $X_{\Gamma}^{\gamma(a)} \cap X_{\Gamma}^a$ contains a type $2$ vertex. We call this vertex $v$, and we call $G_v$ its local group, which is a dihedral parabolic subgroup of $A_{\Gamma}$. Because $v \in X_{\Gamma}^{\gamma(a)}$, we have $\gamma(a) \in G_v$. As $a, \gamma(a) \in C(\h)$ and $C(\h)$ is abelian then $a$ and $\gamma(a)$ must commute, which yields $\gamma(a) \in C(a) \cap G_v = C_{G_v}(a)$.

    By Proposition \ref{PropCentralisers} again (applied to $G_v$), we know that $C_{G_v}(a) = \langle a, z \rangle$, where $z$ is the element generating the centre of $G_v$. In particular, the above implies that $\gamma(a) = a^n z^m$ for some $n, m \in \mathbb{Z}$. Because $\gamma(a)$ has type $1$, the only possibility is that $m = 0$ (see \cite{vaskou2022acylindrical}, and more precisely Case 2 in the proof of Proposition E.(2)). Comparing heights, we then obtain $n = 1$, and finally $\gamma(a) = a$, as wanted.
\end{proof}

Both remaining cases only occur when $\Gamma$ contains a triangle with all edges labelled by $3$. In this case there are exotic dihedral Artin subgroups (see Definition \ref{DefiExotic}). Up to conjugation and permuting $a$, $b$ and $c$, the centre of such a subgroup is generated by $abcabc$ (assuming maximality, see Theorem \ref{TheoremExotic}). The following technical lemma will play an important role in both remaining cases.

\begin{lemma}\label{lemabcabcfixed}
    Suppose $a$, $b$, $c$ are distinct generators of $A_\Gamma$ with $m_{ab} = m_{bc} = m_{ac} = 3$. Then $\gamma(abcabc) = abcabc$ if and only if one of the following three conditions are met:

    \begin{enumerate}
        \item $\sigma(a) = a$, $\sigma(b) = b$, $\sigma(c) = c$, and $g \in \langle b, abc \rangle = C(abcabc)$.

        \item $\sigma(a) = b$, $\sigma(b) = c$, $\sigma(c) = a$, and $ga^{-1} \in \langle b, abc \rangle$.

        \item $\sigma(a) = c$, $\sigma(b) = a$, $\sigma(c) = b$, and $gc \in \langle b, abc \rangle$.
    \end{enumerate}
\end{lemma}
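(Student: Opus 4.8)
The statement says $\gamma(abcabc) = abcabc$ is equivalent to a list of three conditions, each pairing a constraint on $\sigma$ with a coset condition on $g$. The backward direction is a routine verification: in each of the three cases one checks directly that $\gamma = \varphi_g\sigma$ sends $abcabc$ to itself. For case (1), $\sigma$ fixes $abcabc$, so $\gamma(abcabc) = g(abcabc)g^{-1}$, which equals $abcabc$ exactly when $g \in C(abcabc)$; by Proposition \ref{PropBassSerreExotic} (or Proposition \ref{PropCentralisers}.3.3) we have $C(abcabc) = \langle b, abc\rangle$. For cases (2) and (3), I would first compute $\sigma(abcabc)$ explicitly: if $\sigma$ is the $3$-cycle $a\mapsto b\mapsto c\mapsto a$, then $\sigma(abcabc) = bcabca = a^{-1}(abcabc)a$, and similarly the other $3$-cycle gives $\sigma(abcabc) = cabcab = c(abcabc)c^{-1}$. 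So $\gamma(abcabc) = g\sigma(abcabc)g^{-1} = (ga^{-1})(abcabc)(ga^{-1})^{-1}$ in case (2), which is $abcabc$ iff $ga^{-1} \in C(abcabc) = \langle b, abc\rangle$; case (3) is analogous with $gc$ in place of $ga^{-1}$.

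\textbf{The forward direction.} Suppose $\gamma(abcabc) = abcabc$. The key point is to pin down $\sigma$. Here I would use the parabolic-closure / type machinery: $z := abcabc$ generates the centre of a maximal exotic dihedral Artin subgroup $H = \langle b, abc\rangle$ (Theorem \ref{TheoremExotic}, Proposition \ref{PropBassSerreExotic}), and $\gamma$ sends $z$ to $z$, hence $\gamma$ preserves the centraliser $C(z) = H$ and also the parabolic closure data attached to $z$. Since $\gamma = \varphi_g\sigma$ and $\varphi_g$ is inner, $\sigma$ must send the conjugacy class of $z$ to itself; combined with the fact that $z = abcabc$ lies in the parabolic $A_{\{a,b,c\}}$ and that $\sigma$ is a graph automorphism (so it permutes the standard generators and preserves parabolic subgroups and their types), $\sigma$ must preserve the set $\{a,b,c\}$ (this is the triangle with all labels $3$ that supports $z$ — one can see $\{a,b,c\}$ is determined by $z$ via its parabolic closure and the uniqueness in Theorem \ref{TheoremExotic}). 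So $\sigma$ restricts to a permutation of $\{a,b,c\}$. Now I would rule out the transpositions: if $\sigma$ is a transposition, say $\sigma(a) = b$, $\sigma(b) = a$, $\sigma(c) = c$, then $\sigma(abcabc) = bacbac$, and I claim $bacbac$ is not conjugate to $abcabc$ in $A_\Gamma$ — this can be checked either by a height/abelianisation-type invariant inside $A_{\{a,b,c\}}$ or, more robustly, by observing that $bacbac$ and $abcabc$ generate centres of distinct (non-conjugate) exotic subgroups, or by direct computation in the dihedral quotient; this is the step I expect to require the most care. Once only the identity and the two $3$-cycles survive for $\sigma|_{\{a,b,c\}}$, each case reduces to the coset condition: from $\gamma(z) = z$ we get $g\,\sigma(z)\,g^{-1} = z$, and substituting the computed value of $\sigma(z)$ (namely $z$, $a^{-1}za$, or $czc^{-1}$) yields exactly $g \in C(z)$, $ga^{-1}\in C(z)$, or $gc \in C(z)$ respectively, with $C(z) = \langle b, abc\rangle$.

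\textbf{Main obstacle.} The genuinely non-formal step is showing that $\sigma$ cannot act on $\{a,b,c\}$ as a transposition (and also that it must preserve $\{a,b,c\}$ at all, rather than sending $z$ to a conjugate of $z$ that involves other generators). I would handle the "$\sigma$ preserves $\{a,b,c\}$" part by appealing to the uniqueness in Theorem \ref{TheoremExotic}: the element $z$ determines the triple $(a,b,c)$ up to the symmetries of the triangle, because the parabolic closure of $z$ inside any putative presentation and the explicit form $z = abcabc$ are canonical; since $\sigma$ is type-preserving and sends $z$ to a conjugate of $z$, it must send this canonical triangle to a conjugate triangle, and being a graph automorphism it sends $\{a,b,c\}$ to $\{a,b,c\}$. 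For the transposition exclusion I would argue that a transposition $\sigma$ composed with an inner automorphism can fix $z$ only if $bacbac$ (say) is conjugate to $abcabc$, then derive a contradiction by comparing a suitable conjugacy invariant — concretely, inside $A_{\{a,b,c\}}$ one can use that $z = abcabc$ is a specific central element of an exotic dihedral subgroup and track how $\sigma$ acts on the two free factors of $\overline{C(z)} \cong \langle \bar b\rangle * \langle \overline{abc}\rangle$ from Proposition \ref{PropBassSerreExotic}: a transposition would have to swap or fix these factors in a way incompatible with preserving $z$ and the element $b$ of infinite order versus $abc$ of order $2$ in the quotient. Writing out this incompatibility cleanly is the crux of the argument.
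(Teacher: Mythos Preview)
Your backward direction and the reduction to $\sigma$ permuting $\{a,b,c\}$ are correct and essentially match the paper (the paper uses parabolic closures and \cite{paris1997parabolic} to get $\sigma(\{a,b,c\})=\{a,b,c\}$ and, crucially, $g\in A_{abc}$). The genuine gap is exactly where you flag it: ruling out the transpositions. Your proposed invariants do not do the job. Height and abelianisation cannot distinguish $abcabc$ from $bacbac$ (both have height $6$ and image $(2,2,2)$ in $\mathbb{Z}^3$). The ``non-conjugate exotic subgroups'' suggestion is circular --- that is precisely what needs proving. Your $\overline{C(z)}$ idea is more promising but, as written, is only a sketch: you would need to show that $\gamma(b)=g\sigma(b)g^{-1}$ lands in $\langle b,abc\rangle$ and then track its image in $\mathbb{Z}*(\mathbb{Z}/2\mathbb{Z})$, and it is not clear this yields a clean contradiction without substantial further work.

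The paper's argument for the transposition cases is quite different and fully geometric. Having already pinned $g\in A_{abc}$, it works inside the Deligne subcomplex $X_{abc}$ tiled by principal triangles with a system of edge colours. The equation $g\cdot bacbac\cdot g^{-1}=abcabc$ forces $g$ to send the axis $u=X_\Gamma^{bacbac}\cap X_\Gamma^a$ to an axis of $abcabc$; using the fundamental domain of $C(abcabc)\curvearrowright \mathrm{Min}(abcabc)$ one translates by some $h\in C(abcabc)$ so that $hg$ fixes $v_{ab}$, and then the colour-preservation constraint on the edges meeting $v_{ab}$ forces explicit equalities like $hg=\Delta_{ab}a^k$ and $hg=\Delta_{ab}^{-1}b^q$, giving $\Delta_{ab}^2=b^qa^{-k}$, a contradiction. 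The other transposition is handled analogously. This is the missing ingredient in your proposal.
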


\begin{proof}
    We recall that $\gamma = \varphi_g \sigma$. First assume one of the three conditions is met. Then it is straightforward to check that $\gamma(abcabc) = abcabc$. For example if $\sigma(a) = b$, $\sigma(b) = c$, $\sigma(c) = a$, and $ga^{-1} \in C(abcabc)$ then
    $$(\varphi_g \sigma)(abcabc) = g\cdot bcabca \cdot g^{-1} = ga^{-1} \cdot abcabc \cdot (ga^{-1})^{-1} = abcabc.$$
    The other two cases are similar.
    
    Let us denote by $A_{str}$ the parabolic subgroup of $A_{\Gamma}$ generated by $\{s, t, r\}$. If the element $abcabc$ is fixed by $\gamma$, then the two following parabolic closures must agree:
    \begin{itemize}
        \item that of $abcabc$, that is $A_{abc}$ ;
        \item that of $(\varphi_g \sigma)(abcabc)$, that is $g A_{\sigma(a)\sigma(b)\sigma(c)} g^{-1}$.
    \end{itemize}
    By \cite[Theorem 4.1]{paris1997parabolic}, we must have $A_{abc} = A_{\sigma(a)\sigma(b)\sigma(c)}$. We obtain $\sigma(\{a, b, c\}) = \{a, b, c\}$, as well as $g \in A_{abc}$. Now we divide into 4 cases, based on the permutation $\sigma$ induces on $\{a, b, c\}$. The identity and order 3 permutations will lead to the three cases in the statement, while the transpositions will lead to a contradiction. 
    \medskip
    
    \noindent \underline{Case 1: $\sigma$ restricts to the identity on $\{a, b, c\}$.} We get
    $$abcabc = (\varphi_g \sigma)(abcabc) = g \cdot abcabc \cdot g^{-1},$$
    which forces $g \in C(abcabc) = \langle b, abc \rangle$ (for the equality, see \cite[Lemma 4.2]{vaskou2023isomorphism}).
    \medskip

    \noindent \underline{Case 2: $\sigma$ induces an order $3$ permutation of $\{a, b, c\}$.} We assume $\sigma(a) = b$, $\sigma(b) = c$ and $\sigma(c) = a$, the other case is similar. We get
    $$abcabc = (\varphi_g \sigma)(abcabc) = g \cdot bcabca \cdot g^{-1} = (ga^{-1}) \cdot abcabc \cdot (ga^{-1})^{-1},$$
    which is satisfied if and only if $ga^{-1} \in C(abcabc) = \langle b, abc \rangle$.
    \medskip

    \noindent \underline{Case 3: $\sigma$ fixes one of $a$ or $c$ and swaps the two other standard generators.} Suppose without loss of generality that $\sigma(a) = b$, $\sigma(b) = a$ and $\sigma(c) = c$. We may do this, because if $b$ and $c$ were swapped, then we may pass to the isomorphic copy of $A_\Gamma$ under $\iota$, so $\langle abcabc, b \rangle$ becomes $\langle (cbacba)^{-1}, b^{-1} \rangle$. By taking the inverses of the generators this is $\langle cbacba, b \rangle$. We now proceed  with the rest of the argument in this isomorphic copy of $A_\Gamma$, where swapping $b$ and $c$ has become equivalent to swapping $a$ and $b$ in the old presentation. 

    We have
    $$abcabc = (\varphi_g \sigma)(abcabc) = g \cdot bacbac \cdot g^{-1}. \ \ (*)$$
    We will show that such an element $g$ cannot exist, which will provide a contradiction. Our argument partially relies on \cite[Section 3\&4]{vaskou2023isomorphism}. Since $\sigma$ preserves $\{a, b, c\}$ and $g \in A_{abc}$, we restrict ourselves to the Deligne subcomplex $X_{abc} \subseteq X_{\Gamma}$ corresponding to the parabolic subgroup $A_{abc}$. In this setting, $X_{abc}$ is tiled by principal triangles, and we put a system of colours on $X_{abc}$, relatively to the three orbits of edges of these principal triangles (see Definition \ref{DefinitionPrincipalTriangle}, Remark \ref{RemSystemColours}). A picture is drawn in Figure \ref{FigureCase3.4} (note that the picture can actually be made planar by following \cite[Lemma 4.12, Figure 17]{vaskou2023isomorphism}, but our argument does not rely on this fact).

    Let $u$ be the axis defined as the intersection $u \coloneqq X_\Gamma^{bacbac} \cap X_{\Gamma}^a$. The equation $(*)$ implies that $g u$ is an axis of $abcabc$. In particular, $g v_{ab}$ belongs to $Min(abcabc)$. Let $Y$ be the fundamental domain of the action $C(abcabc) \curvearrowright Min(abcabc)$. Then there is some $h \in C(abcabc)$ such that $hg v_{ab} \in Y$. Notice that $Y$ is the brown area from Figure \ref{FigureExoticMinset2}, and one can see $Y$ contains exactly four type $2$ vertices, and only one in the orbit or $v_{ab}$ (use the system of colours). In particular, it must be that $hg v_{ab} = v_{ab}$.
 
    We now exhibit the contradiction. The element $hg$ sends the line $u$ onto $hg u$ while fixing $v_{ab}$ and preserving the colouring of the edges of the two axes. Using Figure \ref{FigureCase3.4}, we can see that the only possibility is that
    $$hg e_3 = e_1 \ \ \text{ and } \ \ hg e_4 = e_2. \ \ (**)$$
    It is clear that $e_3 = e^a$ while $e_2 = e^b$. One can see using \cite[Figure 17]{vaskou2023isomorphism} that the edges $e_1$ and $e_4$ can be described as $e_1 = \Delta_{ab} e^a$ and $e_4 = \Delta_{ab} e^b$, where $\Delta_{ab} = aba$ is the usual Garside element. In particular, $(**)$ gives
    $$hg e^a = \Delta_{ab} e^a \ \ \text{ and } \ \ hg \Delta_{ab} e^b = e^b,$$
    which implies that there are some $k, q \in \mathbb{Z}$ such that
    $$hg = \Delta_{ab} a^k \ \ \text{ and } \ \ hg = \Delta_{ab}^{-1} b^q.$$
    This yields $\Delta_{ab}^2 = b^q a^{-k}$, a contradiction.
    \medskip

    \noindent \underline{Case 4: $\sigma$ fixes $b$ and swaps $a$ and $c$.} We have
    $$abcabc = (\varphi_g \sigma)(abcabc) = g \cdot cbacba \cdot g^{-1}.$$
    We proceed similarly as in Case 3 (although the picture is not planar). We obtain
    $$hg e_5 = e_1 \text{ with } e_5 = \Delta_{ab}^{-1} e^a \ \ \text{ and } \ \ hg e_6 = e_2 \text{ with } e_6 = e^b.$$
    Thus there are some $k, q \in \mathbb{Z}$ such that
    $$hg \Delta_{ab}^{-1} e^a = \Delta_{ab} e^a \ \text{ and } \ hg e^b = e^b, \ \text{ i.e. } \ hg = \Delta_{ab}^2 a^k \ \text{ and } \ hg = b^q.$$
    We finally obtain $\Delta_{ab}^2 = b^q a^{-k}$, a contradiction.
\end{proof}

\begin{figure}[H]
\centering
\includegraphics[scale=0.72]{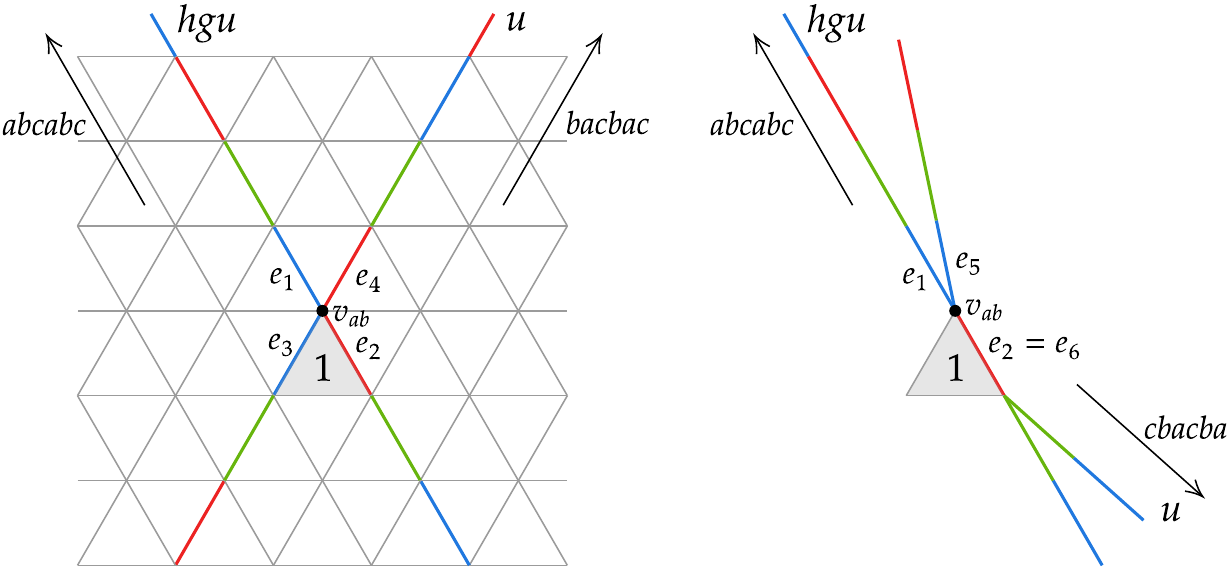}
\caption{The arguments of the proof of Lemma \ref{lemabcabcfixed}, seen as in $X_{abc}$. The principal triangle that is contained in $K_{\Gamma}$ is denoted by $1$. The edges in the orbit of $e^a$, $e^b$ and $e^c$ are drawn in blue, red and green respectively.}
\label{FigureCase3.4}
\end{figure}

\begin{lemma} \label{LemmaCase3.4Hyperbolic}
    Suppose $\h$ is as in case 3.4 of Proposition \ref{PropCentralisers}, so $\langle \h, abcabc \rangle$ is a finite index subgroup of $C(\h) \cong \mathbb{Z}^2$, where $a$, $b$ and $c$ are three standard generators satisfying $m_{ab} = m_{ac} = m_{bc} = 3$. Then
    
    \begin{enumerate}
        \item If $\sigma$ is the identity on $\{a, b, c\}$ and $g \in \langle b, abc \rangle$, then $\Fix(\gamma) = C(\h) \cong \mathbb{Z}^2$.
        \item If $\sigma(a) = b$, $\sigma(b) = c$, $\sigma(c) = a$ and $g \in \langle b, abc \rangle a$, then $\Fix(\gamma) = C(\h) \cong \mathbb{Z}^2$.
        \item If $\sigma(a) = c$, $\sigma(b) = a$, $\sigma(c) = b$ and $g \in \langle b, abc \rangle c^{-1}$, then $\Fix(\gamma) = C(\h) \cong \mathbb{Z}^2$.
        \item Otherwise, $\Fix(\gamma) \cong \mathbb{Z}$ is virtually $\langle \h \rangle$.
    \end{enumerate}
\end{lemma}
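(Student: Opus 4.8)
The plan is to combine Lemma \ref{infiniteOrderIntoCentraliser}, which traps $\Fix(\gamma)$ between $\langle \h \rangle$ and $C(\h)$, with the classification already obtained in Lemma \ref{lemabcabcfixed}, reducing the whole statement to an elementary observation about automorphisms of $\mathbb{Z}^2$. First I would invoke Lemma \ref{infiniteOrderIntoCentraliser} to get $\langle \h \rangle \leq \Fix(\gamma) \leq C(\h)$ together with the fact that $\gamma$ restricts to an automorphism of $C(\h)$. By hypothesis $C(\h) \cong \mathbb{Z}^2$ and $\langle \h, abcabc \rangle$ is a finite-index (hence rank $2$) subgroup, so $\h$ and $abcabc$ are $\mathbb{Q}$-linearly independent in $C(\h) \otimes \mathbb{Q}$.

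The key observation, which I would state and prove in one line, is that for any automorphism $\phi$ of $\mathbb{Z}^2$ the fixed subgroup $\Fix(\phi)$ has rank $0$, $1$ or $2$, and it has rank $2$ if and only if $\phi = \mathrm{id}$ (viewing $\phi$ in $\mathrm{GL}_2(\mathbb{Q})$, two $\mathbb{Q}$-independent fixed vectors already force $\phi$ to be the identity on $\mathbb{Q}^2$, hence on $\mathbb{Z}^2$). Applying this to $\gamma|_{C(\h)}$ and using that $\Fix(\gamma)$ contains $\langle \h \rangle \cong \mathbb{Z}$, we see $\Fix(\gamma)$ has rank $1$ or $2$.

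Next I would run the following dichotomy. Since $\gamma(\h) = \h$ always, we have $\langle \h, abcabc \rangle \leq \Fix(\gamma)$ precisely when $\gamma(abcabc) = abcabc$. If that holds, then $\Fix(\gamma)$ has rank $2$, so by the observation $\gamma|_{C(\h)} = \mathrm{id}$, whence $\Fix(\gamma) = C(\h) \cong \mathbb{Z}^2$. If instead $\gamma(abcabc) \neq abcabc$, then $\Fix(\gamma) \neq C(\h)$, so it has rank exactly $1$, i.e. $\Fix(\gamma) \cong \mathbb{Z}$, and being a nontrivial subgroup of a $\mathbb{Z}$ it contains $\langle \h \rangle$ with finite index, i.e. $\Fix(\gamma)$ is virtually $\langle \h \rangle$. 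Finally, I would quote Lemma \ref{lemabcabcfixed} to determine when $\gamma(abcabc) = abcabc$: this happens exactly under its three listed conditions, which are cases (1), (2), (3) of the present lemma after rewriting $g a^{-1} \in \langle b, abc \rangle$ as $g \in \langle b, abc \rangle a$ and $g c \in \langle b, abc \rangle$ as $g \in \langle b, abc \rangle c^{-1}$. Everything not covered by those three conditions — including the cases where $\sigma$ does not preserve $\{a,b,c\}$ or acts on it as a transposition — forces $\gamma(abcabc) \neq abcabc$ by Lemma \ref{lemabcabcfixed}, giving case (4).

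I do not expect a genuine obstacle here: the substantive content is already carried by Lemmas \ref{infiniteOrderIntoCentraliser} and \ref{lemabcabcfixed}. The only point requiring a little care is the bookkeeping in $\mathbb{Z}^2$ — in particular making sure that "$\Fix(\gamma)$ has rank $2$" is upgraded to "$\Fix(\gamma) = C(\h)$" via the identity-automorphism observation rather than merely "$\Fix(\gamma)$ is finite index in $C(\h)$", and checking that the three cases of Lemma \ref{lemabcabcfixed} exhaust all ways $abcabc$ can be fixed (which is automatic, since any rank-$2$ fixed subgroup makes $\gamma$ act trivially on all of $C(\h)$, hence on $abcabc$).
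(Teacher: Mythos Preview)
Your proposal is correct and follows essentially the same approach as the paper: both trap $\Fix(\gamma)$ between $\langle \h \rangle$ and $C(\h)\cong\mathbb{Z}^2$ via Lemma \ref{infiniteOrderIntoCentraliser}, observe that an automorphism of $\mathbb{Z}^2$ fixing a rank-$2$ subgroup must be the identity, and then invoke Lemma \ref{lemabcabcfixed} to decide when $abcabc$ is fixed. Your write-up is slightly more explicit about the $\mathrm{GL}_2(\mathbb{Q})$ step, but the argument is the same.
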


\begin{proof}
    We first notice that $\Fix(\gamma) \cong \mathbb{Z}^2$ if and only if $abcabc \in \Fix(\gamma)$. Indeed, the “if” direction is clear as $\h$ and $abcabc$ don't share any non-trivial powers. For the “only if” direction, note that $\Fix(\gamma)$ can be seen as a cocompact lattice in $C(\h)$. In particular, it has finite index in $C(\h) \cong \mathbb{Z}^2$, so the whole of $C(\h)$ is fixed by $\gamma$. In particular, $abcabc \in \Fix(\gamma)$. Using Lemma \ref{lemabcabcfixed} along with the above implies that $\Fix(\gamma) \cong \mathbb{Z}^2$ if and only if one of the conditions in the first three cases is met, and otherwise we are in case 4.
\end{proof}

\begin{lemma}\label{LemmaCase3.3Hyperbolic}
    Suppose $\h$ is as in case 3.3 of Proposition \ref{PropCentralisers}, i.e. $\h = (abcabc)^k$ for some $k \in \mathbb{Z}$, and $C(\h) = \langle b, abc \rangle$ is an exotic dihedral Artin subgroup of $A_{\Gamma}$. Then one of the following 4 cases occurs. In what follows, $q \in \mathbb{Z} \backslash \{0\}$.

    \begin{enumerate}
        \item $\sigma(a) = a$, $\sigma(b) = b$ and $\sigma(c) = c$, and $g = (abcabc)^q$. Moreover, $\Fix(\gamma) = C(\h)$.

        \item $\sigma(a) = c$, $\sigma(c) = b$ and $\sigma(b) = a$, and $g = (abcabc)^q ab$. Moreover, $\Fix(\gamma) = C(\h)$.

        \item $\sigma(a) = b$, $\sigma(b) = c$, $\sigma(c) = a$, and $g = (abcabc)^q c^{-1} b^{-1}$.  Moreover, $\Fix(\gamma) = C(\h)$.

        \item $\Fix(\gamma) \cong \mathbb{Z}$.
    \end{enumerate}
\end{lemma}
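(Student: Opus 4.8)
The plan is to work entirely inside the subgroup $C(\h)$. By Lemma \ref{infiniteOrderIntoCentraliser} we have $\langle \h \rangle \leq \Fix(\gamma) \leq C(\h) = \langle b, abc \rangle$ and $\gamma$ restricts to an automorphism $\bar\gamma$ of $C(\h)$, so $\Fix(\gamma) = \Fix(\bar\gamma)$ and everything reduces to understanding $\bar\gamma$. Recall from Theorem \ref{TheoremExotic} that $C(\h)$ is abstractly the dihedral Artin group $\langle x, y \mid xyxy = yxyx \rangle$, with infinite cyclic centre $Z(C(\h)) = \langle abcabc \rangle$ generated by its Garside element, and that $\h = (abcabc)^k$ with $k \neq 0$. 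Since $\bar\gamma$ preserves $Z(C(\h))$ and fixes $\h = (abcabc)^k$ with $k \neq 0$, it must fix the generator of the centre, i.e. $\gamma(abcabc) = abcabc$. Hence Lemma \ref{lemabcabcfixed} applies, and we land in one of its three configurations: $\sigma$ restricts to the identity on $\{a,b,c\}$ with $g \in C(\h)$, or $\sigma$ restricts to a $3$-cycle on $\{a,b,c\}$ with $g$ in the coset $C(\h)\cdot a$ or $C(\h)\cdot c^{-1}$ accordingly. These match up with the three configurations (1), (3), (2) of the statement.

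Next I would show that, in all three configurations, $\bar\gamma$ is an \emph{inner} automorphism of $C(\h)$. When $\sigma|_{\{a,b,c\}} = \mathrm{id}$ this is immediate, since then $\sigma$ fixes both $b$ and $abc$, so $\bar\gamma = \varphi_g|_{C(\h)}$ with $g \in C(\h)$. For the $3$-cycle configurations, write $g = g_0\cdot a$ (resp. $g = g_1 \cdot c^{-1}$) with $g_0$ (resp. $g_1$) in $C(\h)$; a direct computation using the braid relations $aba=bab$, $aca=cac$, $bcb=cbc$ shows that $\bar\gamma(abc) = g_0(abc)g_0^{-1}$ and $\bar\gamma(b) = g_0(aca^{-1})g_0^{-1}$ (and the analogue with $g_1$). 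Since $\varphi_{abc}$ fixes $abc$ and sends $b$ to $aca^{-1}$, this is precisely the statement that $\bar\gamma = \varphi_{g_0\cdot abc}|_{C(\h)}$ (resp. $\varphi_{g_1\cdot abc}|_{C(\h)}$). So in every case $\bar\gamma = \varphi_w|_{C(\h)}$ for an explicit $w \in C(\h)$, and therefore $\Fix(\gamma) = C_{C(\h)}(w)$.

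It remains to compute this centraliser, and the key observation is that $w$ always acts elliptically on the Bass--Serre tree $T$ of $C(\h)$ from Section \ref{sectionDihedral}. Indeed $\bar\gamma^{n} = \gamma^{n}|_{C(\h)} = \varphi_{\h}|_{C(\h)} = \varphi_{w^{n}}|_{C(\h)}$, so $w^{n}$ and $\h$ induce the same inner automorphism of $C(\h)$; as $\h \in Z(C(\h))$ this forces $w^{n} \in Z(C(\h))$, and since $Z(C(\h))$ stabilises every edge of $T$ and hence acts trivially on $T$, a hyperbolic $w$ would yield a hyperbolic $w^{n}$, which is impossible. So $w$ is elliptic, and Proposition \ref{dihedralCentralisers} leaves only two possibilities: either $w \in Z(C(\h))$, in which case $C_{C(\h)}(w) = C(\h)$ and, unwinding the cosets, $g$ takes the form stated in cases (1)--(3) (with $q \neq 0$ in case (1), as $g = 1$ would make $\gamma = \sigma$ elliptic, contradicting hyperbolicity); or $w$ is conjugate in $C(\h)$ to an odd power of $abc$, in which case $C_{C(\h)}(w) \cong \mathbb{Z}$ is the maximal cyclic subgroup containing $w$ and we are in case (4), where $\langle \h \rangle$ sits with finite index in $\Fix(\gamma)$. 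I expect the main obstacle to be the computation identifying $\bar\gamma$ with an inner automorphism of $C(\h)$ in the $3$-cycle configurations: it depends essentially on $\gamma$ genuinely preserving $C(\h)$ (which Lemma \ref{infiniteOrderIntoCentraliser} provides) together with a careful use of the braid relations, and once this is done the remaining centraliser computation is short.
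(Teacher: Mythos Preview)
Your approach is correct and genuinely different from the paper's. The paper works geometrically inside $\minset(\h)\subseteq X_\Gamma$: it locates an axis $u$ of $\gamma$ passing through the fundamental domain, reads off which coloured edge $\gamma\cdot e^b$ lands on, and then uses a translation--length and height comparison to pin down $g$. You instead stay entirely algebraic inside $C(\h)\cong A_4$: after Lemma~\ref{lemabcabcfixed} forces $\sigma|_{\{a,b,c\}}$ to be trivial or a $3$-cycle and places $g$ in the right coset, a short braid computation shows $\bar\gamma=\varphi_w|_{C(\h)}$ with $w\in C(\h)$ explicit (indeed $\varphi_{abc}(b)=aca^{-1}=c^{-1}ac$ is the only identity one needs), and then Proposition~\ref{dihedralCentralisers} finishes the job once you observe $w^n\in Z(C(\h))$ forces $w$ to be elliptic on the Bass--Serre tree.

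What each approach buys: the paper's geometric argument makes the connection with $\minset(\h)$ and the coloured picture of Figure~\ref{FigureExoticMinset2} transparent, and recovers the exact element $g$ together with $q$ via the translation length $6q$, $6q\pm 2$. Your argument is shorter and more conceptual --- the dichotomy ``$\Fix(\gamma)=C(\h)$'' versus ``$\Fix(\gamma)\cong\mathbb{Z}$'' becomes exactly ``$w$ central'' versus ``$w$ conjugate to an odd power of $abc$'', and the unwinding to the stated form of $g$ is a one--line coset computation. Your remark that $q\neq 0$ is only forced in configuration~(1) (where $q=0$ would make $\gamma=\sigma$ elliptic) is accurate; in configurations~(2)--(3) the value $q=0$ still gives a hyperbolic $\gamma$ with $\Fix(\gamma)=C(\h)$, so your argument actually handles those boundary cases more cleanly than the paper's phrasing suggests.
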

    
\begin{proof}
    Before proceeding to the main argument, we argue that $abcabc \in \Fix(\gamma)$. By Lemma \ref{infiniteOrderIntoCentraliser}, $\gamma$ restricts to an automorphism of $C((abcabc)^k) = \langle abc, b \rangle$. Therefore, it restricts to an automorphism of the centre, which is the cyclic group $\langle abcabc \rangle$ (this follows from the centraliser structure of even dihedral Artin groups, for instace Proposition \ref{dihedralCentralisers}). Since, again by Lemma \ref{infiniteOrderIntoCentraliser}, $(abcabc)^k \in \Fix(\gamma)$, it must be that the induced automorphism on $\langle abcabc \rangle$ is trivial, and $abcabc \in \Fix(\gamma)$ as required.

    We recall that $X_{\Gamma}^{\h}$ splits as a direct product of the form $\mathcal{T} \times \mathbb{R}$. The structure of $\mathcal{T}$ and the action of $C(\h)$ on $X_{\Gamma}^{\h}$ and on $\mathcal{T}$ are described in Proposition \ref{PropBassSerreExotic} and in Figure \ref{FigureExoticMinset1}. We adopt the same notation as in this figure, and we let $Y \subseteq X_{\Gamma}^{\h}$ be the fundamental domain of the action of $C(\h)$ on $X_{\Gamma}^{\h}$, as drawn on Figure \ref{FigureExoticMinset2}.

    By Corollary \ref{corPreserveFixMinsets}, $\gamma$ acts on $X_\Gamma^{\h}$. Moreover, $X_\Gamma^{\h}$ contains an axis $w$ of translation of $\gamma$. This comes from the fact that $\gamma$ acts hyperbolically on $X_\Gamma$, and all its axes are axes of $\gamma^n = \varphi_\h$, which acts like $\h$ by compatibility. Up to conjugating $\gamma$ by $\varphi_r$ for some appropriate $r \in C(\h)$, we may assume that $w$ goes through $Y$, and in particular it is a vertical axis contained in the strip $[u, v]$ (see Figure \ref{FigureExoticMinset2}).

    First suppose that $w$ is neither $u$ nor $v$. Then, because the action is by combinatorial isometries, it must be that $u$ is also an axis of $\gamma$ (actually, we have $[u, v] \subseteq X_{\Gamma}^{\gamma}$).

    Suppose now that $w = v$, and that $u$ is not an axis of $\gamma$. By the above, it must be that the only axis of $\gamma$ is $v$. Now, by Corollary \ref{fixAction}, we have $\Fix(\gamma) \leq \Stab(v)$. But $\Stab(v) \cong \mathbb{Z}$ by \cite[Lemma 3.23]{vaskou2023isomorphism} (actually, $\Stab(v) = \langle abc \rangle$). Thus we have $\langle abcabc \rangle \leq \Fix(\gamma) \leq \langle abc \rangle$, so we are in Case 4 of the lemma.

    From now we assume that $v$ is not the only axis of $\gamma$. By the above, this means $u$ is an axis of $\gamma$. Notice that $\Stab(u) = \langle b, abcabc \rangle \cong \mathbb{Z}^2$ (see Figure \ref{FigureExoticMinset2}). We consider the translation length of elements of $\Stab(u)$ along $u$, with respect to the combinatorial metric $d^C$. Note that the translation length of $abcabc$ is $6$. Recall that if $n$ is the order of $\sigma$ in $\Aut_{\Gamma}(A_{\Gamma})$ then $\gamma^n = \varphi_{\h}$. Since $\h = (abcabc)^k$, the automorphism $\gamma^n$ acts with translation length $6k$.

    We now split the proof into 3 cases, based on the automorphism induced by $\sigma$ on $\{a, b, c\}$. Since $abcabc \in \Fix(\gamma)$, we can use Lemma \ref{lemabcabcfixed}: $\sigma$ acts on ${a,b,c}$ either trivially or with order 3. In what follows, we heavily refer to Figure \ref{FigureExoticMinset2}.
    \medskip

    \noindent\underline{Case 1: $\sigma$ is the identity on $\{a,b,c\}$.} Then $\sigma$ acts trivially on the Deligne subcomplex $X_{abc}$. In particular, $\gamma = \varphi_g \sigma$ acts like $g$ on $X_{abc}$, by compatibility. Let $e^b$ be the principal edge defined by $e^b = K \cap u$. Since $\gamma$ acts by translations on $u$, it must be that $\gamma \cdot e^b = g e^b$ is also an edge of $u$. By our system of colour, this edge is in the red-coloured orbit. But this orbit of principal edges is precisely $\langle abcabc \rangle e^b$. Consequently, there is some $q \in \mathbb{Z}$ such that $g e^b = (abcabc)^q e^b$. Note that $q \neq 0$ because $g$ acts hyperbolically on $u$. This means that $(abcabc)^{-q} g \in \langle b \rangle$, and more precisely, that there is some $i \in \mathbb{Z}$ such that $g = (abcabc)^q b^i$. It follows that the translation length of $\gamma$ along $u$ is $6q$. But the translation length of $\gamma^n$ is $6k$, so we must have $k = nq$. We can now compare heights:
    $$6nq = 6k = ht((abcabc)^k) = ht(\h) = ht(g \sigma(g) \cdots \sigma^{n-1}(g)) = n \cdot ht(g) = n (6q +i),$$
    where we used that $\sigma$ preserves height. This forces $i = 0$, and thus $g = (abcabc)^q$, as wanted.

    \medskip
    \noindent\underline{Case 2: $\sigma(a) = c$, $\sigma(b) = a$, $\sigma(c) = b$.} 
    This time, because $\sigma(b) = a$, the edge $\gamma \cdot e^b = g e^a$ is a blue-coloured edge of $u$. But the blue-coloured orbit of edges in $u$ is exactly $\langle abcabc \rangle ab e^a$: this is because the only blue edge in $Y$ is $ab e^a$, and the other edges are obtained by $(abcabc)$-translations. We obtain the equation $g e^a = (abcabc)^q ab e^a$ for some $q \in \mathbb{Z}$. This yields $g = (abcabc)^q ab a^i$ for some $i \in \mathbb{Z}$. Once again, $q \neq 0$ or $g$ would be elliptic. This time, the translation length of $g$ is $6q + 2$. In particular, the translation length of $\gamma^n = \varphi_g$ is both $n(6q+2)$ and $6k$. As before, comparing heights gives:
    $$n(6q+2) = 6k = ht(\h) = n \cdot ht(g) = n(6q +2 +i).$$
    Once again, this gives $i = 0$ and $g = (abcabc)^q ab$.

    \medskip
    \noindent\underline{Case 3: $\sigma(a) = b$, $\sigma(b) = c$, $\sigma(c) = a$.} This case is almost identical to the previous one. We obtain $g = (abcabc)^q c^{-1}b^{-1} c^i$ for some $q \neq 0$. The translation length of $g$ is $6q -2$ and its height is $6q-2+i$, so as before we conclude that $i = 0$ and $g = (abcabc)^q c^{-1}b^{-1}$.

    \medskip
    Finally, we want to prove that $\Fix(\gamma) = C(abcabc)$ in each of the three previous cases. By Lemma \ref{infiniteOrderIntoCentraliser}, we only need to show $C(abcabc) \leq \Fix(\gamma)$. Recall that $C(abcabc) = \langle b, abc \rangle$, thus we only show that $abc$ and $b$ are fixed by $\gamma$. We only do the computations for Case 2, the other two cases being very similar. We have $\sigma(a) = c$, $\sigma(b) = a$ and $\sigma(c) = b$, and we now know that $g = (abcabc)^q ab$. We calculate,
    \begin{align*}
        \gamma(abc) = (\varphi_{(abcabc)^q ab}\sigma)(abc) &= (abcabc)^q ab \cdot cab \cdot (ab)^{-1} (abcabc)^{-q}\\
        &= (abcabc)^q abc (abcabc)^{-q}\\
        &= abc,
    \end{align*}
    and
    \begin{align*}
        \gamma(b) = (\varphi_{(abcabc)^q ab}\sigma)(b) &= (abcabc)^q ab \cdot a \cdot (ab)^{-1} (abcabc)^{-q}\\
        &= (abcabc)^q b (abcabc)^{-q}\\
        &= b,
    \end{align*}
    where in the penultimate equality we use that $aba = bab$.
\end{proof}

\begin{figure}[H]
    \centering
    \includegraphics[scale=0.9]{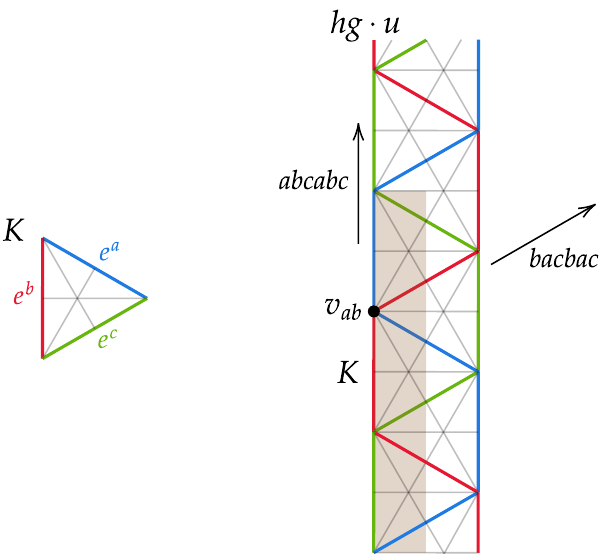}
    \caption{The arguments of the proof of Lemma \ref{LemmaCase3.3Hyperbolic}. See Figure \ref{FigureExoticMinset1} for more details.}
    \label{FigureExoticMinset2}
\end{figure}

\subsubsection{Hyperbolic automorphisms involving the global inversion}

Finally, in this section we compute $\Fix(\gamma)$ when $\gamma$ acts hyperbolically on $X_{\Gamma}$ and does involve the global inversion. In other words, $\gamma = \varphi_g \sigma \iota$. The classification given by Proposition \ref{PropCentralisers} comes very handy because of how the elements described interact with height. We begin with the following observation:

\begin{lemma} \label{LemmaHeightMustBe0}
    If $h \in \Fix(\gamma)$ then $ht(h) = 0$.
\end{lemma}

\begin{proof}
    This is because $\varphi_g$ and $\sigma$ preserve height while $\iota$ reverses height. Thus we have
    $$ht(h) = ht((\varphi_g \sigma \iota)(h)) = -ht(h).$$
\end{proof}

\begin{proposition} \label{PropFixIotagIsVirtuallyZ}
    Let $\gamma = \varphi_g \sigma \iota$ act hyperbolically on $X_\Gamma$, let $n$ be the order of $\psi \coloneqq \sigma \iota$ and let $\h \coloneqq g \psi(g) \cdots \psi^{n-1}(g)$. Then $\Fix(\gamma) \cong \mathbb{Z}$, and it is virtually $\langle \h \rangle$.
\end{proposition}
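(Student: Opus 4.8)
The plan is to play the two constraints we already have on $\Fix(\gamma)$ against the classification of centralisers of hyperbolic elements. On the one hand, Lemma \ref{infiniteOrderIntoCentraliser} gives $\langle \h \rangle \leq \Fix(\gamma) \leq C(\h)$, with $\gamma$ restricting to an automorphism of $C(\h)$; on the other hand, Lemma \ref{LemmaHeightMustBe0} gives $\Fix(\gamma) \leq \ker(ht)$. First I would record the routine preliminaries: since the action $\Aut_{\Gamma}(A_{\Gamma}) \curvearrowright X_{\Gamma}$ is compatible and $\gamma^n = \varphi_{\h}$ (as $\psi$ has order $n$), the element $\h$ acts on $X_{\Gamma}$ exactly as $\gamma^n$ does, hence hyperbolically; in particular $\h \neq 1$, so $\langle \h \rangle \cong \mathbb{Z}$, and $\h$ falls under case $3$ of Proposition \ref{PropCentralisers}. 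Applying Lemma \ref{LemmaHeightMustBe0} to $\h$ itself (which lies in $\Fix(\gamma)$) also gives $ht(\h) = 0$.

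Next I would run through the four subcases of Proposition \ref{PropCentralisers}.(3), which describe $C(\h)$ up to conjugacy; conjugating $\h$ by an element of $A_{\Gamma}$ changes neither $ht$-values nor isomorphism types nor subgroup indices, so we may assume $C(\h)$ is literally in the stated form. In case $3.2$ we have $C(\h) \cong \mathbb{Z}$, so $\langle \h \rangle \leq \Fix(\gamma) \leq \mathbb{Z}$ forces $\Fix(\gamma) \cong \mathbb{Z}$ with $\langle \h \rangle$ of finite index, since every non-trivial subgroup of $\mathbb{Z}$ has finite index. Case $3.3$ is the one where the global inversion does the work: there $\h = (abcabc)^k$ with $k \neq 0$, hence $ht(\h) = 6k \neq 0$, contradicting $ht(\h) = 0$; so this case cannot occur. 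In cases $3.1$ and $3.4$ we have $C(\h) \cong \mathbb{Z}^2$, and $C(\h)$ contains an element of non-zero height — a standard generator $a$ with $ht(a) = 1$ in case $3.1$, and $abcabc$ with $ht(abcabc) = 6$ in case $3.4$ — so $ht$ restricts to a non-zero homomorphism $C(\h) \to \mathbb{Z}$. Then $K \coloneqq C(\h) \cap \ker(ht)$ is the kernel of a non-zero homomorphism $\mathbb{Z}^2 \to \mathbb{Z}$, hence of rank $1$, i.e. $K \cong \mathbb{Z}$; it contains the non-trivial element $\h$, so $\langle \h \rangle \leq_{f.i.} K$, and from $\langle \h \rangle \leq \Fix(\gamma) \leq K$ we conclude $\Fix(\gamma) \cong \mathbb{Z}$ with $\langle \h \rangle$ of finite index. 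This exhausts all cases and proves the proposition.

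I do not expect a genuine obstacle here; the argument is a short interaction between the height homomorphism $ht$ and Proposition \ref{PropCentralisers}. The one point requiring a moment's thought — and arguably the content of the statement — is the elimination of case $3.3$, i.e. the observation that when $\gamma$ genuinely involves $\iota$ the element $\h$ cannot be a non-trivial power of $abcabc$ (its height would be non-zero, but $\h \in \Fix(\gamma)$). Everything else is elementary subgroup arithmetic in $\mathbb{Z}$ and $\mathbb{Z}^2$, together with the bookkeeping that lets us replace $\h$ by a conjugate when invoking Proposition \ref{PropCentralisers}.
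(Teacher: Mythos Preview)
Your proposal is correct and follows essentially the same approach as the paper's own proof: both use Lemma \ref{infiniteOrderIntoCentraliser} to sandwich $\Fix(\gamma)$ between $\langle \h \rangle$ and $C(\h)$, invoke Lemma \ref{LemmaHeightMustBe0} to force $\Fix(\gamma) \leq \ker(ht)$, and then run through the four subcases of Proposition \ref{PropCentralisers}.(3), eliminating case 3.3 via $ht(\h) = 6k \neq 0$ and handling cases 3.1 and 3.4 by exhibiting an element of non-zero height in $C(\h)$ so that $\ker(ht) \cap C(\h) \cong \mathbb{Z}$. Your write-up is in fact slightly more careful than the paper's in spelling out why $\h$ is hyperbolic and why conjugation is harmless for the height argument.
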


\begin{proof}
We know by Lemma \ref{infiniteOrderIntoCentraliser} that $\langle \h \rangle \leq \Fix(\gamma) \leq C(\h)$. The element $\h$ must take one of the four forms of hyperbolic elements described in Proposition \ref{PropCentralisers}. We proceed case by case:
\medskip

\noindent \underline{Case 3.3.} In this case, we must have $\h = (abcabc)^k$ for some $k \neq 0$. In particular, $ht(\h) = 6k \neq 0$. This contradicts Lemma \ref{LemmaHeightMustBe0}, as $\h \in \Fix(\gamma)$.
\medskip

\noindent \underline{Case 3.2.} Then we have $\mathbb{Z} \cong \langle \h \rangle \leq \Fix(\gamma) \leq C(\h) \cong \mathbb{Z}$. So $\Fix(\gamma)$ is virtually $\langle \h \rangle$.
\medskip

\noindent For the two remaining cases, we have $\mathbb{Z} \cong \langle \h \rangle \leq \Fix(\gamma) \leq C(\h) \cong \mathbb{Z}^2$. We will consider the map $ht \colon C(\h) \rightarrow \mathbb{Z}$ and note that, by Lemma \ref{LemmaHeightMustBe0}, we must have $\Fix(\gamma) \leq \ker(ht)$. In both cases, we will show that $Im(ht)$ is non-trivial. Consequently, it will be isomorphic to $\mathbb{Z}$, and thus $\ker(ht) \cong \mathbb{Z}^2 / \mathbb{Z} \cong \mathbb{Z}$, proving that $\Fix(\gamma) \cong \mathbb{Z}$.
\medskip

\noindent \underline{Case 3.1.} Up to conjugation, $C(\h)$ contains a non-trivial power $a^n$ of a standard generator. We have $ht(a^n) = n \neq 0$, hence $Im(ht)$ is non-trivial.
\medskip

\noindent \underline{Case 3.4.} Up to conjugation, $C(\h)$ contains an element of the form $(abcabc)^k$ for some $k \neq 0$. We have $ht((abcabc)^k) = 6k \neq 0$, so $Im(ht)$ is non-trivial.
\end{proof}

\section{Geometrically Characterising $\bf{\mathrm{\bf{Aut}}_\Gamma(A_\Gamma)}$}\label{sectionEquivariantRigidity}

In this section, we provide a natural geometric characterisation of $\Aut_\Gamma(A_\Gamma)$ as the maximal subgroup of $\Aut(A_\Gamma)$ with a compatible action on $X_\Gamma$. We start with the following notion.

\begin{definition}
    Take $X$ a metric space with $G \curvearrowright X$. Then for $\psi \in \Aut(G)$, an isometry $f$ of $X$ is \emph{$\psi$-equivariant} if for all $g \in G$, $x \in X$, $$f(gx) = \psi(g)f(x).$$
\end{definition}

\begin{lemma}\label{lemmaComposingEquivariants}
    Suppose $G \curvearrowright X$ is an action of $G$ on a metric space $X$. Suppose for $i \in \{1,2\}$, $f_i$ is a $\psi_i$-equivariant isometry, where $\psi_i \in \Aut(G)$. Then $f_1f_2$ is a $\psi_1\psi_2$-equivariant isometry.
\end{lemma}
\begin{proof}
    We see that for all $g \in G$, $x \in X$, $$f_1f_2(gx) = f_1(\psi_2(g)f_2(x)) = (\psi_1\psi_2)(g)(f_1f_2)(x).$$
\end{proof}

The next lemma relates the notion of a $\psi$-equivariant isometry in terms of compatible actions.

\begin{lemma}\label{compatibilityImpliesEquivariance}
    Suppose $\Omega\colon G \rightarrow \Isom(X)$ is an action of $G$ on a metric space $X$, and $A \leq \Aut(G)$ acts compatibly on $X$. Then if $\psi \in A$, the isometry $\Omega(\psi)$ is $\psi$-equivariant.
\end{lemma}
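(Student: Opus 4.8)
The plan is to unwind the definition of compatibility and the definition of $\psi$-equivariance, and observe that they are literally the same statement once we recall how the action $\Psi$ is built from the action $\Omega$ together with $i\colon G \to A$, $g \mapsto \varphi_g$. Concretely, the content of Definition \ref{defCompatible} is that the compatible action is an extension $\Psi\colon A \to \Isom(X)$ of $\Omega$ along $i$, i.e.\ $\Psi(\varphi_g) = \Omega(g)$ for all $g \in G$; I will write $\psi \cdot x$ for $\Psi(\psi)(x)$ and $gx$ for $\Omega(g)(x)$, as in the paper's notational conventions.

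First I would recall Lemma \ref{compatibilityLemma}, which already states that for all $\psi \in A$, $g \in G$, $x \in X$ one has $\psi \cdot gx = \psi(g)(\psi \cdot x)$. This is exactly the assertion that the isometry $\Omega(\psi) = \Psi(\psi)$ (using the natural identification of $\psi \in A$ with the isometry $\Psi(\psi)$) satisfies $f(gx) = \psi(g) f(x)$ for all $g \in G$ and $x \in X$, which is precisely the definition of being $\psi$-equivariant. So the proof is a two-line citation: apply Lemma \ref{compatibilityLemma} and compare with the definition of $\psi$-equivariant isometry.

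There is essentially no obstacle here; the only subtlety worth a sentence is making sure the reader identifies the isometry denoted $\Omega(\psi)$ in the lemma statement with the isometry $\Psi(\psi)$ coming from the compatible action — but since $A \leq \Aut(G)$ acts on $X$ via a single homomorphism (the compatible action $\Psi$), and the paper uses $\Omega$ loosely for "the action", there is no genuine ambiguity. I would state the proof as follows.

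\begin{proof}
    Write $\psi \cdot x$ for the image of $x$ under the isometry $\Omega(\psi) = \Psi(\psi)$ associated to $\psi$ by the compatible action. By Lemma \ref{compatibilityLemma}, for all $g \in G$ and $x \in X$ we have $\psi \cdot gx = \psi(g)(\psi \cdot x)$. This is exactly the condition that $\Omega(\psi)$ is a $\psi$-equivariant isometry of $X$.
\end{proof}
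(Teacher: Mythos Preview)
Your proof is correct and takes essentially the same approach as the paper. The only difference is cosmetic: the paper reproves the identity $\psi \cdot gx = \psi(g)(\psi \cdot x)$ inline via $\psi \varphi_g = \varphi_{\psi(g)} \psi$, whereas you cite the already-established Lemma \ref{compatibilityLemma} directly; your observation about the notational slip ($\Omega(\psi)$ versus $\Psi(\psi)$) is also accurate.
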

\begin{proof}
    Take $g \in G$ and $x \in X$. Then $$\psi \cdot gx = \psi \varphi_g \cdot x = \varphi_{\psi(g)} \psi \cdot x = \psi(g)(\psi \cdot x).$$
\end{proof}

\begin{theorem}\label{theoremEquivariantRigidity}
    Let $A_\Gamma$ be a large-type connected Artin group. Suppose $f$ is an isometry of $X_\Gamma$, which is $\psi$-equivariant for some $\psi \in \Aut(A_\Gamma)$. Then $\psi \in \Aut_\Gamma(A_\Gamma)$.
    
\end{theorem}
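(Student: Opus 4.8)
The plan is to show that a $\psi$-equivariant isometry $f$ of $X_\Gamma$ forces $\psi$ to be built out of an inner automorphism, a graph automorphism, and possibly the global inversion. The first step is to understand what $f$ does to the vertex $v_\emptyset$ (corresponding to the trivial parabolic). Since the action of $\Aut(A_\Gamma)$ on $X_\Gamma$ preserves type (Remark \ref{remTypePreserving}), and $\psi$-equivariance means $f$ intertwines the $A_\Gamma$-action twisted by $\psi$ with the usual one, $f$ must send type-$0$ vertices to type-$0$ vertices, type-$1$ to type-$1$, and type-$2$ to type-$2$. In particular $f(v_\emptyset)$ is a type-$0$ vertex, hence of the form $g v_\emptyset$ for some $g \in A_\Gamma$; composing $f$ with the (automatically $\varphi_{g^{-1}}$-equivariant) isometry given by left-translation by $g^{-1}$, and using Lemma \ref{lemmaComposingEquivariants}, we may assume $f(v_\emptyset) = v_\emptyset$. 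Then for every $h \in A_\Gamma$ we get $f(h v_\emptyset) = \psi(h) v_\emptyset$, so $f$ is completely determined on the type-$0$ vertices (which are in bijection with $A_\Gamma$) by $\psi$.

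The second step is to extract a graph automorphism. Because $f$ fixes $v_\emptyset$ and is simplicial, it permutes the link of $v_\emptyset$ in $X_\Gamma$, which is the barycentric subdivision $\Gamma_{bar}$ of the presentation graph. This permutation must respect types, so it sends each vertex $v_s$ ($s \in V(\Gamma)$) to some $v_{\sigma(s)}$, and each $v_{st}$ to $v_{\sigma(s)\sigma(t)}$, with the incidence and labelling preserved (labels are determined by the Moussong metric via the angle $\pi/(2m_{st})$ at $v_{st}$, which $f$ preserves since it is an isometry). This defines a label-preserving permutation $\sigma \in \Aut(\Gamma)$. Post-composing $f$ with the isometry induced by $\sigma^{-1}$ (again using Lemma \ref{lemmaComposingEquivariants} and the fact that the action of $\Aut_\Gamma(A_\Gamma)$ is by equivariant isometries), we reduce to the case where $f$ fixes $v_\emptyset$ and every vertex $v_s$, $v_{st}$ of $K_\Gamma$ — i.e.\ $f$ fixes the fundamental domain pointwise — and we now must show the corresponding $\psi$ lies in $\langle \Inn(A_\Gamma), \iota\rangle$.

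The third and main step handles this reduced case. Since $f$ fixes each $v_s$, it stabilises the standard tree $X_\Gamma^{v_s} = X_\Gamma^s$, and $\psi$-equivariance forces $\psi$ to preserve each $\langle s\rangle = \Stab(v_s)$ setwise; comparing with the height homomorphism $ht$, either $\psi(s) = s$ for all $s$ (whence $\psi$ restricted to the standard generators is the identity, and since $V(\Gamma)$ generates $A_\Gamma$ we get $\psi = \mathrm{id}$), or $\psi(s) = s^{-1}$ for all $s$ — here one uses connectedness of $\Gamma$ to rule out mixed behaviour: if $\psi(s) = s$ and $\psi(t) = t^{-1}$ for an edge $e^{st}$, then applying $\psi$ to the braid relation $\underbrace{sts\cdots}_{m_{st}} = \underbrace{tst\cdots}_{m_{st}}$ yields a contradiction in the dihedral Artin group $A_{st}$ (this is where one invokes the explicit structure of dihedral Artin groups from Section \ref{sectionDihedral}, e.g.\ that no automorphism sends $s \mapsto s$, $t \mapsto t^{-1}$). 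So on each edge $\psi$ is either the identity or $\iota$, and connectedness propagates one choice to all of $\Gamma$; thus $\psi \in \{\mathrm{id}, \iota\}$. Undoing the two reductions, the original $\psi$ is a product $\varphi_g \sigma \iota^\varepsilon \in \Aut_\Gamma(A_\Gamma)$, as claimed.

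I expect the main obstacle to be the third step, specifically the rigidity statement that $f$ fixing the whole fundamental domain $K_\Gamma$ pointwise forces $\psi \in \langle \Inn(A_\Gamma), \iota\rangle$ rather than merely $\psi$ permuting the standard generators up to sign: one genuinely needs to rule out "partial inversions" using the braid relations and the connectedness hypothesis, and one must be careful that fixing $K_\Gamma$ pointwise together with $\psi$-equivariance really does pin down $\psi$ on all of $A_\Gamma$ (not just on the standard generators), which follows because every element of $A_\Gamma$ is a word in the generators and $f$ is determined on type-$0$ vertices by $\psi$. A secondary subtlety is checking that each reduction step preserves the hypothesis "$f$ is $\psi'$-equivariant for the new $\psi'$," which is exactly the content of Lemma \ref{lemmaComposingEquivariants} combined with the observation that both left-translations and graph automorphisms act on $X_\Gamma$ by equivariant isometries (Lemma \ref{compatibilityImpliesEquivariance} applied to the compatible action of $\Aut_\Gamma(A_\Gamma)$).
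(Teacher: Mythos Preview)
Your first two reductions match the paper exactly. The divergence is in the third step, and your approach is genuinely different---and works, though one key fact needs more justification than you give it.

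The paper does not argue algebraically. Instead, after reducing to $f$ fixing $K_\Gamma$ pointwise, it picks a single type-$2$ vertex $v_{ab}$ and invokes a link-rigidity result of Crisp \cite[Proposition~40]{crisp2005automorphisms}: any simplicial isometry of $lk_{X_\Gamma}(v_{ab})$ fixing the two edges $e_a, e_b$ acts either trivially or as $\iota$ on the whole link. After post-composing with $\iota^\varepsilon$, the paper propagates ``trivial on the link'' along edges of $\Gamma$ using connectedness (re-applying Crisp at each step, now with \emph{two} adjacent edges of the link already fixed, which forces the trivial option). This yields $f(sv_\emptyset) = sv_\emptyset$ for every generator $s$, whence $\psi(s) = s$ via equivariance and triviality of $\Stab(v_\emptyset)$.

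Your route---deduce $\psi(s) \in \{s, s^{-1}\}$ from $f(v_s) = v_s$ via the stabiliser, then exclude mixed signs by applying $\psi$ to the braid relation---is more self-contained (no external citation), but the assertion that $\underbrace{st^{-1}s\cdots}_{m_{st}} \neq \underbrace{t^{-1}st^{-1}\cdots}_{m_{st}}$ in $A_{st}$ is not proved anywhere in Section~\ref{sectionDihedral} and is not entirely trivial. For $m_{st}$ odd a height count suffices (the two sides have heights $+1$ and $-1$); for $m_{st} = 2n$ even, height is blind to this, and you need something like passing to the central quotient $A_{2n}/Z(A_{2n}) \cong \mathbb{Z}/n * \mathbb{Z}$ and comparing syllable lengths of $(st^{-1})^n$ and $(t^{-1}s)^n$ (they are $2n$ and $2n+1$ respectively). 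This is short but should be spelled out. A second, minor point: your justification that $f$ is type-preserving (citing Remark~\ref{remTypePreserving}) does not apply, since $f$ is an arbitrary isometry, not a priori coming from $\Aut_\Gamma(A_\Gamma)$; the paper instead uses that type-$2$ vertices are characterised metrically as those with unbounded link.
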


\begin{proof}
First, we note that any isometry of $X_\Gamma$ is combinatorial and type preserving. This follows from the fact that the type 2 vertices are exactly the points with unbounded link \cite[Theorem E, Remark 3.4]{vaskou2022acylindrical}, and from the permutation of the type 2 vertices we can recover the entire isometry.

Suppose that $f$ is a $\psi$-equivariant isometry of $X_\Gamma$, for some $\psi \in \Aut(A_\Gamma)$. Consider the type $0$ vertex $v_{\emptyset}$ that lies in $K_{\Gamma}$. Since $f$ sends type $0$ vertices onto type $0$ vertices, we know that $f(v_{\emptyset}) = g v_{\emptyset}$ for some $g \in A_{\Gamma}$. By Lemma \ref{compatibilityImpliesEquivariance} and Lemma \ref{lemmaComposingEquivariants} then, $\varphi_{g^{-1}}f$ is a $\varphi_{g^-1}\psi$-equivariant isometry fixing $v_\emptyset$. So up to post-composing $\psi$ by an inner automorphism, we can assume $f$ fixes $v_\emptyset$, and do hereafter.

In particular, $f$ now preserves the star and the link of $v_{\emptyset}$, i.e. $K_{\Gamma}$ and its boundary $\partial K_{\Gamma}$. This means $f$ induces an automorphism $\sigma$ of $\partial K_{\Gamma}$ and thus of (the barycentric subdivision of) $\Gamma$. Up to post-composing by the isometry of $X_\Gamma$ induced by the graph automorphism $\sigma^{-1}$, $f$ fixes $\partial K_{\Gamma}$ and $K_{\Gamma}$ pointwise. As before, from here we assume $f$ fixes $\partial K_\Gamma$, up to post-composing $\psi$ by $\sigma^{-1}$ ($\sigma^{-1}\psi$-equivariance follows from Lemma \ref{compatibilityImpliesEquivariance} and Lemma \ref{lemmaComposingEquivariants}).

We now come to a more local argument. Consider any type $2$ vertex $v_{ab}$ of $K_{\Gamma}$. By the previous argument, $\psi$ fixes $v_{ab}$ and hence preserves the link $lk_{X_{\Gamma}}(v_{ab})$. Since $f$ fixes $K_{\Gamma}$ pointwise, it actually fixes the intersection $lk_{X_{\Gamma}}(v_{ab}) \cap K_{\Gamma} = e_a \cup e_b$ pointwise. We can now apply \cite[Proposition 40]{crisp2005automorphisms}: $f$ acts on $lk_{X_{\Gamma}}(v_{ab})$ either trivially, or as the global inversion $\iota$ (sending an edge $g e_s$ to $\iota \cdot g e_s = \iota(g) e_s$, for $s \in \{a, b\}$). This means that up to post-composing with the action of $\iota^{\varepsilon}$ for an appropriate $\varepsilon \in \{0, 1\}$, we can assume that $f$ fixes the whole of $lk_{X_{\Gamma}}(v_{ab})$ pointwise. As usual, we now only control $\psi$ up to $\iota^\varepsilon$.
\medskip

Now we stop modifying $f$, and show that it is the identity on $X_\Gamma$.
\medskip

\noindent \underline{Claim:} For every pair of adjacent vertices $s, t \in V(\Gamma)$, the automorphism $f$ fixes $lk_{X_{\Gamma}}(v_{st})$ pointwise.
\medskip

\noindent \underline{Proof of Claim:} Since $\Gamma$ is by assumption connected, we proceed via induction on the minimal number $n$ such that $v_0, v_1 \dots v_n$ is a sequence of adjacent vertices of $\Gamma$ where $v_n \in \{s, t\}$ and $\{v_0, v_1\} = \{a, b\}$. The base case, when $\{s,t\} = \{a,b\}$ is already compete.

So suppose, inductively and without loss of generality, that there is a vertex $u \in V(\Gamma)$ such that $lk_{X_\Gamma}(v_{us})$ is fixed pointwise by $f$. Therefore $f$ fixes both the edges $e_s$ and $s e_s$. Note that both $e_s$ and $s e_s$ are contained in $lk_{X_{\Gamma}}(v_{st})$. Using \cite[Proposition 40]{crisp2005automorphisms} again, we obtain that $f$ acts trivially on $lk_{X_{\Gamma}}(v_{st})$. Inductively, this finishes the proof of the claim.
\medskip

Crucially, it follows from the claim that for each standard generator $s$ (which is contained in an edge of $\Gamma$ by connectedness) the point $sv_\emptyset$ is fixed by $f$. Using $\psi$-equivariance, this yields
$$s v_{\emptyset} = f(sv_\emptyset) = \psi(s)f(v_\emptyset) = \psi(s)v_\emptyset.$$
So $\psi(s) = s$ since $v_\emptyset$ has trivial stabiliser. Since $\psi$ is the identity on every generator, $\psi$ is the identity. Throughout the proof, we post-composed $\psi$ by an automorphism $\varphi_g\sigma\iota^{\varepsilon} \in \Aut_\Gamma(A_\Gamma)$. The result follows.
\end{proof}

We obtain Theorem \ref{theoremIntroEquivariantRigidity} as an immediate corollary.

\begin{corollary}\label{corollaryMaximalCompatible}
    Suppose $A_\Gamma$ is a large-type connected Artin group, and $A \leq \Aut(A_\Gamma)$ acts compatibly on $X_\Gamma$. Then $A \leq \Aut_\Gamma(A_\Gamma)$.
\end{corollary}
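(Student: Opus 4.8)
The goal is to show that if $A \leq \Aut(A_\Gamma)$ acts compatibly on $X_\Gamma$, then $A \leq \Aut_\Gamma(A_\Gamma)$, given that Theorem \ref{theoremEquivariantRigidity} has already been established. The plan is to deduce this as a direct consequence of Theorem \ref{theoremEquivariantRigidity} together with Lemma \ref{compatibilityImpliesEquivariance}, since the real content has already been proved.

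\begin{proof}
Let $\Omega \colon A_\Gamma \to \Isom(X_\Gamma)$ denote the standard action of $A_\Gamma$ on its Deligne complex, and suppose $\Psi \colon A \to \Isom(X_\Gamma)$ is a compatible action, so that $\Psi \circ i = \Omega$ where $i \colon A_\Gamma \to A$ sends $g$ to $\varphi_g$ (this uses that $\Inn(A_\Gamma) \leq A$, which holds since $A_\Gamma$ has trivial centre by Remark \ref{remarkCentre}, so $\Inn(A_\Gamma) \cong A_\Gamma$ and $i$ is injective). Take any $\psi \in A$. By Lemma \ref{compatibilityImpliesEquivariance}, the isometry $f \coloneqq \Psi(\psi)$ of $X_\Gamma$ is $\psi$-equivariant. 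Since $A_\Gamma$ is large-type and $\Gamma$ is connected, Theorem \ref{theoremEquivariantRigidity} applies and gives $\psi \in \Aut_\Gamma(A_\Gamma)$. As $\psi \in A$ was arbitrary, we conclude $A \leq \Aut_\Gamma(A_\Gamma)$.
\end{proof}

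The only subtlety I would flag is the hypothesis check: Lemma \ref{compatibilityImpliesEquivariance} and the definition of a compatible action require $\Inn(A_\Gamma) \leq A \leq \Aut(A_\Gamma)$, so one should remark (as above) that this is automatic in the large-type rank $\geq 3$ setting by Remark \ref{remarkCentre}; in the dihedral case $\Inn(A_\Gamma)$ still sits inside $A$ by hypothesis. Beyond that, there is no real obstacle — the corollary is genuinely immediate once Theorem \ref{theoremEquivariantRigidity} is in hand, since a compatible action is precisely a coherent supply of $\psi$-equivariant isometries, one for each $\psi \in A$, and the theorem forces each such $\psi$ into $\Aut_\Gamma(A_\Gamma)$. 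I would keep the proof to these few lines rather than reproving anything.
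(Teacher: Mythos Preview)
Your proof is correct and follows exactly the same route as the paper: apply Lemma \ref{compatibilityImpliesEquivariance} to an arbitrary $\psi \in A$ to get a $\psi$-equivariant isometry, then invoke Theorem \ref{theoremEquivariantRigidity}. The remark about $\Inn(A_\Gamma) \leq A$ is harmless but unnecessary, since this containment is already built into Definition \ref{defCompatible} of a compatible action and hence is part of the hypothesis.
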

\begin{proof}
    Take such an $A$ and $\psi \in A$ arbitrary. Then by Lemma \ref{compatibilityImpliesEquivariance} there is a $\psi$-equivariant isometry of $X_\Gamma$, and by Theorem \ref{theoremEquivariantRigidity}, $\psi \in \Aut_\Gamma(A_\Gamma)$.
\end{proof}

\bibliographystyle{amsalpha}
\bibliography{bibliography}

\providecommand{\bysame}{\leavevmode\hbox to3em{\hrulefill}\thinspace}
\providecommand{\MR}{\relax\ifhmode\unskip\space\fi MR }
% \MRhref is called by the amsart/book/proc definition of \MR.
\providecommand{\MRhref}[2]{%
  \href{http://www.ams.org/mathscinet-getitem?mr=#1}{#2}
}
\providecommand{\href}[2]{#2}
\begin{thebibliography}{GHMR00}

\bibitem[AM22]{andrew2022free}
Naomi Andrew and Armando Martino, \emph{Free-by-cyclic groups, automorphisms and actions on nearly canonical trees}, Journal of Algebra \textbf{604} (2022), 451--495.

\bibitem[BH92]{bestvina1992train}
Mladen Bestvina and Michael Handel, \emph{Train tracks and automorphisms of free groups}, Annals of Mathematics \textbf{135} (1992), no.~1, 1--51.

\bibitem[BH13]{bridson2013metric}
Martin~R Bridson and Andr{\'e} Haefliger, \emph{Metric spaces of non-positive curvature}, Grundlehren der mathematischen Wissenschaften, vol. 319, Springer Science \& Business Media, 2013.

\bibitem[BJ96]{bass1996automorphism}
Hyman Bass and Renfang Jiang, \emph{Automorphism groups of tree actions and of graphs of groups}, Journal of Pure and Applied Algebra \textbf{112} (1996), no.~2, 109--155.

\bibitem[BS72]{brieskorn1972artin}
Egbert Brieskorn and Kyoji Saito, \emph{Artin-gruppen und {C}oxeter-gruppen}, Inventiones mathematicae \textbf{17} (1972), 245--271.

\bibitem[CC05]{charney2004automorphism}
Ruth Charney and John Crisp, \emph{Automorphism groups of some affine and finite type {A}rtin groups}, Math. Res. Letters. \textbf{12} (2005), 321--333.

\bibitem[CD95]{charney1995k}
Ruth Charney and Michael~W Davis, \emph{The {K}($\pi$,1)-problem for hyperplane complements associated to infinite reflection groups}, Journal of the American Mathematical Society \textbf{8} (1995), no.~3, 597--627.

\bibitem[CHR20]{ciobanu2020equations}
Laura Ciobanu, Derek Holt, and Sarah Rees, \emph{Equations in groups that are virtually direct products}, Journal of Algebra \textbf{545} (2020), 88--99.

\bibitem[CMV23]{cumplido2023parabolic}
Mar{\'\i}a Cumplido, Alexandre Martin, and Nicolas Vaskou, \emph{Parabolic subgroups of large-type {A}rtin groups}, Mathematical Proceedings of the Cambridge Philosophical Society, vol. 174, Cambridge University Press, 2023, pp.~393--414.

\bibitem[Cri00]{crisp2000symmetrical}
John Crisp, \emph{Symmetrical subgroups of {A}rtin groups}, Advances in Mathematics \textbf{152} (2000), no.~1, 159--177.

\bibitem[Cri05]{crisp2005automorphisms}
\bysame, \emph{Automorphisms and abstract commensurators of 2--dimensional {A}rtin groups}, Geometry \& Topology \textbf{9} (2005), no.~3, 1381--1441.

\bibitem[Fio24]{fioravanti2024coarse}
Elia Fioravanti, \emph{Coarse-median preserving automorphisms}, Geometry \& Topology \textbf{28} (2024), no.~1, 161--266.

\bibitem[GHMR00]{ghrm2000treeaction}
N.~D. Gilbert, J.~Howie, V.~Metaftsis, and E.~Raptis, \emph{Tree actions of automorphism groups}, Journal of Group Theory \textbf{3} (2000), no.~2, 213--223.

\bibitem[Jon23]{jones2023fixed}
Oli Jones, \emph{Fixed points of automorphisms of torus knot groups}, 2023.

\bibitem[Lau95]{laurence1995generating}
Michael~R Laurence, \emph{A generating set for the automorphism group of a graph group}, Journal of the London Mathematical Society \textbf{52} (1995), no.~2, 318--334.

\bibitem[Lev07]{levitt2007automorphism}
Gilbert Levitt, \emph{On the automorphism group of generalized baumslag--solitar groups}, Geometry \& Topology \textbf{11} (2007), no.~1, 473--515.

\bibitem[Mar22]{martin2022tits}
Alexandre Martin, \emph{The tits alternative for two-dimensional {A}rtin groups and wise’s power alternative. arxiv math}, 2022.

\bibitem[MO12]{minasyan2012fixed}
Ashot Minasyan and Denis Osin, \emph{Fixed subgroups of automorphisms of relatively hyperbolic groups}, Quarterly journal of mathematics \textbf{63} (2012), no.~3, 695--712.

\bibitem[MP22]{martin2022acylindrical}
Alexandre Martin and Piotr Przytycki, \emph{Acylindrical actions for two-dimensional {A}rtin groups of hyperbolic type}, International Mathematics Research Notices \textbf{2022} (2022), no.~17, 13099--13127.

\bibitem[MV23]{martin2023characterising}
Alexandre Martin and Nicolas Vaskou, \emph{Characterising large-type {A}rtin groups}, Bulletin of the LMS (in press) (2023).

\bibitem[Neu92]{neumann1992fixed}
Walter~D Neumann, \emph{The fixed group of an automorphism of a word hyperbolic group is rational}, Inventiones mathematicae \textbf{110} (1992), no.~1, 147--150.

\bibitem[Par97]{paris1997parabolic}
Luis Paris, \emph{Parabolic subgroups of {A}rtin groups}, Journal of Algebra \textbf{196} (1997), no.~2, 369--399.

\bibitem[Ser89]{servatius1989automorphisms}
Herman Servatius, \emph{Automorphisms of graph groups}, Journal of Algebra \textbf{126} (1989), no.~1, 34--60.

\bibitem[Ser02]{serre2002trees}
Jean-Pierre Serre, \emph{Trees}, Springer Science \& Business Media, 2002.

\bibitem[Vas22]{vaskou2022acylindrical}
Nicolas Vaskou, \emph{Acylindrical hyperbolicity for {A}rtin groups of dimension 2}, Geometriae Dedicata \textbf{216} (2022), no.~1, 7.

\bibitem[Vas23a]{vaskou2023automorphisms}
\bysame, \emph{Automorphisms of large-type free-of-infinity {A}rtin groups}, 2023.

\bibitem[Vas23b]{vaskou2023isomorphism}
\bysame, \emph{The isomorphism problem for large-type {A}rtin groups}, 2023.

\bibitem[VdL83]{van1983homotopy}
Harm Van~der Lek, \emph{The homotopy type of complex hyperplane complements}, Ph.D. thesis, Katholieke Universiteit te Nijmegen, 1983.

\end{thebibliography}

\end{document}